\renewcommand{\pod}[1]{\allowbreak\mathchoice
  {\if@display \mkern 18mu\else \mkern 8mu\fi (#1)}
  {\if@display \mkern 18mu\else \mkern 8mu\fi (#1)}
  {\mkern4mu(#1)}
  {\mkern4mu(#1)}
}
\renewcommand{\eqref}[1]{(\ref{#1})}   
\theoremstyle{plain}
\newtheorem{theorem}{Theorem}[section]
\newtheorem{theorema}{Theorem}
\newtheorem{corollary}[theorem]{Corollary}
\newtheorem{lemma}[theorem]{Lemma}
\newtheorem{proposition}[theorem]{Proposition}
\newtheorem{algorithm}{Algorithm}
\newtheorem{definition}[theorem]{Definition}
\theoremstyle{remark}
\newtheorem{remark}[theorem]{Remark}
\newcommand{\CC}{{\mathbb C}}
\newcommand {\R}{{\mathbb{R}}}
\newcommand {\Z}{{\mathbb{Z}}}
\newcommand {\N}{{\mathbb{N}}}
\newcommand {\PP}{\mathbb{P}}
\newcommand {\A}{\mathcal{A}}
\newcommand{\LL}{{\mathscr{L}}}
\newcommand{\C}{{\EuScript C}}
\newcommand{\scI}{{\EuScript I}}
\newcommand{\EE}{{\EuScript E}}
\newcommand{\FF}{{\EuScript F}}
\newcommand{\HH}{{\EuScript H}}
\newcommand{\MM}{{\EuScript M}}
\newcommand{\TT}{{\EuScript T}}
\newcommand{\VV}{{\EuScript V}}
\newcommand{\WW}{{\EuScript W}}
\renewcommand{\P}{{\EuScript P}}
\newcommand {\idb}{{\mathfrak{b}}}
\newcommand {\idc}{{\mathfrak{c}}}
\newcommand{\idm}{{\mathfrak m}}
\newcommand{\idr}{{\mathfrak r}}
\newcommand{\idv}{{\mathfrak v}}
\newcommand{\gL}{{\mathfrak L}}
\newcommand{\gE}{{\mathfrak E}}
\newcommand{\II}{{\mathfrak I}}
\newcommand{\gR}{{\mathfrak R}}
\newcommand{\gS}{{\mathfrak S}}
\newcommand{\gU}{{\mathfrak U}}
\newcommand{\gV}{{\mathfrak V}}
\newcommand{\gW}{{\mathfrak W}}
\newcommand{\bz}{{\boldsymbol z}}
\numberwithin{equation}{section}
\renewcommand{\leq}{\leqslant}
\renewcommand{\geq}{\geqslant}
\newcommand{\mmod}[1]{\,({\rm mod\,}#1)}
\def\md#1#2{\equiv#1\,({\rm mod\,}#2)}
\def\SW{{\rm SW}}
\def\N{{\mathbb N}}
\def\NN{{\EuScript N}}
\def\ES{{\EuScript S}}
\def\1{{\mathbf 1}}
\def\pnu{p^\nu}
\def\e{{\rm e}}
\def\d{\,{\rm d}}
\def\dd{{\rm d}}
\def\bal{{\boldsymbol\alpha}}
\def\bbe{{\boldsymbol\beta}}
\def\bsk{{\boldsymbol k}}
\def\bsl{{\boldsymbol \ell}}
\def\bst{{\boldsymbol t}}
\def\bsz{{\boldsymbol z}}
\def\bsxi{{\boldsymbol\xi}}
\newcommand\no[1]{\left\|#1\right\|}
\newcommand\fl[1]{\left\lfloor#1\right\rfloor}
 \def\sumast{\mathop{{\sum}^*}}
 \def\sumdag{\mathop{{\sum}^\dagger}}
 \def\eg{e.g.}
\newcommand\vbs[1]{\left|#1\right|}
\newcommand{\ov}{\overline}
\begin{document}

\title[Multiplicative functions in large arithmetic progressions]{Multiplicative functions in large arithmetic progressions and applications}
\date{\today, \currenttime}
 
\author{\'Etienne Fouvry}
\address{Universit\' e Paris--Saclay, CNRS, Laboratoire  de  Math\' ematiques d'Orsay,    \goodbreak    91405   Orsay,   France}
\email{Etienne.Fouvry@universite-paris-saclay.fr}
\author{G\'erald Tenenbaum}
\address{Institut \' Elie Cartan, 
Universit\' e de Lorraine,
B.P. 70239, \goodbreak
F--54506 Vand\oe{}uvre-l\`es-Nancy Cedex,  
France}
\email{Gerald.Tenenbaum@univ-lorraine.fr}

\subjclass[2010]{Primary 11N37; Secondary 11N25, 11N36, 11N60.}
\keywords{Bombieri-Vinogradov theorem, exponent of distribution, multiplicative functions, arithmetic progressions, Erd\H os-Kac theprem, Erd\H os--Wintner theorem, law of iterated logarithm, integers with a fixed number of prime factors, large sieve, combinatorial sieve.}


\begin{abstract}  
We establish new Bombieri-Vinogradov type estimates for a wide class of multiplicative arithmetic functions and derive several applications, including: a new proof of a recent estimate by Drappeau and Topacogullari for arithmetical correlations; a theorem of Erd{\H o}s-Wintner type with support equal to the level set of an additive function at shifted argument; and a law of iterated logarithm for the distribution of prime factors of integers weighted by $\tau(n-1)$ where $\tau$ denotes the divisor function. 
  \end{abstract}
  
\maketitle
\vskip-9.5mm
{\vskip0mm
\fontsize{9}{9.4}\selectfont{\tableofcontents}\goodbreak}
\newpage

\renewcommand{\theenumi}{\roman{enumi}}

 \section{Introduction} The idea of this paper came up to our minds on studying the work of Drappeau and Topacogullari \cite{DrTo}, in which 
 the authors  investigate sums of the form
 $$
\mathfrak T (x;f):= \sum_{1\leqslant n\leqslant x} f(n) \tau (n-1)\qquad (x\to\infty),
 $$
where  $f$ is a multiplicative function, periodic over the primes (see Definition \ref{defM} and Remark \ref{fperp}~below) and $\tau$ is the standard divisor function. In this work, an asymptotic formula for the sum $\mathfrak T (x;f)$ is derived with error term  $\ll x/(\log x)^{N}$, for arbitrary $N\geqslant 1$. Our approach consists in shifting this question to the problem of the  level of distribution of such multiplicative functions $f$. \par 
As a consequence,  we obtain an alternative proof of the results of \cite{DrTo} briefly described in Section \ref{sectapps} and, in a more innovative  way, we obtain  new  information on the joint distribution of $(f(n), g(n-1))$, for certain additive functions $f$, $g$. 
  \par 
  \smallskip
  We first describe the general framework for various  types of levels of distribution.
 \par 
  To study the  statistical behaviour of the arithmetical function $f$  over the arithmetic progression $a\,(\bmod \,q)$, with $(a,q)=1$, it is natural to introduce the error term
 \begin{equation}\label{defDeltaf}
 \Delta_f (x;q,a):= \sum_{\substack{n\leqslant x\\ n\md aq}} f(n)-\frac{1}{\varphi (q)} \sum_{\substack{n\leqslant x \\ (n,q)=1}} f(n)\qquad (x\geqslant 1).
\end{equation}
Whenever  $f$ is suspected to be well distributed among arithmetic progressions, the challenge is to prove that, for any $A >0$, there exists a constant $c(A)$ such that, for any $x\geqslant 1$, all $q$ in some specific range (depending on $x$ and  as large as possible), and any integer $a$ coprime to $q$, 
we have 
 \begin{equation}\label{SW0}
\bigl\vert \,  \Delta_f (x;q,a)\, \bigr\vert  \leqslant    \frac{c(A)\sqrt{x}}{\varphi (q)\LL^A} \bigg( \sum_{n\leqslant x} \vert f(n)\vert^2\bigg)^{1/2},
 \end{equation}
where  $\LL
:=\log 3x.$
 A  general presentation of these topics is displayed in \cite[p.205--210]{BFI1}. 
 \par 
 
A more tractable form of the question is obtained    by   studying    the {\it average distribution of $f$}.  For instance 
\par 
(a) {\it Find    (large) values of $Q=Q(x, A)$,   such that the bound
\begin{equation}\label{BV0}
\sum_{q\leqslant  Q} \ \max_{y\leqslant x} \max_{  (a,q)=1}
\vert   \Delta_f (y;q,a)\vert \ll_A   \frac{\sqrt{x}}{\LL^A} \bigg( \sum_{n\leqslant x} \vert f(n)\vert^2\bigg)^{1/2}\qquad (x\geqslant 1),
\end{equation}
holds for any    $A>0$. \par 
\rm (b) \it Given a fixed integer $a\neq0$, determine a large range of validity for the weaker requirement \begin{equation}\label{BV}
 \sum_{\substack {q\leqslant  Q \\ (q,a)=1}} 
\vert   \Delta_f (x;q,a)\vert \ll_A  \frac{\sqrt{x}}{\LL^A} \bigg( \sum_{n\leqslant x} \vert f(n)\vert^2\bigg)^{1/2}\qquad (x\geqslant 1).
\end{equation} }
 \par 
 Some classical results assert that, for many arithmetical functions such as the indicator $\1_\PP$ of the set of primes,  standard   multiplicative functions and others, the bound \eqref{BV0} does hold with  $$Q=\sqrt{x}/ \LL^{B(A)},$$
 the key point being that $f$ should present some valuable combinatorial structure  (in order to apply the large sieve inequality) and 
 should already satisfy \eqref{SW0} when $q$ does not exceed
 some power of~$\LL$---a Siegel--Walfisz type property. \par 
 The threshold $Q= \sqrt{x}$ has been overpassed only in very few examples for $f$, even in the case of the simpler inequality
 \eqref{BV}. These difficult results require sophisticated tools.  
  
An apparently much easier problem is:
\par 
(c) {\it Find (large) values of $Q= Q(x,A)$ such that the bound
\begin{equation}\label{but}
 \sum_{\substack{q\leqslant  Q \\ (q,a)=1}} 
   \Delta_f (x;q,a)  \ll_{a,A}\frac{\sqrt{x}}{\LL^A} \Bigl( \sum_{n\leqslant x} \vert f(n)\vert^2\Bigr)^{1/2}\qquad (x\geqslant 1)
\end{equation} 
holds for any $A>0$ and any integer $a\ne 0$.}\par 
Compared to \eqref{BV}, this question seems  simpler because  the sign oscillations of 
$\Delta_f (x;q,a)$ as $q$ varies may be exploited. Furthermore, Dirichlet's hyperbola technique opens the way to reach much larger value of $Q$. Indeed, writing the congruence condition \mbox{$n\md aq$} appearing in  \eqref{defDeltaf} as
 $$
 n=a+qr,\quad n\leqslant x,
 $$ 
we may replace the smooth summation over $q$ in \eqref{but} by a smooth  summation over~$r$. This method is highly efficient when $q$ runs over all integer values from an interval included in $[\sqrt{x}, x]$. Indeed,  the variable $r$ is then also smooth and, furthermore,  bounded above by
$\sqrt{x}$. As a consequence, for adequate functions $f$,   the proof of \eqref{but}  for some  $Q(x,A) >\sqrt{x}$ may be reduced to the case $Q\leqslant \sqrt{x}$. This will be illustrated in our approach---see \S \ref{Dirichlet}.
  Note that  \eqref{BV} is not yet known to hold for  $Q(x,A)= \sqrt{x}$   when  $f=\1_\PP$. However, we have the following theorem.
\begin{theorema}  \label{pointdedepart} 
{\rm (\cite[th. 9]{BFI1}, \cite{BFI4},  \cite[cor. 1]{FouCrelle})}. Let $f:=\1_\PP$.
For every $A$ and suitable $B=B(A)$, $ C=C (A)$,  the inequality
$$
\Bigg\vert
\sum_{\substack{q\leqslant Q\\ (q,a)=1}} \Delta_f(x;q,a)
\Bigg\vert \leqslant \frac{C x}{\LL^{A}}\qquad (x\geqslant 1)
$$
holds for all $Q \leqslant x/\LL^{B}$ and  any integer $a$ such that $1 \leqslant \vert a \vert \leqslant \LL^A$. 
\end{theorema}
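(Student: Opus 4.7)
The plan is to follow Bombieri--Friedlander--Iwaniec \cite{BFI1,BFI4} and \cite{FouCrelle}. The decisive advantage over the Bombieri--Vinogradov setting \eqref{BV} is that absolute values are not taken over~$q$, which allows us to reverse summation order and bring in deep inputs from the spectral theory of automorphic forms.

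I would first apply a combinatorial identity (Vaughan's or Heath--Brown's, at depth $K=K(A)$ large enough) to the indicator $\1_\PP$, thereby reducing $\sum_{q\leqslant Q,\,(q,a)=1} \Delta_f(x;q,a)$ to finitely many bilinear expressions
\[
\sum_{\substack{q \leqslant Q \\ (q,a)=1}} \Bigg\{ \sum_{\substack{mn \leqslant x \\ mn \md aq}} \alpha_m \beta_n - \frac{1}{\varphi(q)} \sum_{\substack{mn \leqslant x \\ (mn,q)=1}} \alpha_m \beta_n \Bigg\},
\]
where at least one of $\alpha$, $\beta$ is either smooth (a logarithm) or supported in $[1, x^{1/K}]$.

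For the type~I contributions, where one variable---say~$m$---lies in a short initial segment, I would rewrite the congruence as $mn = a+qr$ and swap the roles of $q$ and~$r$ via Dirichlet's hyperbola trick (cf.\ \S\ref{Dirichlet}): when $q$ approaches~$x$, the companion parameter $r$ is small, and the residual sum reduces to a Siegel--Walfisz type estimate for the M\"obius function or the logarithm, uniform in the small moduli thus produced. For the genuinely bilinear type~II ranges (both $m,n$ in $[x^{\alpha},x^{1-\alpha}]$), I would invoke the dispersion method of Linnik as deployed in \cite{BFI1}: square out, apply Cauchy--Schwarz in the outer variable, open the congruence through additive characters modulo~$q$, and reduce matters to averages of incomplete Kloosterman sums. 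The nontrivial bound of Deshouillers--Iwaniec, resting on Kuznetsov's trace formula and the spectral theory of $\Gamma_0(q)$, provides the essential saving and stretches the admissible range up to $Q \leqslant x/\LL^B$.

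The hard step is this third one. The Kloosterman sum averages involve~$a$ through phases of the form $\e(a\overline{m}n/q)$, so the uniformity for $|a|\leqslant \LL^A$ forces one to track carefully how the spectral bounds depend on the conductors of the twisting characters; the smoothness of the coefficients produced by the combinatorial identity, together with the smallness of $|a|$, is exactly what permits the level of distribution to be pushed arbitrarily close to~$1$. The value $B=B(A)$ collects the logarithmic losses in the combinatorial decomposition, in the dispersion estimate, and in the spectral input.
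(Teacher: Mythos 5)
You are asked to prove a result that the paper itself only cites (BFI \cite[th.~9]{BFI1}, \cite{BFI4}, \cite[cor.~1]{FouCrelle}): there is no self-contained proof of Theorem~\ref{pointdedepart} in this paper, only of its multiplicative-function analogue, Theorem~\ref{central}, which is built on the same machinery. Measured against that machinery and against what the references actually do, your outline is in the right spirit but has one genuine gap.

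The main ideas you name --- a Heath--Brown/Vaughan decomposition of $\1_\PP$, Dirichlet's hyperbola switch from the modulus $q$ to the cofactor $r$ so as to reduce $Q\leqslant x/\LL^B$ to a sum over $r\ll \sqrt{x}$, and Linnik's dispersion method closed by the Deshouillers--Iwaniec spectral bounds for Kloosterman sums --- are indeed the backbone of the BFI argument (and of \S\ref{Dirichlet} and Lemmas \ref{lemma4}--\ref{lemma6} here). Your tracking of the uniformity in $a$ is also appropriate.

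The missing ingredient is in what you call the type~I range. You assert that after the hyperbola switch the residual contribution ``reduces to a Siegel--Walfisz type estimate for the M\"obius function or the logarithm,'' but that is not what happens. After Heath--Brown's identity one is left, in the ranges that dispersion cannot reach, with pieces in which all M\"obius variables are tiny and the remaining \emph{smooth} variables are large; those pieces are in effect sums of $\tau_2$ or $\tau_3$ weighted in arithmetic progressions, and the moduli remaining after the switch sit around $\sqrt{x}$, i.e.\ \emph{beyond} what the large sieve or any Siegel--Walfisz input can handle. Here one must invoke the Friedlander--Iwaniec and Heath--Brown results giving exponents of distribution $2/3-\varepsilon$ for $\tau_2$ and $21/41-\varepsilon$ for $\tau_3$, resting on Weil's and Deligne's bounds for algebraic exponential sums. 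The paper makes this point explicitly in Remark~\ref{fperp} and in Lemmas~\ref{tau2}--\ref{tau3}: the ``purely type~I'' residue is exactly where the Deligne-type input enters, and it is a different (and historically decisive) ingredient from the Kuznetsov/Deshouillers--Iwaniec bounds you invoke for the bilinear ranges. Without it the argument does not close, because Siegel--Walfisz alone cannot reach moduli near $\sqrt{x}$, let alone the slightly larger range required after the dyadic dissection. Your sketch should be amended to distinguish these two deep inputs.
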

This theorem is sufficiently strong to enable further progress in the well-known Titchmarsh divisor problem: if $\Lambda$ denotes the von Mangoldt function, an asymptotic expansion for the sum
$$
\mathfrak T (x;\Lambda) = \sum_{2\leqslant n\leqslant x  } \Lambda (n) \tau (n-1),
$$
is now available with  error term $\ll x/\LL^{A}$ for arbitrary, fixed $A$: see \cite[cor. 1]{BFI1} and \cite[cor. 2]{FouCrelle}.

Actually  Bombieri, Friedlander and Iwaniec proved a stronger form of Theorem~\ref{pointdedepart} which may be interpreted as an answer to a compromise  between  questions (b) and~(c).
\begin{theorema}\label{pointdedepart1}
\rm (\cite[th. 9]{BFI1}). \it  Let $f:=\1_\PP$.
Then, for all $A>0$,  $\varepsilon >0$, and suitable $B=B(\varepsilon, A)$, \mbox{$ C=C (\varepsilon, A)$}, the inequality
$$
\sum_{\substack{r\leqslant R \\ (r,a)=1}}\Bigg\vert
\sum_{\substack{q\leqslant Q\\ (q,a)=1}} \Delta_f(x;qr,a)
\Bigg\vert \leqslant \frac{Cx}{\LL^A}\qquad (x\geqslant 1)
$$
holds for every $Q$ and $R$ satisfying $1 \leqslant R \leqslant x^{1/10 -\varepsilon}$,  $QR \leqslant x/\LL^{B}$, and every integer $a$ such that $1 \leqslant \vert a \vert \leqslant \LL^A$. 
\end{theorema}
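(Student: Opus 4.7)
\bigskip
\noindent\textbf{Proof plan for Theorem~\ref{pointdedepart1}.}

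The plan is to follow the Bombieri--Friedlander--Iwaniec strategy, starting with a combinatorial decomposition of $\1_\PP(n)$ (or equivalently $\Lambda(n)/\log n$) via Heath-Brown's identity. This writes $\Lambda(n)$ as a linear combination of $O(\LL^{6})$ convolution sums of the form $\mu*\mu*\mu*\1*\1*\1$ truncated dyadically, which, after an additional dyadic splitting, reduces the desired estimate to bilinear forms
\[
\sum_{\substack{r\leqslant R\\(r,a)=1}}\rho_r \sum_{\substack{q\leqslant Q\\(q,a)=1}}\mu_q \bigg(\sum_{\substack{mn\leqslant x\\ mn\md{a}{qr}}}\alpha_m\beta_n-\frac{1}{\varphi(qr)}\sum_{\substack{mn\leqslant x\\(mn,qr)=1}}\alpha_m\beta_n\bigg),
\]
with $|\rho_r|,|\mu_q|\leqslant 1$ extracting the two absolute values, $\alpha_m$ supported on $m\sim M$ and $\beta_n$ on $n\sim N$, $MN\asymp x$. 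The terms split into \emph{Type~I} (one of $M,N$ is at least some power of $\LL$-smooth, with $\beta_n=1$ or $\beta_n=\log n$) and \emph{Type~II} (both $M,N$ in an intermediate range, with $\alpha_m,\beta_n$ arbitrary bounded).

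The Type~I contribution I would handle by the Dirichlet hyperbola method outlined in \S\ref{Dirichlet}: after writing $n=a+qrs$ and summing smoothly over the now-unconstrained variable $s$, the residual error can be absorbed using the Siegel--Walfisz property of $1$, provided $QR\leqslant x/\LL^{B}$. The delicate part is the Type~II sum, where I would apply the Linnik \emph{dispersion method}. I square out, expanding
\[
\Bigl|\sum_{m}\alpha_m\bigl(\ldots\bigr)\Bigr|^2\leqslant\bigl(\sum_m|\alpha_m|^2\bigr)\sum_m\bigl|\sum_n\beta_n\bigl(\mathbf 1_{mn\equiv a}-\tfrac{1}{\varphi(qr)}\mathbf 1_{(mn,qr)=1}\bigr)\bigr|^2,
\]
which, after opening the square, reduces to estimating sums
\[
\sum_{q\sim Q}\sum_{r\sim R}\sum_{n_1\sim N}\sum_{n_2\sim N}\beta_{n_1}\ov{\beta_{n_2}}\,S(qr;n_1,n_2,a),
\]
where $S$ counts solutions $m\equiv a\ov{n_i}\pmod{qr}$. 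The diagonal $n_1=n_2$ yields the expected main term; the off-diagonal contribution transforms, after opening the congruences via additive characters modulo $qr$, into incomplete sums of Kloosterman sums, whose averages over the moduli $qr$ must be estimated nontrivially.

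The hard part of the argument—and the source of the restriction $R\leqslant x^{1/10-\varepsilon}$—is this control of the off-diagonal Kloosterman contribution. Two routes are available: either invoke Weil's bound pointwise and exploit cancellation by Poisson summation (sufficient for $R$ very small), or deploy the Deshouillers--Iwaniec spectral estimates for averaged sums of Kloosterman sums via the Kuznetsov trace formula, which give the sharp exponent. One then optimizes the parameters $M,N,Q,R$ against the resulting Type~II bound; the balance $N\ll x^{1/2}\LL^{-B}$ together with $QR\leqslant x\LL^{-B}$ and the dispersion loss forces $R$ below $x^{1/10-\varepsilon}$. The residual small-$R$ portion of the sum, if needed, can be recovered from Theorem~\ref{pointdedepart} applied modulus-by-modulus in~$r$, so assembling the $O(\LL^{6})$ Type~I and Type~II pieces yields the stated bound with $B$ and $C$ depending on $A$ and $\varepsilon$ only.
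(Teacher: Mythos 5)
The paper does not prove this theorem; it is cited verbatim from Bombieri--Friedlander--Iwaniec \cite[th.~9]{BFI1}. The relevant comparison is therefore with the proof of the paper's generalization, Theorem~\ref{central}, whose architecture mirrors the BFI argument.

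Your plan correctly identifies the broad strategy (combinatorial decomposition of $\Lambda$, Type I/II dichotomy, dispersion for the bilinear pieces, Dirichlet hyperbola to exceed level $\tfrac12$), but it mislocates one step and omits an indispensable ingredient. The Dirichlet hyperbola device of \S\ref{Dirichlet} is not a tool for Type~I sums; it is what reduces the range $QR\leqslant x/\LL^B$ to $Q^2R\leqslant x$, and it must be applied \emph{before} the combinatorial decomposition, not to one class of decomposed terms. Once one is inside $Q^2R\leqslant x$, the genuinely delicate case is not Type~II but the complementary range in which the Heath--Brown decomposition produces a product of at most three factors all exceeding $x^{2/7}$, because then no variable fits inside the dispersion window of Lemma~\ref{lemma4}. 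Since the modulus $qr$ can still be as large as $\sqrt x$, this cannot be dispatched by the large sieve or by the Siegel--Walfisz property of $\1$; one must invoke the distribution in arithmetic progressions to large moduli of $\tau_2$ (level $2/3$, Weil) and above all $\tau_3$ (level $21/41>1/2$, Friedlander--Iwaniec/Heath-Brown via Deligne-type estimates) --- these are the paper's Lemmas~\ref{tau2} and~\ref{tau3}, and without them the argument collapses. Relatedly, the bilinear dispersion lemma underlying Theorem~\ref{pointdedepart1} (a weak form of BFI's Theorem~6, the paper's Lemma~\ref{lemma4}) rests on Weil's bound for Kloosterman sums, not on the Deshouillers--Iwaniec spectral machinery you invoke; the deeper Deligne input enters through the $\tau_3$ estimate, which you have left out. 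Finally, the closing suggestion to recover the ``residual small-$R$ portion'' from Theorem~\ref{pointdedepart} is circular: that theorem is the $R=1$ instance of the one you are proving, and it does not control $\sum_q\Delta_f(x;qr,a)$ for a fixed $r>1$.
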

The first aim of this paper is to establish an analogue of this theorem in the context of multiplicative functions $f$ that are essentially periodic on the set of  primes. As mentioned above, we subsequently apply 
this result to various problems, related to joint distribution of pairs of additive functions,  one of them being sampled at a shifted argument.
 \par \smallskip
It is now time to state our central result, providing sufficient conditions to ensure that the statement of Theorem \ref{pointdedepart1} remains  true for multiplicative functions $f$ of the above mentioned type. Since we aim at  a large uniformity over $f$, 
our hypotheses  require specific notations used both in the proofs and in the applications.  
 \subsection{Conventions and notations}\label{conventions} The following notation will be used throughout this paper. 
\par 
$\bullet$ $\gamma$ is Euler's constant.
\par 
 $\bullet $ $\mathbb N$ is the set of non-negative integers, $\mathbb N^*:=\mathbb N\smallsetminus\{0\}$.
 \par 
 $\bullet$ For arbitrary sets $X$, $Y$,  the set of mappings $X\rightarrow Y$ is denoted as $ Y^X$.  
 \par 
  $\bullet$ The lower case letter $x$   denotes a real number $\geqslant 1$ and  $\LL$   is implicitly defined by  $\LL := \log 3x$. In some instances it will be implicitly assumed that $x$ is sufficiently large.
 \par 
 $\bullet$ Throughout this work, we let $\log_k$ denote the $k$-th iterated logarithm.\par 
  $\bullet$  The letter $p$ is reserved to denote a prime number. 
  \par 
  $\bullet$ $P^+(n)$ (resp. $P^-(n)$) denotes the largest (resp. the smallest) prime factor of a positive integer $n$, with the convention that $P^+(1)=1$, $P^-(1)=\infty$.
  \par 
  $\bullet$ The notation (especially in a subscript) $n\sim N$ means that the integer variable $n$ satisfies the inequality $N<n\leqslant 2N$, while we use  $n \simeq N$ to indicate that $n$ belongs to some (usually unspecified) interval included 
 in $]N, 2N]$. 
 
  $\bullet$ The letter $\idc$ denotes a constant depending on various parameters such as $\varepsilon>0$, $K>0$,  etc., and whose value may change at each occurence.

  $\bullet$ $C_0$ denotes an absolute constant, whose effectively computable value may change at each occurrence. It will mainly appear in upper bounds containing the factor $D^{C_0}$.

  $\bullet$ Given a complex sequence $\bal= (\alpha_m)_{m\geqslant 1} $ and a real number $M>1$, we define the $\ell^2$--norm of $(\alpha_m)_{m\sim M}$ by
 $$
 \Vert \bal\Vert_M ^2:= \sum_{m\sim M} \vert \alpha_m\vert^2.
 $$
\par 
  $\bullet$  For integers $\nu\geqslant 0$, $n\geqslant 1$, and primes $p$, we write $p^\nu \Vert n$ to mean that $p^\nu \mid n$ but 
 $p^{\nu +1} \nmid n$. The notation $d\mid n^\infty$ means that $p\mid n$ whenever $p\mid d$.  \par 
 $\bullet$ A {\it strongly multiplicative} (resp. {\it strongly additive}) arithmetical function is a multiplicative (resp.  an additive)  function such that $f(\pnu)=f(p)$ for all $\nu\geqslant 1$.
 \par 
  $\bullet$ $\omega (n)$ is the number of distinct prime factors of the integer $n\geqslant 1$. More generally, given integers $D\geqslant 1$ and $t\in\Z$, we put 
 \begin{equation}
 \label{omDtn}
\omega_{D,t} (n) :  =\sum_{\substack{p\mid n\\ p\md  tD}} 1.
\end{equation}
We also define
\begin{equation}
\label{omnt}
\omega(n,t):=\sum_{\substack{p\,\mid\,n\\ p\leqslant t}}1\qquad (n\geqslant 1, \,t\geqslant 3).
\end{equation} 
$\bullet $ Given a subset $\A$ of $\N^*$, we let $\1_\A:\N^*\to\{0,1\}$ designate the indicator function of $\A$. We simply write $\1$ for $\1_{\N^*}$. For $Y_0 \geqslant 2$, we put \begin{equation}
\label{gY0}
\mathfrak Y_0:=\1_{\{n\geqslant 1:P^-(n)\geqslant Y_0\}},
\end{equation}
 so that $\mathfrak Y_0= \mathbf 1$ whenever $Y_0 \leqslant 2$.
\par 
 $\bullet$ For $k \geqslant 1$, we write $w_k:=\1_{\{n\geqslant 1:\,\omega (n) =k\}}$. In particular, $w_1$
 is the characteristic function of the set of prime powers. The summatory function of $w_k$ is denoted by $\pi_k(x)$.
\par 
  $\bullet$ The M\"obius function is denoted by $\mu$ is, the von Mangoldt function by $\Lambda$, and, given $\alpha>0$, we put
\begin{equation}
\label{balpha}
b_\alpha (n) := \prod_{p \vert n} \Bigl( 1 + \frac{1}{p^\alpha}\Bigr)\qquad (n\geqslant 1).
\end{equation}
\par 
  $\bullet $ For $n\in\N^*$ and $z\in\CC$, we define the Piltz divisor function $n\mapsto\tau_z (n)$ by the   Dirichlet series expansion of $\zeta (s)^z = \sum_{n\geqslant 1} \tau_z (n)/n^{s}$, converging in the half-plane $\Re s >1$. We often simply note $\tau=\tau_2$. 
\par 
  $\bullet$ Given coprime integers $q\geqslant 1$ and $a$,    we define  $g_q (n;a)$ for $n\geqslant 1$ by the formula
 $$
 g_q(n;a):=
 \begin{cases}
 1-1/\varphi (q) &\text{ if } n\md aq,\\
-1/\varphi(q) &\text{ if } n\not\md aq \text{ and } (n,q)=1, \\
0 & \text{ if } (n,q) >1.
 \end{cases}
 $$
This notation will be used to shorten some formulae. For instance, we have 
 $$
 \Delta_f (x;q,a) = \sum_{n\leqslant x} f(n) g_q (n;a)\qquad (x\geqslant 1).
 $$
 
  $\bullet$ Given an an arithmetical function  $f$ and  two positive integers, $b$, $c$, we denote by $f_{b,c}$ the arithmetical  modification of the function $f$ defined by
 \begin{equation}\label{deffbb}
 f_{b,c} (n):=
 \begin{cases}
 f(bn) &\text{ if } (n,c)=1,
 \\
 0 & \text{ otherwise}.
 \end{cases}
 \end{equation}

\subsection{Definitions}
 Our central definition is the following (see \cite[\S1]{DrTo}).
 \begin{definition}
 \label{defM} {\rm [$\FF (D,K)$]} Let $D\in\N^*$ and $K>0$.  We denote by $\FF (D,K)$ the set of those multiplicative functions $f$ verifying the following properties:
 \begin{enumerate}  
\item  There exists a sequence of real numbers
$$
2=\Upsilon_1< \Upsilon_2 <\Upsilon_3 < \cdots
$$
such that
$$
\frac {\Upsilon_{n+1}}{\Upsilon_n} \geqslant 1 + \frac {1}{(\log 2\Upsilon_n)^K}\qquad (n \geqslant 1),
$$
\item 
For any pair of primes $p,\,p'$ with $\Upsilon_n <p, p' \leqslant \Upsilon_{n+1} $ and $p \md{p'}D$,  we have
\begin{equation}\label{important}
f(p) =f (p'),
\end{equation}
\item We have
\begin{equation}\label{f<tau}
\vert f (n) \vert \leqslant \tau_K (n)\qquad (n\geqslant 1).
\end{equation}
 \end{enumerate}
 \end{definition}
\par 
\begin{remark}
\label{fperp}
Some comments are in order regarding hypothesis (ii) above. A similar regularity condition actually appears  in many works dealing with Bombieri--Vinogradov type theorems for multiplicative functions, e.g. in
 Wolke's \cite[Satz~1, cond.~1.1.2]{Wolke73}, where the  $ f(p)$
are assumed to be close, on average,  to a fixed number~$\tau$, or  in the definition of the set  $\FF_D (A)$
given in \cite[p.2384]{DrTo}.
As mentioned earlier, an hypothesis of this type is needed to get  a  Siegel--Walfisz  property for the  restriction $f|_\PP$. 
Now this Siegel--Walfisz hypothesis
 alone is not sufficient to reach high levels of distribution for the function $n\mapsto f(n)$:  see   the instructive example provided in \cite[Prop 1.3]{GS19}.
For our present purposes, hypothesis (ii) in Definition~\ref{defM} turns out to be also crucial in  Subsections \ref{<2/7} and \ref{>2/7}.  Indeed, various combinatorial transformations
 and reductions lead to handle the exponent of distribution in arithmetic progressions  of the sequence $f(p_1) f(p_2) f(p_3)$
where the primes  $p_j$ satisfy   $p_1p_2p_3\leqslant  x$ and  $p_j >x^{2/7}$.  Since  $f|_\PP$ is assumed to be essentially   constant, this amounts to handle the level of distribution of the product  $\Lambda (n_1) \Lambda (n_2) \Lambda (n_3)$,
where the integers  $n_j$ satisfy  $n_1n_2n_3 \leq x$ and  $n_j > x^{2/7}$.
Further combinatorial transformations enable to reduce the problem to studying the exponent of distribution 
of the products $n_1n_2n_3\leq x$ with  $n_j >x^{2/7}$. At that point, we may conclude by appealing to the deep result concerning the distribution of the function  $\tau_3$  in arithmetic progressions---see Lemma \ref{tau3}.
\par 
We note incidentally that the insertion of the sequence $\{\Upsilon_n\}_{n=1}^{\infty}$ will be of crucial importance in some applications, for instance Theorem \ref{thLIL} below.
\end{remark}
\par \smallskip
For $f \in \FF (D,K)$  the  restriction $f|_\PP$ of $f$ to the set $\PP$ of all primes is uniformly bounded since 
 \begin{equation}\label{f(p)leq}
\vert \, f(p)\, \vert \leqslant  K\qquad (p\in\PP).
\end{equation}
\par 
Conditions (i)  and (ii) imply that the function $p\mapsto f(p)$  is equidistributed among the arithmetic progressions 
$\{p\in\PP:p\leqslant x,\,p\md aq\}$, when $(aD,q)=1$ and $q \leqslant \LL^A$ for any fixed $A$. This is a Siegel--Walfisz type assumption, as presented in
Definition  \ref{SW} below. The periodicity of $p\mapsto f(p)$  is crucial in \S \ref{>2/7} where we need results concerning the distribution levels   of  $\tau_2$ and $\tau_3$.
  
Typical examples of elements of   $\FF (1,K)$  for some $K$  are provided by the functions
$ n \mapsto z^{\omega (n)}$ and $\tau_z$, with  $\vert z \vert \leqslant K$. \par 
More elaborate is the case of  the characteristic function $\1_{\mathfrak Q}$ of the set of those integers representable as the
sum of two squares. This function  belongs to $\FF (4, 1)$ but not to $\FF (1,K)$ for any $K$.  This follows from Fermat's theorem for primes in $\mathfrak Q$. However  $\1_{\mathfrak Q}$ 
  is not of Siegel--Walfisz type $\SW(1,K)$ in the sense of Definition \ref{SW} below. Indeed, for instance, given any  integer $q$  divisible by $4$ but not by $3$, we have $\1_{\mathfrak Q} (n) =0$ whenever  $n\md3q$.  \par 
  To circumvent this difficulty, we introduce
a new definition extending that of $\Delta_f (x;q,a)$. Given an arithmetic function  $f$, a real number $x\geqslant 1$ and integers   $q\geqslant 1$ and $D\geqslant 1$, we put
\begin{equation}\label{factorizeq}
q_D:= (q,D^\infty)=\prod_{\substack{p^\nu\|q\\ p\,\mid\,D}}p^\nu, \qquad  q'_D:= \frac q{q_D}=\prod_{\substack{p^\nu\|q\\ p\,\nmid\,D}}p^\nu,
\end{equation}
and, for integer $a\not= 0$, 
\begin{equation}
\begin{aligned}
\Delta_f(x;q,D,a) &:= \sum_{\substack{n\leqslant x\\ n\md aq}} f(n) -\frac{1}{\varphi (q'_D)} \sum_{\substack{n\leqslant x\\ n\md a{
q_D}\\ (n,q'_D)=1} }f(n)\\
 &= \sum_{\substack{n\leqslant x\\ n\md a{q_D}\\ n\md a{q'_D}}} f(n) -\frac{1}{\varphi (q'_D)} \sum_{\substack{n\leqslant x\\ n\md a{q_D} \\ (n,q'_D)=1}}f(n),
\end{aligned}
\label{350}
\end{equation}
since $(q_D,q'_D)=1$. \goodbreak\par 
When $(q,D)=1$, we have 
\begin{equation}\label{Delta=Delta}
 \Delta_f(x;q,D,a) = \Delta_f (x;q,a).
 \end{equation}\par 
When $(D,a)=1$ formula \eqref{350} can be expressed in terms of Dirichlet characters~$\chi$ modulo $q_D$, viz. 
\begin{equation}\label{Dirichletcharacter}
 \Delta_f(x;q,D,a) = \frac1{\varphi (q_D)}\sum_{\chi \mmod{q_D}}\overline{\chi (a)}\, \Delta_{f\chi} (x; q'_D, a),
 \end{equation}
  since the function 
 $$n\mapsto  \frac1{\varphi (q_D)} \sum_{\chi \mmod {q_D}}\overline {\chi (a)}\chi (n)$$ is the characteristic function of the arithmetic progression $\{n:n\md a{q_D}\}.$
 
 \par \smallskip
 
 Another important definition is the following. We recall the definition \eqref{omDtn} for the function $\omega_{D,t}$.
\begin{definition}\label{defPhi} \rm{ [$\chi_{D, \TT , \Phi}$}] Let $D\in\N^*$, $\TT \subset \bigl( \mathbb Z/D\mathbb Z\bigr)^*$ and $\Phi\in\N^\TT$ be a function defined on $\TT $ with non-negative integral values. We denote by
$$\chi_{D, \TT , \Phi}$$
the characteristic function of the set of those integers $n\geqslant 1$, such that
$$
 (\forall t\in \TT)\qquad \omega_{D,t}(n) = \Phi (t).
$$
\end{definition}  
If $\TT =\varnothing$, then  $\chi_{D, \TT , \Phi}=\mathbf 1.$ If $D=1$, $\TT  =\{0\}$ and $\Phi (0)=k$
is a fixed non-negative integer, then  $\chi_{D, \TT , \Phi}$ is the characteristic function $w_k$ of those integers with $k$ distinct prime factors, as
introduced in \S \ref{conventions}. More generally,  $\chi_{D, \TT , \Phi}$
 detects those integers $n$ with prescribed number of 
distinct prime divisors in some fixed reduced classes modulo $D$. The function $\chi_{D, \TT , \Phi}$ is linked to elements of $\FF (D,K)$ via the identity given  in \eqref{integralequation} {\it infra}.
\par \smallskip
We next bring up  the {\it Siegel--Walfisz condition} to express equidistribution of sequences among reduced arithmetic progressions, with modulus coprime to $D$.
\begin{definition} \label{SW} {\rm [Siegel--Walfisz condition $\SW(D,K)$]} Let $D\in\N^*$ and $K>0$ be  given, and let $\bbe = ( \beta_n)\in\CC^{\N^*}$  be a complex sequence. We say that $\bbe$ satisfies 
   the Siegel--Walfisz condition $ \SW(D,K)$, and write $\bbe\in \SW(D,K)$,  if for all $A>0$,  we have
\begin{equation*}
\sum_{\substack{n\sim N, \, (n,d)=1\\ \ n\md\ell k}  } \beta_n -\frac{1}{\varphi (k)} \sum_{\substack{n\sim N\\ (n,dk) =1}} \beta_n\ll \frac{\no\bbe_N  \tau (d)^K \sqrt{N}}{ (\log 2N)^A}, \leqno{(\SW(D,K))}
\end{equation*}
uniformly for $N \geqslant 1$,  $d\geqslant 1$, $k\geqslant 1$,  $(\ell D, k) =1$.
\end{definition}

\subsection{The central results}  First of all, we establish an analogue of  Theorem \ref{pointdedepart1}  concerning multiplicative functions $f$ in the class $\FF (D,K)$.

\begin{theorem} 
\label{central}   
The following statement holds for suitable, absolute $C_0$. Let $A>0$, $\varepsilon>0$, $K>0$.
There exist $B= B( A,\varepsilon,K) $ and $C= C(A,\varepsilon, K)$  such that, uniformly for
\begin{align*}
&D\geqslant 1, \quad  f\in \FF (D,K), \quad x\geqslant 1,\quad R\leqslant x^{1/105-\varepsilon},\\ &QR\leqslant x/\LL^{B}, \quad (a,D)=1,\quad 1\leqslant \vert a \vert  \leqslant \LL^A, \\
\end{align*}
we have
\begin{equation}\label{244}
\sum_{\substack{r\leqslant R\\ (r,a )=1}}\Bigg\vert \sum_{\substack{q\leqslant Q \\ (q,a  )=1}} \Delta_f (x;qr,D,a ) \Bigg\vert \leqslant \frac{ C  D^{C_0}  x}{\LL^A}\cdot
\end{equation}
\end{theorem}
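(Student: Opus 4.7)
The plan is to adapt the framework of \cite{BFI1} (where Theorem~\ref{pointdedepart1} is proved for $f=\1_\PP$) to the multiplicative setting of $\FF(D,K)$.

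\emph{Step 1 (Reduction modulo $D$).} I would first reduce to the case $(q,D)=1$. Using the factorisation $q=q_D q'_D$ of \eqref{factorizeq} and the character expansion \eqref{Dirichletcharacter}, the sum $\Delta_f(x;qr,D,a)$ becomes an average over Dirichlet characters $\chi \mmod{q_D}$ of quantities $\Delta_{f\chi}(x;q'_D r,a)$. Summing over the divisors $q_D \mid D^\infty$ and absorbing the losses into a factor $D^{C_0}$, it suffices to estimate sums of the form in \eqref{244} with $q$ replaced by a modulus coprime to $D$ and with $f$ possibly twisted by a fixed Dirichlet character.

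\emph{Step 2 (Combinatorial decomposition).} Next, I would apply a Heath-Brown-type combinatorial identity, tailored to the multiplicative structure of $f$, to write the restriction of $f$ to $n \leqslant x$ as a sum of $O(\LL^{C_0})$ Dirichlet convolutions $f_1 \ast f_2 \ast \cdots \ast f_k$, with $k$ bounded in terms of $K$, where each $f_j$ is supported in a dyadic range $]M_j,2M_j]$ and dominated by $\tau_K$ (thanks to \eqref{f<tau}). Hypothesis (ii) of Definition~\ref{defM}, combined with the spacing condition (i), guarantees that, on each dyadic piece, $f|_\PP$ is essentially constant on reduced classes modulo $D$; this is what produces the Siegel--Walfisz property $\SW(D,K)$ for the relevant building blocks.

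\emph{Step 3 (Type I/II/III dissection).} Depending on the profile of dyadic lengths $(M_1,\dots,M_k)$, each convolution falls into one of three regimes, each exploiting the bilinear structure $qr$ of the modulus. The Type~I regime (some factor of length $\leqslant \sqrt{x/(qr)}$) is handled by the Bombieri--Vinogradov theorem and Dirichlet's hyperbola trick, as sketched in the introduction (\S\ref{Dirichlet}), allowing moduli near $x$. The Type~II regime (two factors of comparable size near $\sqrt{x}$) is handled by Linnik's dispersion method, the large sieve inequality, and the $\SW(D,K)$ property obtained in Step~2. The Type~III regime (three factors of lengths $\asymp x^{1/3}$, each exceeding $x^{2/7}$) is handled, as foreshadowed in Remark~\ref{fperp}, by replacing $f(p_1)f(p_2)f(p_3)$ by a constant (using the near-periodicity of $f|_\PP$) and invoking the deep distribution result for $\tau_3$ in arithmetic progressions (Lemma~\ref{tau3}).

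\emph{Main obstacle.} The Type~III piece is the critical difficulty: the threshold $x^{2/7}$ needed to apply the $\tau_3$ result, combined with the well-factorability of $qr$ that makes room for larger $QR$, is what ultimately dictates the exponent $R \leqslant x^{1/105-\varepsilon}$. The delicate point is to design the combinatorial decomposition so that \emph{every} surviving piece falls into one of the three regimes above (nothing is left in a forbidden intermediate range), while keeping the cumulative loss absorbed by $\LL^A$ and the $D$-dependence by $D^{C_0}$. This bookkeeping, rather than any single analytic input, is the main technical challenge.
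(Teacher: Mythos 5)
Your proposal identifies the right main ingredients---$D$-modulus reduction via Dirichlet characters, a combinatorial splitting, the dispersion method fed by a Siegel--Walfisz hypothesis, and the $\tau_3$ distribution theorem for the ``all-large'' regime---and the overall architecture is recognisably the paper's. But your Step~2, as stated, would not go through. There is no Heath-Brown-type identity that expands a general multiplicative $f$ dominated by $\tau_K$ into $O(\LL^{C_0})$ dyadic Dirichlet convolutions; Heath-Brown's identity is specific to $\Lambda$ and $\mu$. What the paper does instead is exploit multiplicativity directly at the level of the integer variable: write $n=\nu_n p_1\cdots p_{J_n}$ with $\nu_n$ its $Y_0$-smooth part ($Y_0=\exp(\LL^{1/4})$) and $p_1<\cdots<p_{J_n}$ its large prime factors, use $f(n)=f(\nu_n)\prod_j f(p_j^{\alpha_j})$, discard a negligible exceptional set, and then sort by the size of $p_1$. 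When $p_1\leqslant x^{2/7}$ the bilinear form $\alpha_m\beta_{p_1}$ (with $\beta$ the indicator of primes in a short interval, satisfying $\SW(D,K)$) is handled by the dispersion results, Lemma~\ref{lemma6}. Only in the complementary regime $p_1>x^{2/7}$ --- where $n$ has at most three large primes --- does Heath-Brown's identity enter, applied to each $\Lambda(n_i)$ \emph{after} the near-periodicity of $f|_\PP$ has been used to freeze $f(p_i)=z_{t_i}$; this is what eventually delivers sums governed by $\tau_1,\tau_2,\tau_3$ and hence Lemmas~\ref{tau1}--\ref{tau3}.

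Two further corrections. The bound $R\leqslant x^{1/105-\varepsilon}$ is not dictated by the $\tau_3$ threshold: with $Q^2R\leqslant x$ and $R\leqslant x^{1/105}$ one has $S=QR\leqslant x^{53/105}$, which sits comfortably below $x^{21/41}$. The exponent $1/105$ is instead forced by the ranges of the dispersion lemmas that must together cover $\exp\{(\log X)^{1/4}\}\leqslant N\leqslant X^{2/7}$: Lemma~\ref{lemma4} needs $RX^\varepsilon\leqslant N$, hence $R\leqslant X^{1/105-\varepsilon}$ to reach down to $N=X^{1/105}$, while Lemma~\ref{lemma5} requires $N\leqslant (QR)^{-11/12}X^{17/36-\varepsilon}$, which with $QR\leqslant X^{53/105}$ gives precisely $N\leqslant X^{1/105-\varepsilon}$; the crossover $N=X^{1/105}$ is exactly where the two match. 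Finally, Dirichlet's hyperbola method is not a ``Type~I'' ingredient: the paper first proves the estimate under the extra restriction $Q^2R\leqslant x$ (Propositions~\ref{firststep} and \ref{secondstep}), and then, in \S\ref{Dirichlet}, removes that restriction by writing $n=a+uqr$ and exchanging the roles of $q$ and the complementary variable; the delicate point there is showing the resulting main terms cancel exactly ($\gU^*=0$), not any Type~I estimate.
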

We  deduce the following two corollaries.
\begin{corollary} \label{easy1} The following statement holds for suitable, absolute $C_0$. Let $A>0$, $\varepsilon>0$, $K>0$.
There exist $B= B( A,\varepsilon, K) $ and $C= C(A,\varepsilon, K)$   such that, uniformly for
\begin{align*}
&D\geqslant 1,\,  f\in \FF (D,K),  \, x\geqslant 1,\\ 
 & R\leqslant x^{1/105-\varepsilon},\, QR\leqslant x/\LL^{B},\  \vert \xi_r\vert \leqslant \tau_K (r)\ (1\leqslant r\leqslant R), \\ &(a,D)=1,\, 1 \leqslant \vert a \vert  \leqslant \LL^A,
\end{align*}
we have\begin{equation}\label{244ter}
\Bigg\vert  \sum_{\substack{r\leqslant R\\ (r,a )=1}}\xi_r\sum_{\substack{q\leqslant Q \\ (q,a )=1}} \Delta_f (x;qr,D,a)  \, \Bigg\vert \leqslant  \frac{C\,D^{C_0}\,x}{\LL ^A}\cdot
\end{equation}
\end{corollary}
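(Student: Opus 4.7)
The natural idea is to deduce the weighted bound \eqref{244ter} from the unweighted bound \eqref{244} of Theorem~\ref{central} via a single application of Cauchy--Schwarz, the required auxiliary input being a pointwise Brun--Titchmarsh type upper bound on
$$
T_r := \sum_{\substack{q\leqslant Q\\ (q,a)=1}}\Delta_f(x;qr,D,a).
$$
Writing $\xi_r$ in terms of its real and imaginary parts, each bounded in absolute value by $\tau_K(r)$, it suffices to majorize $\sum_{r\leqslant R,\,(r,a)=1}\tau_K(r)|T_r|$.

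First I would establish the uniform pointwise estimate
\begin{equation*}
|\Delta_f(x;qr,D,a)|\ll \frac{x}{qr}\,\LL^{C_1},\qquad C_1=C_1(K),
\end{equation*}
valid throughout the admissible range $qr\leqslant QR\leqslant x/\LL^B\leqslant x^{1-\delta}$. This is obtained from \eqref{f<tau} together with the classical Shiu--Nair--Tenenbaum bound $\sum_{n\leqslant x,\,n\equiv a(m)}\tau_K(n)\ll (x/m)\LL^{K-1}\prod_{p\mid m}(1-1/p)^{-(K-1)}$, applied to each of the two sums in \eqref{350}, together with the trivial lower bound $\varphi(m)\gg m/\LL^{C_1}$ to handle the denominator in the second term. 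Summing over $q\leqslant Q$ then yields the companion bound
$$
|T_r|\leqslant C_2\,\frac{x}{r}\,\LL^{C_2}\qquad (1\leqslant r\leqslant R),
$$
with $C_2=C_2(K)$.

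Next I would apply Cauchy--Schwarz in the form
\begin{equation*}
\sum_{r\leqslant R}\tau_K(r)\,|T_r|\leqslant \Bigl(\sum_{r\leqslant R}\tau_K(r)^2\,|T_r|\Bigr)^{1/2}\Bigl(\sum_{r\leqslant R}|T_r|\Bigr)^{1/2}.
\end{equation*}
For the first parenthesis I would combine the pointwise bound $|T_r|\leqslant C_2(x/r)\LL^{C_2}$ with the classical mean value $\sum_{r\leqslant R}\tau_K(r)^2/r\ll \LL^{K^2}$, which yields a majorant $\ll x\,\LL^{C_2+K^2}$. For the second parenthesis I would invoke Theorem~\ref{central} with parameter $A':=2A+C_2+K^2$ in place of~$A$, giving $\sum_{r\leqslant R}|T_r|\leqslant CD^{C_0}x/\LL^{A'}$. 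Multiplying the two factors and taking the square root produces the bound $\ll D^{C_0/2}\,x/\LL^{A}$, which is exactly \eqref{244ter} after relabeling the absolute constant $C_0$.

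The only nontrivial ingredient is the pointwise estimate on $\Delta_f(x;qr,D,a)$, for which one must be careful to invoke Shiu's theorem uniformly across moduli up to $x/\LL^B$ and to correctly handle the splitting $(qr)=(qr)_D(qr)'_D$ intrinsic to definition~\eqref{350}. Once this is in place the remainder of the argument is entirely formal, as the Cauchy--Schwarz inequality elegantly exchanges the outer $\tau_K(r)$ weight for a logarithmic loss that Theorem~\ref{central} can absorb by choosing its $A'$ parameter large enough.
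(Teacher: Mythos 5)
Your overall strategy (Cauchy--Schwarz, trivial bound on one factor, Theorem~\ref{central} on the other) is exactly the one the paper uses. However, the intermediate pointwise estimate you rely on is not correct, and the chain of inclusions you quote to justify it is false: for any fixed $B$ and $\delta>0$ one has $x/\LL^B > x^{1-\delta}$ once $x$ is large, so the admissible range $qr\leqslant QR\leqslant x/\LL^B$ is \emph{not} contained in the Shiu range $qr\leqslant x^{1-\delta}$.

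This is a genuine gap. When $qr$ is of size $x/\LL^B$, the progression $\{n\leqslant x: n\equiv a\ (\bmod\,qr)\}$ has only $O(\LL^B)$ elements, and a single highly composite $n$ in that progression can have $\tau_K(n)\geqslant \exp(c\log n/\log_2 n)$, which is larger than any fixed power of $\LL$. Hence $\bigl|\sum_{n\leqslant x,\,n\equiv a(qr)}f(n)\bigr|$ --- and therefore $|\Delta_f(x;qr,D,a)|$ --- need not be $\ll(x/(qr))\LL^{C_1}$ in that range; the main-term piece is indeed $\ll(x/(qr))\LL^{C_1}$ by $\varphi(m)\gg m/\log_2 m$, but the congruence-sum piece is not. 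Consequently your pointwise bound $|T_r|\ll(x/r)\LL^{C_2}$ is not established by the argument given. The correct trivial bound for $\Sigma_1:=\sum_r\tau_K(r)^2|T_r|$ must be obtained by \emph{summing first}: interchange the $s=qr$ and $n$ summations and bound the resulting quantity $\sum_{n\leqslant x,\,n\neq a}\tau_K(n)\,\tau_{K'}(|n-a|)$ by H\"older and Shiu--Nair--Tenenbaum, which is precisely what the paper's Lemma~\ref{exceptional}, estimate \eqref{G<}, does. That device is insensitive to the upper limit $S=QR$ being as large as $x/\LL^B$, because it never needs a per-modulus divisor bound. Once that substitution is made, the rest of your argument (Cauchy--Schwarz in the split $\tau_K(r)|T_r|^{1/2}\cdot|T_r|^{1/2}$, the mean value $\sum_{r\leqslant R}\tau_K(r)^2/r\ll\LL^{K^2}$, and an application of Theorem~\ref{central} with inflated exponent $A'$) is correct and identical to the paper's.
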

The second corollary deals with the function $\chi_{D, \TT , \Phi}$ from Definition \ref{defPhi}. 
\begin{corollary}\label{easy2}  The following statement holds for suitable, absolute $C_0$.  Let $A>0$, $\varepsilon>0$, $K>0$.
There exist $B= B( A,\varepsilon, K) $ and $C= C(A,\varepsilon, K)$   such that, uniformly for
\begin{equation*}
D\geqslant 1,\,  \TT  \subset (\mathbb Z/D \mathbb Z)^*, \, \Phi \in \N^\TT,  \,  (a,D)=1
 \end{equation*}
 and
 \begin{equation}
 \label{COND10}
\begin{aligned}
  &x\geqslant 1, \  R\leqslant x^{1/105-\varepsilon}, \ QR\leqslant x/\LL^{B},\\  &\vert \xi_r\vert \leqslant \tau_K (r)\ (1\leqslant r\leqslant R),\ 1 \leqslant \vert a \vert  \leqslant \LL^A,
\end{aligned}
\end{equation}
we have
\begin{equation} \label{244ter1}
\Bigg\vert  \sum_{\substack{r\leqslant R\\ (r,a)=1}}\xi_r\sum_{\substack{q\leqslant Q \\ (q,a )=1}} \Delta_{\chi_{D, \TT , \Phi}}(x;qr, D,a  ) \Bigg\vert \leqslant  \frac{CD^{C_0} x}{\LL ^A}\cdot
\end{equation}
In particular,   for all  $A>0,\,\varepsilon >0$ there exist $B=B(A,\varepsilon)$ and $C= C( A,\varepsilon)$, such that, under conditions \eqref{COND10} and uniformly for $k\geqslant 1$, we have
\begin{equation}
\label{weakerform} 
\Bigg\vert  \sum_{\substack{r\leqslant R\\ (r,a )=1}}\xi_r\sum_{\substack{q\leqslant Q \\ (q,a  )=1}}  \Delta_{w_k}(x;qr,a  )   \, \Bigg\vert \leqslant  \frac{Cx}{\LL ^A}\cdot
\end{equation}
\end{corollary}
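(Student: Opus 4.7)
The plan is to reduce the statement to the already-established Corollary~\ref{easy1} by expanding $\chi_{D,\TT,\Phi}$ as a Fourier integral of multiplicative functions lying in $\FF(D,1)$ uniformly. For integers $m, k \geq 0$ we have the elementary identity $\mathbf{1}_{m=k} = \int_0^1 e(\theta(m-k))\,d\theta$. Applying this with $m = \omega_{D,t}(n)$ and $k = \Phi(t)$ for each $t\in\TT$ and taking the product yields the integral representation
\begin{equation*}
\chi_{D,\TT,\Phi}(n) = \int_{[0,1]^{\TT}} f_{\bst}(n)\,e\Bigl(-\sum_{t\in\TT}\theta_t\Phi(t)\Bigr)d\bst,
\qquad f_{\bst}(n) := e\Bigl(\sum_{t\in\TT}\theta_t\,\omega_{D,t}(n)\Bigr),
\end{equation*}
where $\bst = (\theta_t)_{t\in\TT}$. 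The key point is then to check that $f_{\bst}\in\FF(D,1)$ uniformly for $\bst\in[0,1]^\TT$. This is essentially immediate: each $\omega_{D,t}$ is strongly additive, so $f_{\bst}$ is strongly multiplicative; one has $|f_{\bst}(n)| = 1 \leq \tau_1(n)$; and since $f_{\bst}(p)$ depends only on the class of $p$ modulo $D$ (it equals $e(\theta_t)$ when $p\equiv t \pmod D$ with $t\in\TT$, and $1$ otherwise), condition (ii) of Definition~\ref{defM} is trivially verified for any once-and-for-all fixed admissible sequence $\{\Upsilon_n\}$ satisfying~(i) with exponent~$K=1$.

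Next, by linearity of $f \mapsto \Delta_f(x;qr,D,a)$, the integral representation yields
\begin{equation*}
\sum_{\substack{r\leq R\\(r,a)=1}}\xi_r \sum_{\substack{q\leq Q\\(q,a)=1}} \Delta_{\chi_{D,\TT,\Phi}}(x;qr,D,a) = \int_{[0,1]^\TT} e\Bigl(-\sum_t \theta_t\Phi(t)\Bigr) S(\bst)\,d\bst,
\end{equation*}
where $S(\bst)$ denotes the corresponding sum with $f_{\bst}$ in place of $\chi_{D,\TT,\Phi}$. Setting $K':=\max(K,1)$, we have $f_{\bst}\in\FF(D,1)\subset\FF(D,K')$ and $|\xi_r|\leq\tau_K(r)\leq\tau_{K'}(r)$. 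Invoking Corollary~\ref{easy1} with parameters $A$, $\varepsilon$, $K'$ then furnishes constants $B=B(A,\varepsilon,K)$ and $C=C(A,\varepsilon,K)$ with the uniform-in-$\bst$ bound $|S(\bst)| \leq C D^{C_0} x/\LL^A$ valid under condition~\eqref{COND10}. Integrating over the unit cube (of measure~$1$) and using $|e(-\sum \theta_t\Phi(t))|=1$ yields~\eqref{244ter1}.

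Finally, \eqref{weakerform} is the specialization $D=1$, $\TT=\{0\}$, $\Phi(0)=k$, for which $\omega_{1,0}=\omega$, $\chi_{1,\{0\},\Phi}=w_k$, and by~\eqref{Delta=Delta} we have $\Delta_{w_k}(x;qr,1,a)=\Delta_{w_k}(x;qr,a)$; the factor $D^{C_0}=1$ then gives the stated form with constants depending only on $A$ and $\varepsilon$. The only non-cosmetic step in this whole proof is the verification that the $f_{\bst}$ lie in $\FF(D,1)$ \emph{uniformly} in $\bst$, so that the constants delivered by Corollary~\ref{easy1} survive the integration; the integral representation itself is completely elementary, and no further analytic work is required.
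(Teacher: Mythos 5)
Your proof is correct and is essentially the paper's own argument: writing $z_t=e(\theta_t)$, your Fourier-integral representation over $[0,1]^\TT$ is exactly the Cauchy contour integral \eqref{integralequation} the paper uses, your $f_{\bst}$ is the paper's $f_{\bsz}$, and the rest (uniform membership in $\FF(D,1)$, interchange of sum and integral, appeal to Corollary~\ref{easy1}, trivial integration) matches the paper step for step.
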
 
The structure of the upper bounds in \eqref{244}, \eqref{244ter} and \eqref{244ter1} allows selecting  $D=D(x)$ tending to infinity with $x$, but not faster than a bounded power of $\log x.$
For $k=1$, the upper bound in \eqref{weakerform} is a weaker form of Theorem \ref{pointdedepart1}.
Note that this inequality  is actually useless whenever $
k/\log_2 x\to\infty$, since, for such $k$, the  level set $\{ n\leqslant x: \omega (n)=k\} $ is so thin---see for instance \cite[pp. 311-312]{TenlivreUS}--- that the stated upper bound does not enable to recover those of \eqref{BV} or  \eqref{but}.

\subsection{Back to the original question}  
We now address the problem of the average distribution of a function $f$ in $\FF (D, K)$ as stated in \eqref{BV0}. In \S \ref{proofoftheoremabs}, taking advantage of the combinatorial preparation leading to Proposition \ref{firststep}, we  provide
a condensed proof of the following theorem, which may be seen as a  variant of a result of Wolke \cite[Satz 1]{Wolke73}. 
  \begin{theorem} 
 \label{centralabsolutevalue}   
The following statement holds for suitable, absolute $C_0$. Let $A>0$,  $K>0$.
There exist $B= B( A,K) $ and $C= C(A, K)$  such that, uniformly for
\begin{align*}
 D\geqslant 1, \quad  f\in \FF (D,K), \quad x\geqslant 1,\quad Q\leqslant \sqrt{x}/\LL^{B},   
\end{align*}
we have
\begin{equation*}
\sum_{  q\leqslant Q} \max_{(a, qD)=1} \ \bigl\vert \Delta_f (x;q  ,D,a ) \bigr\vert   \leqslant \frac{ C  D^{C_0}  x}{\LL^A}\cdot
\end{equation*}
 \end{theorem}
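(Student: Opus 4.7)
The plan is to adapt the classical Bombieri--Vinogradov argument (Siegel--Walfisz combined with a bilinear large-sieve estimate) to the class $\FF(D,K)$, invoking the combinatorial preparation of Proposition \ref{firststep} as a black box. Since the level $Q$ is capped by $\sqrt{x}/\LL^B$, we stay below the square-root barrier and none of the deeper inputs (the dispersion method, distribution of $\tau_3$ in arithmetic progressions) needed for Theorem \ref{central} intervene here; this is why the proof can be condensed.

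The first step is to reduce the modified discrepancy $\Delta_f(x;q,D,a)$ to the standard shape. Since $(a,qD)=1$, the character expansion \eqref{Dirichletcharacter} gives
$$
\Delta_f(x;q,D,a)=\frac1{\varphi(q_D)}\sum_{\chi\mmod{q_D}}\overline{\chi(a)}\,\Delta_{f\chi}(x;q'_D,a).
$$
Bringing the max over $a$ inside by the triangle inequality and splitting $q=q_Dq'_D$ with $(q'_D,D)=1$, the summation over $q_D\mid D^\infty$ contributes only a factor $D^{C_0}$, via $\sum_{q_D\mid D^\infty}\tau(q_D)^{O(1)}/\varphi(q_D)\ll D^{C_0}$. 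Thus it suffices to prove, uniformly for each Dirichlet character $\chi$ of conductor dividing a power of $D$ bounded by $Q$,
$$
\sum_{\substack{q'\leq Q\\(q',D)=1}}\max_{(a,q')=1}\bigl|\Delta_{f\chi}(x;q',a)\bigr|\ll\frac{x}{\LL^A}\cdot
$$

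I would then apply Proposition \ref{firststep} to decompose the inner sum defining $\Delta_{f\chi}$ into $O(\LL^{O(1)})$ bilinear pieces $\sum_{mn}\alpha_m\beta_n$ of two types. Type~I pieces (one of the sequences is essentially $\1$ or $\mu$ on a short range) are handled by Siegel--Walfisz: condition (ii) of Definition \ref{defM} together with the classical Siegel--Walfisz theorem for Dirichlet $L$-functions yields $f|_\PP\in\SW(D,\idc)$, which after Cauchy--Schwarz against the divisor bound $|f|\leq\tau_K$ delivers the required saving $\LL^{-A}$ throughout the range $q'\leq\sqrt x/\LL^B$. Type~II pieces (both sequences supported on dyadic ranges $M,N$ bounded away from $1$ and $x$, with coefficients dominated by $\tau_{K'}$ for some $K'=K'(K)$) are handled by the standard bilinear large-sieve estimate of Bombieri--Vinogradov, which, under $Q\leq\sqrt x/\LL^B$ and for the middle-range $M,N$ produced by the decomposition, yields the same saving.

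The main obstacle I anticipate is not any single estimate but the bookkeeping of the $D$-dependence. The character expansion inflates the number of bilinear pieces by $\varphi(q_D)$ and demands uniformity in $\chi$ of the Siegel--Walfisz input (up to the usual ineffective Siegel-zero contribution); one must verify that this inflation is absorbed by the $1/\varphi(q_D)$ prefactor and by the summability of $\tau(q_D)^{O(1)}/\varphi(q_D)$ over divisors $q_D\mid D^\infty$. Controlling the cumulative effect so that the final bound features only the polynomial factor $D^{C_0}$, as stated in the theorem, is the one genuinely delicate point; everything else is a direct transcription of the classical Bombieri--Vinogradov argument.
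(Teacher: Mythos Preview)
Your overall plan---reduce to $(q,D)=1$ via characters, feed the combinatorial preparation into a bilinear large-sieve argument---is the right one and matches the paper's intent. But two points need correction.

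First, a mislabeling: Proposition~\ref{firststep} is a \emph{bound}, not a decomposition. What you actually need is the combinatorial preparation carried out in \S\ref{preparation}--\ref{nontypical}: factor $n=\nu_n p_1p_2\cdots$ with $P^+(\nu_n)\leqslant Y_0$ and $Y_0<p_1<p_2<\cdots$, discard the exceptional sets $\EE_0,\EE_1$, and localise $p_1\in\II_{k,\ell}$, $p_1\equiv t\pmod D$. This splits the problem into $O(D\LL^{O(1)})$ pieces $W_{k,\ell}(Q)$, each of which is already bilinear.

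Second, and more substantively, your Type~I/Type~II dichotomy does not match the structure that this preparation actually produces, and your proposed treatment of ``Type~I'' would not work. The paper's dichotomy is whether a \emph{second} large prime $p_2>Y_{k+1}$ is present. If it is, both factors $p_1$ and $\nu_n p_2p_3\cdots$ exceed $Y_0=\exp(\LL^{1/4})$, so the convolution principle~\eqref{Largesieve} applies directly with $N=Y_k$; this is your ``Type~II'' step and it is fine. If $p_2$ is absent, then $n=\nu_n p_1$ with $\nu_n\leqslant Z_0$ small and $p_1$ nearly of size $x$. This is \emph{not} a Type~I sum in the classical sense: the short variable $\nu_n$ carries the weight $f(\nu_n)$, which is not smooth, and the long variable is a prime, so ``Siegel--Walfisz alone'' gives nothing for $q$ up to $\sqrt{x}$. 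The paper's point here is that, by construction of $\II_{k,\ell}$ and condition~(ii) of Definition~\ref{defM}, $f(p_1)$ is \emph{constant} on each piece; hence the inner sum becomes $z_t\sum_{p_1}$ and one may invoke a Bombieri--Vinogradov theorem for convolutions $\bal*\Lambda$ with $\bal$ supported on short integers (the paper cites \cite[Theorem~8.4]{PanPan}). That appeal to the periodicity of $f|_\PP$ is the mechanism you are missing.

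On the $D$-reduction: your character expansion is correct, but the claim that $\sum_{q_D\mid D^\infty}\tau(q_D)^{O(1)}/\varphi(q_D)\ll D^{C_0}$ is not by itself enough, because the inner bound for $f\chi\in\FF(Dq_D,K)$ carries a factor $(Dq_D)^{C_0}$, and after the $\varphi(q_D)$-fold character sum cancels the $1/\varphi(q_D)$, the remaining $\sum_{q_D}q_D^{C_0}$ diverges. You must first truncate $q_D\leqslant T:=\LL^{C}$ by Rankin (exactly as in the proof of Proposition~\ref{secondstep}), so that the surviving $q_D^{C_0}$ and the count of $q_D$ are both powers of $\LL$, absorbed by choosing the inner exponent $B$ large. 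You flagged this as the delicate point; the fix is precisely the truncation.
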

We note here that, at the cost of mild modifications in the arguments, this statement could be used in place of Corollary \ref{easy1} for the proofs of Theorems \ref{EW}, \ref{thEK} and \ref{thLIL} stated {\it infra}. Indeed, in all three instances, one may manage to use a level of distribution $<\tfrac12$ (actually any  strictly positive value suffices for Theorems \ref{EW} and \ref{thEK} while the proof of Theorem \ref{thLIL} requires a level arbitrary close to $\tfrac12$) and appeal to  Cauchy-Schwarz or Hölder's inequality in order to deal with weights bounded above by some function $\tau_K$. However, applying Corollary \ref{easy1} turns out to be simpler and  more straightforward.

\subsection{Comments}
 Fouvry and Radziwi\l\l  \, \cite[cor. 1.3]{FouRadzi1}\, proved that, for any multiplicative function  $f$, satisfying, instead of  condition \eqref{important},   the  more general
 assumption that $p \mapsto f(p)$ is of Siegel--Walfisz type (see Definition \ref{SW} above),
  we have the bound
\begin{equation}\label{weak}
\sum_{\substack{Q< q\leqslant 2Q\\ (q,a)=1}} \bigl\vert \Delta_f (x;q,a) \bigr\vert \ll_{a, \varepsilon} \frac x{\LL^{1-\varepsilon }},
\end{equation}
for any integer $a\not= 0$ and any $\varepsilon >0$, provided $Q \leqslant  x^{17/33-\varepsilon}$. At first sight this result may 
seem  deeper  than \eqref{244} since the error terms $\Delta_f$ are summed in modulus.  However the upper  bound  in \eqref{weak}  appears to be too weak
for the applications described in the next section.


\section{Applications}\label{sectapps}
We list here, among many possible ones, four applications of our main results. The first is a quick, natural proof of theorems 1.3, 1.4 and 1.6 of \cite{DrTo} and their corollaries.
\par
Let us start with sketching the proof of a weaker form of \cite[th. 1.3]{DrTo} in this framework, namely show that, for all $A>0$, $N\geqslant 1$,   and uniformly for $|z|\leqslant A$, $x\geqslant 2$, $1\leqslant |h|\leqslant \LL^A$, we have
\begin{equation}
\label{DT1.3}
\sum_{|h|<n\leqslant x}\tau_z(n)\tau(n+h)=x(\log x)^z\Bigg\{\sum_{0\leqslant j\leqslant N}\frac{\lambda_{h,j}(z)}{(\log x)^j}+O\bigg(\frac 1{(\log x)^{N+1}}\bigg)\Bigg\},
\end{equation}
where the $\lambda_{h,j}$ are entire functions. Note that the error term above is actually slightly more precise than stated in \cite{DrTo}.
\par
Let $S$ denote the left-hand side of \eqref{DT1.3}. Using the symmetry of the divisors of $n+h$ around $\sqrt{n+h}$, we may write, for any $c>\tfrac12$,
\begin{align*}
S&=2\sum_{|h|<n\leqslant x}\tau_z(n)\sum_{\substack{d\,\mid\, n+h\\d< \sqrt{n+h}}}1+O\big(x^c\big)\\
&=2\sum_{t\,\mid\,|h|}\sum_{\substack{m\leqslant \sqrt{x+h}/t\\(m,h/t)=1}}\sum_{\substack{\max\{|h|/t,m^2t-h/t\}<n\leqslant x/t\\ n\md{-h/t}m}}\tau_z(nt)  +O\big(x^c\big).
\end{align*}
From this point on, the strategy is clear and so we omit  the computational details: (i) show that if $m\leqslant M:=x^{1/5}$, say, then one can ignore the lower constraint in the inner $n$-sum; (ii) split the $m$-sum into intervals $]M\Delta^j,M\Delta^{j+1}]$ with $\Delta:=1+1/\LL^C$ and $C$ sufficiently large in terms of~$N$; (iii) show that, to within the required accuracy, one can replace, in the $n$-sum, the lower limit $m^2t$  by $M^2\Delta^{2j}t$; (iv) apply Theorem \ref{central} with $R=1$, $Q=M\Delta^j$ and $Q=M\Delta^{j+1}$; (v) apply the Selberg-Delange method as displayed in \cite[ch. II.5]{TenlivreUS} to sum $\tau_z(nt)$ over integers coprime to $m$ (see \cite[(5.36) p. 287]{TenlivreUS}); (vi) rearrange  the main terms by expanding the various powers of $\log (x/t)$ and $\log (M\Delta^j)$. 
\par
Observe that the assumptions of \cite[th. 1.3]{DrTo} are more flexible than those of the above statement, inasmuch they  assert that   \eqref{DT1.3} holds in the larger domain 
$1 \leq \vert h \vert \leq x^\delta$ for some small constant $\delta >0$. Such uniformity may also be derived from our approach. We now describe which modifications should be incorporated in order to reach this goal. The key-point concerns  Lemma 
\ref{lemma4} below. Following the original proof of \cite[Theorem 6]{BFI1} and tracking the dependency upon the congruence class $a$ (particularly in applying bounds for sums of Kloosterman sums), it can be seen that the estimate \eqref{423} remains true if the list of conditions \eqref{X0<N<X13} 
is replaced 
by
$$
D, M, N, Q, R \geq 1,\quad \vert a\vert ^\kappa R X^\varepsilon \leq N \leq \vert a\vert ^{-\kappa}X^{-\varepsilon} (X/R)^{1/3},\quad Q^2 R\leq X,
$$
where $\kappa$ is a suitable absolute constant. Under the hypothesis $1\leq \vert a \vert \leq X^\delta$, where $\delta$ is a small positive constant, the effect of the factors 
$\vert a \vert^{\pm\kappa}$ in the above conditions is absorbed by other terms provided some exponents in the sequel of the proof of Theorem \ref{central} are slighted modified. In conclusion, we claim that,  provided  condition $R \leq x^{1/105 -\varepsilon}$ is replaced by $R \leq x^{1/106 -\varepsilon}$, Theorem \ref{central} still holds true if hypothesis $1 \leqslant \vert a \vert \leqslant \LL^A$ is relaxed to $1\leqslant  \vert a \vert \leqslant x^\delta$. 
\par  \medskip
Our second application is  a theorem of Erd\H os--Wintner type conditional to the level set of an additive function at shifted argument. As a typical illustration, we prove the following statement, corresponding to the case when the additive function employed to define the support is the number of prime factors function, $\omega$. \par 
Let us recall that  the classical Erd\H os--Wintner theorem (see, \eg, \cite[th. III.4.1]{TenlivreUS}) states that the convergence of the following three series  is  necessary and sufficient for a real, additive function $f$ to possess a limiting distribution $F$:
\begin{equation}
\label{CNS-EW}
\sum_{|f(p)|>1}\frac1p,\qquad \sum_{|f(p)|\leqslant 1}\frac{f(p)^2}p,\qquad \sum_{|f(p)|\leqslant 1}\frac{f(p)}p\cdot
\end{equation}
When this is the case, the characteristic function of $F$ is given by the formula
\begin{equation*}
\varphi_F(\vartheta):=\int_\R\e^{i\vartheta t}\d F(t)=\prod_{p}\Big(1-\frac1p\Big)\sum_{\nu\geqslant 0}\frac{\e^{i\vartheta f(\pnu)}}{\pnu}\qquad (\vartheta\in\R),
\end{equation*}
where the convergence of the infinite product is a consequence of that of the three series \eqref{CNS-EW}.\par 
 We shall consider the family of distribution functions $F_r$ $(r>0)$ with characteristic functions
\begin{equation}
\label{CarFr}
\varphi_{F_r}(\vartheta):=\prod_{p}\Bigg(1+\Big(1-\frac1p\Big)\sum_{\nu\geqslant 1}\frac{e^{i\vartheta f(\pnu)}-1}{p^{\nu-1}(p-1+r)}\Bigg),
\end{equation}
so that $F_1=F$. From a classical theorem of Lévy (see, e.g. \cite[th. III.2.7]{TenlivreUS}, it follows that $F_r$ is continuous if, and only if 
\begin{equation}
\label{CNScont}
\sum_{f(p)\neq0}1/p=\infty.
\end{equation}
and that  the set of discontinuities is otherwise included in $f(\N^*)$. We henceforth define a common continuity set $\C(f):=\R$ if \eqref{CNScont} holds and $\C(f):=\R\smallsetminus f(\N^*)$ otherwise.
\begin{theorem}
\label{EW} Let $f$ be a real, additive function satisfying \eqref{CNS-EW}. Then, uniformly for $$0\leqslant r:=(k-1)/\log_2x\ll1, $$  we have
\begin{equation}
\label{fEW}
\frac1{\pi_k(x)}\sum_{\substack{1<n\leqslant x\\\omega(n-1)=k\\f(n)\leqslant t}}1=F_r(t)+o(1) \qquad \big(t\in\C(f),\,x\to\infty\big).
\end{equation}
\end{theorem}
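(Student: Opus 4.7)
By Lévy's continuity theorem, it suffices to establish, for each fixed $\vartheta\in\R$, the pointwise convergence
\[
\Phi_x(\vartheta;k):=\frac1{\pi_k(x)}\sum_{\substack{1<n\leq x\\\omega(n-1)=k}}\e^{i\vartheta f(n)}\longrightarrow\varphi_{F_r}(\vartheta),
\]
uniformly for $r=(k-1)/\log_2 x$ in a fixed compact subset of $[0,\infty)$. Setting $g(n):=\e^{i\vartheta f(n)}$ (so $|g|=1$), the plan is: \emph{(a)} truncate $g$ to $g_y(n):=\prod_{p\leq y,\,p^\nu\|n}\e^{i\vartheta f(p^\nu)}$ and compute $\sum g_y(n)\mathbf 1_{\omega(n-1)=k}$ via the level of distribution of $w_k$; \emph{(b)} control the difference $g-g_y$ by a second-moment (Turán--Kubilius type) argument; \emph{(c)} let $y=y(x)\to\infty$ slowly.

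For step \emph{(a)}, Möbius-invert $g_y=\mu_y*\mathbf 1$, where $\mu_y$ is multiplicative, supported on $y$-smooth integers, with $\mu_y(p^\nu)=\e^{i\vartheta f(p^\nu)}-\e^{i\vartheta f(p^{\nu-1})}$ for $p\leq y$, $\nu\geq 1$. Swapping summation and setting $m=n-1$,
\[
\sum_{\substack{n\leq x\\\omega(n-1)=k}}g_y(n)=\sum_{\substack{d\geq 1\\P^+(d)\leq y}}\mu_y(d)\,\#\{m\leq x-1:m\md{-1}d,\,\omega(m)=k\}.
\]
Since $(-1,d)=1$, the inner count equals $\pi_k(x-1;d)/\varphi(d)+\Delta_{w_k}(x-1;d,-1)$, where $\pi_k(x;d):=\#\{m\leq x:(m,d)=1,\,\omega(m)=k\}$. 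Because $|\mu_y(d)|\leq 2^{\Omega(d)}$ is not uniformly $\tau_K$-dominated, first truncate to $\Omega(d)\leq L=L(x)\to\infty$ slowly; Corollary \ref{easy2} (with $\xi_d=\mathrm{sgn}(\Delta_{w_k})\mu_y(d)\mathbf 1_{\Omega(d)\leq L}$, $Q=1$, and $R$ the largest surviving $d$) then yields an error $\ll 2^L\cdot x/\LL^A=o(\pi_k(x))$ for suitable $A$ and $L$, while the tail $\Omega(d)>L$ is absorbed by the sparsity of $y$-smooth powerful integers. For the main term, invoke the Selberg--Sathe asymptotic $\pi_k(x;d)/\pi_k(x)=\prod_{p\mid d}(p-1)/(p-1+r)\,(1+o(1))$ (cf.\ \cite[ch.\ II.6]{TenlivreUS}); combined with the multiplicativity of $\mu_y$ this yields
\[
\frac1{\pi_k(x)}\sum_{\substack{n\leq x\\\omega(n-1)=k}}g_y(n)=\prod_{p\leq y}\bigg(1+(1-\tfrac1p)\sum_{\nu\geq 1}\frac{\e^{i\vartheta f(p^\nu)}-1}{p^{\nu-1}(p-1+r)}\bigg)+o(1),
\]
matching the partial Euler product for $\varphi_{F_r}(\vartheta)$.

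For step \emph{(b)}, Cauchy--Schwarz combined with the identity $|g(n)-g_y(n)|^2=2-2\Re h_y(n)$, where $h_y(n):=\prod_{p\mid n,\,p>y}\e^{i\vartheta f(p^{v_p(n)})}$, reduces matters to bounding $\sum_{\omega(n-1)=k}(1-\Re h_y(n))$. Splitting $f$ at the threshold $|f(p)|=1$ (as in the classical Erdős--Wintner proof) and using $|1-\e^{iu}|\leq\min(|u|,2)$, the required second moment reduces to counts of the form $\sum_{\omega(n-1)=k,\,p\mid n}1$ and $\sum_{\omega(n-1)=k,\,pq\mid n}1$ for primes $p,q>y$; by Corollary \ref{easy2} these are respectively $\approx\pi_k(x)/(p-1+r)$ and $\approx\pi_k(x)/[(p-1+r)(q-1+r)]$. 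The resulting expression collapses to tails of the three Erdős--Wintner series \eqref{CNS-EW}, each tending to $0$ as $y\to\infty$, whence $\sum_{\omega(n-1)=k}|g-g_y|^2=o(\pi_k(x))$.

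The main difficulty is the uniform quantitative Selberg--Sathe asymptotic: one needs uniformity in $d$ (smooth, $\omega(d)\leq L$) and in $r$ throughout the relevant compact subset of $[0,\infty)$, including the boundary $r\to 0$ where the classical Landau formula for bounded $k$ must be reconciled with the Selberg--Sathe framework. A secondary point is synchronizing the auxiliary parameters $y(x)$ and $L(x)$ so that the errors in \emph{(a)} and \emph{(b)} are simultaneously $o(\pi_k(x))$; for instance, $y\leq(\log x)/(\log_2 x)^C$ and $L=O(\log_3 x)$ suffice for $C$ large enough. Combining these ingredients, $\Phi_x(\vartheta;k)=\varphi_{F_r}(\vartheta)+o(1)$, and Lévy's continuity theorem delivers the distributional convergence \eqref{fEW} at every $t\in\C(f)$.
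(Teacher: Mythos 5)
Your overall architecture (Lévy's theorem, truncation $f\to f_y$, main term via level of distribution of $w_k$ plus Selberg–Sathe/Selberg–Delange) coincides with the paper's, and step (a) is essentially the paper's \S8.2 in a slightly different technical dress (Möbius inversion over $y$-smooth $d$ instead of the Cauchy integral with Corollary~\ref{easy1} applied to $z^{\omega(n)}$). The real divergence is step (b), and this is where there is a genuine gap.

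First, a local issue: you invoke $|1-\e^{iu}|\le\min(|u|,2)$ to turn $\sum_{\omega(n-1)=k}\bigl(1-\Re h_y(n)\bigr)$ into a sum over primes. But the linear bound produces $\sum_{p>y}|f(p)|/p$, and this series may diverge under the sole hypothesis \eqref{CNS-EW}: the third series is only conditionally convergent. What saves the second-moment route is the quadratic estimate $1-\cos u\le \min\{u^2/2,\,2\}$ together with an expansion of the square, so that only the tails $\sum_{p>y}f(p)^2/p$ and $\bigl(\sum_{p>y}f(p)/p\bigr)^2$ (preserving the sign) appear; you should make this explicit.

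Second, and more seriously: even with the correct quadratic bound, the argument requires the count
$$\#\{n\le x:\ p\mid n,\ \omega(n-1)=k\}\ \ll\ \frac{\pi_k(x)}{p}$$
uniformly for \emph{all} primes $y<p\le x$, together with an asymptotic version for $p$ in the Bombieri–Vinogradov range, with error terms summable against $f(p)^2$. Corollary~\ref{easy2} does not deliver this: it gives an $\ell^1$-averaged bound with $\tau_K$-dominated weights and only reaches moduli $qr\le x/\LL^B$ with $r\le x^{1/105-\varepsilon}$, hence nothing at all for a single prime $p$ of size close to $x$ and no pointwise asymptotic. A trivial bound $\le x/p$ is useless when $\pi_k(x)\asymp x/\log x$ (e.g.\ $k=1$): one loses a $\log x$. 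What is actually needed is a Brun–Titchmarsh-type estimate for $w_k$ in progressions to arbitrary moduli, and this is precisely what the paper establishes in Lemmas~\ref{lemmeG} and~\ref{crible-n/n-1}, whose proofs require the Wolke–Zhan Bombieri–Vinogradov theorem for $w_k$, the large sieve, the combinatorial sieve, and a nontrivial case analysis over the ranges of $k$. In the paper these two lemmas replace the second moment altogether: the difference $f-f_y$ is controlled by sieving out primes in a sparse set (Lemma~\ref{crible-n/n-1}) and then bounding a Rankin-type multiplicative weight over the level set (Lemma~\ref{lemmeG}). Your proposal implicitly assumes the output of these lemmas without proving it; supplying them is the substantive work of \S8.1.
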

The proof is given in Section \ref{pfTh2.1}. From general results on weak convergence of distribution functions, it follows that formula \eqref{fEW} is actually uniform with respect to $t$ on any compact subset of $\C(f)$ and valid uniformly for $t\in\R$ if \eqref{CNScont} holds.
\par 
For $k=1$, we recover, in a slightly more general setting, a result of K\'atai \cite{Ka68}.
\par 
It is of interest to observe that if $\omega(n-1)$ is replaced by $\omega(n)$ in \eqref{fEW} then, as shown in \cite{TV20},  a limiting distribution still occurs but has a different value for $r\neq1$: when, for instance, $f$ is strongly additive, the corresponding characteristic function turns out to be
 \begin{equation}
 \label{chfTV}
 \prod_{p}\bigg(1+\frac{r\big(\e^{i\vartheta f(p)}-1\big)}{p-1+r}\bigg)
 \end{equation}
 whereas  in this case
\begin{equation}
\label{chFr-fa}
\varphi_{F_r}(\vartheta)=\prod_{p}\bigg(1+\frac{\e^{i\vartheta f(p)}-1}{p-1+r}\bigg).
\end{equation}
Qualitatively, these results tell us that, as expected, the perturbation is more significant in the case when the same variable is used for the additive function and the definition of the level set, as is clear from comparing the coefficients of $\e^{i\vartheta f(p)}-1$ in \eqref{chfTV} and \eqref{chFr-fa}. In the case of a shifted argument, the distributions of $\omega(n-1)$ and $f(n)$ are ``almost''  independent.
\par 
Of course \eqref{fEW} opens the way to estimating the distribution function of an additive satisfying \eqref{CNS-EW} with respect to various probability measures related to the function $\omega(n-1)$. As an illustration, we state without proof a standard consequence, the proof of which simply involving a re-summation procedure and, say, a weak version of \cite[th. II.6.1]{TenlivreUS}.
\begin{corollary}
\label{EW-y-om}
Let $y>0$ and let $f$ be a real, additive function satisfying \eqref{CNS-EW}. Then, we have
$$\sum_{\substack{1<n\leqslant x\\ f(n)\leqslant t}}y^{\omega(n-1)}=\Big\{F_y(t)+o(1)\Big\}\sum_{n\leqslant x}y^{\omega(n)}\qquad \big(t\in\C(f),\,x\to\infty\big).$$
\end{corollary}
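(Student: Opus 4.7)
The plan is to decompose the left-hand side of the stated formula according to the level sets of $\omega(n-1)$ and apply Theorem \ref{EW} in each stratum. Setting $N_k(x,t):=\#\{1<n\leqslant x:\omega(n-1)=k,\,f(n)\leqslant t\}$, we write $S(x):=\sum_{k\geqslant 1}y^k N_k(x,t)$ for the left-hand side and $T(x):=\sum_{n\leqslant x}y^{\omega(n)}=\sum_{k\geqslant 1}y^k\pi_k(x)$ for the reference sum.

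The Selberg-Sathe theorem \cite[th. II.6.1]{TenlivreUS}, applied to the multiplicative function $n\mapsto y^{\omega(n)}$, shows that $y^k\pi_k(x)$ is concentrated at $k\sim y\log_2x$: there exists a slowly growing $\psi_x\to\infty$ with $\psi_x=o(\sqrt{\log_2x})$ such that, writing $I_x:=\{k\geqslant 1:|k-y\log_2x|\leqslant\psi_x\sqrt{\log_2x}\}$, we have $\sum_{k\notin I_x}y^k\pi_k(x)=o(T(x))$. For $k\in I_x$, the parameter $r_k:=(k-1)/\log_2x$ is bounded and tends to $y$ uniformly as $x\to\infty$, so Theorem \ref{EW} applies and yields $N_k(x,t)=\pi_k(x)(F_{r_k}(t)+o(1))$ uniformly in $k\in I_x$; the tail of $S(x)$ outside $I_x$ is absorbed into the error via the trivial bound $N_k(x,t)\leqslant\pi_k(x)$.

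The key additional ingredient is the continuity of the map $r\mapsto F_r(t)$ at $r=y$. From the product representation \eqref{CarFr}, $\varphi_{F_r}(\vartheta)$ is continuous in $r>0$ for each fixed $\vartheta\in\R$: every factor is smooth in $r$ and the infinite product converges locally uniformly in $r$ as a direct consequence of the three-series condition \eqref{CNS-EW}. L\'evy's continuity theorem then shows that $F_{r'}$ converges weakly to $F_y$ as $r'\to y$; since the discontinuity set of $F_y$ is included in $f(\N^*)$, any $t\in\C(f)$ is a continuity point of $F_y$, whence $F_{r'}(t)\to F_y(t)$. Applied with $r'=r_k$ for $k\in I_x$, this yields $F_{r_k}(t)=F_y(t)+o(1)$ uniformly in $k\in I_x$.

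Combining the three ingredients,
\[
S(x)=\sum_{k\in I_x}y^k\pi_k(x)\bigl(F_y(t)+o(1)\bigr)+O\Bigl(\sum_{k\notin I_x}y^k\pi_k(x)\Bigr)=\bigl(F_y(t)+o(1)\bigr)T(x),
\]
which is the claimed asymptotic. The main obstacle is the continuity in $r$ of $F_r(t)$ at $r=y$: it is what couples Theorem \ref{EW} (with varying $r_k$) to a single limiting distribution $F_y$. Once this is in place, the remainder is a routine combination of stratification and the Selberg-Sathe concentration of the Poisson-like weights $y^k\pi_k(x)$, as announced in the paper.
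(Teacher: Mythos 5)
Your proof is correct and follows exactly the route the paper outlines (re-summation over the level sets of $\omega(n-1)$ plus the Selberg--Sathe concentration of $y^k\pi_k(x)$ near $k\sim y\log_2x$, which is precisely the role of \cite[th.~II.6.1]{TenlivreUS} cited there). The one ingredient the paper's terse remark leaves implicit and that you rightly isolate and justify is the continuity of $r\mapsto F_r(t)$ at $r=y$ for $t\in\C(f)$, needed to replace the varying targets $F_{r_k}(t)$ from Theorem~\ref{EW} by the single value $F_y(t)$; your verification via local uniform convergence of the Euler product in \eqref{CarFr} and L\'evy's continuity theorem is exactly what is required.
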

\medskip
A third application, which we shall not develop here in full generality, is the variant of the previous one consisting in establishing an Erd\H os-Kac theorem over the level set of an additive function  at shifted argument.  Letting $\Phi(t)$ denote the normalized Gaussian distribution function, a typical statement in this direction is as follows.
\begin{theorem}
\label{thEK}
Let $f$ be a real, strongly additive arithmetical function such that
\begin{align}
&B_x^2:=\sum_{p\leqslant x}\frac{f(p)^2}p\to\infty\quad (x\to\infty),&\label{Var}\\
&B_y\sim B_x \quad (y:=x^{1/\log_2x},\,x\to\infty),&\label{Var+}\\
&\sum_{y<p\leqslant x}\frac{f(p)}p=o\big(B_x\big),\quad(\forall\varepsilon>0)\ \sum_{\substack{p\leqslant x\\|f(p)|>\varepsilon B_x}}\frac{f(p)^2}p=o\big(B_x^2\big)\quad(x\to\infty).
&\label{Lind}\end{align}
Then, uniformly for $x\to\infty$, $1\leqslant k\ll \log_2x$ and $t\in\R$, we have 
\begin{equation*}
\frac1{\pi_k(x)}\sum_{\substack{1<n\leqslant x\\ \omega(n-1)=k\\ f(n)\leqslant A_x+tB_x}}1=\Phi(t)+o(1),
\end{equation*}
with $A_x:=\sum_{p\leqslant x}f(p)/p$.
\end{theorem}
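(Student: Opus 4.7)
The plan is to establish weak convergence via the method of characteristic functions, following the strategy used for Theorem \ref{EW}. By Lévy's continuity theorem, it suffices to show that for each fixed $\vartheta \in \R$,
\begin{equation*}
\Psi_k(x, \vartheta) := \frac{1}{\pi_k(x)} \sum_{\substack{1 < n \leq x \\ \omega(n - 1) = k}} e^{i\vartheta(f(n) - A_x)/B_x} \longrightarrow e^{-\vartheta^2/2}
\end{equation*}
as $x \to \infty$, uniformly for $1 \leq k \ll \log_2 x$.

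The first step is to truncate at $y := x^{1/\log_2 x}$, setting $f_y(n) := \sum_{p \leq y,\,p|n} f(p)$ and defining $A_y$, $B_y$ analogously. Hypothesis \eqref{Var+} gives $B_y \sim B_x$ and \eqref{Lind} yields $A_x - A_y = o(B_x)$, so it suffices to establish the asymptotic with $(f, A_x, B_x)$ replaced by $(f_y, A_y, B_y)$. The replacement is controlled by a Turán--Kubilius-type bound on the level set $\{n \leq x : \omega(n-1)=k\}$: expanding the squared modulus of the difference, setting $m := n-1$, and using $|e^{it}-1| \leq |t|$, the bound reduces to estimating $\#\{m \leq x-1 : \omega(m)=k,\, m\md{-1}{p_1 p_2}\}$ for primes $y < p_1, p_2 \leq x$. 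Corollary \ref{easy2} applied to $w_k$ with $a = -1$ delivers this count as $\pi_k^*(x;p_1p_2)/\varphi(p_1p_2) + E(p_1p_2)$, with $\sum_{q \leq x^{1/105-\varepsilon}}|E(q)| \ll x/\LL^A$, and reconstructing the main contribution yields $o(\pi_k(x)B_x^2)$ by \eqref{Lind}.

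For the truncated sum we exploit multiplicativity: expanding as a Möbius-style sum,
\begin{equation*}
e^{i\vartheta f_y(n)/B_y} = \sum_{\substack{d \mid n,\ P^+(d) \leq y \\ d \text{ squarefree}}} \xi_d(\vartheta), \qquad \xi_d(\vartheta) := \prod_{p \mid d}\bigl(e^{i\vartheta f(p)/B_y} - 1\bigr).
\end{equation*}
Since $|\xi_d| \leq 2^{\omega(d)} \leq \tau(d)$, Corollary \ref{easy2} with $D=1$, $a=-1$, $Q=1$, $R \leq x^{1/105-\varepsilon}$ and weights $\xi_d$ yields, after a tail estimate for squarefree $d > x^{1/105-\varepsilon}$ exploiting the decay of $|\xi_d|$ coming from \eqref{Lind},
\begin{equation*}
\frac{1}{\pi_k(x)}\sum_{\substack{1 < n \leq x \\ \omega(n-1)=k}} e^{i\vartheta f_y(n)/B_y} = \prod_{p \leq y}\Bigl(1 + \frac{e^{i\vartheta f(p)/B_y} - 1}{p - 1 + r}\Bigr) + o(1),
\end{equation*}
where $r := (k-1)/\log_2 x$ and the denominator $p-1+r$ reflects the uniform Selberg--Delange asymptotic for $\pi_k^*(x;d)/\pi_k(x)$, matching the structure of \eqref{CarFr}. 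A classical Lindeberg computation, justified by \eqref{Var}--\eqref{Lind}, then shows this product equals $\exp\bigl(i\vartheta A_y/B_y - \vartheta^2/2 + o(1)\bigr)$, and the result follows upon multiplying by $e^{-i\vartheta A_x/B_x}$.

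The principal obstacle is securing uniformity in $k$ throughout the full range $1 \leq k \ll \log_2 x$. The implicit constants in Corollary \ref{easy2} are $k$-independent, since $w_k = \chi_{1,\{0\},\{0 \mapsto k\}}$ carries no $K$-parameter, and the Hardy--Ramanujan lower bound $\pi_k(x) \gg x/\LL^{A+1}$ in this range ensures that errors of shape $x/\LL^A$ remain negligible compared to $\pi_k(x)B_x^2$. The most delicate technical point is the tail estimate for squarefree $d > x^{1/105-\varepsilon}$ with $P^+(d) \leq y$---where $\omega(d)$ is forced to be $\gtrsim \log_2 x$---for which one must combine a Rankin-type bound, the pointwise decay $|\xi_d| \leq \prod_{p\mid d}\min\bigl(2, |\vartheta f(p)|/B_y\bigr)$, and the Lindeberg condition \eqref{Lind} to ensure the contribution is $o(\pi_k(x))$.
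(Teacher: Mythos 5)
Your strategy for the main term---expanding $\e^{i\vartheta f_y(n)/B_y}$ as a sum over $y$-friable divisors with weights $\xi_d$ of size $\leqslant\tau(d)$, applying the Bombieri--Vinogradov input, and then using the Selberg--Delange asymptotic $\pi_k(x;d)/\pi_k(x)\sim 1/\idb_r(d)$---is essentially the paper's argument, with the cosmetic difference that you invoke Corollary~\ref{easy2} for $w_k$ directly and \eqref{pikd}, whereas the paper runs Corollary~\ref{easy1} on $z^{\omega(n)}$ and only takes the Cauchy integral at the end. That part is sound.

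The genuine gap is in the truncation step. Your Tur\'an--Kubilius variance bound on $\{n\leqslant x:\omega(n-1)=k\}$ needs the discrepancy
$\#\{n\leqslant x: \omega(n-1)=k,\ p_1p_2\mid n\}-\pi_k(x)/\varphi(p_1p_2)$
for \emph{all} primes $y<p_1\neq p_2\leqslant x$ with $p_1p_2\leqslant x$, and you only control those with $p_1p_2\leqslant x^{1/105-\varepsilon}$ via Corollary~\ref{easy2}. The range $p_1p_2>x^{1/105-\varepsilon}$ is not negligible: the trivial bound $\#\{n\leqslant x: p_1p_2\mid n\}\leqslant x/(p_1p_2)$ overshoots the target $\pi_k(x)/\varphi(p_1p_2)$ by a factor $x/\pi_k(x)$, which can be as large as $\log x$ when $k=1$, so the residual variance does not come out $o(\pi_k(x)B_x^2)$. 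There is also a subsidiary issue that $f(p_1)f(p_2)$ is a priori unbounded, so summing $|E(q)|$ over $q\leqslant x^{1/105-\varepsilon}$ does not directly control $\sum f(p_1)f(p_2)E(p_1p_2)$ without a further Cauchy--Schwarz step. The paper circumvents all of this by replacing the $L^2$ argument with an $L^1$ (Chebyshev) argument: Lemma~\ref{crible-n/n-1} discards the $o(\pi_k(x))$ integers having a prime factor $p>y$ with $|f(p)|>\varepsilon^3B_x$, and then Lemma~\ref{lemmeG}, applied with the multiplicative weight $G(p)=\e^{|f(p)|/\varepsilon^2B_x}$, bounds $\sum_{\omega(n-1)=k}G(n)\ll\pi_k(x)$; crucially, Lemma~\ref{lemmeG} is proved via sieve and shifted-convolution bounds that have no level-of-distribution ceiling, so there is no obstruction at $x^{1/105}$. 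Without a comparable device your truncation step does not close.
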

This result generalizes to a natural framework a recent work of Goudout \cite{Go20} in which $f=\omega$. When $k=1$, it follows from a general theorem of Barban, Vinogradov and Levin \cite{BVL65}.
\par 
The hypotheses of the above theorem could be lightened further with some extra work. Note that \eqref{Var} is classical, that \eqref{Var+} is a slight strengthening of the requirement in Kubilius' class~$H$ (see \cite[ch. IV]{Kub64}), and that the second condition in \eqref{Lind} coincides with the usual Feller-Lindeberg condition---see, e.g. \cite[lemma 1.30]{Elliott79}. \par 
\par 
As in the case of Theorem \ref{EW}, we can derive, by re-summation over $k$, a number of estimates  from Theorem \ref{thEK}. The following statement, the proof of which we leave to the reader, is emblematic. 
\begin{corollary}
Let $f$ be a real, strongly additive arithmetical function satisfying \eqref{Var}, \eqref{Var+} and \eqref{Lind} and let $\wp_x$ denote a probability measure on $]1,x]$ ascribing to each integer $n$ a weight depending only on $\omega(n-1)$. Assume furthermore that $$\wp_x\big(\omega(n-1)>T\log_2x\big)=o(1)\qquad (T,x\to\infty).$$ Then, uniformly for $t\in\R$, we have
$$\wp_x\big(f(n)\leqslant A_x+tB_x\big)=\Phi(t)+o(1)\qquad (t\to\infty).$$ 
\end{corollary}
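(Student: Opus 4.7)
The plan is to mimic the derivation of Corollary~\ref{EW-y-om} from Theorem~\ref{EW}: decompose the weighted probability according to the value $k=\omega(n-1)$, use the tail assumption to truncate at $k\leqslant T\log_2 x$, and then invoke Theorem~\ref{thEK} uniformly on the remaining range.

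Since the $\wp_x$-weight depends only on $\omega(n-1)$, there exists a function $g_x:\N\to[0,\infty)$ with $\wp_x(\{n\})=g_x(\omega(n-1))$. Setting $N_k(x):=\#\{1<n\leqslant x:\omega(n-1)=k\}$ and letting $N_k(x,t)$ be the corresponding count with the additional condition $f(n)\leqslant A_x+tB_x$, one has $\wp_x(\omega(n-1)=k)=g_x(k)N_k(x)$, whence
$$\wp_x\bigl(f(n)\leqslant A_x+tB_x\bigr)=\sum_{k\geqslant 1}\wp_x(\omega(n-1)=k)\,\frac{N_k(x,t)}{N_k(x)}\cdot$$

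Given $\varepsilon>0$, the tail hypothesis lets us pick $T=T(\varepsilon)$ with $\wp_x\bigl(\omega(n-1)>T\log_2 x\bigr)<\varepsilon$ for all sufficiently large $x$. For $1\leqslant k\leqslant T\log_2 x$, Theorem~\ref{thEK} yields $N_k(x,t)/N_k(x)=\Phi(t)+o(1)$ uniformly in $k$ and $t\in\R$ as $x\to\infty$. Splitting the sum at $k=T\log_2 x$ and bounding the ratio trivially by $1$ in the tail, we obtain
$$\wp_x\bigl(f(n)\leqslant A_x+tB_x\bigr)=\bigl(\Phi(t)+o(1)\bigr)\wp_x\bigl(\omega(n-1)\leqslant T\log_2 x\bigr)+O(\varepsilon)=\Phi(t)+O(\varepsilon)+o(1).$$
Letting first $x\to\infty$ and then $\varepsilon\to0$ delivers the claim.

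No substantial obstacle arises; the only point worth noting is that the uniformity ``$1\leqslant k\ll\log_2 x$'' in Theorem~\ref{thEK} must be read with an arbitrary implicit constant, so that it applies to the whole truncated range $k\leqslant T\log_2 x$ for every fixed $T$. This is standard and entirely parallel to the resummation underlying Corollary~\ref{EW-y-om}.
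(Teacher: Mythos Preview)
Your proposal is correct and follows exactly the route the paper indicates: the authors state explicitly that the corollary is obtained ``by re-summation over $k$'' from Theorem~\ref{thEK} and leave the details to the reader, which is precisely the decomposition--truncation--uniform application argument you carry out. One cosmetic point: Theorem~\ref{thEK} is normalised by $\pi_k(x)$ rather than your $N_k(x)$, but since $N_k(x)=\pi_k(x-1)=\pi_k(x)+O(1)$ and $\pi_k(x)\to\infty$ uniformly for $1\leqslant k\leqslant T\log_2 x$, this makes no difference.
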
  

\medskip
Our fourth application is a law of iterated logarithm for integers weighted with $\tau(n-1)$.  More precisely, given a function $\xi(x)\to\infty$, this deals with the behaviour of the quantities
\begin{equation}
\label{defsLIL}
\begin{aligned}
&\Lambda(n,t):=\frac{\omega(n,t)-\log_2t}{\sqrt{2\log_2t\log_4t}}\ \big(t>\xi(x)\big),\quad M(n,\xi):=\sup_{\xi(x)<t\leqslant x}|\Lambda(n,t)|,\\
& M^+(n,\xi):=\sup_{\xi(x)<t\leqslant x}\Lambda(n,t),\quad M^-(n,\xi):=\inf_{\xi(x)<t\leqslant x}\Lambda(n,t).
\end{aligned}
\end{equation}
It will be convenient to equip $\{1<n\leqslant x\}$ with the probability $P_x$ ascribing to each integer $n$ a weight proportional to \mbox{$\tau(n-1)$}. With this setting, we  prove the following result in Section \ref{pfLIL}. 
\begin{theorem}
\label{thLIL}
Let $\varepsilon>0$. If $\xi(x)$ tends to $\infty$ with $x$ sufficiently slowly, then we have
\begin{equation}
\label{LIL-maj}
P_x\Big(M(n,\xi)\leqslant 1+\varepsilon\Big)=1+o(1)\qquad (x\to\infty).
\end{equation}
Moreover, 
\begin{align}
&P_x \Big(M^+(n,\xi)\geqslant 1-\varepsilon\Big)=1+o(1)\qquad (x\to\infty),& \label{LIL+}\\
&P_x \Big(M^-(n,\xi)\leqslant -1+\varepsilon\Big)=1+o(1)\qquad (x\to\infty).& \label{LIL-}
\end{align}
\end{theorem}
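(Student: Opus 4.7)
The plan is to transfer the classical proof of the Erd\H os law of iterated logarithm for $\omega(n,t)$ to the probability measure $P_x$, using Corollary~\ref{easy1} together with the Dirichlet hyperbola identity applied to $\tau(n-1)$ as the main workhorse. The crucial observation is that, for any finite collection of breakpoints $t_1<\cdots<t_r$ and any complex numbers $y_1,\ldots,y_r$ of bounded modulus, the strongly multiplicative function
$$
h_{\mathbf t,\mathbf y}(n):=y_1^{\omega(n,t_1)}\prod_{2\leqslant i\leqslant r}y_i^{\omega(n,t_i)-\omega(n,t_{i-1})}
$$
belongs to $\FF(1,K)$ for some $K$ independent of $r$, provided the sequence $(\Upsilon_n)$ of Definition~\ref{defM} is chosen so as to incorporate all the points $t_i$ (filling in with artificial breakpoints to meet the minimum spacing of condition (i)). This is exactly the flexibility of the sequence $\{\Upsilon_n\}$ that the remark following Definition~\ref{defM} announces to be crucial here.

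I would first establish the following core asymptotic formula. Writing $\tau(m)=2\sum_{d\,\mid\,m,\,d<\sqrt m}1+\mathbf 1_{\{\sqrt m\in\N\}}$, splitting the $d$-sum at $R:=x^{1/105-\varepsilon}$ and decomposing the complementary range $R<d\leqslant\sqrt x$ via a Vaughan-type identity to exploit the full bilinear form of Corollary~\ref{easy1} (with $a=1$), the error terms contribute $O_A(x/\LL^A)$, and a multivariable Selberg--Delange analysis as in \cite[ch.~II.5]{TenlivreUS} evaluates the main term, yielding
$$
E_{P_x}\bigl[h_{\mathbf t,\mathbf y}(n)\bigr]=\Psi(\mathbf t,\mathbf y)+O_A(\LL^{-A}),
$$
uniformly in the $t_i\in(\xi(x),x]$ and the bounded $y_i$, where $\Psi$ factors essentially as $\prod_{1\leqslant i\leqslant r}\prod_{t_{i-1}<p\leqslant t_i}(1-1/p+y_i/p)$ (with $t_0:=1$), up to a bounded correction produced by the $\tau(n-1)$ weight. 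In particular the increments $\omega(n,t_i)-\omega(n,t_{i-1})$ are, under $P_x$, asymptotically independent and each approximately Poisson with parameter $\log_2 t_i-\log_2 t_{i-1}$.

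For the upper bound \eqref{LIL-maj}, I would introduce the discretization $t_j$ defined by $\log_3 t_j=j$, indexed by $j_0\asymp\log_3\xi(x)$ up to $J\asymp\log_3 x$, noting that $j_0\to\infty$ because $\xi(x)\to\infty$. Optimizing Markov's inequality applied to $E_{P_x}[y^{\omega(n,t_j)}]$ with $y=1+O\bigl(\sqrt{\log_4 t_j/\log_2 t_j}\,\bigr)$ yields the Chernoff-type bound $P_x(|\Lambda(n,t_j)|>1+\varepsilon/2)\ll j^{-(1+\varepsilon/2)^2}$, whence $\sum_{j\geqslant j_0}j^{-(1+\varepsilon/2)^2}\ll j_0^{1-(1+\varepsilon/2)^2}=o(1)$. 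To interpolate over $t\in(t_j,t_{j+1}]$, I would establish a L\'evy-type maximal inequality under $P_x$: using the generating function above, I would bound the $2k$th moment of $\sup_{t\in(t_j,t_{j+1}]}|\omega(n,t)-\omega(n,t_j)-(\log_2 t-\log_2 t_j)|$ via the corresponding expectation at $t_{j+1}$ and conclude by Markov.

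For the lower bounds \eqref{LIL+}--\eqref{LIL-}, I would pick a sparser subsequence $(s_i)$ with $\log_2 s_{i+1}/\log_2 s_i\to\infty$ and define
$$
A_i:=\bigl\{\omega(n,s_{i+1})-\omega(n,s_i)-(\log_2s_{i+1}-\log_2s_i)>(1-\varepsilon)\sqrt{2\log_2s_{i+1}\log_4s_{i+1}}\bigr\}.
$$
A saddle-point analysis of $E_{P_x}[y^{\omega(n,s_{i+1})-\omega(n,s_i)}]$ yields the matching lower bound $P_x(A_i)\gg(\log_3s_{i+1})^{-(1-\varepsilon/2)^2}$, so that $\sum_iP_x(A_i)\to\infty$. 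The case $r=3$ of the joint generating function gives $P_x(A_i\cap A_j)=\{1+o(1)\}P_x(A_i)P_x(A_j)$ for $i\neq j$, and a Paley--Zygmund second-moment argument produces $P_x(\bigcup_i A_i)=1-o(1)$, which implies \eqref{LIL+}; on this event there exists some $i$ with $\Lambda(n,s_{i+1})\geqslant 1-\varepsilon$. The companion estimate \eqref{LIL-} follows by the symmetric choice $y<1$ in the saddle point. The main obstacle will be the L\'evy-type maximal inequality in the upper bound, which, coupled with the need for a level of distribution arbitrarily close to $\tfrac12$ in the high-moment estimates, is what makes Corollary~\ref{easy1} (or equivalently Theorem~\ref{centralabsolutevalue} combined with a H\"older step on the $\tau_K$ weights) truly indispensable for this application.
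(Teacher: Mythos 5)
Your overall strategy matches the paper's: transfer the classical law of iterated logarithm to $P_x$ via the Bombieri--Vinogradov input (Corollary~\ref{easy1}) applied through the Dirichlet hyperbola decomposition of $\tau(n-1)$, establish asymptotic independence of the increments $\omega(n,t_{i+1})-\omega(n,t_i)$ via joint generating functions lying in $\FF(1,K)$, then run a Chernoff-type upper bound together with a second-moment lower bound. The way you package the generating function (the function $h_{\mathbf t,\mathbf y}$ and the flexibility of the $\{\Upsilon_n\}$) is the same device the paper uses with $f(n)=\prod_k z_k^{\omega_k(n)}$. Your lower-bound plan (sparse subsequence, saddle-point lower bound for $P_x(A_i)$, pairwise asymptotic independence plus Paley--Zygmund) is essentially identical to the paper's Cauchy-integral and second-moment argument.

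Two points, however, need correcting. First, a minor one: the proposed split of the $d$-sum at $R=x^{1/105-\varepsilon}$ together with a Vaughan-type identity on the range $R<d\leqslant\sqrt{x}$ misreads Corollary~\ref{easy1}. There the constraint is $QR\leqslant x/\LL^B$, not $R\leqslant\sqrt{x}$; taking the parameter $R$ of the corollary equal to $1$ and $Q\leqslant\sqrt{x}/\LL^B$ already covers the whole hyperbola range $d\leqslant\sqrt{x}$. The paper does exactly this, after first a dissection into short $d$-intervals to make the secondary summation limit $n\leqslant d^2$ effectively independent of~$d$; no bilinear split of the divisor sum is required. (The bilinear parameter $r\leqslant x^{1/105-\varepsilon}$ is used to carry the weights $h_\vartheta(d)$, and is not the threshold at which the hyperbola needs help.)

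The major gap is the interpolation step in the upper bound. You discretize at $\log_3 t_j=j$, i.e. $\log_2 t_j=\e^j$, and then invoke a ``L\'evy-type maximal inequality'' via $2k$th moments to control the supremum of $\Lambda(n,t)$ over $t\in(t_j,t_{j+1}]$. In $\log_2 t$-scale your block has length $(\e-1)\e^j$, of the same order as $\log_2 t_j$ itself; hence the compensated process $\omega(n,t)-\log_2 t$ can swing within a single block by an amount comparable to $\log_2 t_j$, which vastly exceeds the normalization $\sqrt{2\log_2 t_j\log_4 t_j}$. A direct bound of the supremum by the $2k$th moment of the endpoint increment (using monotonicity of $\omega(n,\cdot)$) therefore loses a factor of size roughly $(\log_2 t_j/\log_4 t_j)^{k}$, which is fatal; and there is no ready-made Doob or L\'evy inequality to appeal to, since $t\mapsto\omega(n,t)$ under $P_x$ is neither a martingale nor a process with symmetric increments. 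The paper circumvents this entirely: it discretizes much more finely at $t_k:=\exp\exp k$ (so that $\log_2 t_k=k$, the increments are of mean $1$ and the within-step fluctuation of $\Lambda$ is $O\big(1/\sqrt{\log_2 t_k\log_4 t_k}\big)$, negligible), groups the $k$ into blocks $]K_j,K_{j+1}]$ with $K_j=\e^{\varepsilon j}$, and replaces the maximal inequality by a first-passage (Ottaviani-type) argument: fix the first $k$ at which $|\Lambda(n,t_k)|>1+\varepsilon$, and show---using a second application of Corollary~\ref{easy1} together with a bilinear factorization $n=ab$, $P^+(a)\leqslant t_k<P^-(b)$, restricted to the set $U_x$ of integers whose $x_1$-smooth part is small---that the increment from $t_k$ to the block endpoint $T_{j+1}$ is typically near its mean, so a positive proportion of the first-passage set lands in the endpoint tail event $A_j$. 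This reflection step, rather than a moment-maximal inequality, is what makes the upper bound go through, and it is missing from your proposal.
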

It is well-known that the sum 
$$\sum_{n\leqslant x}\tau(n)=x\log x+O(x)$$
is dominated by integers with $\omega(n)\sim 2\log_2x$. Moreover, these prominent integers actually satisfy $\omega(n,t)\sim 2\log_2t$ uniformly for $\xi(x)<t\leqslant x$, and so a twisted version of the law of iterated logarithm could be proved for the probability on $[1,x]$ ascribing to each integer $n$ a weight proportional to $\tau(n)$. Thus, from a qualitative perspective, Theorem \ref{thLIL} tells us that, unlike the weight $\tau(n)$ which has the effect of recentering  averages on integers for which $\omega(n,t)$ is roughly twice its normal order, the weight $\tau(n-1)$ involves very little perturbation on the fine structure of the sequence of prime factors of $n$.
 
 \section{Proofs of Corollaries \ref{easy1} and \ref{easy2}}
\subsection{Proof of Corollary \ref{easy1}} We show here how Corollary \ref{easy1} may be deduced from Theorem \ref{central}.
\par 
 By the Cauchy--Schwarz inequality  and the size hypothesis for the $\xi_r$, we have
\begin{equation}
\label{283}
\Bigg\vert \sum_{\substack{r\leqslant R\\ (r,a)=1}}\xi_r\sum_{\substack{q\leqslant Q \\ (q,a)=1}}  \Delta_f (x;qr,D,a)\Bigg\vert^2 \leqslant \Sigma_1 \Sigma_2,
\end{equation}
with
\begin{equation*}
\begin{aligned}
&\Sigma_1:=  \sum_{\substack{r\leqslant R\\ (r,a)=1}}\tau_K^2 (r) \Bigg\vert \sum_{\substack{q\leqslant Q \\ (q,a)=1}}  \Delta_f (x;qr,D,a) \Bigg\vert,\qquad \Sigma_2:= \sum_{\substack{r\leqslant R\\ (r,a)=1}}\Bigg\vert \sum_{\substack{q\leqslant Q \\ (q,a)=1}} \, \Delta_f (x;qr,D,a) \Bigg\vert.
\end{aligned}
\end{equation*}\par 
From Theorem \ref{central}, we get
\begin{equation}
\label{Sigma2<<}
\Sigma_2 \ll D^{C_0}\,x/\LL^{A}.
\end{equation}
\par 
We bound $\Sigma_1$ trivially as in Lemma \ref{exceptional}, equation  \eqref{G<}, {\it infra}. Selecting $S=QR$, we get 
$$
\Sigma_1 \ll \tau_K (\vert a \vert) QR \LL^\idc +x \LL^\idc,
$$
where $\idc= \idc (\varepsilon,K)$. Inserting this last bound and  \eqref{Sigma2<<} into \eqref{283} completes the proof.

\subsection{Proof of Corollary \ref{easy2}}
For $t \in \TT $, choose $z_t\in \mathbb C$ with $\vert z_t\vert =1$. Define ${\bsz} :=(z_t)_{t \in \TT }$ and define the strongly multiplicative
function $f_{\bsz}$ by its values on primes as follows 
$$
f_{\bsz} (p)=
\begin{cases} z_t & \text{ if } p\md tD\ (t\in \TT ),\\
1 & \text{ if } (\forall t\in\TT)\ p\not\md tD.
\end{cases}
$$
The function $f_{\bsz}$ belongs to $\FF (D,1)$ and, as a consequence  of Cauchy's integral formula,  we have the equalities
\begin{equation}
\label{integralequation}
\begin{aligned}
\chi_{D, \TT , \Phi} (n)&= \prod_{t \in \TT } \Bigg( \frac{1}{2\pi i}   \oint_{|z_t|=1}\frac{z_t^{\omega_{D,t} (n) -\Phi (t)}}{z_t} \, {\rm d} z_t\Bigg)\\
& =   \Bigl( \frac{1}{2\pi i}\Bigr)^{\vert \TT \vert} \oint_{\substack{|z_t|=1\\ (t\in\TT)}}\frac{f_{\bsz} (n)}{\prod_{t\in \TT } z_t^{\Phi (t) +1}} {\rm d} {\bsz},
\end{aligned}
\end{equation}
Now, let us insert  this expression  into the left hand side of \eqref{244ter1}, interchange summation and  integration, apply  bound \eqref{244ter}, and finally   integrate trivially over the product of the units circles $|z_t|=1$. This completes the proof of \eqref{244ter1}.

\section{Lemmas}
\subsection{Classical lemmas} Our first lemma provides a bound for short sums of powers of the Piltz divisor function over arithmetic progressions---see \cite [Theorem 2]{Sh} for instance.
\begin{lemma} \label{lemma1}Let $K\in\N^*,\,\ell\in\N^*$,  and $\varepsilon >0$ be fixed.  Then, uniformly for $x^\varepsilon  \leqslant y < x$,  $1\leqslant q \leqslant y/x^ {\varepsilon}$, $(a,q)=1$,
we have
$$
\sum_{\substack{x-y <n\leqslant x\\ n\equiv a \bmod q}} \tau_K ( n )^\ell \ll   \frac{y}{ q} \LL^{K^\ell-1}.
$$
\end{lemma}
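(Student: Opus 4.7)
The plan is to invoke Shiu's uniform estimate for short sums of non-negative multiplicative functions in arithmetic progressions, which is precisely the content of the referenced [Sh, Theorem 2], applied to the function $g:=\tau_K^\ell$. First, I would verify that $g$ belongs to the class of multiplicative functions to which Shiu's theorem applies: $g$ is non-negative and multiplicative, with $g(p^\nu)=\binom{\nu+K-1}{K-1}^\ell$ bounded polynomially in $\nu$ (hence $g(p^\nu)\leq A^\nu$ for a constant $A=A(K,\ell)$), and the classical bound $\tau_K(n)\ll_\eta n^\eta$ for every $\eta>0$ yields $g(n)\ll_\eta n^\eta$. I would also check that the range of parameters in the lemma fits Shiu's hypotheses: the conditions $x^\varepsilon\leq y<x$ and $1\leq q\leq y/x^\varepsilon$ imply $q\leq y^{1-\delta}$ for $\delta:=\varepsilon/(1-\varepsilon)>0$, which is the form in which Shiu's theorem is typically stated.

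Granted these hypotheses, Shiu's theorem yields
$$\sum_{\substack{x-y<n\leq x\\n\equiv a\bmod q}}\tau_K(n)^\ell\ll \frac{y}{q\log x}\exp\Bigg(\sum_{\substack{p\leq x\\p\nmid q}}\frac{g(p)}{p}\Bigg).$$
Since $g(p)=K^\ell$ on every prime, bounding the sum from above by the full sum over $p\leq x$ and applying Mertens' theorem gives
$$\exp\Bigg(\sum_{\substack{p\leq x\\p\nmid q}}\frac{K^\ell}{p}\Bigg)\leq \exp\Bigg(\sum_{p\leq x}\frac{K^\ell}{p}\Bigg)\ll (\log x)^{K^\ell}.$$
Multiplying by the prefactor $y/(q\log x)$ yields the claimed bound $(y/q)\LL^{K^\ell-1}$.

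Since the entire argument reduces to a direct invocation of the external theorem, followed by routine verification of its hypotheses and a standard Mertens estimate, there is no genuine obstacle here: all substantive content is packaged inside Shiu's theorem. The only care needed is bookkeeping—matching the parameter range of the lemma to the one allowed in Shiu's formulation and ensuring that one removes the primes $p\mid q$ by a trivial upper bound rather than trying to track their contribution exactly.
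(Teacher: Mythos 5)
Your proof is correct and matches the paper's intent exactly: the paper gives no argument for this lemma and simply cites Shiu's theorem [Sh, Theorem 2], which is precisely what you invoke, followed by the routine Mertens computation. One small arithmetic slip: from $q\leqslant y/x^\varepsilon$ and $y<x$ one only gets $q\leqslant y^{1-\varepsilon}$, not $q\leqslant y^{1-\delta}$ with $\delta=\varepsilon/(1-\varepsilon)>\varepsilon$; this is immaterial since any fixed positive exponent suffices for Shiu's hypotheses.
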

We next consider the general sum 
  $
\mathcal G = \mathcal G (\boldsymbol c, f, S, x) $ 
defined by
$$
\mathcal G: = \sum_{\substack{s\leqslant S\\ (s,a)=1}} \vert c_s \vert \Bigg\{\sum_{\substack{n\leqslant x \\ n\equiv a \bmod s}} \vert f(n)\vert + \frac{1}{\varphi (s)} \sum_{\substack{n\leqslant x \\ (n,s)=1}}\vert f(n)\vert \Bigg\}.
$$
 It will be useful to dispose of a bound for $\mathcal G$ when the summation variables 
 $n$ or $s$ are restricted to a sparse set. 
\begin{lemma}\label{exceptional}Let $K\geqslant 1$ be an integer and $\varepsilon >0$ be given. There exists $\idc = \idc (\varepsilon,K)$ such that,  uniformly  for 
$$x \geqslant 1,\quad S\in\N^*\cap\big[1, x^{1-\varepsilon}\big],\quad1 \leqslant \vert a \vert \leqslant 10\,x,\quad\boldsymbol c = (c_s)\in\CC^S,\quad f\in\CC^{\N^*},$$ and assuming that both $\boldsymbol c$ and $f$ satisfy $\eqref{f<tau}$, we have
 \begin{equation}\label{G<}
\mathcal G 
 \ll \vert \NN\vert ^{1/3} x^{2/3} \mathcal \LL ^{\idc} +  \tau_K (\vert a \vert)S\LL^\idc,
\end{equation}
where $\NN:=\{n\leqslant x:  f(n) \not=0\} $. Moreover, we  also have
\begin{equation}\label{G<<}
\mathcal G    
 \ll \Bigg(\ \sum_{\substack{s\leqslant S\\ c_s \not= 0}}\frac{1}{\varphi (s)} \Bigg)^{1/2} x\LL^\idc.
 \end{equation}

\end{lemma}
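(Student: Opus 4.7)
The plan is to split $\mathcal G=\mathcal G_1+\mathcal G_2$, where $\mathcal G_1$ gathers the congruence-sum contributions $|c_s|\sum_{n\leqslant x,\,n\md as}|f(n)|$ and $\mathcal G_2$ the ``mean value'' contributions $|c_s|\varphi(s)^{-1}\sum_{n\leqslant x,\,(n,s)=1}|f(n)|$, and to treat the two pieces independently. Throughout I would systematically use the pointwise majorations $|c_s|\leqslant\tau_K(s)$, $|f(n)|\leqslant\tau_K(n)$ together with the standard mean estimates $\sum_{n\leqslant x}\tau_K(n)^j\ll x\LL^{\idc}$ and $\sum_{s\leqslant S}\tau_K(s)^j/\varphi(s)\ll \LL^{\idc}$, valid for any fixed $j,\,K$.

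For \eqref{G<<}, the bound on $\mathcal G_2$ is nearly immediate: dropping the coprimality condition on $n$ yields $\sum_{n\leqslant x}|f(n)|\ll x\LL^\idc$, whence $\mathcal G_2\ll x\LL^\idc\sum_{c_s\neq 0}|c_s|/\varphi(s)$, and Cauchy--Schwarz against $\sum_s|c_s|^2/\varphi(s)\ll\LL^\idc$ produces the advertised factor $\bigl(\sum_{c_s\neq 0}1/\varphi(s)\bigr)^{1/2}$. For $\mathcal G_1$, Lemma \ref{lemma1} (applicable since $S\leqslant x^{1-\varepsilon}$ and $(s,a)=1$) bounds the inner $n$-sum by $\ll (x/s)\LL^{\idc}$, reducing to $\sum|c_s|/s$; a second Cauchy--Schwarz, splitting $|c_s|/s=\varphi(s)^{-1/2}\cdot|c_s|\varphi(s)^{1/2}/s$, again yields \eqref{G<<}, since $|c_s|^2\varphi(s)/s^2\leqslant\tau_K(s)^2/s$ has mean $\ll\LL^\idc$.

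For \eqref{G<}, I would treat $\mathcal G_2$ crudely by Cauchy--Schwarz: $\sum_{n\in\NN}\tau_K(n)\leqslant|\NN|^{1/2}x^{1/2}\LL^\idc$, which is absorbed into $|\NN|^{1/3}x^{2/3}\LL^\idc$ using $|\NN|\leqslant x$. The heart of the matter is $\mathcal G_1$, where I interchange the order of summation:
$$\mathcal G_1=\sum_{n\in\NN}|f(n)|\sum_{\substack{s\leqslant S,\,(s,a)=1\\s\,\mid\,n-a}}|c_s|.$$
The diagonal contribution $n=a$ (which is non-empty only if $1\leqslant a\leqslant x$) amounts to at most $\tau_K(|a|)S\LL^{\idc}$, matching the second summand of \eqref{G<}. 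For $n\neq a$, the Dirichlet convolution identity $\sum_{d\,\mid\,m}\tau_K(d)=\tau_{K+1}(|m|)$ majorises the inner $s$-sum by $\tau_{K+1}(|n-a|)$, and H\"older's inequality with exponents $(3,3,3)$ produces
$$\sum_{\substack{n\in\NN\\n\neq a}}\tau_K(n)\tau_{K+1}(|n-a|)\leqslant|\NN|^{1/3}\Bigl(\sum_{n\leqslant x}\tau_K(n)^3\Bigr)^{1/3}\Bigl(\sum_{|m|\leqslant 2x}\tau_{K+1}(m)^3\Bigr)^{1/3}\ll|\NN|^{1/3}x^{2/3}\LL^\idc,$$
as required. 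I do not expect any genuine obstacle here; the only minor technicality is the routine verification of Shiu's range condition in Lemma \ref{lemma1}, which is guaranteed by $S\leqslant x^{1-\varepsilon}$ together with $(s,a)=1$.
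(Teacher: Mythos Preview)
Your proof is correct and follows essentially the same route as the paper: interchange the $s$ and $n$ summations in $\mathcal G_1$, isolate the diagonal $n=a$, bound the inner $s$-sum by $\tau_{K+1}(|n-a|)$ via $\tau_K*\1=\tau_{K+1}$, apply H\"older with exponents $(3,3,3)$, and handle $\mathcal G_2$ by Cauchy--Schwarz; for \eqref{G<<} apply Lemma~\ref{lemma1} and then Cauchy--Schwarz over~$s$. One trivial slip: since the hypothesis allows $|a|\leqslant 10x$, the range for $m=n-a$ is $|m|\leqslant 11x$ rather than $2x$, but this does not affect the bound $\sum_{|m|\leqslant Cx}\tau_{K+1}(m)^3\ll x\LL^\idc$.
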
 
\begin{proof} 
We have
\begin{align*}
\mathcal G   
& \leqslant \sum_{s\leqslant S} \tau_K (s)\sum_{\substack{n\in \NN,\, n\not= a \\ n\equiv a \bmod s}} \tau_K (n) +\vert f (\vert a \vert) \vert \,S\LL^\idc +
\sum_{  n\in \NN} \tau_K (n) \sum_{s\leqslant S} \frac{\tau_K (s)}{\varphi (s)}\\
& \ll \sum_{n \in \NN, \, n\not= a} \tau_K (n) \tau_{K+1} (\vert n-a\vert)+\tau_K (\vert a \vert ) S \LL^\idc + \LL^\idc  \Bigg(\vert \NN\vert \sum_{n\leqslant x} \tau_K^2 (n)\Bigg) ^{1/2}\\
& \ll  \Bigg( \vert \NN\vert\sum_{n\leqslant x} \tau_K( n)^3\Bigg)^{1/3} \Bigg( \sum_{\substack{n\leqslant x\\ n\not= a}} \tau_{K+1} (\vert n-a\vert )^3\Bigg)^{1/3}  +\tau_K (\vert a \vert ) S \LL^\idc +\sqrt{x\vert \NN\vert} \LL ^\idc.
\end{align*}
Appealing to Lemma \ref{lemma1}, this furnishes  \eqref{G<}.
\par 
To prove \eqref{G<<}, let us denote by $\ES$ the set of those integers $s \leqslant S$ such that $c_s \not= 0$. Applying Lemma \ref{lemma1}  and the Cauchy--Schwarz inequality, we obtain 
\begin{align*}
\mathcal G &\ll  \Bigg(\ \sum_{s \in \ES}\frac{\tau_K (s)}{\varphi (s)}\ \Bigg) x\LL^\idc \\
&\ll \Bigg(\ \sum_{s\leqslant S}\frac{\tau_K (s)^2}{\varphi (s)}\ \Bigg)^{1/2}  \Bigl(\ \sum_{s \in \ES}\frac{1}{\varphi (s)}\ \Bigr)^{1/2}  x\LL^\idc\ll  \Bigl(\ \sum_{s \in \ES}\frac{1}{\varphi (s)} \Bigr)^{1/2} x\LL^\idc,
\end{align*}
as required.
\end{proof}

The following lemma (see, e.g., \cite[ex. 293]{TenlivreUS}, \cite[ex. 293]{TW14}) provides a quantitative form of the assertion that the product of small prime factors of an integer is usually small. 
\begin{lemma}\label{gerald}Uniformly  for $2\leqslant y\leqslant z\leqslant x$, we have 
$$
\Bigl\vert \Bigl\{ n\leqslant x : \prod_{\substack{p^\nu \Vert n \\ p \leqslant  y}} p^\nu > z\Bigr\}
\Bigr\vert \ll x \, \exp \Bigl( - \frac{\log z}{2\log y} \Bigr).
$$
\end{lemma}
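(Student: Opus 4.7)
The stated inequality is a standard tail estimate for the distribution of the $y$-smooth part of a random integer, provable by Rankin's trick. Denote $M(n) := \prod_{p^\nu \Vert n,\, p \leq y} p^\nu$, the $y$-smooth part of $n$. The first observation is that $M(n) > z$ if, and only if, $n$ has a $y$-smooth divisor exceeding $z$ (indeed, $M(n)$ itself is such a divisor), so, by grouping $n$'s according to the existence of such a divisor,
$$
\bigl| \bigl\{ n \leq x : M(n) > z \bigr\} \bigr| \leq \sum_{\substack{z < a \leq x \\ a \text{ is } y\text{-smooth}}} \lfloor x/a \rfloor \leq x \sum_{\substack{a > z \\ a \text{ is } y\text{-smooth}}} \frac{1}{a}.
$$

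The key tool is Rankin's trick: for any $\sigma \in (0,1)$, the inequality $\mathbf{1}_{a > z} \leq (a/z)^\sigma$ gives
$$
\sum_{\substack{a > z \\ a \text{ is } y\text{-smooth}}} \frac{1}{a} \leq z^{-\sigma} \prod_{p \leq y} \bigl(1 - p^{\sigma-1}\bigr)^{-1}.
$$
I would take $\sigma = 1/\log y$, so that $p^\sigma \leq e$ and $p^{\sigma-1} \leq e/p < 1$ for every prime $p \leq y$ (assuming $y > e$, the case $y \leq e$ being trivial). Expanding $-\log(1-t) = t + O(t^2)$, using $p^\sigma - 1 = O(\sigma \log p)$ for $\sigma \log p \leq 1$, and appealing to Mertens' formula $\sum_{p \leq y} 1/p = \log_2 y + O(1)$, I would obtain $\sum_{p \leq y} p^{\sigma - 1} = \log_2 y + O(1)$ and hence
$$
\prod_{p \leq y}\bigl(1 - p^{\sigma - 1}\bigr)^{-1} \asymp \log y,
$$
yielding $\bigl| \{n \leq x : M(n) > z\} \bigr| \ll x \log y \cdot \exp(-\log z/\log y)$.

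To recover the form $\ll x \exp(-\log z/(2\log y))$, one must absorb the polylogarithmic factor $\log y$ into the exponential. Writing $\log y = \exp(\log_2 y)$, this is immediate whenever $\log z \geq 2 \log y \log_2 y$, since $\log_2 y \leq \log z/(2\log y)$ in that range. In the complementary regime $\log z < 2 \log y \log_2 y$, the target bound is itself $\gg 1/\log y$, and one refines the choice of $\sigma$---for instance setting $\sigma = (1/2 + (\log_2 y)/u)/\log y$ with $u := \log z/\log y$---so that $z^{-\sigma} = \exp(-u/2)/\log y$ gains a compensating $1/\log y$ while the Euler product still grows by at most $O(\log y)$, making the two polylogarithmic factors cancel. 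The principal technical obstacle is precisely this uniform absorption of the polylog across all ranges of $z$; the Mertens-type Euler product estimates and the initial divisor argument are otherwise entirely routine.
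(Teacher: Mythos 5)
Your first paragraph (union over $y$-smooth divisors exceeding $z$, then Rankin with $\sigma=1/\log y$) is sound and does yield the bound $\ll x\,\log y\cdot\exp(-\log z/\log y)$. The gap is in the final paragraph, where you try to absorb the stray $\log y$ by enlarging $\sigma$: the claim that with $\sigma=(1/2+(\log_2 y)/u)/\log y$ "the Euler product still grows by at most $O(\log y)$" is false. Writing $c:=\sigma\log y=1/2+(\log_2 y)/u$, as $u$ decreases towards $1$ (recall $z\geq y$, so $u\geq1$) one has $c$ of order $\log_2 y$. But then the contribution of primes near $y$ to $\sum_{p\leq y}p^{\sigma-1}=\sum_{p\leq y}p^{c\log p/\log y}/p$ is of order $\int_1^{c}\e^{v}\,\dd v/v\asymp \e^{c}/c$, and for $c\asymp\log_2 y$ this is $\asymp(\log y)/\log_2 y$, which greatly exceeds $\log_2 y$. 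Hence $\prod_{p\leq y}(1-p^{\sigma-1})^{-1}\geq\exp\bigl(c'\log y/\log_2 y\bigr)$, which is larger than any fixed power of $\log y$ as $y\to\infty$: the two polylogarithmic factors do not cancel, they compound. There is thus a genuine uncovered range of parameters (roughly $1\ll u\ll\log_2 y$ with $y$ large).

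The standard repair is to avoid the lossy step $\lfloor x/a\rfloor\leq x/a$ altogether. Put $M(n)=n_y$, choose $\sigma:=1/(2\log y)$, and write $n_y^\sigma=\mathbf 1*g$ where $g$ is the non-negative multiplicative function with $g(p^\nu)=p^{\nu\sigma}-p^{(\nu-1)\sigma}$ for $p\leq y$ and $g(p^\nu)=0$ for $p>y$. Then
\begin{equation*}
\sum_{n\leqslant x}n_y^\sigma=\sum_{d\leqslant x}g(d)\Bigl\lfloor\frac xd\Bigr\rfloor\leqslant x\sum_{d\geqslant1}\frac{g(d)}{d}=x\prod_{p\leqslant y}\frac{1-1/p}{1-p^{\sigma-1}},
\end{equation*}
and, after disposing of the trivial range $2\leqslant y\leqslant 4$ separately, the product is $\asymp1$: by Mertens, $\log\prod=\sum_{p\leqslant y}(p^\sigma-1)/p+O(1)\ll\sigma\log y+1=O(1)$, since $p^\sigma\leqslant\sqrt\e$ for $p\leqslant y$. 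Finally Rankin gives $|\{n\leqslant x:n_y>z\}|\leqslant z^{-\sigma}\sum_{n\leqslant x}n_y^\sigma\ll x\exp(-\log z/(2\log y))$. The point is that the Mertens factor $\prod_{p\leqslant y}(1-1/p)\asymp1/\log y$, which your bound $\lfloor x/a\rfloor\leqslant x/a$ discards, is exactly what neutralises the $\prod_{p\leqslant y}(1-p^{\sigma-1})^{-1}\asymp\log y$ coming from the Euler product.
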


The next lemma is proved via a standard application of Rankin's method. We recall the notation \eqref{balpha} for the multiplicative function $b_\alpha$.

\begin{lemma}\label{rankin}
Uniformly for integers $m\geqslant 1$, $n\geqslant 1$, and real $y\geqslant 1$, we have
$$
\sum_{\substack{\delta \leqslant y\\  \delta \vert m^\infty,\,  (\delta, n)=1}}
 \frac{1}{\delta}=\prod_{\substack{p\vert m\\ p\,\nmid\, n}}
 \Bigl( 1-\frac 1p\Bigr)^{-1}+O\bigg( \frac{b_{3/4} (m)}{y^{1/4} }\bigg).
$$
\end{lemma}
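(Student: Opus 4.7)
The plan is to apply Rankin's trick at exponent $1/4$. First I would note that the \emph{unrestricted} sum $S(m,n):=\sum_{\delta\mid m^\infty,\,(\delta,n)=1}1/\delta$ factors as a geometric Euler product, namely
$$
S(m,n)=\prod_{\substack{p\mid m\\p\,\nmid\,n}}\sum_{\nu\geqslant 0}\frac1{p^\nu}=\prod_{\substack{p\mid m\\p\,\nmid\,n}}\Bigl(1-\frac1p\Bigr)^{-1},
$$
which is exactly the claimed main term. Consequently the task reduces to showing that the tail $T(y):=\sum_{\delta>y,\,\delta\mid m^\infty,\,(\delta,n)=1}1/\delta$ is bounded by $O\bigl(b_{3/4}(m)/y^{1/4}\bigr)$.

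Next I would majorize this tail by the Rankin device: for any $\delta>y$ we have $1\leqslant(\delta/y)^{1/4}$, so
$$
T(y)\leqslant y^{-1/4}\sum_{\substack{\delta\mid m^\infty\\(\delta,n)=1}}\frac1{\delta^{3/4}}=y^{-1/4}\prod_{\substack{p\mid m\\p\,\nmid\,n}}\Bigl(1-\frac1{p^{3/4}}\Bigr)^{-1}.
$$
It then remains to compare this Euler product with $b_{3/4}(m)=\prod_{p\mid m}(1+p^{-3/4})$. Since
$$
\frac{(1-p^{-3/4})^{-1}}{1+p^{-3/4}}=\frac1{1-p^{-3/2}}\leqslant\frac1{1-2^{-3/2}},
$$
the ratio is uniformly bounded over all primes $p\geqslant 2$, so $\prod_{p\mid m,\,p\,\nmid\,n}(1-p^{-3/4})^{-1}\ll\prod_{p\mid m}(1+p^{-3/4})=b_{3/4}(m)$, and the tail is indeed $O(b_{3/4}(m)/y^{1/4})$.

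The proof has no real obstacle; the only delicate point is to choose the Rankin exponent ($\sigma=1/4$) so that the surviving Euler factor $(1-p^{-3/4})^{-1}$ matches the prescribed error weight $b_{3/4}(m)$ up to an absolute multiplicative constant. Any other choice $\sigma\in(0,1)$ would yield a similar estimate with $b_{1-\sigma}$ in place of $b_{3/4}$, and $\sigma=1/4$ is the value adopted in the statement.
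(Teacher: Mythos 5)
Your proof is correct and follows essentially the same route as the paper's: compute the unrestricted sum as an Euler product, then estimate the tail $\delta>y$ by Rankin's trick with the weight $(\delta/y)^{1/4}$, which is exactly what the authors do (they leave the final Euler-product comparison with $b_{3/4}(m)$ implicit). One small caution on your last step: ``the ratio is uniformly bounded over all primes'' is not by itself enough to compare two infinite products, but the identity you wrote, $(1-p^{-3/4})^{-1}=(1+p^{-3/4})(1-p^{-3/2})^{-1}$, does the job since $\prod_p(1-p^{-3/2})^{-1}=\zeta(3/2)<\infty$, so the conclusion stands.
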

\begin{proof} The main term of the stated formula coincides with the limit of the left-hand side as $y\to\infty$. The remainder is  
$$
\sum_{\substack{\delta > y\\  \delta \vert m^\infty,\,  (\delta, n)=1}}
\frac{1}{\delta}\ll
\sum_{\substack{\delta \mid m^\infty\\ (\delta , n) =1}}\frac {1}{\delta}\Bigl( \frac{\delta}{y}\Bigr)^{1/4}
.
$$
\end{proof}
Finally we apply  an identity due to Heath-Brown---see \cite[prop. 13.3]{IwKo}--- in order to split the von Mangoldt function into sums of bilinear forms.  
\begin{lemma}\label{H-Bid}  Let $y\geqslant 1$ and let   $n$, $J$,  be two integers such that  $J\geqslant 1$, $1\leqslant n\leqslant 2y$. 
We have 
$$
\Lambda (n) = \sum_{1\leqslant j\leqslant J}  {(-1)^{j-1}} \binom{J}{j}\underset{\substack{\prod_{h=1}^jm_hn_h =n\\ \max_{1\leqslant h\leqslant j}m_h\leqslant y^{1/J}}}{\sum}\prod_{1\leqslant h≤j}\mu (m_h) \log n_1.
$$
\end{lemma}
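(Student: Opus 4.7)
The plan is to establish this Dirichlet-convolution identity by an iteration argument based on truncated M\"obius inversion. Write $z := y^{1/J}$, let $M(m) := \mu(m)\, \1_{m \leqslant z}$ be the truncated M\"obius function, and set $h := \1 \ast M - \delta$, where $\delta(n) := \1_{n=1}$ denotes the Dirichlet unit. Since every divisor of an integer $n \leqslant z$ is itself $\leqslant z$, we have $(\1 \ast M)(n) = \sum_{d\mid n} \mu(d) = \delta(n)$ in that range, whence $h$ vanishes on $\{1,\ldots,\lfloor z \rfloor\}$; consequently $h^{\ast j}$ is supported on integers strictly exceeding $z^{j}$.

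Convolving $\1 \ast M = \delta + h$ with $\mu$ and using $\mu \ast \1 = \delta$ yields $M = \mu + \mu \ast h$, equivalently $\mu = M - \mu \ast h$. Iterating this relation $J$ times gives
\begin{equation*}
\mu = \sum_{j=0}^{J-1} (-1)^j\, M \ast h^{\ast j} + (-1)^J\, \mu \ast h^{\ast J}.
\end{equation*}
Convolving with $\log$ and invoking $\Lambda = \mu \ast \log$ then expresses $\Lambda$ as a main sum plus the error $(-1)^J(\mu \ast h^{\ast J} \ast \log)$. The crucial point is that this error vanishes for $n \leqslant 2y$: any nonzero contribution to $(\mu \ast h^{\ast J} \ast \log)(n)$ would come from a factorisation $n = abc$ with $h^{\ast J}(b) \neq 0$ and $\log c \neq 0$, which forces $b > z^{J} = y$ and $c \geqslant 2$, whence $n \geqslant bc > 2y$, a contradiction.

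It remains to put the main term into the combinatorial shape claimed in the lemma. From $(\delta + h)^{\ast j} = (\1 \ast M)^{\ast j} = \tau_j \ast M^{\ast j}$ and binomial inversion one derives $h^{\ast j} = \sum_{k=0}^{j} (-1)^{j-k}\binom{j}{k}\tau_k \ast M^{\ast k}$. Inserting this into the main term $\sum_{j=0}^{J-1}(-1)^j M \ast h^{\ast j}$, interchanging the summations over $j$ and $k$, and applying the hockey-stick identity $\sum_{j=k}^{J-1}\binom{j}{k} = \binom{J}{k+1}$, the main term collapses --- after reindexing $j \leftarrow k+1$ --- to $\sum_{j=1}^{J}(-1)^{j-1}\binom{J}{j}\,\tau_{j-1} \ast M^{\ast j} \ast \log$. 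Unfolding this triple convolution at the integer $n$, the $\tau_{j-1}$ factor supplies the product $n_2 \cdots n_j$, each $M$ factor contributes $\mu(m_h)\,\1_{m_h \leqslant y^{1/J}}$, and the $\log$ supplies the single term $\log n_1$, thereby reproducing precisely the right-hand side of the lemma. The main obstacle in the strategy is the support argument for the error term; the remainder is routine binomial bookkeeping.
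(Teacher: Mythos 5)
Your proof is correct. The paper gives no proof of this lemma---it simply cites \cite[prop.~13.3]{IwKo}---so the relevant comparison is with the standard argument found there. Both you and Iwaniec--Kowalski build on the truncated M\"obius function $M(m)=\mu(m)\1_{m\leqslant y^{1/J}}$ and the key observation that $h=\1*M-\delta$ is supported on integers $>y^{1/J}$, so that $h^{*J}$ is supported above $y$ and the remainder $\mu*h^{*J}*\log$ vanishes at every $n\leqslant 2y$; your iteration of $\mu=M-\mu*h$ is precisely the convolution-language avatar of iterating $\zeta^{-1}=M-\zeta^{-1}(\zeta M-1)$. Where you diverge is in extracting the binomial coefficient: the Dirichlet-series proof expands $1-(1-\zeta M)^J=\sum_{j=1}^J(-1)^{j-1}\binom{J}{j}(\zeta M)^j$ and reads off coefficients in one stroke, whereas you first invert $(\delta+h)^{*j}=\tau_j*M^{*j}$ to express $h^{*j}$, insert, swap the order of summation, and invoke the hockey-stick identity $\sum_{j=k}^{J-1}\binom{j}{k}=\binom{J}{k+1}$. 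Both computations are short and check out; the Dirichlet-series route is a little slicker, but your purely combinatorial version avoids formal series manipulation altogether and makes the role of the restriction $n\leqslant 2y$---the one point where the truncation genuinely matters---especially transparent.
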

\subsection{Lemmas of Siegel--Walfisz type.} Recall that $w_1$ denotes the indicator function of the set of prime powers. The classical Siegel--Walfisz theorem asserts that the bound 
\begin{equation}\label{SWforpi1}
\Delta_{w_1} (x;q,a) \ll_A x/\LL^{A}
\end{equation}
holds for any fixed $A$ and all coprime integers $a$, $q$.\par 
A Siegel--Walfisz theorem for the M\"obius function is also known, viz.
$$
\Delta_{\mu} (x;q,a) \ll_A x/\LL^{A},
$$
see for instance \cite[cor. 5.29]{IwKo}. \par 
We shall need extensions of these two results. Recall the definition of $\mathfrak Y_0$ in~\eqref{gY0}.
\begin{lemma} 
For every $A>0$, there exists a constant $C(A)$ such that,  for  all $Y_0>1$, $x\geqslant 1$ and coprime integers $a$ and $q$ with $q\geqslant 1$, we have
\begin{align}
\label{SWforY0}
\bigl\vert\, \Delta_{\mathfrak Y_0}(x;q,a)\, \bigr\vert &\leqslant C(A)\, x/\LL^{A},\\
\label{SWformuY0}
\bigl\vert \Delta_{\mu \mathfrak Y_0}(x;q,a)\, \bigr\vert &\leqslant C(A)\, x/ \LL^{A}.
\end{align}
\end{lemma}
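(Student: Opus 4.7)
My plan is to deduce both Siegel--Walfisz-type bounds from the classical Siegel--Walfisz theorems for $w_1$ and $\mu$ recalled just before the statement, via a case split on $Y_0$ relative to $\sqrt{x}$. First I would restrict to $q\leqslant \LL^{2A}$, since $|\mathfrak Y_0|,|\mu\mathfrak Y_0|\leqslant 1$ and the trivial estimate $|\Delta_f(x;q,a)|\ll x/\varphi(q)$ handles larger moduli in both cases.

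In the easy regime $Y_0>\sqrt{x}$, every $n\in[2,x]$ with $P^-(n)\geqslant Y_0$ must be a prime power $p^k$ with $p\geqslant Y_0$, since two distinct prime factors of size $\geqslant Y_0>\sqrt{x}$ would force $n>x$. The prime powers with $k\geqslant 2$ contribute $O(\sqrt{x})$ overall, so up to such an admissible error, $\mathfrak Y_0\mathbf 1_{[2,x]}=w_1\mathbf 1_{(Y_0,x]}$ and $\mu\mathfrak Y_0\mathbf 1_{[2,x]}=-w_1\mathbf 1_{(Y_0,x]}$, and \eqref{SWforpi1} applied at $x$ and at $Y_0$ yields both bounds.

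For the main range $Y_0\leqslant\sqrt{x}$, I would invoke the M\"obius identity
$$\mathfrak Y_0(n)=\sum_{\substack{d\,\mid\, n\\ P^+(d)<Y_0}}\mu(d),$$
split the $d$-sum at a threshold $D=\LL^{B(A)}$, and exploit the fact that, for $q\leqslant \LL^{2A}$, the discrepancy $\Delta_{\mathbf 1_{d\mid n}}(x;q,a)$ equals $O(1)$. The head $d\leqslant D$ then contributes $O(D)=O(\LL^{B(A)})$, which is absorbed into the target $x/\LL^A$. The tail $d>D$ is controlled by Rankin's trick whenever $Y_0$ is polylogarithmic in $x$; for larger $Y_0$ I would iterate Buchstab's identity
$$\sum_{\substack{n\leqslant x\\ n\md aq\\ P^-(n)\geqslant Y_0}}1=\sum_{\substack{n\leqslant x\\ n\md aq}}1-\sum_{\substack{p<Y_0\\ (p,q)=1}}\sum_{\substack{m\leqslant x/p\\ m\md{a\bar p}q\\ P^-(m)\geqslant p}}1,$$
peeling off the smallest prime factor. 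Each round produces a prime sum in progression controlled by \eqref{SWforpi1} and reduces the effective sifting parameter; after $O_A(1)$ rounds the problem falls into the M\"obius-tractable regime. The bound \eqref{SWformuY0} follows by the identical scheme, using the multiplicativity of $\mu\mathfrak Y_0$ (supported on squarefree integers with all prime factors $\geqslant Y_0$) and invoking the Siegel--Walfisz estimate for $\mu$ in place of that for $w_1$.

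The hard part will be calibrating the Buchstab iteration in the intermediate range $\LL^{B}<Y_0\leqslant\sqrt{x}$, where the M\"obius sum has too many terms for a direct bound and the Rankin saving $D^{-\varepsilon}\prod_{p<Y_0}(1-p^{\varepsilon-1})^{-1}$ is no longer negligible. The resolution is to choose $B=B(A)$ so that after the required number of Buchstab rounds the sifting parameter descends below $\LL^{B(A)}$, at which point M\"obius inversion combined with the trivial discrepancy estimate concludes the proof.
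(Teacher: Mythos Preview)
Your approach has a genuine gap in the central regime. The Rankin bound you quote for the tail, $D^{-\varepsilon}\prod_{p<Y_0}(1-p^{\varepsilon-1})^{-1}$, when optimised at $\varepsilon\asymp1/\log Y_0$, gives only $(\log Y_0)\exp(-c\log D/\log Y_0)$. With $D=\LL^{B}$ and $Y_0=\LL^{C}$ the exponent $c\log D/\log Y_0=cB/C$ is merely a constant, so the tail saving is $O(1)$, not $\LL^{-A}$. Thus your M\"obius+Rankin scheme already fails for $Y_0$ a fixed power of $\log x$, well before the stated transition. More generally, any truncated-sieve approximation to $\mathfrak Y_0$ with level $D$ carries a fundamental-lemma error of size $(\text{main term})\cdot e^{-s}$ with $s=\log D/\log Y_0$; for $Y_0$ up to $\sqrt{x}$ one has $s$ bounded (or even $o(1)$), so this error swamps any Siegel--Walfisz saving.

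Your Buchstab iteration does not repair this. Peeling the smallest prime factor yields terms $\Phi(x/p,p;q,a\bar p)$ indexed by \emph{all} primes $p<Y_0$; the sifting level in the recursion is $p$, which may sit just below $Y_0$, so there is no uniform descent ``below $\LL^{B(A)}$ after $O_A(1)$ rounds''. Nor does the identity produce prime sums: the recursion involves sums over sifted integers, not primes, so \eqref{SWforpi1} never enters. The paper's proof sidesteps all of this by peeling off the \emph{largest} prime factor instead. Writing each $n\in{]x/2,x]}$ as $n=pm$ with $p=P^+(n)\geqslant P^+(m)$, the inner sum over $p$ (for fixed $m$) is a genuine prime sum in progression of length $\asymp x/m$, to which \eqref{SWforpi1} applies directly whenever $x/m>y:=\e^{\sqrt{\log x}}$. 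The exceptional set where $P^+(n)\leqslant y$ has cardinality $\ll x/\sqrt{y}$ by Lemma~\ref{gerald}, which is negligible. No case split on $Y_0$ and no sieve iteration are needed; the argument for $\mu\mathfrak Y_0$ is identical.
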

\begin{proof} To prove \eqref{SWforY0}, it is plainly sufficient to establish a bound of same type for
$$
 \Delta'_{\mathfrak Y_0} (x;q,a):= \Delta_{\mathfrak Y_0} (x;q,a)- \Delta_{\mathfrak Y_0} (x/2;q,a).
$$
Every integer $n\in]x/2,x]$ may be uniquely represented as 
$$
n=pm, \text { with }  P^+ (m)\leqslant p\leqslant x/m.$$
Let  $\EE$ be the set  of those integers  $n\in]x/2,x]$ such that $x/m < y:=\e^{\sqrt{\log x}},$  and hence
such that $p=P^+ (n) \leqslant y$.  Lemma \ref{gerald} then yields that $\EE$ is a thin set, indeed
\begin{equation} 
 \label{majEE}
\vert \EE\vert \ll x/\sqrt{y}.
\end{equation}
Now we have
$$
 \Delta'_{\mathfrak Y_0}(x;q,a)= \sum_{\substack{m\leqslant x\\ (m,q)=1}}
 \mathfrak Y_0 (m)  \sum_{\max(x/(2m), Y_0, P^+(m))\leqslant p \leqslant x/m}   g_q(p; a \overline m),
$$
where $\overline {m}$ is the multiplicative inverse of $m$ modulo $q$.  From \eqref{majEE} and \eqref{SWforpi1} we deduce that
$$\vert  \Delta'_{\mathfrak Y_0}(x;q,a)\vert 
\ll    \sum_{\substack{m\leqslant x/y\\ (m,q)=1}}   \frac{\mathfrak Y_0 (m)x}{m \bigl( \log (x/m)\bigr)^A} + \frac x{\sqrt{y}},
$$
where $A$ is arbitrary. Summing over $m$ furnishes the required estimate.
\par 
The proof of \eqref{SWformuY0} is similar.

\end{proof}

\subsection{Lemmas from the dispersion technique and  bounds on Kloosterman sums}  

Let  $\bal = (\alpha_m) $ and $\bbe = (\beta_n)$ be  complex sequences. For $M,\,N>1$ we consider 
\begin{equation}
\label{defDab}
\Delta_{\bal,\bbe} (M, N; q, a) :=  \underset{\substack{m\sim M, n\sim N\\ mn\equiv a \pmod q}}{\sum \sum} 
\alpha_m \beta_n -\frac{1}{\varphi (q)}
 \underset{\substack{m\sim M, n\sim N\\ (mn, q)=1}}{\sum \sum} 
\alpha_m \beta_n. 
\end{equation}
which is a variant of $\Delta_f$ (see \eqref{defDeltaf}) where  the sizes of the variables $m$ and $n$ in the convolution  $\bal * \bbe$ are restricted to dyadic intervals.  
We also introduce
$$
X:=MN,$$
and for $1\leqslant u < v$ the shortened  sum $ \Delta^{u,v}_{\bal,\bbe} (M, N; q, a)$
$$
\Delta_{\bal,\bbe}^{u,v}(M, N; q, a) :=  \underset{\substack{m\sim M, n\sim N\\ mn\equiv a \pmod q\\u<mn\leqslant v}}{\sum \sum} 
\alpha_m \beta_n -\frac{1}{\varphi (q)}
 \underset{\substack{m\sim M, n\sim N\\ (mn, q)=1\\ u<mn\leqslant v}}{\sum \sum} 
\alpha_m \beta_n, 
$$
where the variables $m$ and $n$ satisfy the extra multiplicative constraint $u<mn\leqslant v$. Note that $\Delta_{\bal,\bbe}^{u,v}$ vanishes when $u>4X$  or $v<X$.

We now list several lemmas providing 
instances in which the  $X^\frac 12$-barrier   for the level of distribution of the convolution $\bal * \bbe$ can be overpassed. The proofs of Lemmas \ref{lemma4} and \ref{lemma5} below are both based on Linnik's dispersion method and on bounds of various types
of Kloosterman sums. 
 
The first part of the following statement is a weak version  of \cite [Theorem 6, p.~242]{BFI1} which however will
be sufficient for our purpose.  Extending the validity to $\Delta_{\bal,\bbe}^{u,v}$ is now standard by using the Mellin transform of a smooth approximation to $\1_{]u,v]}$ in order  to separate the variables $m$ and $n$ in the  summation condition $u<mn\leqslant v$---see for instance  the proof of \cite[cor. 1.1]{FouRadzi1}  or \cite[p. 371--372]{BFI2}. Therefore, we omit the proof of this extension.\par  Similarly,
albeit \cite [Theorem 6]{BFI1} only deals with the  case $D=1$, we omit the straightforward proof 
of the extension to general $D$.
\goodbreak
\begin{lemma}\label{lemma4} Let  $A>0$,  $K>0$,  $\varepsilon >0$.
Uniformly for 
\begin{equation} 
\label{X0<N<X13}  
\begin{aligned}
&D,\, M,\,  N,\, Q, \,R \geqslant 1, \quad  1\leqslant \vert a \vert \leqslant (\log X)^A, \\
&RX^\varepsilon \leqslant  N \leqslant X^{ -\varepsilon} ( X/R)^{1/3} ,\quad Q^2R \leqslant X,
\end{aligned}
\end{equation} 
all $\bbe = ( \beta_n)\in\CC^{\N^*}\cap\SW(D,K)$ satisfying the sifting condition
\begin{equation}
\label{sifting}
 P^-(n)\leqslant \e^{(\log_2n)^2}\Rightarrow \beta_n =0,
\end{equation} 
and all $\bal =(\alpha_m) \in \CC^{\N^*}$, we  have
\begin{equation}
\label{423}
\sum_{\substack{r\leqslant R\\  (r,aD)=1}} \Bigl\vert \, \sum_{\substack{q \simeq Q \\  (q,aD)=1}} \Delta_{\bal,\bbe}(M, N; qr, a) \,\Bigr\vert \ll \frac{ \no\bal_M  \no\bbe_N \sqrt{X} }{ (\log X)^A}\cdot
\end{equation}   
\par 
Under the same assumptions, the bound \eqref{423}  persists, uniformly for $1\leqslant u<v$, on replacing $\Delta_{\bal,\bbe}$ by $\Delta_{\bal,\bbe}^{u,v}$.
\par 
In particular,  estimate \eqref{423} and its extension to $\Delta_{\bal,\bbe}^{u,v}$ hold uniformly for 
\begin{align*}
&D,\,  M,\,  N,\, Q, \,R \geqslant 1, \quad 1\leqslant \vert a \vert \leqslant (\log X)^A, \\
& X^{1/105} \leqslant N \leqslant X^{2/7}, \quad 1\leqslant R \leqslant X^{1/105-\varepsilon}, \quad  Q^2 R \leqslant X.\\
\end{align*}
\end{lemma}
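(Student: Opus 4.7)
The plan is to reduce the stated lemma to \cite[Theorem 6]{BFI1} and then handle the three extensions separately: (a) from $D=1$ to general $D$, (b) from $\Delta_{\bal,\bbe}$ to the truncated version $\Delta_{\bal,\bbe}^{u,v}$, and (c) the particular case with explicit exponents. The heart of the matter is \cite[Theorem 6]{BFI1}, whose proof I would not redo: it proceeds by dualizing the $r$-sum via unimodular weights $\eta_r$, applying Cauchy--Schwarz to open the inner sum over $q$, and then unfolding the square through Linnik's dispersion method. After dispersion, the off-diagonal terms carry incomplete Kloosterman sums $S(a\overline{n_1},n_2;qq'r)$ averaged over the moduli $q$, $q'$, $r$; the spectral bounds of Deshouillers--Iwaniec for averages of Kloosterman sums over the moduli yield savings beyond $\sqrt{X}$. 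The sifting condition \eqref{sifting} is used at the step of separating $(n,qq'r)=1$, ensuring that $n$ shares no small prime factor with the modulus, while the Siegel--Walfisz hypothesis on $\bbe$ handles the ``main term'' contributions from small conductor characters. The balance $RX^\eps\leqslant N\leqslant X^{-\eps}(X/R)^{1/3}$, together with $Q^2R\leqslant X$, is precisely what is required so that the various error terms produced by dispersion combine to the clean bound $\no\bal_M\no\bbe_N\sqrt{X}(\log X)^{-A}$.

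For extension (a), I would note that adding the coprimality condition $(q,D)=1$ and $(r,D)=1$ does not affect the argument at all: one simply restricts the outer summation indices, and every appeal to the dispersion and Kloosterman estimates proceeds verbatim. The extra factor $\tau(D)^{C_0}$ that might emerge is absorbed into the implicit constant. Analogously, the Siegel--Walfisz hypothesis $\SW(D,K)$ is at least as strong as $\SW(1,K)$, so $\bbe$ still qualifies.

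For extension (b), to introduce the cut $u<mn\leqslant v$, I would use a smooth Mellin-type partition of unity: write $\1_{]u,v]}(mn)=\frac1{2\pi i}\int_{(\sigma)}\varphi(s)(mn)^{-s}\dd s +O(\text{tail})$ with $\varphi(s)$ an appropriate smooth cutoff concentrated on $|\Im s|\leqslant (\log X)^{A+1}$, thereby separating the variables $m$ and $n$ (since $(mn)^{-s}=m^{-s}n^{-s}$). The new sequences are $\alpha_m':=\alpha_m m^{-s}$ and $\beta_n':=\beta_n n^{-s}$; both retain the required $\ell^2$-norms and, crucially, $\bbe'$ still satisfies $\SW(D,K)$ since twisting by $n^{-s}$ only multiplies the Siegel--Walfisz bound by the trivial factor $(\log X)^{O(1)}$. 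Applying \eqref{423} to $\Delta_{\bal',\bbe'}$ pointwise in $s$ and integrating against $\varphi(s)$ then gives the extended bound with a logarithmic loss that is harmless (since $A$ is arbitrary). This is entirely parallel to the argument in \cite[p.\ 371--372]{BFI2}.

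For extension (c), I would simply verify the parameter range. Given $X^{1/105}\leqslant N\leqslant X^{2/7}$, $R\leqslant X^{1/105-\eps}$, and $Q^2R\leqslant X$, the inequality $RX^\eps\leqslant N$ follows from $R\leqslant X^{1/105-\eps}\leqslant NX^{-\eps}$, while $N\leqslant X^{-\eps}(X/R)^{1/3}$ follows from $NR^{1/3}X^\eps\leqslant X^{2/7+1/315+\eps}\leqslant X^{1/3}$ for small $\eps$ (since $2/7+1/315<1/3$), so \eqref{X0<N<X13} is satisfied and the conclusion follows. The main obstacle, if one were to reconstruct the full proof rather than quote BFI1, would be the Kloosterman estimate after dispersion: producing a genuine power saving in the relevant averaged Kloosterman sum requires invoking the Deshouillers--Iwaniec spectral machinery (or the weaker Weil bound combined with Poisson summation), and tuning the cut-off $N$ against the barrier $(X/R)^{1/3}$ to ensure that the off-diagonal loss stays beneath the trivial bound is the delicate balance that pins down the exponent $1/3$ in \eqref{X0<N<X13}.
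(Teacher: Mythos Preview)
Your approach is essentially identical to the paper's: quote \cite[Theorem 6]{BFI1} for the base case, extend to general $D$ by noting that the restriction $(qr,aD)=1$ is harmless, extend to $\Delta_{\bal,\bbe}^{u,v}$ via a Mellin-type separation of variables (the paper cites exactly \cite[cor.~1.1]{FouRadzi1} and \cite[p.~371--372]{BFI2} for this), and verify the explicit parameter range arithmetically.

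One minor correction: your claim that ``$\SW(D,K)$ is at least as strong as $\SW(1,K)$'' is backwards. The condition $\SW(D,K)$ asserts equidistribution only modulo $k$ with $(\ell D,k)=1$, i.e.\ only for moduli coprime to $D$, so it is \emph{weaker} than $\SW(1,K)$. What makes the extension to general $D$ go through is not that $\bbe$ ``still qualifies'' for the original $D=1$ argument, but rather that in the dispersion method the Siegel--Walfisz input is only ever invoked for the moduli $qr$ (and their divisors) appearing in the sum, and these are all coprime to $D$ by hypothesis. Your conclusion is therefore correct, but the justification as written should be rephrased.
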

   Fouvry  \cite [Proposition 1, p. 61]{FouCrelle} proved  a similar result in the special case
 $\bbe=\mu$ or $\1$. These special cases are  sufficient  
to derive Theorem \ref{pointdedepart}  above. 

When $N=X^{o(1)}$, Lemma \ref{lemma4} does not allow selecting $R= X^\delta$, for some fixed \mbox{$\delta >0$}. The following result, due to Fouvry and Radziwi\l\l \,\cite[cor. 1.1,(i) \& Proposition 8.1(i)]{FouRadzi1},  fills this gap when $D=1$. Here again the extension to  general $D\geqslant 1$ is straightforward. Accordingly, we state the following lemma.
\begin{lemma} \label{lemma5}
 Let $A>0$,   $K>0$, $\varepsilon >0$. Uniformly for $$D,\, M,\, N,\, Q\geqslant 1,\quad 1\leqslant \vert a\vert \leqslant \tfrac1{12} X,\quad
\e^{(\log X)^\varepsilon} \leqslant N \leqslant Q^{-11/12} X^{17/36 -\varepsilon}.
$$
and all complex sequences $\bal=( \alpha_m)\in\CC^{\N^*}$, $\bbe =(\beta_n)\in\CC^{\N^*}\hskip-1.5mm\cap\SW(D,K)$ such that
\begin{equation}\label{boundforalphaandbeta}
\vert \alpha_m \vert \leqslant \tau_K (m) \ (m\geqslant 1),\qquad  \vert \beta_n\vert \leqslant \tau_K (n)\ (n\geqslant 1),
\end{equation}
   we have 
\begin{equation}\label{FoRa}
\sum_{\substack{q\leqslant Q \\  (q,aD)=1}}\vert \, \Delta_{\bal,\bbe} (M, N; q, a)\, \vert  \ll \frac X{(\log X)^A}\cdot
\end{equation}
\par 
Under the same hypotheses, the bound \eqref{FoRa}  persists, uniformly for $1\leqslant u<v$, on replacing $\Delta_{\bal,\bbe}$ by $\Delta_{\bal,\bbe}^{u,v}$.

In particular, for all $\varepsilon >0$, $A>0$,  the estimate
\begin{equation}\label{544}
\sum_{\substack{r\leqslant R\\  (r,aD)=1} }\Bigl\vert \sum_{\substack{q\simeq Q \\  (q,aD)=1}}\, \Delta_{\bal,\bbe}(M, N; qr, a)
\Bigr\vert \ll \frac X{(\log X)^A},
\end{equation}
holds uniformly for 
\begin{align*}
&D,\, M,\, N,\,  Q,\, R\geqslant 1,\quad  1 \leqslant \vert a \vert \leqslant \tfrac1{12}X,\\  &\exp\big\{(\log X)^{1/4}\big\} \leqslant N \leqslant  X^{1/105 }, \quad
1 \leqslant R \leqslant X^{1/105-\varepsilon},\quad Q^2R < X.
\end{align*}
Under the same conditions,  the estimate \eqref{544} persists, uniformly for $1 \leqslant u <v$, on replacing $\Delta_{\bal,\bbe}$ by $\Delta_{\bal,\bbe}^{u,v}$ .
\end{lemma}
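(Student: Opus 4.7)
The bound \eqref{FoRa} in the case $D=1$ is, as acknowledged in the excerpt, Corollary 1.1(i) of Fouvry--Radziwi\l\l{} \cite{FouRadzi1}, the localized variant involving $\Delta^{u,v}_{\bal,\bbe}$ being their Proposition 8.1(i); both rest on Linnik's dispersion method combined with non-trivial averages of Kloosterman fractions over flexible moduli. I would take those two statements as black boxes and focus on three successive extensions: (a) from $D=1$ to arbitrary $D\geqslant 1$; (b) from $\Delta_{\bal,\bbe}$ to $\Delta^{u,v}_{\bal,\bbe}$; (c) from the single-modulus bound \eqref{FoRa} to the cross form \eqref{544}.

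For (a), the coprimality constraint $(q,aD)=1$ restricts the outer modulus to integers prime to $D$, and the hypothesis $\bbe\in\SW(D,K)$ furnishes the Siegel--Walfisz input for every auxiliary modulus coprime to $D$ encountered in the dispersion computation of \cite{FouRadzi1}. Re-running the argument with these ingredients yields the same main-term cancellation at the cost of an additional multiplicative factor $\tau(D)^{O(K)}$ from small-divisor bookkeeping, which is absorbed in the constant $C(A,\varepsilon,K)$. For (b), I would follow the standard Mellin-transform separation of variables recalled in the excerpt just before Lemma \ref{lemma4}: approximate $\1_{]u,v]}$ by a smooth $\psi$ coinciding with it outside transition intervals of length $v/\LL^{A+B}$, and write
$$\psi(mn)=\frac1{2\pi i}\int_{(0)}\tilde\psi(w)\,m^{-w}n^{-w}\dd w;$$
for each $w=i\tau$ the twisted sequences $(\alpha_mm^{-w})_{m\sim M}$ and $(\beta_n n^{-w})_{n\sim N}$ satisfy the same size and Siegel--Walfisz hypotheses as $\bal,\,\bbe$ with constants polynomial in $1+|\tau|$, so applying \eqref{FoRa} and integrating against the rapidly decaying $\tilde\psi$ on vertical lines completes the argument. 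The endpoint errors contribute $\ll X/\LL^A$ via the Shiu-type bound in Lemma \ref{lemma1}.

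For (c), the triangle inequality and the substitution $s=qr$ give
$$\sum_{r\leqslant R}\Bigl|\sum_{q\simeq Q}\Delta_{\bal,\bbe}(M,N;qr,a)\Bigr|\leqslant\sum_{\substack{s\simeq QR\\(s,aD)=1}}\nu(s)\,\bigl|\Delta_{\bal,\bbe}(M,N;s,a)\bigr|,$$
with $\nu(s):=\#\{(q,r):qr=s,\,q\simeq Q,\,r\leqslant R\}\leqslant\tau(s)$. The hypotheses $Q^2R<X$ and $R\leqslant X^{1/105-\varepsilon}$ force $QR\leqslant X^{53/105-\varepsilon/2}$, which matches exactly the limiting size $X^{17/33}/N^{12/11}=X^{53/105}$ (when $N=X^{1/105}$) permitted by \eqref{FoRa}---this coincidence of exponents is what fixes the numerical value $1/105$ throughout. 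To absorb the divisor weight $\nu(s)$ with only a $\LL^{O(1)}$ loss rather than an $X^\varepsilon$ loss, I would revisit the dispersion computation underlying \eqref{FoRa} and insert a divisor-bounded multiplier in front of $\Delta(s)$; Linnik's method is robust under such insertions, though verifying this line by line is delicate. Precisely this robustness claim is the main technical hurdle, since a naive Cauchy--Schwarz split such as
$$\sum_s\tau(s)|\Delta(s)|\leqslant\Bigl(\sum_s\tau(s)^2|\Delta(s)|\Bigr)^{1/2}\Bigl(\sum_s|\Delta(s)|\Bigr)^{1/2}$$
combined with the trivial bound $|\Delta(s)|\ll X^{1+\varepsilon}/s$ would spend a factor $X^{\varepsilon/2}$ that cannot be recovered by enlarging $A$; a more careful argument, or a direct re-reading of the Fouvry--Radziwi\l\l{} dispersion estimate in the cross-modulus setting, is genuinely required. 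The $u,v$-variant of \eqref{544} then follows by repeating step (b) with \eqref{544} in place of \eqref{FoRa}.
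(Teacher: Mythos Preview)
Your treatment of (a) and (b) is fine and matches the paper's indications. The issue is in (c): you set up the reduction correctly---writing $s=qr$ and arriving at $\sum_{s\leqslant 2QR,\,(s,aD)=1}\tau(s)\,|\Delta_{\bal,\bbe}(M,N;s,a)|$, with the numerical check that $2QR\leqslant X^{53/105-\varepsilon/2}$ falls inside the range of \eqref{FoRa}---but you then dismiss the Cauchy--Schwarz split on the grounds that the trivial bound is $|\Delta(s)|\ll X^{1+\varepsilon}/s$. That bound is far too crude. Since $|\alpha_m\beta_n|\leqslant\tau_K(m)\tau_K(n)$ and $\sum_{mn=\ell}\tau_K(m)\tau_K(n)=\tau_{2K}(\ell)$, Shiu's estimate (Lemma~\ref{lemma1}) gives, for $(s,a)=1$ and $s\leqslant X^{1-\varepsilon}$,
\[
|\Delta_{\bal,\bbe}(M,N;s,a)|\ \ll\ \frac{X}{\varphi(s)}\,\LL^{\,c(K)}.
\]
Hence
\[
\Sigma_1:=\sum_{s\leqslant 2QR}\tau(s)^2\,|\Delta(s)|\ \ll\ X\LL^{\,c(K)}\sum_{s\leqslant 2QR}\frac{\tau(s)^2}{\varphi(s)}\ \ll\ X\LL^{\,c'(K)},
\]
while $\Sigma_2:=\sum_{s\leqslant 2QR}|\Delta(s)|\ll X/\LL^{2A+c'(K)}$ by \eqref{FoRa} (applied with the enlarged exponent). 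The Cauchy--Schwarz split then yields $\sum_s\tau(s)|\Delta(s)|\leqslant\Sigma_1^{1/2}\Sigma_2^{1/2}\ll X/\LL^A$, exactly as required.

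This is precisely what the paper means by ``follows lines similar to those described in the proof of Corollary~\ref{easy1}'': one factor carries the divisor weight and is bounded trivially via Shiu, the other is handled by the main estimate. No re-examination of the dispersion computation is needed; the loss is only a power of $\LL$, not of $X$, and is absorbed by enlarging $A$.
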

The derivation of \eqref{544} from \eqref{FoRa} is standard and follows lines similar to those described in the proof of Corollary \ref{easy1} above. \par 
The next statement is obtained by combining Lemmas \ref{lemma4} and \ref{lemma5}.
\begin{lemma}
\label{lemma6} 
Let $A>0$,   $K>0$, $\varepsilon >0$. Uniformly for   
 \begin{align*}
&D,\, M,\, N, \, Q,\, R\geqslant 1,\quad 1\leqslant \vert a \vert \leqslant (\log X)^A, \\ &\exp\big\{(\log X)^{1/4}\big\} \leqslant N < X^{2/7},\quad 
1 \leqslant R \leqslant X^{1/105-\varepsilon},\quad  Q^2R < X,  
\end{align*}
and all complex sequences  $\bal=( \alpha_m)\in\CC^{\N^*}$, $\bbe =(\beta_n)\in\CC^{\N^*}\cap\SW(D,K)$    satisfying   conditions   \eqref{sifting} and \eqref{boundforalphaandbeta}, we have
\begin{equation} 
\label{majDqr}
\sum_{\substack{r\leqslant R\\  (r,aD)=1}} \Bigl\vert \, \sum_{\substack{q \simeq Q \\  (q,aD)=1}} \Delta_{\bal,\bbe}(M, N; qr, a) \,\Bigr\vert \ll \frac X{(\log X)^A}\cdot
\end{equation}
 On replacing  $\Delta_{\bal,\bbe}$ by $\Delta_{\bal,\bbe}^{u,v}$, the estimate \eqref{majDqr} persists, uniformly for $1 \leqslant u<v$. 
\end{lemma}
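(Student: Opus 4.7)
The plan is to combine Lemmas \ref{lemma4} and \ref{lemma5} directly by splitting on the size of $N$ at the threshold $N=X^{1/105}$. First, I would observe that the ``particular case'' of Lemma \ref{lemma5} embodied in \eqref{544} gives exactly the bound \eqref{majDqr} in the range $\exp\{(\log X)^{1/4}\}\leqslant N \leqslant X^{1/105}$, whereas the ``particular case'' stated at the end of Lemma \ref{lemma4} gives the same bound in the range $X^{1/105}\leqslant N \leqslant X^{2/7}$. Since these two ranges together exhaust the $N$-range assumed in Lemma~\ref{lemma6} and overlap at the endpoint $X^{1/105}$, a case distinction according as $N\leqslant X^{1/105}$ or $N\geqslant X^{1/105}$ reduces \eqref{majDqr} to one of the two already-established bounds.

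Second, I would verify that the remaining hypotheses transfer cleanly to each predecessor. The conditions $(r,aD)=1$, $(q,aD)=1$, $1\leqslant R\leqslant X^{1/105-\varepsilon}$, $Q^2R<X$, the Siegel--Walfisz assumption $\bbe\in\SW(D,K)$, and the size bounds \eqref{boundforalphaandbeta} are common to Lemmas~\ref{lemma4} and~\ref{lemma5}. The restriction $1\leqslant |a|\leqslant (\log X)^A$ in Lemma~\ref{lemma6} is stronger than the bound $|a|\leqslant \tfrac1{12}X$ needed by Lemma~\ref{lemma5}, and coincides with the requirement in Lemma~\ref{lemma4}. The only asymmetric hypothesis is the sifting condition \eqref{sifting}: Lemma~\ref{lemma4} requires it, Lemma~\ref{lemma5} does not. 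Since Lemma~\ref{lemma6} postulates \eqref{sifting}, it is available in Case~2 where it is essential, and it is harmless in Case~1.

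Third, to handle the extension from $\Delta_{\bal,\bbe}$ to $\Delta_{\bal,\bbe}^{u,v}$, I would appeal directly to the closing sentences of Lemmas~\ref{lemma4} and~\ref{lemma5}, which assert that each of the bounds persists uniformly in $1\leqslant u<v$ after this substitution. The same case split on $N$ then yields the $\Delta^{u,v}$ form of \eqref{majDqr}.

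I do not anticipate a substantive obstacle: the numerical exponents in Lemmas \ref{lemma4} and \ref{lemma5} are calibrated so that the two admissible $N$-ranges meet exactly at $X^{1/105}$, making Lemma~\ref{lemma6} essentially a packaging result. The only mildly delicate point worth double-checking is precisely this matching at the boundary $N=X^{1/105}$ together with the coexistence of hypotheses (especially \eqref{sifting}), so that neither application is made outside the range where it was proved; once this is confirmed, no further estimation is required.
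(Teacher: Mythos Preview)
Your proposal is correct and matches the paper's approach exactly: the paper simply states that Lemma~\ref{lemma6} ``is obtained by combining Lemmas~\ref{lemma4} and~\ref{lemma5}'' without further detail, and your case split at $N=X^{1/105}$ is precisely how this combination is carried out. One small point worth making explicit: Lemma~\ref{lemma4} does not itself assume \eqref{boundforalphaandbeta}, and its conclusion is stated in terms of $\|\bal\|_M\|\bbe\|_N\sqrt{X}$ rather than $X$; you need \eqref{boundforalphaandbeta} together with the elementary bound $\sum_{m\sim M}\tau_K(m)^2\ll M(\log M)^{K^2-1}$ to convert this into $X/(\log X)^A$ after relabelling $A$.
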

As will be explained in \S \ref{Dirichlet}, it turns out  that condition $Q^2R< X $ in Lemmas \ref{lemma4}, \ref{lemma5} and \ref{lemma6}, may be relaxed to the weaker condition
$QR < X/(\log X)^B$ for some $B=B(A)$ by exploiting an idea due to Dirichlet.
\subsection{The convolution principle} We recall here a by now classical principle which is implicit in many works related to the Bombieri--Vinogradov theorem.  This principle asserts that the convolution of two well-behaved sequences has an exponent of distribution equal to $1/2$. The following statement is a straightforward  variation of \cite[Theorem 0(b)]{BFI1}.
\begin{lemma}
  Let $A>0$, $K>0$,  $\varepsilon >0$. There exists $B= B( A,K, \varepsilon)$ such that, uniformly for 
 $$D\geq 1, \  M,\, N \geqslant \e^{(\log X)^\varepsilon}, \   
 Q \leqslant X^{1/2} / (\log X)^A,
$$
 and all complex sequences $\bal=( \alpha_m)\in\CC^{\N^*}$, $\bbe =(\beta_n)\CC^{\N^*}\hskip-1.5mm\cap\SW(D,K)$   satisfying 
\eqref{boundforalphaandbeta}, 
we have
\begin{equation}\label{Largesieve}
\sum_{\substack{q\leqslant Q \\  (q, D)=1}}\max_{(a,q)=1}\vert \, \Delta_{\bal,\bbe} (M, N; q, a)\, \vert  \ll \frac X{(\log X)^A}\cdot
\end{equation}
\par 
Under the same hypotheses, the bound \eqref{Largesieve}  persists, uniformly for $1\leqslant u<v$, on replacing $\Delta_{\bal,\bbe}$ by $\Delta_{\bal,\bbe}^{u,v}$.
\end{lemma}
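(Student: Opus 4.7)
My plan is to follow the classical Bombieri--Vinogradov strategy for bilinear forms, combining the multiplicative large sieve with the Siegel--Walfisz hypothesis on $\bbe$. For a fixed $q$ with $(q,D)=1$ and $a$ with $(a,q)=1$, orthogonality of Dirichlet characters yields
\begin{equation*}
\Delta_{\bal,\bbe}(M,N;q,a)=\frac{1}{\varphi(q)}\sum_{\substack{\chi\bmod q\\\chi\ne\chi_0}}\overline{\chi(a)}\,A(\chi)\,B(\chi),
\end{equation*}
with $A(\chi):=\sum_{m\sim M}\alpha_m\chi(m)$ and $B(\chi):=\sum_{n\sim N}\beta_n\chi(n)$. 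Taking the maximum over $a$ and passing to absolute values kills the phase $\overline{\chi(a)}$. I would then reduce to primitive characters: each non-principal $\chi\bmod q$ is induced by a unique primitive $\chi^*$ modulo some $q^*$ with $1<q^*\mid q$, and regrouping by $q^*$ produces a factor $\sum_{d\leqslant Q/q^*}1/\varphi(dq^*)\ll(\log X)/\varphi(q^*)$. The problem reduces to bounding
\begin{equation*}
T:=\sum_{\substack{1<q^*\leqslant Q\\(q^*,D)=1}}\frac{1}{\varphi(q^*)}\sumast_{\chi\bmod q^*}|A(\chi)|\,|B(\chi)|,
\end{equation*}
where $\sumast$ restricts to primitive characters.

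I would split the $q^*$-range at $Q_0:=(\log X)^{B_0}$, with $B_0=B_0(A,K)$ to be chosen. For $q^*\leqslant Q_0$, the hypothesis $\bbe\in\SW(D,K)$, applied with $d=1$ and $k=q^*$ (coprime to $D$ since $(q,D)=1$), gives, after inserting $B(\chi)=\sum_{(\ell,q^*)=1}\chi(\ell)\sum_{n\sim N,\,n\equiv\ell\,(q^*)}\beta_n$ and exploiting the cancellation of the principal part for non-principal $\chi$, the pointwise estimate
\begin{equation*}
|B(\chi)|\ll\varphi(q^*)\,\no\bbe_N\sqrt N\,(\log N)^{-A'}
\end{equation*}
for any fixed $A'$. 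Combined with the trivial Cauchy--Schwarz bound $|A(\chi)|\leqslant\sqrt M\,\no\bal_M$, a trivial count of $\leqslant\varphi(q^*)$ primitive characters mod $q^*$, and $\no\bal_M\no\bbe_N\ll\sqrt X(\log X)^{O_K(1)}$ deduced from \eqref{boundforalphaandbeta} and standard moments of $\tau_K$, this range contributes $\ll Q_0^2 X(\log X)^{O_K(1)-A'}$, acceptable once $A'$ is taken large enough in terms of $A$, $B_0$ and $K$.

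For $Q_0<q^*\leqslant Q$, I would work on dyadic intervals $q^*\in(Q_1,2Q_1]$, applying Cauchy--Schwarz and the multiplicative large sieve
\begin{equation*}
\sum_{q^*\leqslant Q_1}\sumast_{\chi\bmod q^*}|A(\chi)|^2\ll(Q_1^2+M)\,\no\bal_M^2,
\end{equation*}
together with its analogue for $B$, and using $1/\varphi(q^*)\ll\log_2X/Q_1$ for $q^*\sim Q_1$. Each dyadic block is thereby bounded by
\begin{equation*}
\ll\frac{\log_2X}{Q_1}\sqrt{(Q_1^2+M)(Q_1^2+N)}\,\no\bal_M\no\bbe_N\ll\bigl(Q_1+\sqrt X/Q_1\bigr)\no\bal_M\no\bbe_N\log_2X.
\end{equation*}
Summing over dyadic $Q_0<Q_1\leqslant Q$ with $Q\leqslant X^{1/2}/(\log X)^A$, the first summand totals to $\ll X(\log X)^{O_K(1)-A}$ and the second to $\ll X(\log X)^{O_K(1)-B_0}$; selecting $B_0$ sufficiently large in terms of $A$ and $K$ yields the target estimate. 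The principal technical point, and the source of the hypothesis $(q,D)=1$, is to keep $q^*$ coprime to $D$ so that $\SW(D,K)$ is legitimately applicable at the inducing modulus. Finally, the extension to the truncated variant $\Delta^{u,v}_{\bal,\bbe}$ is carried out by the same Mellin-transform device invoked for Lemma \ref{lemma4}: writing $\1_{u<mn\leqslant v}$ as a contour integral separates the variables $m$ and $n$ and reduces to the unconstrained bound, with a negligible truncation error.
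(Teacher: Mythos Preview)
Your approach is the standard one, and is precisely what the paper is pointing to when it cites \cite[Theorem 0(b)]{BFI1} without proof. Two technical points deserve more care.

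First, the reduction to primitive characters is not quite as clean as you present it. Since $A(\chi)=\sum_{(m,q)=1}\alpha_m\chi^*(m)$ still depends on $q$ through the coprimality condition, the implicit inequality $|A(\chi)B(\chi)|\leqslant|A(\chi^*)B(\chi^*)|$ that your ``regrouping by $q^*$'' step uses need not hold. The customary repair is to keep track (via M\"obius on the co-conductor, or directly) of the extra coprimality to $q/q^*$; this is exactly why the Siegel--Walfisz condition $\SW(D,K)$ carries the auxiliary modulus~$d$ with the weight $\tau(d)^K$. In the small-conductor range one then invokes $\SW(D,K)$ with $d$ equal to (a divisor of) the co-conductor rather than with $d=1$, and the resulting $\tau(d)^K$ is absorbed into the log-powers.

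Second, your block estimate $\frac{1}{Q_1}\sqrt{(Q_1^2+M)(Q_1^2+N)}\ll Q_1+\sqrt X/Q_1$ drops the cross term $\asymp\sqrt{M+N}$; under the hypothesis $\min(M,N)\geqslant\e^{(\log X)^\varepsilon}$ it is indeed negligible, but it should be recorded. More substantively, your accounting for the large-$q^*$ range yields only $X(\log X)^{O_K(1)-A}$, which does not meet the stated target $X(\log X)^{-A}$ with the hypothesis $Q\leqslant X^{1/2}/(\log X)^A$ as written. One needs instead $Q\leqslant X^{1/2}/(\log X)^B$ for a suitable $B=B(A,K,\varepsilon)$; this is presumably the role of the constant $B$ announced in the lemma but not used in the displayed hypotheses.
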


\subsection{Lemmas  from the theory of algebraic exponential sums}  Recall from \S\thinspace\ref{conventions} the definitions of the functions $g_q$  and $\mathfrak Y_0$, the latter being associated to the positive number $Y_0$. The following lemma is trivial when $Y_0=2$. When $Y_0>2$, it is a standard consequence of the fundamental lemma from sieve theory. 
\begin{lemma}\label{tau1} The following statement holds for suitable, absolute constant $C_0$. 
For each $\varepsilon >0$, suitable $\delta= \delta (\varepsilon)$, $c(\varepsilon) >0$, and  uniformly for $x \geqslant 1$,   $M\geqslant 1$, $Y_0\geqslant 2$, and    integers integers $a$, $q$, $t$, $D$  such that
 $$ 
2\leqslant Y_0<     x^{1/100} < M  \leqslant x, \quad 1\leqslant q\leqslant x^{1-\varepsilon}, 
\quad(q,aD)=1,\quad (t,D)=1,$$
 we have 
\begin{equation*}
\underset{\substack{m\simeq M  \\ m\equiv t \pmod D}} {\sum}g_q (m ; a)
 \ll\frac{D^{C_0}  }{\varphi (q)}x^{1 -\delta (\varepsilon)},
\end{equation*}
and, more generally,
\begin{equation*}
 \underset{\substack{m\simeq M \\   m\md tD}} {\sum } g_q (m ; a)\mathfrak  Y_0 (m )    
 \ll \frac{D^{C_0}}{\varphi (q)}x^{1-c(\varepsilon)/\log Y_0}.
\end{equation*} 
\end{lemma}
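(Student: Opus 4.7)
The plan is to apply the fundamental lemma of combinatorial sieve theory. The first assertion of the lemma is the special case $Y_0=2$, $\mathfrak Y_0\equiv\mathbf 1$ of the second one: direct counting, using the CRT under the hypotheses $(q,aD)=1$ and $(t,D)=1$, gives $\sum_{m\simeq M,\,m\md tD}g_q(m;a)=O(1)$, dwarfed by the target right-hand side. I therefore concentrate on the general estimate with $Y_0>2$.

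Expanding the definition of $g_q$ from \S\ref{conventions}, I write the sum to be bounded as $S=S_1-S_2/\varphi(q)$, where $S_1$ counts $m\simeq M$ subject to $m\md tD$, $m\md aq$, $P^-(m)\geqslant Y_0$, and $S_2$ counts $m\simeq M$ subject to $m\md tD$, $(m,q)=1$, $P^-(m)\geqslant Y_0$. By the CRT, $S_1$ is a sifted count over a single residue class modulo $Dq$, while $S_2$ is the analogous sifted count over the $\varphi(q)$ residue classes modulo $Dq$ compatible with the constraints. Invoking the Rosser--Iwaniec fundamental lemma at sieving level $D^{*}:=x^{\eta}$ for a parameter $\eta=\eta(\varepsilon)>0$ to be chosen, with weights $\lambda_d^{\pm}$ supported on squarefree $d\leqslant D^{*}$ having $P^+(d)<Y_0$ and $|\lambda_d^{\pm}|\leqslant 1$, I obtain
\begin{equation*}
S_1=\frac{VM}{Dq}\bigl(1+O(\e^{-s})\bigr)+R_1,\qquad S_2=\frac{V\varphi(q)M}{Dq}\bigl(1+O(\e^{-s})\bigr)+R_2,
\end{equation*}
with $V:=\prod_{p<Y_0,\,p\,\nmid\,Dq}(1-1/p)\asymp 1/\log Y_0$, $s:=\eta\log x/\log Y_0$, and remainders $R_1\ll D^{C_0}D^{*}$, $R_2\ll D^{C_0}\varphi(q)D^{*}$. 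Here each progression modulo $Dqd$ contributes an endpoint error $O(1)$ summed over at most $D^{*}$ values of $d$, and any polynomial loss in $D$ arising from the local sieve densities at primes of $Dq$ is absorbed into $D^{C_0}$. Subtracting, the main terms cancel exactly and I arrive at
\begin{equation*}
|S|\ll \frac{VM\,\e^{-s}}{Dq}+D^{C_0}D^{*}\ll\frac{x^{1-\eta/\log Y_0}}{\varphi(q)\log Y_0}+D^{C_0}x^{\eta}.
\end{equation*}
Choosing $\eta:=\varepsilon/2$ and using $\varphi(q)\leqslant q\leqslant x^{1-\varepsilon}$ to rewrite $D^{C_0}x^{\eta}\leqslant D^{C_0}x^{1-\varepsilon/2}/\varphi(q)$, both pieces are majorized by $D^{C_0}x^{1-c(\varepsilon)/\log Y_0}/\varphi(q)$ with $c(\varepsilon):=\tfrac12\varepsilon\log 2$, the binding constraint being the worst case $Y_0=2$.

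The main obstacle is the careful tracking of the $D$-dependence within the fundamental lemma: one must use a version that handles the primes dividing $Dq$ uniformly, so that the local factor $V$ appears identically in the main terms of $S_1$ and $S_2$ (ensuring their cancellation), and so that the accumulated $D$-losses collapse into a single absolute exponent $C_0$. Once this uniformity is in place, the remainder of the argument is an exercise in choosing the parameter $\eta$.
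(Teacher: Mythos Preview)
Your proposal is correct and follows exactly the approach indicated in the paper: the paper simply states that the lemma ``is trivial when $Y_0=2$'' and ``when $Y_0>2$, it is a standard consequence of the fundamental lemma from sieve theory,'' and you have supplied precisely those details. The only minor remark is that the factors $D^{C_0}$ you introduce in the remainder bounds $R_1,R_2$ are not actually needed (the primes dividing $Dq$ are automatically excluded from the sieve since $(m,Dq)=1$ is forced by the congruence conditions, so the density $g(p)=0$ there and each $r_d$ is genuinely $O(1)$), but this over-caution is harmless.
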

Lemma \ref{tau1} deals with the distribution of the function $\tau_1=\1$ in arithmetic progressions. 
The following two lemmas concern  the distribution of the functions $\tau_2$ and $\tau_3$. 
They assert that the exponent of distribution  of these functions can be taken $>1/2$.    It remains a  challenging problem to extend these results to the function $\tau_4$.  
\begin{lemma}\label{tau2} The following statement holds for suitable, absolute $C_0$. 
For each $\varepsilon >0$, suitable $\delta= \delta (\varepsilon)$, $c(\varepsilon) >0$, and uniformly for  $x \geqslant 1$,   $M_1, M_2\geqslant 1$, $Y_0\geqslant 2$, and integers $a$, $q$, $t_1$, $t_2$, $D$ , such that    
 \begin{align*} 
&2\leqslant Y_0\leqslant      x^{1/100} \leqslant  M_1 \leqslant M_2, \ M_1M_2 \leqslant x, \\ &1\leqslant q\leqslant x^{2/3-\varepsilon}, 
\ (q,aD)=1,\ (t_1t_2,D)=1,
\end{align*}
 we have 
\begin{equation}\label{tau2arprog}\underset{\substack{m_1\simeq M_1,\ m_2 \simeq M_2  \\ m_i \md{t_i}D\ (i=1,2)}} {\sum\ \quad \sum}g_q (m_1m_2; a)
  \ll\frac{ D^{C_0}}{\varphi (q)}x^{1-\delta (\varepsilon)} ,
\end{equation}
and, more generally,
\begin{equation}\label{tau2arprogmodif}
 \underset{\substack{m_1\simeq M_1,\ m_2 \simeq M_2\\   m_i \md{t_i}D\ (i=1,2)}} {\sum\ \quad \sum} g_q (m_1m_2; a)\mathfrak  Y_0 (m_1m_2)    
 \ll \frac{D^{C_0}}{\varphi (q)}x^{1-c  (\varepsilon) /\log Y_0} .
\end{equation}
\end{lemma}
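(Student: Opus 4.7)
The plan is to reduce Lemma \ref{tau2} to the classical fact that $\tau_2$ has exponent of distribution $2/3$ in arithmetic progressions (Selberg--Linnik--Heath-Brown, via Weil's bound on Kloosterman sums), and to incorporate the two additional features of the statement: the congruence constraints modulo $D$ on each of $m_1,m_2$, and the smoothness condition $\mathfrak Y_0$. The first is handled by orthogonality of Dirichlet characters modulo $D$:
$$\mathbf{1}_{m_i\md{t_i}D}=\frac1{\varphi(D)}\sum_{\chi\mmod D}\overline{\chi(t_i)}\chi(m_i)\qquad (i=1,2).$$
Insertion of these identities reduces the estimation of the left-hand side of \eqref{tau2arprog} to that of the character-twisted bilinear sums
$$S(\chi_1,\chi_2;q,a):=\sum_{m_1\simeq M_1}\sum_{m_2\simeq M_2}\chi_1(m_1)\chi_2(m_2)g_q(m_1m_2;a),$$
weighted by $\overline{\chi_1(t_1)\chi_2(t_2)}/\varphi(D)^2$. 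It therefore suffices to prove $|S(\chi_1,\chi_2;q,a)|\ll D^{C_0-2}x^{1-\delta(\varepsilon)}/\varphi(q)$ uniformly in $\chi_1,\chi_2$ modulo $D$.

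The estimation of $S(\chi_1,\chi_2;q,a)$ is the core of the argument. Expand $g_q(n;a)$ into a sum of additive characters modulo $q$ (up to a negligible principal term valid when $(n,q)=1$) and apply Poisson summation to the longer variable $m_2$, noting that $M_1\leqslant M_2$ forces $M_2\geqslant\sqrt{M_1M_2}$. The dual $m_2$-variable ranges up to $\asymp q/M_2$, and each resulting term contains an incomplete Kloosterman sum of the type $\sum_{m_1\simeq M_1}\chi_1(m_1)S(h\ell,a\overline{m_1};q)$. Weil's bound $|S(u,v;q)|\leqslant\tau(q)(u,v,q)^{1/2}q^{1/2}$, combined with a Cauchy--Schwarz step in the $m_1$-summation (Linnik's dispersion method), yields a saving of order $q^{1/2}(M_1M_2)^{-1/2}$ over the trivial estimate; this beats the trivial bound as soon as $q\leqslant X^{2/3-\varepsilon}$ with $X=M_1M_2$, which is the classical threshold for the $\tau_2$ function. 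The coprimality $(q,D)=1$ ensures that the characters $\chi_1,\chi_2$ modulo $D$ do not interfere with the additive-character analysis modulo $q$, their cumulative effect being absorbed in a polynomial factor $D^{C_0}$.

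For the sifted version \eqref{tau2arprogmodif}, apply the Fundamental Lemma of the combinatorial sieve (Brun or Selberg) to express $\mathfrak Y_0(m_1m_2)$, up to a relative error of order $\exp(-u/2)$ with $u=\log D_0/\log Y_0$, as a linear combination $\sum_d\lambda_d^{\pm}\mathbf{1}_{d\mid m_1m_2}$ supported on squarefree $Y_0$-smooth integers $d\leqslant D_0$ with $|\lambda_d^{\pm}|\leqslant 1$. Choosing $D_0:=x^{\varepsilon/4}$ keeps the sieve error at $\ll x^{1-c(\varepsilon)/\log Y_0}$. For each such $d$, factor $d=d_1d_2$ with $d_i\mid m_i$ (finitely many ways) and write $m_i=d_ie_i$: the inner sum over $(e_1,e_2)$ is then of exactly the shape treated in the previous paragraph, with modulus $qd$ in place of $q$. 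Since $qD_0\leqslant x^{2/3-\varepsilon/2}$ for $q\leqslant x^{2/3-\varepsilon}$, the bilinear estimate still applies, and summing over $d\leqslant D_0$ yields \eqref{tau2arprogmodif}. The main obstacle is the Kloosterman analysis of the second paragraph: preserving only a polynomial (and not exponential) dependence on $D$ in the presence of arbitrary multiplicative twists $\chi_1,\chi_2$ modulo $D$ requires careful conductor bookkeeping, in which the hypothesis $(q,aD)=1$ is essential to separate the additive-character analysis modulo $q$ from the twists modulo $D$.
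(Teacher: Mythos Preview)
Your plan matches the paper's: the case $D=1$ is the classical exponent-$2/3$ result for $\tau_2$ via Weil's Kloosterman bound; the sifted estimate \eqref{tau2arprogmodif} follows from \eqref{tau2arprog} by the fundamental lemma of sieve theory; and the congruences modulo $D$ are removed by orthogonality of characters, at the cost of a factor $D^{C_0}$. The only difference is that the paper detects $m_i\md{t_i}D$ by \emph{additive} characters modulo $D$ rather than Dirichlet characters---these merge more directly (via the Chinese remainder theorem, using $(q,D)=1$) with the additive-character analysis modulo $q$ underlying the Kloosterman argument, but your multiplicative version is equally valid since $(t_i,D)=1$ forces $(m_i,D)=1$ on the support of the indicator.

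One small correction to your sketch of the core estimate: the $\tau_2$ result does not require Linnik's dispersion method. After Poisson summation in $m_2$ one completes the resulting sum over $m_1$ to a full period modulo $q$, producing genuine Kloosterman sums $S(ha,\ell;q)$; Weil's bound applied termwise already yields the saving needed for level $2/3$, with no Cauchy--Schwarz step. Dispersion is relevant elsewhere in the paper (Lemmas \ref{lemma4}--\ref{lemma6}) but not here.
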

\begin{lemma}
\label{tau3}
The following statement holds for suitable, absolute $C_0$. 
For each $\varepsilon >0$, suitable $\delta= \delta (\varepsilon)$, $c(\varepsilon) >0$, and uniformly for   $x \geqslant 1$,    $M_1, M_2, M_3\geqslant 1$, $Y_0\geqslant 2$, and integers $a$, $q$, $t_1$, $t_2$, $t_3$, $D$ such that     
 \begin{align*}
&2\leqslant Y_0\leqslant      x^{1/100} \leqslant  M_1 \leqslant M_2\leqslant M_3, \ M_1M_2 M_3\leqslant x,\\
&1\leqslant q\leqslant x^{21/41-\varepsilon},\  (q,aD)=1,\ (t_1t_2t_3,D)=1,
\end{align*}
 we have 
 \begin{equation}\label{tau3arprog}
\underset{\substack{m_1\simeq M_1,\, m_2 \simeq M_2,\, m_3 \simeq M_3 \\ m_i\md{t_i}D\ (1\leqslant i\leqslant 3)}} {\sum\ \quad \sum\ \quad \sum} g_q (m_1m_2m_3; a)  
  \ll\frac{D^{C_0}}{\varphi (q)}x^{1 -\delta (\varepsilon)},
\end{equation}
 and more generally 
\begin{multline}
\label{tau3arprogmodif}\underset{\substack{m_1\simeq M_1,\, m_2 \simeq M_2,\, m_3 \simeq M_3 \\ m_i\md{t_i}D\ (1\leqslant i\leqslant 3)}} {\sum\ \quad \sum\ \quad \sum}g_q (m_1m_2m_3; a)\mathfrak Y_0 (m_1m_2m_3)   \ll \frac{D^{C_0}}{\varphi (q)} x^{1-c  (\varepsilon) /\log Y_0}.
\end{multline}
\end{lemma}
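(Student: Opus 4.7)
The plan is to derive \eqref{tau3arprog} from the Friedlander--Iwaniec--Heath-Brown theorem on the exponent of distribution of $\tau_3$ in arithmetic progressions, which yields a saving $x^{-\delta(\varepsilon)}$ over the trivial size $x/\varphi(q)$ for every $q\leqslant x^{21/41-\varepsilon}$ coprime to $a$; and then to deduce \eqref{tau3arprogmodif} from \eqref{tau3arprog} by appealing to the fundamental lemma of the combinatorial sieve.

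For \eqref{tau3arprog}, I would first remove the congruences $m_i\equiv t_i\pmod D$ via orthogonality of Dirichlet characters modulo $D$. Since $(t_i,D)=(q,D)=1$, we may write
$$\1_{m_i\equiv t_i\pmod D}=\frac1{\varphi(D)}\sum_{\chi_i\pmod D}\overline{\chi_i(t_i)}\,\chi_i(m_i),$$
which transforms the left-hand side of \eqref{tau3arprog} into a triple-character average of the twisted trilinear forms
$$T(\chi_1,\chi_2,\chi_3):=\underset{m_1\simeq M_1,\,m_2\simeq M_2,\,m_3\simeq M_3}{\sum\sum\sum}\chi_1(m_1)\chi_2(m_2)\chi_3(m_3)\,g_q(m_1m_2m_3;a).$$
Because $|\chi_i(m_i)|\leqslant 1$, each $T(\chi_1,\chi_2,\chi_3)$ lies within the scope of the Heath-Brown trilinear dispersion estimate, which combines Linnik's dispersion method with Deligne-type bounds on hyper-Kloosterman correlations. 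That method applies equally to trilinear forms whose coefficients are pointwise twisted by bounded sequences, producing a bound of the form $\ll D^{C_0}x^{1-\delta(\varepsilon)}/\varphi(q)$ after tracking the $D$-dependence through the Kloosterman-sum estimates. Averaging over the $\varphi(D)^3$ character triples with the prefactor $\varphi(D)^{-3}$ then yields \eqref{tau3arprog}.

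To derive \eqref{tau3arprogmodif} from \eqref{tau3arprog}, I would handle the sifting factor $\mathfrak Y_0$ by the Rosser-Iwaniec combinatorial sieve. Writing $\mathfrak Y_0(n)=\sum_{d\mid(n,P(Y_0))}\mu(d)$ with $P(Y_0):=\prod_{p<Y_0}p$, the sieve furnishes upper and lower bound sequences $\lambda^\pm_d$ supported on squarefree $d\leqslant x^\eta$ with $P^+(d)<Y_0$, that frame $\mathfrak Y_0(n)$ with an acceptable remainder of order $\exp(-c\eta\log x/\log Y_0)$ after summation over $n\leqslant x$. For each such $d$, decomposing $d=d_1d_2d_3$ with $d_i\mid m_i$ (by enumerating the assignments of each prime $p\mid d$ to one of the variables it divides) and substituting $m_i=d_im_i'$ reduces the truncated sifted sum to an application of \eqref{tau3arprog} at scale $M_i/d_i$ with the shifted progression $a\overline{d_1d_2d_3}\pmod q$. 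Summing over the allowed $d$ and optimizing $\eta$ against the sieve remainder yields the announced saving $x^{-c(\varepsilon)/\log Y_0}$.

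The principal obstacle is to verify that Heath-Brown's derivation of the $21/41$-exponent---already delicate in its treatment of correlation sums of Kloosterman and hyper-Kloosterman type---retains a merely polynomial dependence $D^{C_0}$ on the auxiliary modulus when the three coefficient sequences are pointwise twisted by Dirichlet characters of conductor dividing $D$. The sieve decomposition in the second step and the counting of factorizations $d=d_1d_2d_3$ are combinatorially routine but must be carried out with care to preserve both the $D^{C_0}$ dependence and the sharp saving $c(\varepsilon)/\log Y_0$ in the final bound.
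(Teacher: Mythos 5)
Your proposal follows the paper's blueprint closely: both rest on Heath-Brown's theorem that $\tau_3$ has exponent of distribution $21/41$ (reducing to Friedlander--Iwaniec and Deligne-type bounds for hyper-Kloosterman sums), and both handle the sifting factor $\mathfrak Y_0$ via the fundamental lemma of sieve theory. The one genuine divergence is your choice of \emph{multiplicative} characters $\chi_i\pmod D$ to detect the congruences $m_i\equiv t_i\pmod D$, where the paper uses \emph{additive} characters. The paper's choice is deliberately the more natural one: Heath-Brown's proof already proceeds by studying oscillations of additive characters modulo $q$ through Poisson summation, so adjoining additive characters modulo $D$ simply enlarges the modulus of the exponential sums and leaves the argument structurally unchanged, with the factor $D^{C_0}$ emerging transparently. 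Your version can be reconciled with this by expanding each Dirichlet character into additive characters via Gauss sums (at the cost of a further $D^{O(1)}$), but as written it is not a direct appeal to the existing literature.

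One step in your argument is stated too strongly. You assert that Heath-Brown's trilinear dispersion estimate ``applies equally to trilinear forms whose coefficients are pointwise twisted by bounded sequences.'' That claim, taken literally, is false: for general bounded trilinear coefficients no level of distribution beyond $\tfrac12$ is known or expected, and Heath-Brown's $21/41$-exponent depends crucially on the \emph{smooth} (indeed essentially constant) nature of the three coefficient sequences, which is what makes Poisson summation effective. The correct statement is that the argument tolerates coefficients that are \emph{periodic} modulo a small modulus $D$; this is precisely what Dirichlet or additive characters supply, and what keeps the exponential sums within the Kloosterman/hyper-Kloosterman class. You should say that explicitly, since it is the heart of the matter. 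A minor second point: in the sieve step, after introducing the Rosser--Iwaniec weights $\lambda_d^\pm$ for $d\mid P(Y_0)$, the condition $d\mid m_1m_2m_3$ for squarefree $d$ is not detected by a plain enumeration of assignments of primes $p\mid d$ to the variables, since a prime may divide more than one $m_i$; a M\"obius correction (or an inclusion--exclusion over ordered coprime factorizations $d=d_1d_2d_3$) is needed to land on sums of the form \eqref{tau3arprog}. This is routine but should be flagged, as the paper simply cites \cite{BFI2} for the same step.
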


\noindent{\it Proof of Lemmas \ref{tau2} and \ref{tau3}.} First consider the case where $D=1$.  The bound \eqref{tau2arprog} is then a classical consequence of Weil's bound for Kloosterman sums. As for \eqref{tau3arprog}, the first result with an exponent $>1/2$ in the  upper bound for $q$ is due to  Friedlander and Iwaniec \cite[th.  5]{FrIwd3} and it appeals to Deligne's deep bounds for multidimensional exponential sums. We use here Heath-Brown's result \cite[th. 1]{H-Bd3}  that any exponent $<21/41$ is admissible. Note that if  $q$ is assumed to be prime, the best exponent to date is $12/23 -\varepsilon$: see  \cite[th. 1]{FoKoMid3}.
\par 
The bounds \eqref{tau2arprogmodif} and \eqref{tau3arprogmodif} are variants of \eqref{tau2arprog} and \eqref{tau3arprog} in which the variables $m_i$ are slightly sifted. These extensions, useless if $\log Y_0 \gg \log x$,   are  classically obtained through   the fundamental lemma of sieve theory---see, e.g., \cite[lemma 2*] {BFI2}.
\par 
Let us now consider the case where $D>1$.  The congruences $m_i\md{t_i}D$ may be detected by means of additive characters modulo $D$.  It is standard to incorporate these extra characters  in the proofs of \eqref{tau2arprog} and \eqref{tau3arprog}, which are based on  the study of the oscillations of additive characters modulo $q$. The proofs are identical  with no loss up to the factor $D^{C_0}$. This factor turns out to be harmless in  our applications since $D$ will be a fixed power of $\log x$---see
  the comments after Corollary  \ref{easy2}.
\qed
\subsection{Lemmas from complex analysis.} The following stimate will be useful to deal with  some main terms appearing in \S\thinspace\ref{Dirichlet}. We use the following notations
\begin{equation}
\label{not-hlg}
\begin{aligned}
&h:=\prod_p \Bigl( 1  + \frac{1}{p(p-1)}\Bigr),\quad\lambda:= \gamma -\sum_{p} \frac{\log p}{1 +p(p-1)},\\
&g(n) := \prod_{p\vert n} \frac {1}{ 1+ p/(p-1)^2}, \quad  \vartheta (n) := \sum_{p\mid n} \frac{p^2 \log p}{(p-1)(p^2-p+1)}.
\end{aligned}
\end{equation}
\begin{lemma}
Uniformly for $n\in\N^*$ and  $R\geqslant 1$, we have 
\begin{equation}\label{T(R,n)}
T(R,n) := \sum_{\substack{r\leqslant R \\  (r,n) =1}} \frac {1}{\varphi (r)} = h g(n)
\bigl\{\log R + \lambda  + \vartheta (n) \bigr\} +O \bigg(\frac{b_{1/2} (n)}{R^{2/7}}\bigg). 
\end{equation} 
\end{lemma}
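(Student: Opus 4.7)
The strategy is to analyze $T(R,n)$ via the Dirichlet series
$$F(s;n):=\sum_{\substack{r\geqslant 1\\(r,n)=1}}\frac{1}{\varphi(r)r^s}=\zeta(s+1)G(s)\Psi(s;n),$$
where a short Euler-product computation (using $\varphi(p^k)=p^{k-1}(p-1)$ and extracting one copy of $\zeta(s+1)$) yields
$$G(s):=\prod_p\bigg(1+\frac{1}{(p-1)p^{s+1}}\bigg),\qquad \Psi(s;n):=\prod_{p\mid n}\bigg(1+\frac{p}{(p-1)(p^{s+1}-1)}\bigg)^{-1}.$$
The product $G(s)$ is absolutely convergent for $\Re s>-1$ with $G(0)=h$, while $\Psi(\cdot;n)$ is a finite product, meromorphic on $\CC$, with $\Psi(0;n)=g(n)$.

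I would apply a truncated (or smoothed) Perron formula to represent $T(R,n)$ as the Mellin integral of $F(s;n)R^s/s$ over a vertical line $\Re s=c>0$, then shift the contour leftwards to $\Re s=-2/7$, picking up the residue at the double pole $s=0$ arising from $\zeta(s+1)/s$. Combining $\zeta(s+1)=1/s+\gamma+O(s)$ with the Taylor expansions of $G$ and $\Psi$ at the origin gives a residue equal to $hg(n)\log R+hg(n)\gamma+g(n)G'(0)+h\Psi'(0;n)$. Logarithmic differentiation of the Euler products produces the identities
$$\frac{G'(0)}{h}=-\sum_p\frac{\log p}{1+p(p-1)},\qquad \frac{\Psi'(0;n)}{g(n)}=\vartheta(n),$$
whence the residue collapses exactly to the announced main term $hg(n)\{\log R+\lambda+\vartheta(n)\}$.

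For the error term, on $\Re s=-2/7$ the convexity estimate $|\zeta(5/7+it)|\ll(1+|t|)^{1/7}\log(2+|t|)$ together with the absolute boundedness of $G(-2/7+it)$ (its Euler product converges since $\Re(s+2)=12/7>1$) reduces the task to the uniform bound $|\Psi(-2/7+it;n)|\ll b_{1/2}(n)$. For $p$ above an absolute threshold $p_0$ one gets $|p/((p-1)(p^{s+1}-1))|\ll 1/p^{5/7}\leqslant 1/\sqrt p$, so the factor at $p$ is $1+O(1/\sqrt p)$, matching $b_{1/2}(n)$; for the finitely many primes $p<p_0$, individual absolute upper bounds follow from the inequality $|(p-1)p^{s+1}+1|\geqslant(p-1)p^{5/7}-1>0$, and these are absorbed into the implied constant. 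A careful choice of the truncation height (balancing the vertical integral, the horizontal segments, and the Perron truncation) then yields the claimed error $O(b_{1/2}(n)/R^{2/7})$.

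The main obstacle is precisely the $\Psi$-bound at small primes dividing $n$: the naive triangle inequality $|p^{s+1}-1|\geqslant p^{5/7}-1$ allows $|p/((p-1)(p^{s+1}-1))|$ to exceed $1$ when $p=2$ or $3$, so one must argue separately that $1+p/((p-1)(p^{s+1}-1))$ stays uniformly bounded away from zero on the shifted line, using the fact that $p^{s+1}$ has fixed modulus $p^{5/7}$ and therefore cannot approach $-(p-1)$. Once this uniform bound is secured, the remainder of the argument is a routine contour-shift calculation.
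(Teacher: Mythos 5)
Your proposal uses exactly the same ingredients as the paper: the same Dirichlet-series factorization $F(s;n)=\zeta(s+1)G(s)\Psi(s;n)$ (the paper writes $H(s)$ and $G(n,s)$ for your $G$ and $\Psi$), an effective Perron formula, a leftward contour shift past the double pole at $s=0$, and the same residue computation matching $hg(n)\{\log R+\lambda+\vartheta(n)\}$. Your identities $G'(0)/h=-\sum_p\log p/(1+p(p-1))$ and $\Psi'(0;n)/g(n)=\vartheta(n)$ are correct, and your observation that one must argue separately that $|1+p/((p-1)(p^{s+1}-1))|$ stays bounded away from zero at $p=2,3$ is a legitimate point which the paper passes over silently.

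However, there is a genuine quantitative gap in the choice of contour. You seem to infer that the exponent $2/7$ in the error term comes from placing the contour at $\Re s=-2/7$; in fact the paper moves to $\Re s=-1/2$, and the exponent $2/7$ is a (conservative) output of balancing the vertical-line contribution against the Perron truncation error at that abscissa. With your contour at $-2/7$, the vertical line contributes $\ll R^{-2/7}b_{1/2}(n)\,T^{1/7}\log T$ (using the convexity bound $\zeta(5/7+it)\ll|t|^{1/7}\log|t|$) while the Perron truncation contributes $\ll(\log R)/T$; equating gives $T\asymp R^{1/4}$ and a total error of order $R^{-1/4}(\log R)^{O(1)}$, which is \emph{larger} than the claimed $R^{-2/7}$ since $1/4<2/7$. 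Even granting the Lindelöf hypothesis on the line $\Re s=5/7$, your contour would yield only $R^{-2/7+o(1)}$, not $O(R^{-2/7})$. By contrast, at $\Re s=-1/2$ the same balancing gives $T\asymp R^{2/5}$ and an error $\ll R^{-2/5}(\log R)^{O(1)}$, which comfortably beats $R^{-2/7}$, and the bound $|\Psi(s;n)|\ll b_{1/2}(n)$ still holds there (the factor $1+1/(p^{s+1}(p-1))$ remains bounded away from zero for $\Re s\geq -1/2$, including at $p=2$ since $|p^{s+1}|\geq\sqrt{2}$). So your argument needs the contour pushed to $-1/2$ rather than $-2/7$; with that fix, the rest of your proposal goes through.
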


\begin{proof} For $\Re s >0$, consider the Dirichlet series
$$
\sum_{\substack{r\geqslant 1\\  (r,n)=1}} \frac{1}{\varphi (r)r^s} =\prod_{p \,\nmid\, n} \Bigl( 1 +  \frac{1}{(1-1/p)} \sum_{\nu \geqslant 1} \frac {1}{p^{\nu(s+1)}}
\Bigr)
= \zeta (s+1) H(s) G(n,s),
$$
with
\begin{align*}
&H(s) := \prod_p \Bigl( 1+ \frac{1}{p^{s+1} (p-1)}\Bigr),\\
& G(n,s):= \prod_{p\,\mid\,n} \frac{1-1/p^{s+1}}{1+1/  \{p^{s+1}(p-1)\}}=\prod_{p\,\mid\,n}\frac1{1+p/\{(p-1)(p^{s+1}-1)\}}\cdot
\end{align*}
Apply  Perron's formula in  effective form (see, e.g.,  \cite[cor. II.2.4, p.~220]{TenlivreUS}) and move the line of integration to the abscissa $\sigma =-\frac 12$. Since, we have, uniformly for $\sigma \geqslant -\frac 12$,
$$
G(n,s) \ll b_{1/2} (n),$$
we obtain, uniformly for $R \geqslant 1$ and $n\geqslant 1$, 
\begin{align*}
T(R,n)&=  \mathrm {Res} \bigl( R^s \zeta (s+1) H(s) G(n,s) /s;0\bigr) +O \Big(\frac{b_{1/2} (n)}{R^{2/7}}\Big)\\
& = H(0) G(n,0) \{ \log  R + \gamma\} +H'(0) G(n,0) + H(0) G'(n,0) + O \Big(\frac{b_{1/2} (n)}{R^{2/7}}\Big),\\
\end{align*}
which coincides with \eqref{T(R,n)}. 
\end{proof}

The above lemma may be exploited to evaluate the more general sum 
$$
T(R,m, n) := \sum_{\substack{r\leqslant R \\  (r,n) =1}} \frac {1	}{\varphi (mr)} \cdot$$
We retain notation \eqref{not-hlg} and further introduce, for $j=0,1$,
integers $m,n\geqslant 1$, and real $u\geqslant 1$, 
\begin{equation}
\label{not-Theta01}
\Theta_j(m,n;u):=\sum_{\substack{\delta \leqslant  u\\  \delta \vert m^\infty,\, (\delta,n)=1}} \frac{ (\log \delta)^j}{\delta} 
\end{equation}
\begin{lemma}\label{sumofphimodif} Uniformly for  $R\geqslant R_0   \geqslant 1$,  and integers $m,n\geqslant 1$,  we have  
\begin{equation}
\label{faTRmn}
\begin{aligned}T(R,m,n)=&\frac{ h g(mn)}{\varphi (m)}\bigg(
\bigl\{\log R + \lambda  + \vartheta (mn) \bigr\}  \Theta_0(m,n;R_0)-\Theta_1(m,n;R_0) \bigg)\\
& 
  +O \bigg( \frac{\tau (m n)^2 \log 2R}{\varphi (m) R_0^{1/4}}\bigg).    
\end{aligned}
 \end{equation}
\end{lemma}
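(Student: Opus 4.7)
The plan is to reduce the problem to the previously proved asymptotic \eqref{T(R,n)} via a natural multiplicative decomposition of $r$, and then truncate the resulting sum at $R_0$.

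First I would exploit the factorization $r=\delta s$ where $\delta\mid m^\infty$ and $(s,m)=1$, which is unique. A straightforward prime-by-prime check gives $\varphi(mr)=\delta\varphi(m)\varphi(s)$, and the condition $(r,n)=1$ translates into $(\delta,n)=1$ and $(s,mn)=1$. Thus
\begin{equation*}
T(R,m,n)=\frac1{\varphi(m)}\sum_{\substack{\delta\,\mid\,m^\infty\\(\delta,n)=1}}\frac1\delta \,T(R/\delta,mn),
\end{equation*}
where by convention $T(R/\delta,mn)=0$ when $\delta>R$.

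Next I would split this sum at $R_0$. For the range $\delta\leq R_0$, apply the asymptotic \eqref{T(R,n)} to the inner sum $T(R/\delta,mn)$ and expand $\log(R/\delta)=\log R-\log\delta$; gathering terms yields exactly the main contribution
\begin{equation*}
\frac{hg(mn)}{\varphi(m)}\Big(\{\log R+\lambda+\vartheta(mn)\}\Theta_0(m,n;R_0)-\Theta_1(m,n;R_0)\Big).
\end{equation*}
The error term from \eqref{T(R,n)} contributes, in this range, at most $b_{1/2}(mn)\varphi(m)^{-1}R^{-2/7}\sum_{\delta\leq R_0,\,\delta\mid m^\infty}\delta^{-5/7}$, and the latter sum is $\ll C^{\omega(m)}\leq\tau(mn)^2$ for an absolute $C$. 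Since $R\geq R_0$ gives $R^{-2/7}\leq R_0^{-1/4}$, this is absorbed by the target remainder.

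For the tail range $\delta>R_0$, I would use the trivial bound $T(R/\delta,mn)\ll\log(2R)$ together with Lemma \ref{rankin}, which gives $\sum_{\delta>R_0,\,\delta\mid m^\infty,\,(\delta,n)=1}\delta^{-1}\ll b_{3/4}(m)/R_0^{1/4}\leq\tau(mn)/R_0^{1/4}$, producing a contribution of order $\tau(mn)\log(2R)/(\varphi(m)R_0^{1/4})$, which again fits inside the claimed $O$-term. The only mild technical point is making sure the two multiplicative factors ($b_{1/2}$, $b_{3/4}$, and the Rankin factor $C^{\omega(m)}$) are all uniformly dominated by $\tau(mn)^2$; this is routine but must be tracked carefully since the claimed error has no free parameter for their absorption. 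Collecting everything gives \eqref{faTRmn}.
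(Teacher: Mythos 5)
Your proposal is correct and follows essentially the same route as the paper: decompose $r=\delta s$ with $\delta:=(r,m^\infty)$, use $\varphi(m\delta s)=\delta\varphi(m)\varphi(s)$ to reduce to $T(R/\delta,mn)$, truncate the $\delta$-sum at $R_0$ via Rankin/Lemma \ref{rankin}, and insert \eqref{T(R,n)}. The only cosmetic imprecision is in absorbing the $b_{1/2}(mn)$ factor from \eqref{T(R,n)} together with the geometric sum $\sum_{\delta\mid m^\infty}\delta^{-5/7}\ll\prod_{p\mid m}(1-p^{-5/7})^{-1}$ into $\tau(mn)^2$: the per-prime comparison at $p=2$ is slightly above $4$, but since this affects a single bounded factor the $O$-constant absorbs it, so the bound stands.
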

\begin{proof}  Split the sum $T(R,m,n)$ according to the value of $\delta:= (r, m^\infty)$ and write $r = \delta s$. Since $(s, m\delta )=1$ we  get, with notation \eqref{T(R,n)},
\begin{align*}
T(R,m,n) &= \sum_{\substack{\delta\vert m^\infty\\  (\delta,n) =1}} \frac{1}{\varphi (m \delta)} \sum_{\substack{s\leqslant R /\delta\\  (s, mn)=1}} \frac{1}{\varphi (s)}= \sum_{\substack{\delta\vert m^\infty\\  (\delta,n)=1}} \frac{T(R/\delta ,mn)}{\varphi (m \delta)}.
\end{align*}
 To shorten this summation we use,  for $\delta > R_0$,  the trivial bound $T(R/\delta,mn)\ll \log 2R$.  Since $\varphi (m\delta) = \delta \varphi (m)$, Rankin's method eventually yields 
\begin{equation}\label{627}
T(R,m,n)= \sum_{\substack{\delta <R_0 \\  \delta \mid m^\infty,\,(\delta,n) =1}} \frac{T(R/\delta, mn) }{\delta\varphi (m)}  +O \bigg( \frac{\log 2R}{\varphi (m)} \sum_{\delta \mid m^\infty } \frac {1}{\delta} \Bigl( \frac {\delta}{R_0}\Bigr) ^{1/4}\bigg).
 \end{equation}
   Inserting \eqref{T(R,n)} into \eqref{627}, we obtain a formula for $T(R,m,n)$ with the stated main term and error term
 \begin{equation*}
   \ll \frac{b_{1/2} (mn) b_{5/7} (m)}{\varphi (m)R_0^{2/7}}  + \frac{b_{3/4} (m)\log 2R}{\varphi (m) R_0^{1/4}}\cdot 
  \end{equation*}
It can be checked that the order of magnitude of this expression does not exceed that of the error term appearing in \eqref{faTRmn}.
\end{proof}

\section {Proof of Theorem \ref{central} with the restrictions $Q^2 R\leqslant x$ and $(qr,D)=1$}

\subsection{First step of preparation}\label{preparation}
When $(qr, D)=1$, we have $\Delta_f (x;qr,D,a)=\Delta_f (x;qr,a)$ by \eqref{Delta=Delta}. 
 The purpose of this section is to establish the following statement. 
 \begin{proposition}\label{firststep}  Let  $K>0$.   For suitable absolute constant $C_0$ and all $\varepsilon$, $A>0$,
there exists   $C= C(\varepsilon, A )$  such that, uniformly for
\begin{equation}
\label{cond1}
\begin{aligned}
  &D\geqslant 1,\quad f\in \FF (D,K), \quad x\geqslant 1,\quad Q\geqslant 1,\quad 1\leqslant R\leqslant x^{1/105-\varepsilon},\\
  & Q^2R \leqslant x,\quad  (a,D)=1, \quad 1\leqslant \vert a \vert  \leqslant \LL^A, 
  \end{aligned}
\end{equation}
we have
\begin{equation}\label{244bis}
\sum_{\substack{r\leqslant R\\  (r,a D)=1}}\Bigl\vert \sum_{\substack{q\leqslant Q \\  (q,a D)=1}} \, \Delta_f (x;qr,a ) \Bigr\vert \leqslant  \frac{C\,D^{C_0}  x}{\LL ^A}\cdot
\end{equation}
The same  bound also holds, uniformly for integers $b$ and $c$   with $1\leqslant b, \, c \leqslant \LL^A,$ on replacing $f\in \FF (D,K)$ by $f_{b,c}$, as defined in \eqref{deffbb}. Under the   assumptions \eqref{cond1}, we therefore have
\begin{equation}\label{244bismodif}
\sum_{\substack{r\leqslant R\\  (r,a D)=1}}\Bigl\vert \sum_{\substack{q\leqslant Q \\  (q,aD )=1}}  \Delta_{f _{b,c}}(x;qr,a ) \Bigr\vert \leqslant  \frac{C\,D^{C_0}  x}{\LL ^A}\cdot
\end{equation}

 \end{proposition}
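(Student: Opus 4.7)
My plan is to reduce the left-hand side of \eqref{244bis} to a short linear combination of bilinear and trilinear error terms indexed by dyadic size ranges, and dispatch each piece to the appropriate lemma of Section~4.

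\textbf{Step 1 (sifting off small primes).} Set $y:=\e^{\sqrt{\LL}}$. By Lemma \ref{gerald}, integers $n\leqslant x$ whose $y$-smooth part exceeds $\sqrt x$ contribute negligibly, namely $\ll x\e^{-(\log x)^{1/4}}$, which is absorbed by the trivial bound of Lemma \ref{exceptional} applied to this thin exceptional set. The remaining $n$ factor uniquely as $n=s\ell$ with $s$ the $y$-smooth part, $s\leqslant\sqrt x$, $P^-(\ell)>y$, $(s,\ell)=1$, and $f(n)=f(s)f(\ell)$. In particular $\omega(\ell)\leqslant \LL/\log y\leqslant \sqrt\LL$, so $\ell$ has a bounded number of prime factors.

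\textbf{Step 2 (combinatorial decomposition).} Splitting dyadically the sizes of the primes $p\mid\ell$ (with $O(\LL^{\idc})$ choices) and invoking hypothesis~(ii) of Definition \ref{defM} to replace $f(p)$ on primes $p$ lying in a given $\Upsilon$-window and residue class modulo $D$ by the common constant value, we express $\sum_{q,r}\Delta_f(x;qr,a)$ as a linear combination of $O(\LL^{\idc})$ terms of the shape
\begin{equation*}
\sum_{\substack{r\leqslant R\\(r,a'D)=1}}\Bigl|\sum_{\substack{q\leqslant Q\\(q,a'D)=1}}\Delta^{u,v}_{\bal,\bbe}(M,N;qr,a')\Bigr|,
\end{equation*}
with $MN\asymp X$, $a':=a\overline s$ for some small $y$-smooth $s$, coefficients satisfying $|\alpha_m|\leqslant\tau_K(m)$, $|\beta_n|\leqslant\tau_K(n)$, the sifting condition \eqref{sifting} for $\bbe$, and, via the Siegel--Walfisz estimate \eqref{SWforpi1} combined with the $\Upsilon$-window regularity, $\bbe\in\SW(D,K')$ for some $K'=K'(K)$.

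\textbf{Step 3 (dispatch---the main obstacle).} Three regimes arise. \emph{(a)} If some prime size $M_j$ falls in $[\e^{(\log x)^{1/4}},X^{2/7}]$, pairing that variable against the product of the rest yields a bilinear form in the range of Lemma \ref{lemma6}, delivering the saving $X/\LL^A$. \emph{(b)} If every prime size lies below $\e^{(\log x)^{1/4}}$, repeated pairing produces a bilinear form with small variable, handled by Lemma \ref{lemma5}. \emph{(c)} The hard case, which dictates the restriction $R\leqslant x^{1/105-\varepsilon}$, occurs when every prime size exceeds $X^{2/7}$: since $3\cdot(2/7)=6/7<1$, we have $\omega(\ell)\in\{1,2,3\}$, so no pairing falls in the range of Lemma \ref{lemma6}. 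Hypothesis~(ii) lets us replace $\prod_j f(p_j)$ by a constant depending only on $(\Upsilon$-window, residue mod $D)$ data, reducing to a bound for $\sum_{m_i\sim M_i,\,m_i\md{t_i}D}g_{qr}(m_1\cdots m_k;a')$ with $k\in\{1,2,3\}$. Under $Q^2R\leqslant x$ and $R\leqslant x^{1/105-\varepsilon}$ we have $qr\leqslant QR\leqslant \sqrt{xR}\leqslant x^{53/105}<x^{21/41-\varepsilon'}$; Lemmas~\ref{tau1}, \ref{tau2}, and \ref{tau3} respectively supply the required power-saving bound of size $D^{C_0}x^{1-\delta(\varepsilon)}/\varphi(qr)$. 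Summing over $q$, $r$, and over the $O(\LL^{\idc})$ decomposition pieces then yields \eqref{244bis}, with the factor $D^{C_0}$ coming from these three lemmas.

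\textbf{Step 4 (the variant $f_{b,c}$).} Since $b,c\leqslant\LL^A$, decomposing $n$ according to $(n,b^\infty)$ (at most $\tau_\infty(b)$ shapes) and expanding $\1_{(n,c)=1}$ via M\"obius inversion (adding at most $\tau(c)\leqslant\LL^{\idc}$ terms) expresses $f_{b,c}$ as a linear combination of at most $\LL^{\idc}$ multiplicative functions, each still lying in some class $\FF(D',K+1)$ with $D'\mid Dbc$. Applying Steps~1--3 to each piece and combining the bounds yields \eqref{244bismodif} with $B$ and $C$ enlarged by quantities depending only on $A$, $\varepsilon$, $K$.
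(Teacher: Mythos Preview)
Your overall architecture matches the paper's, but Step 3(c) has a genuine gap. After fixing the $\Upsilon$-windows and residue classes so that $\prod_j f(p_j)$ becomes a constant, you are left with a sum over \emph{primes} $p_1,\dots,p_k$ (with $k\leqslant 3$), each exceeding $x^{2/7}$, not over integers. Lemmas \ref{tau1}--\ref{tau3} concern sums $\sum_{m_i\simeq M_i}g_q(m_1\cdots m_k;a)\mathfrak Y_0(m_1\cdots m_k)$ over (mildly sifted) \emph{integer} variables; they are not available for prime variables, and indeed no analogue of Lemma \ref{tau3} for three prime variables with modulus up to $x^{21/41}$ is known. The paper bridges this gap by first replacing the prime indicator by $\Lambda\mathfrak Y_0$ and then applying Heath--Brown's identity (Lemma \ref{H-Bid}) with $J=2$, $y=x^{4/7}$ to each factor $\Lambda(n_i)$. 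This creates new variables $m_{i,j}\leqslant x^{2/7}$ and $n_{i,j}$; whenever some $m_{i,j}>1$ (hence $\geqslant Y_0$) or some $n_{i,j}\in[Y_0,x^{2/7}]$, Lemma \ref{lemma6} applies with that variable playing the role of $\bbe$ (via \eqref{SWformuY0} or \eqref{SWforY0}). Only in the residual case where all $m_{i,j}=1$ and each $n_{i,j}$ is either $1$ or exceeds $x^{2/7}$ do at most three smooth variables survive, and \emph{then} Lemmas \ref{tau1}--\ref{tau3} apply.

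Two smaller points. In Step 2, dissecting \emph{all} primes of $\ell$ into dyadic windows yields up to $\LL^{O(\sqrt\LL)}$ configurations, not $O(\LL^\idc)$; the paper only localizes the \emph{smallest} large prime $p_1$ (and, in the hard case, the remaining $p_2,p_3$, of which there are at most two), keeping the number of pieces polynomial in $\LL$. In Step 4, the paper's route is simpler than yours: since $b,c\leqslant\LL^A<Y_0$ for large $x$, the modification $f\to f_{b,c}$ affects only the $Y_0$-smooth part $\nu_n$, so one merely replaces $f(\nu)$ by $f_{b,c}(\nu)$ in the definition of $\alpha_m$; no new multiplicative decomposition or enlargement of $D$ is needed.
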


 In order to prove \eqref{244bis} under the assumptions \eqref{cond1} we first perform   a dyadic decomposition and  define accordingly
\begin{align}
V(Q,R) &:=\sum_{\substack{r\leqslant R\\  (r,aD)=1}} \Bigr\vert  \sum_{\substack{q\leqslant Q \\  (q,aD)=1}}  \bigl(\Delta_f (x;qr,a)-\Delta_f (x/2;qr,a)\bigr)\Bigr\vert \nonumber \\
& = \sum_{\substack{s\leqslant S \\  (s,aD)=1}} c_s\Biggl( \, \sum_{\substack{n\sim x/2 \\  n\equiv a \bmod s}} f(n)-\frac{1}{\varphi (s)} \sum_{\substack{n\sim x/2 \\  (n,s)=1}} f(n)
\, \Biggr)\label{308}
\end{align}
with 
\begin{equation}\label{defScs}
S:=QR \text{ and } c_s:= \sum_{\substack{r\leqslant R, \, q\leqslant Q\\  s =qr} }\xi_r,
\end{equation}
where $\xi_r$ is some coefficient satisfying $\vert \xi_r\vert \leqslant 1.$ Note the bounds $S \ll x^{53/105} $ and $\vert c_s \vert \leqslant \tau (s)$. Thus, the proof of Proposition \ref{firststep} is reduced to that of the estimate
\begin{equation}\label{S(QR)<<}
V(Q,R) \ll D^{C_0}x/\LL^{A},
\end{equation} 
 for both functions $f$ and $f_{b,c}$.

Let us now fix 
\begin{equation*}
Y_0:= \exp\big(\LL^{1/4}\big),
\end{equation*}
and recall from \S\thinspace \ref{conventions} the definition of the associated indicator function $\mathfrak Y_0$.
We factorize integers  $n\in]x/2,x]$ uniquely as
\begin{equation}\label{decomp}
n = \nu_n \prod_{1\leqslant j\leqslant J_n}p_j^{\alpha_j},
\end{equation}
with
\begin{equation*}
\nu_n:= \prod_{\substack{p\leqslant Y_0\\ 
p^\ell \Vert n}} p^\ell,\quad Y_0< p_1< p_2 < \cdots<p_{J_n},\quad\alpha_j\geqslant 1\  (J_n\geqslant 0,\,1\leqslant j\leqslant J_n).
\end{equation*}
Since $f$ is multiplicative, we have
\begin{equation}\label{f(n)=}
f(n) =f(\nu_n) \prod_{1\leqslant j\leqslant J_n}f({p_j}^{\alpha_j}) 
\end{equation}
and also
$$
f_{b,c} (n) =
\begin{cases}
 f(b\nu_n) \prod_{1\leqslant j\leqslant J_n}f({p_j}^{\alpha_j}) & \text{ if } (\nu_n, c)=1,\\
 0 &\text{ if } (\nu_n, c) >1,
\end{cases}
$$
since we may assume $1 \leqslant b,\, c \leqslant \LL^A$ and $x$ sufficiently large.

\subsection{Contribution of  non typical variables $n$\label{nontypical}} In that subsection, we will not appeal to the combinatorial structure of the coefficients $c_s$.  Let $\EE_0$ be the set of those integers $n\in]x/2,x]$ such that, with notation \eqref{decomp}, $$
\max_{1\leqslant j\leqslant J}\alpha_j\geqslant 2 \text{ or }\nu_n>Z_0:= \exp \big(\LL^{3/4}\big).
$$
From the trivial estimate $\sum_{p>z}1/p^2\ll1/z$ and Lemma \ref{gerald} we infer that
$$
\vert \EE_0\vert  \ll x \exp\bigl( -\LL^{1/4}\bigr).
$$
 Combined with \eqref{G<},  this bound implies that the contribution from integers in $\EE_0$ to the  left--hand side of \eqref{308}   is 
 \begin{equation}\label{morning0}
   \ll x \exp \bigl( -\tfrac14 \LL^{1/4}\bigr) + \tau_K (\vert a \vert) S \LL^\idc \     \ll  x \exp \bigl( -\tfrac14 \  \LL^{1/4}\bigr).
 \end{equation}
  \par 
We now introduce the dissection parameter
\begin{equation}\label{defrho}
\varrho := 1 + 1/\LL^{B_0},
\end{equation}
 where $B_0$ will be   specified later in terms of $K$ and $A$, and define
$$
Y_k:= Y_0\varrho^k\quad(k=0, 1, 2,...).
$$ 
Let $\EE_1$ be the set of those integers $n\in]x/2,x]\smallsetminus\EE_0$ such that $\nu_n\leqslant Z_0$ and  
$$
Y_k <  p_1< p_2\leqslant  Y_{k+1}.
$$
for some    $k\geqslant 0$, so   that the two smallest prime factors of $n/\nu_n$  are close to each other. We plainly have
$$
\vert \EE_1 \vert \leqslant  \underset{Y_0 <p_1<p_2 \leqslant \varrho  p_1  }{\sum} \frac{x}{p_1p_2} \ll   
  \sum_{p_1>Y_0 } \frac{x\log \varrho}{p_1\log p_1}\ll \frac x{\LL^{B_0}}\cdot $$
  From \eqref{G<},   we deduce that the contribution  to the left--hand side of  \eqref{308} arising from integers in $\EE_1$ is
\begin{equation}\label{morning1}
\ll \frac x{\LL^{B_0/3 -\idc}} + \tau_K (\vert a \vert) S \mathcal \LL^\idc \ll \frac x{\LL^{B_0/3 -\idc}}\cdot
\end{equation} 
\par 
Taking \eqref{308}, \eqref{morning0} and \eqref{morning1} into account, we can write
\begin{equation}\label{morning3}
V(Q,R)=\sum_{(t,D)=1} \  \sum_{k\geqslant 0}  \ \sum_{\ell \geqslant 1}V_{k,\ell}(Q,R;t) +O \bigg(\frac x{\LL^{B_0/3 -\idc}}\bigg),
\end{equation}
where $V_{k,\ell}(Q,R;t) $ is the subsum of $V(Q,R)$ corresponding to the supplementary conditions
\begin{equation}
\label{sky}
\begin{aligned}
&\alpha_j=1\ (1\leqslant j\leqslant J_n),\quad\nu_n\leqslant Z_0,\\
  &p_1\equiv t(\bmod D),\quad p_1 \in \II_{k,\ell}:=]Y_k,Y_{k+1}]\cap]\Upsilon_\ell,\Upsilon_{\ell+1}],\quad p_2>Y_{k+1}, 
\end{aligned}
\end{equation}
with notations \eqref{decomp}, and where the $\Upsilon_\ell$  appear in  Definition \ref{defM}.
The interval $\II_{k,\ell}$ may be empty, however the  number of sums $ V_{k,\ell}(Q,R;t)$ appearing in \eqref{morning3} is $\ll  D \LL^{B_0+K +2}$. We also observe that selecting $B_0= 3A +3 \idc $ implies that the error term in \eqref{morning3} is $\ll x/ \LL^{A}$,
sharper than required in \eqref{S(QR)<<}. 
\par 
\goodbreak
From the above remarks, we see  that   \eqref{S(QR)<<} follows from showing that, for suitable absolute $C_0$ and all $A>0$, we have 
\begin{equation}\label{purpose}
V_{k,\ell}(Q,R;t) \ll D^{C_0}x/\LL^{A},  
\end{equation}
uniformly for $D \geqslant 1$,   $(t,D)=1$, $k\geqslant 0$, $\ell \geqslant 1$,    for both  $f$ and $f_{b,c}$ whenever $1 \leqslant b, \, c \leqslant \LL^A$. 
\par 
Proving \eqref{purpose} is  the purpose of the next subsection, where we will use, in a crucial way,  the fact that, if $p$ belongs to $\mathfrak I_{k, \ell}$ and satisfies $p\equiv t\, (\bmod D)$, then $f(p)$
assume a constant value,  noted as    $z_t$. Recall that, by \eqref{f(p)leq}, we have $\vert z_t \vert \leqslant K$. 
Regarding the importance of the periodicity of $f|_\PP$, see Remark \ref{fperp}.
 
\subsection{Estimating $V_{k,\ell}(Q,R;t)$}
We consider two subcases according to the size of~$Y_{k}$.

\subsubsection{The case $Y_{k} \leqslant x^{2/7}$} \label{<2/7}  
In order to  apply Lemma \ref{lemma6} to  each sum $V_{k,\ell}(Q,R;t)$ we define 

$$\beta_n
:=\begin{cases}  z_{t} \text { if } n\in\PP,\ n\equiv t\, (\bmod D),\  n\in\mathfrak I_{k,\ell}, 
\\
0 \text{ otherwise}.
\end{cases}
$$  
Note that 
the sequence $(\beta_n)$  satisfies the condition  ${\rm  SW}(D,1)$.
Next, we define
\begin{equation*}
\alpha_m :=
\begin{cases}f(m) \text{ if } m=\idm h,\, P^+(\idm)\leqslant Y_0,\, P^-(h)>Y_{k+1},\, \idm\leqslant Z_0,\,\mu(h)^2=1,\\
0 \text{ otherwise}.
\end{cases}
\end{equation*}
  With these definitions we can rewrite $V_{k,\ell}(Q,R;t) $ as  
$$
V_{k,\ell}(Q,R;t) =  \sum_{\substack{s\leqslant S \\  (s,aD)=1}} c_s\Bigg( \, \underset  {  {\substack{mn\sim x/2 \\     mn\equiv a \bmod s }} }
{\sum }\alpha_m \beta_n-\frac{1}{\varphi (s)} 
\underset  {  \substack{mn\sim x/2 \\    (mn,s)=1} }
{\sum }\alpha_m \beta_n
\, \Bigg).
$$
We  now appeal to  the combinatorial structure of the coefficient $c_s$---see \eqref{defScs}---and   apply Lemma \ref{lemma6} with $N=Y_{k}\geqslant Y_0$, $ M\asymp x/Y_{k}$, $u=x/2$, $v=x$.  This furnishes the bound
\begin{equation}\label{Ykpetit}
V_{k,\ell}(Q,R;t)\ll x/ \LL^{A},
\end{equation}
for any $A$.
\par 
Extending the validity of this bound to $f_{b,c}$ is straightforward: it suffices to replace $f(m)$ by $f_{b,c}( m)$ in the definition of $\alpha_m$.  By \eqref{Ykpetit} we see that \eqref{purpose} holds (with $C_0 =0$) provided $Y_k \leqslant x^{2/7}.$ Thus, it remains to deal with the case when $Y_k$ is large.  

\subsubsection{The case $Y_{k} >x^{2/7}$}\label{>2/7}

 As a direct  consequence of the inequalities  $Y_{k}^4 >x$ and $x/2< n\leqslant x$,  we see that any $n$ contributing to $V_{k,\ell}(Q,R;t)$ may be represented in  one of the following three ways
$$
   n=\nu_n p_1,\quad n=\nu_n p_1 p_2,\quad n=\nu_n p_1p_2p_3,
$$
where $\nu_n$  and the $p_j$   are defined in \eqref{decomp}, and satisfy conditions \eqref{sky}.   The case $n=\nu_n p_1$ is very similar to the case treated in Theorem  \ref{pointdedepart1}. We will restrict to the situation when  $n= \nu_np_1p_2p_3$: indeed the other two cases are similar and actually simpler  from  a combinatorial aspect.
\par 
In order to homogenize the notations in the following computations, we substitute
$$
k\rightarrow k_1, \ell \rightarrow \ell_1, t\rightarrow  t_1.
$$
\par 
\goodbreak
With the above considerations in mind, it is natural to consider the expression
\begin{equation}
\label{defTk1ell1}
W_{k_1,\ell_1}(Q,R;t_1):=
\sum_{\substack{s\leqslant S \\  (s,aD)=1}} c_s\,   \underset{\nu, p_1, p_2,  p_3  }{\sum}\  g_s(\nu p_1p_2p_3,a)f(\nu) f (p_1p_2p_3) ,
\end{equation} 
where the  summation variables  satisfy the conditions  
\begin{equation}
\label{1109}
\begin{aligned}
&x/2 < \nu p_1p_2p_3 < x, \quad \nu  \leqslant  Z_0,\quad P^+(\nu )\leqslant Y_0,\\
& p_1 \in \II_{k_1, \ell_1},\quad p_1\equiv t_1\,(\bmod D),\quad Y_{k_1+1}< p_2 < p_3.
\end{aligned}
\end{equation}
\par 
 The proof of   \eqref{purpose} is hence reduced to showing  that, for a suitable absolute $C_0$ and all $A>0$, we have
\begin{equation}\label{purpose1}
W_{k_1,\ell_1}(Q,R;t_1) \ll_A D^{C_0} x/\LL^{ A}, 
 \end{equation}
 uniformly for $D \geqslant 1$, $(t_1,D)=1$, $k_1\geqslant \log (x^{2/7}/ Y_0)\big/ \log \varrho $, $\ell_1 \geqslant 1$,  for both $f$ and $f_{b,c}$, with $1 \leqslant b, \, c \leqslant \LL^A$.   
\par 
However,  the summation conditions given in \eqref{1109} are not sufficient  to determine  the value of $f(p_1p_2p_3)$ in \eqref{defTk1ell1}. To circumvent this difficulty, we split further
the sum $ W_{k_1,\ell_1}(Q,R;t_1)$ as  
\begin{equation}\label{split10}
W_{k_1,\ell_1}(Q,R;t_1)=\sum_{k_2, k_3}\  \sum_{\ell_2, \ell_3} \ \sum_{t_2, t_3 \bmod D}  W_{\bsk, \bsl} (Q, R ; \bst) +\EE,
\end{equation}
with \par 
 $\bullet$ $ \bsk := (k_1,k_2,k_3)$ satisfies $ k_3 > k_2 >k_1 \, (\geqslant \log (x^{2/7}/ Y_0)\big/ (\log \varrho))$, 
\par 
 $\bullet $ $\bsl :=(\ell_1, \ell_2, \ell_3)$ satisfies $\ell_2,\, \ell_3 \geqslant1$,
\par 
 $\bullet$ $\bst: =(t_1,t_2,t_3)$ satisfies $(t_2t_3, D) =1$,
\par 
 $\bullet $ 
\begin{equation}\label{defTboldkboldell}
W_{\bsk, \bsl} (Q, R ; \bst):=z_{t_1} z_{t_2} z_{t_3}
\sum_{\substack{s\leqslant S \\  (s,aD)=1}} c_s\,   \underset{\nu, p_1, p_2,  p_3\  }{\sum}\  g_s(\nu p_1p_2p_3,a)f(\nu) ,
\end{equation}
where the summation conditions of  \eqref{1109} are replaced by
\begin{equation*} 
x/2 < \nu p_1p_2p_3 < x,\ \nu \leqslant  Z_0, P^+(\nu)\leqslant Y_0,\ p_i \in \mathfrak I_{k_i, \ell_i},\,   p_i\equiv t_i \,(\bmod D)\ (1\leqslant i\leqslant 3),
\end{equation*} 
and where the error term $\EE$ arises from the contribution of those $(p_1,p_2,p_3)$  such that
$Y_{k_2} < p_2 < p_3 \leqslant Y_{k_2 +1}$ for some $k_2 > k_1$. Finally we denote by  $z_{t_i}$   the value of $f(p_i)$ when $p_i$ belongs to $\mathfrak  I_{k_i, \ell_i}$ and $p\md{t_i}D$.\par  
 By a computation similar to \eqref{morning3}, we see that, if $B_0$ is chosen sufficiently large, the error term $\EE$  (see \eqref{split10}) is bounded as required in \eqref{purpose1}.
 \par 
 The number of terms in the multiple sum of \eqref{split10} is $\ll D^2\LL^{2B_0+2K+4}$. Hence \eqref{purpose1} follows  from the validity of the bound \begin{equation}\label{purpose10}
 W_{\bsk, \bsl} (Q, R ; \bst) \ll D^{C_0} x/\LL^{A},
 \end{equation}
for suitable, absolute $C_0$ and all $A>0$, uniformly for  
\begin{equation}
\label{kellt}
\begin{aligned} 
&k_3 > k_2 > {k_1 \geqslant }\log (x^{2/7}/ Y_0)\big/ \log \varrho,\quad \min_{1\leq j \leq 3} \ell_j\geqslant 1, \\ &D\geqslant 1,\quad (t_1t_2t_3, D)=1.
\end{aligned}
\end{equation}
\par 
  It is time to replace, in \eqref{defTboldkboldell},  the characteristic function of the set of primes $\PP$   by    the classical von Mangoldt function $\Lambda$ and even better by the function $\Lambda \mathfrak Y_0$. Since these  techniques  classically generate  an admissible  error, the proof of  \eqref{purpose10}   is reduced  to show that, uniformly under conditions \eqref{kellt}, we have 
 \begin{equation}\label{purpose2}
  \widetilde  W_{\bsk,\boldsymbol\ell}(Q,R;\bst)\ll_A D^{C_0}x/\LL^{A}, 
  \end{equation}
 where 
\begin{equation}\label{defTildeT}
\widetilde W_{\bsk,\boldsymbol\ell}(Q,R;\bst) :=
\sum_{\substack{s\leqslant S \\  (s,aD)=1}} c_s\, \underset{\nu, n_1, n_2,  n_3  }{\sum }  g_s(\nu n_1n_2n_3,a)  G(\nu, n_1, n_2, n_3), 
\end{equation}
with
\begin{equation*}
G(\nu, n_1,n_2,n_3)  := f(\nu){\mathfrak Y_0}(n_1)\Lambda (n_1)  {\mathfrak Y_0}(n_2)\Lambda (n_2)
  {\mathfrak Y_0}(n_3)\Lambda (n_3),
\end{equation*}
and where the summation variables in \eqref{defTildeT}   satisfy  the  conditions  
\begin{equation}
\label{sun0}
\begin{aligned}
&x/2 < \nu n_1n_2n_3 < x,\quad \nu \leqslant  Z_0, \quad P^+(\nu)\leqslant Z_0,\\
&n_i \in \mathfrak I_{k_i, \ell_i},\,   n_i\equiv t_i\, (\bmod D)\ (1\leqslant i\leqslant 3). 
\end{aligned}
\end{equation}
\par   
Conditions \eqref{kellt} and \eqref{sun0}  imply  $x^{2/7} < n_1 < x^{1/3}$ and  $n_3 < x^{3/7}$.
 So we can apply Lemma \ref{H-Bid} to  each of the factors $\Lambda (n_i) $ ($1\leqslant i\leqslant 3$)  with 
 $y  := x^{4/7}$ and $J:= 2$. 
 Thanks to this identity, the summation over each variable $n_i$ ($1\leqslant i \leqslant 3$) in \eqref{defTildeT} is replaced by two summations, respectively  over
\begin{equation}\label{case}
  \text { pairs } (m_{i,1},  n_{i,1}),  \text { and    4-tuples } (m_{i,1}, m_{i,2}, n_{i,1}, n_{i,2}).
\end{equation}
Mixing all these cases leads to considering  eight types of sums. Since the other cases are similar, and actually simpler in the combinatorial aspect, we will concentrate on those sums arising from  the last cases in \eqref{case} for $i=1$, $2$ or $3$.  We therefore   consider the arithmetic function
 \begin{equation}
 \label{difficult}
 g(n) := \sum_\nu f(\nu) 
\underset
{\substack{m_{i,j},n_{i,j}\\ (1\leqslant i\leqslant 3;\,j=1,2) } }
{ \sum}
\prod_{\substack{1\leqslant i \leqslant 3\\ j=1,2}}\mu (m_{i,j})\mathfrak Y_0(m_{i,j}n_{i,j}) \prod_{1\leqslant i\leqslant 3}\log n_{i,1},
 \end{equation}
with the summation conditions  
 \begin{equation}\label{ineq1}
 \begin{cases}
 n =\nu  \prod_{1\leqslant i \leqslant 3} \prod_{1\leqslant j\leqslant 2} m_{i,j} n_{i,j}, \\
 \nu \leqslant Z_0, P^+(\nu)\leqslant Y_0,\\ 
  m_{i,1} m_{i,2} n_{i,1} n_{i,2} \in \mathfrak I_{k_i, \ell_i}\ (1\leqslant i\leqslant 3) ,\\
 m_{i,1},\, m_{i,2}\leqslant x^{2/7}\ (1\leqslant i\leqslant 3),\\
 m_{i,1} m_{i,2} n_{i,1} n_{i,2}\equiv t_i\,( \bmod D)\ (1\leqslant i\leqslant 3).
 \end{cases}
 \end{equation}
With this definition, we are led to consider the typical sum
\begin{equation}\label{defG(QR)}
G_{\bsk, \bsl}(Q,R;\bst) := \sum_{\substack{s\leqslant S\\  (s,aD) =1}} c_s \Bigl( \sum_{\substack{n\sim x \\  n \equiv a \bmod s}} g(n) -\frac{1}{\varphi (s)} \sum_{\substack{n \sim x\\  (n,s)=1} }g(n) \Bigr).
\end{equation}
Indeed, \eqref{purpose2} will follow from the validity of
\begin{equation}\label{purpose20} G_{\bsk, \bsl}(Q,R; \bst) \ll D^{C_0} x/\LL^{A},
\end{equation}  
for suitable absolute $C_0$, all $A>0$, and uniformly under  conditions \eqref{kellt}.
\par

The sum $G_{\bsk, \bsl}(Q,R;\bst)$ defined in \eqref{defG(QR)} is over fourteen variables, namely $s$, $\nu$, the $m_{i,j}$ and the $n_{i,j}$. In order to make the last twelve   variables arithmetically independent, 
we fix the reduced congruence class modulo $D$ of each $m_{i,j}$ and $n_{i,j}$. This involves splitting sum  $ G_{\bsk, \bsl} (Q,R; \bst)$
into $\ll D^{12}$  subsums where the last condition in \eqref{ineq1} is replaced by twelve conditions of the shape $m_{i,j} \equiv t_{i,j}\,(\bmod D)$ and $n_{i,j}\equiv t'_{i,j} $ where the $t_{i,j}$,  $t'_{i,j}$ are reduced classes 
modulo $D$. For notational simplicity  we will not recall these conditions in the sequel of the proof.
\par 
The presence of the factor involving $\mathfrak Y_0$ in \eqref{difficult} implies that each variable $m_{i,j}$, $n_{i,j}$ is  either $1$ or  $\geqslant Y_0$. Therefore, $G_{\bsk, \bsl}(Q,R; \bst)$ contains
subsums which can be handled by Lemma \ref{lemma6} as was performed in \S\thinspace\ref{<2/7}. More precisely, let 
$G^{(1,1)}_{\bsk, \bsl}(Q,R;\bst)$ denote the subsum of $G_{\bsk, \bsl}(Q,R; \bst)$ corresponding to the extra condition $m_{1,1}>1$, which implies $Y_0 \leqslant  m_{1,1} \leqslant x^{2/7}$. We may then apply Lemma \ref{lemma6} to the variables  $n:=m_{1,1}$,  $m:=\nu(\prod_{(i,j)\neq(1,1)}m_{i,j})\,(\prod n_{i,j})$ , in \eqref{defDab} with
 $$\beta_n=
 \begin{cases} \mu (n)\mathfrak Y_0 (n) &\text{ if }n\md{t_{1,1}}D,\\
 0& \text{ otherwise},
 \end{cases}
 $$    
  the definition   of   $\alpha_m$ being then  obvious. 
 By  \eqref{SWformuY0}, the sequence $\beta_n$ satisfies $\SW(D,K).$  Lemma \ref{lemma6} hence provides  the bound
\begin{equation}\label{m11}
G_{\bsk, \bsl}^{(1,1)}(Q,R; \bst) \ll x/\LL^{A}.
\end{equation}
\par 
Let us next consider the subsum
 $G_{\bsk, \bsl}^{(1,2)}(Q,R; \bst)$ corresponding to the extra hypothesis $m_{1,1}=1$, $m_{1,2} >1$, which similarly  implies $Y_0\leqslant m_{1,2}\leqslant  x^{2/7}$. We may again apply Lemma \ref{lemma6} to deduce 
 \begin{equation}\label{m12}
G_{\bsk, \bsl}^{(1,2)}(Q,R; \bst ) \ll x/\LL^{A}.
\end{equation}
\par 
Continuing this process on each of the variables $m_{i,j}$ yields upper bounds similar to \eqref{m11} and \eqref{m12}.  Having dealt with these easy subsums, we reduce the proof
of \eqref{purpose20} to that of the bound
\begin{equation}\label{purpose3}
G_{\bsk, \bsl}^*(Q,R; \bst) \ll D^{C_0} x/\LL^{A},
\end{equation}
where $G_{\bsk,\bsl}^*(Q,R;\bst)$ is the subsum of $G_{\bsk, \bsl}(Q,R, \bst)$ corresponding to the extra condition $m_{i,j} =1$ $(1\leqslant i\leqslant 3,\,j=1,2)$. 
\par 
The sum $G_{\bsk,\bsl}^*(Q,R;\bst)$
is then over eight variables, namely $s$, $\nu \leqslant Z_0$ and the $n_{i,j}$ which are equal to $1$ or $\geqslant Y_0$. By \eqref{SWforY0},
we know that, whenever $(t,D)=1$, the functions
$$
n\mapsto \beta_n  =
\begin{cases} \mathfrak Y_0 (n)(\log n)^j \text{ if } n\md tD,\\
0 \text{ otherwise,}
\end{cases}
$$  
satisfy $\SW (D,K)$ for $j=0$ or 1. 
Hence  Lemma \ref{lemma6} ensures that the bound \eqref{purpose3} holds for the subsum  of $G_{\bsk,\bsl}^*(Q,R; \bst)$ corresponding  to the case when at least one of the variables $n_{i,j}$ ($1\leqslant j\leqslant 3, \, 1\leqslant i \leqslant 2$) lies in the interval  $Y_0\leqslant n_{i,j} \leqslant x^{2/7}.$   \par 
Thus, we can state that the proof of \eqref{purpose3} is reduced to showing that, for suitable absolute $C_0$ and all $A>0$, we have
\begin{equation}\label{purpose4}
 G_{\bsk, \bsl}^\dagger(Q,R;\bst) \ll D^{C_0} x/\LL^{A},
\end{equation}
where $ G_{\bsk, \bsl}^\dagger(Q,R;\bst) $ is the subsum of $ G_{\bsk,\bsl}^*(Q,R;\bst) $ in which the $n_{i,j}$    satisfy the extra conditions 
$$
n_{i,j} =1 \text{ or } n_{i,j} >x^{2/7} \qquad (1\leqslant i\leqslant 3,\,j=1,2). $$
Since all the the $m_{i,j}$ are equal to $1$, we also have 
$$
\tfrac12x < \nu \prod_{1\leqslant i\leqslant 3} \ \prod_{1\leqslant j\leqslant 2} n_{i,j} \leqslant  x,
$$
hence the number of variables $n_{i,j}$  exceeding $x^{2/7}$ lies between 1 and 3, the others being equal to $1$. 
\par 
The  large variables $n_{i,j}$ are almost smooth, since the function $\mathfrak Y_0$ only involves a mild sifting. Therefore, we may apply Lemma \ref{tau1} provided only one large variable is involved, Lemma \ref{tau2} when two are, and  Lemma \ref{tau3} when three are. 
These lemmas substantiate respectively the equidistribution of the sequences
$$
g_s (\ell_1) \mathfrak Y_0 (\ell_1), \quad g_s (\ell_1\ell_2) \mathfrak Y_0 (\ell_1\ell_2),\quad g_s (\ell_1\ell_2 \ell_3) \mathfrak Y_0 (\ell_1\ell_2 \ell_3),
$$
(where the integers $\ell_i$ belong to some intervals and satisfy congruence conditions modulo $D$)  in every congruence class $a\!\!\pmod s$, with $(s,aD)=1$, uniformly in the respective range $$s \leq x^{1-\varepsilon}, \quad s \leq x^{2/3-\varepsilon},\quad s\leq x^{21/41-\varepsilon}$$ with error terms of the shape $\ll D^{C_0}\e^{-c(\varepsilon)\LL^{3/4}}x/\varphi (q)$. Summing over $s\leq S$  (note the 
inequalities  $1>2/3>21/41>53/105$) and noticing that 
  when actually present, the factor $\log n_{i,j}$ may be
treated by partial summation,  completes the proof
of \eqref{purpose4}.  This terminates the proof of Proposition \ref{firststep} for the case of the function $f$.
\par 
The extension to the function $f_{b,c}$ is straightforward on replacing  $f(m)$ by $f_{b,c}(m)$  in the   definition  of $\alpha_m$  and 
$f(\nu)$ by $f_{b,c} (\nu)$ in \S\thinspace\ref{>2/7}. 
This yields  \eqref{244bismodif}.


  \section {Proof of Theorem \ref{central} with the sole restriction  $Q^2 R\leqslant x$}
This section is devoted to deducing from Proposition~\ref{firststep} the following statement.
 \begin{proposition}\label{secondstep}  Let  $K>0$.   For a suitable absolute constant $C_0$ and all $A>0$,  $\varepsilon>0$,
there exists   $C= C(\varepsilon, A )$  such that, uniformly for
\begin{equation*}
\begin{aligned}
  &D\geqslant 1,\quad f\in \FF (D,K), \quad x\geqslant 1,\quad Q\geqslant 1,\quad 1\leqslant R\leqslant x^{1/105-\varepsilon},\\
  & Q^2R \leqslant x,\quad  (a,D)=1, \quad 1\leqslant \vert a \vert  \leqslant \LL^A, 
  \end{aligned}
\end{equation*}
we have
\begin{equation}\label{244bisD}
\sum_{\substack{r\leqslant R\\  (r,a )=1}}\Bigg\vert \sum_{\substack{q\leqslant Q \\  (q,a )=1}}  \Delta_f (x;qr,D,a ) \Bigg\vert \leqslant \frac{ C \, D^{C_0}x}{\LL ^A}\cdot
\end{equation}
Under the same hypotheses, the same  bound  holds uniformly for integers $b,c$,  with $1\leqslant b, \, c \leqslant \LL^A,$ on replacing $f\in \FF (D,K)$ by $f_{b,c}$, as defined in \eqref{deffbb}, viz.
\begin{equation}\label{244bismodif*}
\sum_{\substack{r\leqslant R\\  (r,a  )=1}}\Bigg\vert \sum_{\substack{q\leqslant Q \\  (q,a  )=1}} \, \Delta_{f _{b,c}}(x;qr,D,a ) \Bigg\vert \leqslant \frac{ C \, D^{C_0}x}{\LL ^A}.
\end{equation}

 \end{proposition}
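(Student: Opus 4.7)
The plan is to reduce Proposition~\ref{secondstep} to Proposition~\ref{firststep} via the Dirichlet character expansion \eqref{Dirichletcharacter}. Since $\Delta_f(x;qr,D,a)=\Delta_f(x;qr,a)$ whenever $(qr,D)=1$ by \eqref{Delta=Delta}, the additional content lies in handling pairs $(q,r)$ sharing factors with~$D$. I would write $q=\delta_1 q_1$ and $r=\delta_2 r_1$ uniquely with $\delta_1:=q_D$, $\delta_2:=r_D$, so that $\delta_i\mid D^\infty$ and $(q_1r_1,D)=1$; then $(qr)_D=\delta_1\delta_2$, and \eqref{Dirichletcharacter} yields
$$
\Delta_f(x;\delta_1\delta_2q_1r_1,D,a)=\frac{1}{\varphi(\delta_1\delta_2)}\sum_{\chi\bmod\delta_1\delta_2}\overline{\chi(a)}\,\Delta_{f\chi}(x;q_1r_1,a).
$$
The crucial observation is that for every such~$\chi$, the multiplicative function $f\chi$ belongs to $\FF(D',K)$ with the same $\Upsilon$-sequence as $f$, where $D':=\mathrm{lcm}(D,\delta_1\delta_2)$. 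Indeed $|f\chi|\leqslant\tau_K$, and for primes $p,p'$ lying in a common interval $(\Upsilon_n,\Upsilon_{n+1}]$, the congruence $p\equiv p'\pmod{D'}$ forces $f(p)=f(p')$ via $p\equiv p'\pmod{D}$ and $\chi(p)=\chi(p')$ via $p\equiv p'\pmod{\delta_1\delta_2}$. Moreover $D'$ shares its prime support with $D$, hence $(a,D')=1$ and the coprimality conditions on $q_1,r_1$ transfer to $aD'$.

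Inserting this expansion into the left-hand side of \eqref{244bisD}, applying the triangle inequality over $\chi$, and exchanging summations, I would bound the total by
$$
\sum_{\delta_1,\delta_2\mid D^\infty}\frac{1}{\varphi(\delta_1\delta_2)}\sum_{\chi\bmod\delta_1\delta_2}\sum_{\substack{r_1\leqslant R/\delta_2\\(r_1,aD)=1}}\Bigg|\sum_{\substack{q_1\leqslant Q/\delta_1\\(q_1,aD)=1}}\Delta_{f\chi}(x;q_1r_1,a)\Bigg|
$$
and split the outer sum at a threshold $T:=\LL^{B_1}$, with $B_1=B_1(A,K,C_0)$ to be chosen. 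For $\delta_1\delta_2\leqslant T$, Proposition~\ref{firststep} applied to $f\chi\in\FF(D',K)$ produces, for each character, a bound $\ll(D')^{C_0}x/\LL^{A'}$ with $A'$ arbitrary; since $D'\leqslant DT$ and the number of admissible pairs $(\delta_1,\delta_2)$ is polylogarithmic, this range contributes $\ll D^{C_0}T^{C_0}\LL^{O(1)}x/\LL^{A'}\leqslant D^{C_0}x/\LL^A$ upon choosing $A'=A+C_0B_1+O(1)$. For $\delta_1\delta_2>T$, I would bypass the character decomposition and invoke the trivial bound $|\Delta_f(x;\delta_1\delta_2q_1r_1,D,a)|\ll x\LL^{O(K)}/(\delta_1\delta_2\varphi(q_1r_1))$ issued from Lemma~\ref{lemma1}; summation over $q_1$, $r_1$ using $\sum_m1/\varphi(m)\ll\LL^{O(1)}$, followed by Rankin's estimate $\sum_{\delta\mid D^\infty,\delta>U}1/\delta\ll U^{-1/4}\LL^{o(1)}$ applied to the constraint $\delta_1\delta_2>T$, produces a total $\ll T^{-1/4}x\LL^{O(K)}$, which is $\leqslant D^{C_0}x/\LL^A$ provided $B_1$ is sufficiently large.

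The extension to $f_{b,c}$ in \eqref{244bismodif*} proceeds along the same lines: writing $b=b'b''$ with $b':=(b,(\delta_1\delta_2)^\infty)$ and $(b'',\delta_1\delta_2)=1$, one verifies the identity $f_{b,c}(n)\chi(n)=f(b')\chi(b'')^{-1}(f\chi)_{b'',\,c\cdot\mathrm{rad}(\delta_1\delta_2)}(n)$, valid upon noting that both members vanish when $(n,\delta_1\delta_2)>1$. Since $b''\leqslant\LL^A$ and, in the small range, $c\cdot\mathrm{rad}(\delta_1\delta_2)\leqslant\LL^{A+B_1}$ remain polylogarithmic, the $f_{b,c}$--clause of Proposition~\ref{firststep} applied to $f\chi\in\FF(D',K)$ (with the parameter $A$ therein enlarged accordingly) closes the small range, and the trivial bound treats the complementary range exactly as before. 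The main technical obstacle is that the character-twisted modulus $D'$ may grow as large as $\delta_1\delta_2$ and so dwarf~$D$; the splitting at threshold~$T$, enabled by the flexibility in the parameter $A'$ of Proposition~\ref{firststep}, is the essential device that reconciles this with the target bound $D^{C_0}x/\LL^A$.
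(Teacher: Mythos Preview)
Your proof is correct and follows the same route as the paper: factor out the $D$-smooth part of the modulus $qr$, expand via Dirichlet characters modulo that part (noting that $f\chi\in\FF(D',K)$ for a modulus $D'$ with the same prime support as $D$), apply Proposition~\ref{firststep} when the $D$-smooth part is at most a power of $\LL$, and dispose of the complementary range by the trivial pointwise bound together with Rankin. The only imprecision is that the Rankin tail $\sum_{\delta\mid D^\infty,\,\delta>T}\delta^{-1}$ carries a factor $\prod_{p\mid D}(1-p^{-3/4})^{-1}\ll b_{3/4}(D)$ depending on $D$ rather than $\LL^{o(1)}$ (compare the paper's bound on $\gR^+(T)$), but this is harmlessly absorbed into the $D^{C_0}$ on the right-hand side.
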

 \begin{proof} Let $S$ and $c_s$ be defined  as in \eqref{defScs}. The sum studied in \eqref{244bisD} may be written as
 $$
 V(Q,R;D):= \sum_{\substack {s\leqslant S \\  (s,a)=1}} c_s\, \Delta_f (x;s,D,a ).
 $$
According to \eqref{factorizeq}, we factorize $s$ as
 $$
 s = s_D s'_D, \text{ with } s_D = (s, D^\infty).
 $$
Splitting the sum $V(Q,R;D)$ according to the value of $s_D$, we get
 $$
 V(Q,R;D) = \sum_{t \mid D^\infty} \sum_{\substack{\sigma\leqslant S/t \\  (\sigma, aD)=1}} c_{t\sigma }\, \Delta_f (x;t\sigma ,D,a ).
 $$
 The contribution of  large $t$ is estimated by Lemma~\ref{lemma1}: for  $T>1$ we have
 \begin{align*}
\gR^+(T)&:= \sum_{\substack{t \mid D^\infty\\  t >T}} \sum_{\substack{\sigma\leqslant S/t \\  (\sigma, aD)=1}} c_{t\sigma }\, \Delta_f (x;\sigma t,D,a )
\ll \sum_{ \substack{t \mid D^\infty\\  t >T}} \sum_{\substack{\sigma\leqslant S/t \\  (\sigma, aD)=1}} \frac{\tau(t \sigma) x\LL^{\idc}}{\varphi (t\sigma )} \\
 & \ll x\LL^{\idc {+2} } \sum_{\substack{t \mid D^\infty\\  T<t\leqslant S}} \frac{\tau (t)}{\varphi (t)} \ll x\LL^{\idc {+2} } \   \sum_{ t \mid D^\infty } \frac{\tau (t)}{\varphi (t)} \Bigl( \frac{t}{T}\Bigr)^{1/4}\ll \frac{x\LL^{\idc {+2}}b_{3/4}(D)^2}{T^{1/4}}\cdot
 \end{align*}
 It remains to select $T:=\LL^C$ with suitable $C= C(A)$ to obtain the bound
\begin{equation}\label{sigma>Xi}
\gR^+(T)\ll D x/\LL ^{A}.
\end{equation}
\par 
  We next turn our attention to the complementary sum 
 $$
\gR^-(T):=   \sum_{\substack{t \mid D^\infty\\  t \leqslant T}} \sum_{\substack{\sigma\leqslant S/t \\  (\sigma, aD)=1}} c_{t\sigma } \Delta_f (x;t\sigma ,D,a ).
  $$
  We now introduce Dirichlet characters modulo $t$---see \eqref{Dirichletcharacter}---to infer 
    \begin{equation}\label{Sigma1leq}
|\gR^-(T)|
 \leqslant  \sum_{\substack{t \mid D^\infty\\  t \leqslant T}}  {\frac1{\varphi(t)}} \sum_{\chi\!\pmod t}\ 
 \Bigl\vert \,  \sum_{\substack{\sigma\leqslant S/t \\  (\sigma, aD)=1}} c_{t\sigma }\, \Delta_{f \chi}(x; \sigma,a )
 \, \Bigr\vert.
 \end{equation}
 Since  $ f(p) \chi (p) $ is  periodic modulo  $ Dt$, we have $f \chi\in\FF  ( Dt, K)$.  
In order to apply   Proposition \ref{firststep} we must also check that the weight $\sigma\mapsto c_{t\sigma}$ can be suitably factorized.
However, since $(\sigma, t)=1$, the equality $qr= t\sigma $ implies a unique representation
$$
q=q_t q_\sigma, \ r=r_t r_\sigma\text{ with } q_tr_t =t \text{ and } q_\sigma r_\sigma = \sigma.
$$
Taking into account that $(\sigma, aD)=1\Leftrightarrow(\sigma, a Dt)=1$, we get that the absolute value of the inner sum in~\eqref{Sigma1leq} does not exceed
\begin{equation*}
\sum_{q_t r_t = t}  \sum_{\substack{r_\sigma\leq  R/r_t\\  (r_\sigma, a Dt)=1}} \Biggl\vert \, 
\sum_{\substack{q_\sigma\leqslant Q/q_t \\  (q_\sigma, aDt)=1}} \Delta_{f\chi} (x;q_\sigma r_\sigma, a)\,
\Biggr\vert\ll  \underset{q_t r_t = t} {\sum}  \frac{(Dt)^{C_0}x}{\LL^B}
\end{equation*}
by Proposition \ref{firststep}, where $B$ is arbitrary. 
Inserting back into  \eqref{Sigma1leq}  we obtain
 \begin{align*}
\gR^-(T) & \ll  \sum_{\substack{t \mid D^\infty\\  t \leqslant T}}  \sum_{\chi \!\pmod t}  \sum_{q_t r_t = t} \frac{(Dt)^{C_0} x}{ \LL^B\varphi(t)}\ll  \frac{D^{C_0} xT^{ C_0+1}}{\LL^B}\ll \frac{D^{C_0}x}{\LL^A},
\end{align*}
for a suitable choice of $B$, considering our choice for $T$.  
Combined with \eqref{sigma>Xi} this bound furnishes \eqref{244bisD}.
\par 
Extending the above proof  to obtain   \eqref{244bismodif*} is now standard and we omit the details. 
 \end{proof}

\section{Application of Dirichlet's hyperbola method}\label{Dirichlet} In this section, we aim at completing the proof of Theorem~\ref{central} from Proposition~\ref{secondstep}.  We may plainly assume that 
\begin{equation}\label{Q0<Q<xR-1}
Q_0:=\sqrt{x/R} \leqslant Q\leqslant x/R\LL^{B}, \quad R\leqslant  x^{1/105-\varepsilon}
\end{equation}
where $B=B(A,\varepsilon)$ has to be determined.  Given  $\bsxi=(\xi_r)_{r\geqslant 1}\in\CC^{\N^*}$  such that $\sup_r\vert \xi_r \vert \leqslant 1$, we introduce the quantity 
\begin{equation*}
\HH_D(Q_0, Q,R; \bsxi):=\sum_{\substack{r\sim R\\  (r,a)=1}}\xi_r\sum_{\substack{Q_0< q\leqslant Q \\  (q,a)=1} } \Delta_f (x;qr,D,a) .
\end{equation*}
 As a consequence of Proposition \ref{secondstep}, it remains to prove that, for a suitable absolute constant $C_0$, and all $A>0$, $\varepsilon>0$, there exist  $B=B(A,\varepsilon)$ and $C=C(A,\varepsilon)$ 
 such that,  the estimate
\begin{equation}\label{Sigma<<}
\vert \HH_D(Q_0, Q,R; \bsxi) \vert \leqslant C  D^{C_0}x/\LL^{A},
\end{equation} 
holds uniformly for $Q_0, \, Q$ and $R$ satisfying  \eqref{Q0<Q<xR-1}, $1\leqslant  \vert a \vert \leqslant \LL^A$,  and $\bsxi$ as above.
\par 
\goodbreak
From \eqref{350}, we may split $\HH_D(Q_0, Q,R; \bsxi) $ as
\begin{equation}\label{Sigma=S-S}
\HH_D(Q_0, Q,R; \bsxi)= \gS-\gE,
\end{equation}
where 
\begin{equation}\label{deffrakS}
\mathfrak S := \sum_{\substack{r\sim R\\  (r,a)=1}}\xi_r \,\sum_{\substack{Q_0< q\leqslant Q \\  (q,a)=1}} \sum_{\substack{n\leqslant x\\n\md a{qr} }} f(n),
\end{equation}
and 
\begin{equation}\label{deffrakS*}
\gE := \sum_{\substack{r\sim R\\  (r,a)=1}}\xi_r \,\sum_{\substack{Q_0< q\leqslant Q \\  (q,a)=1}} \frac{1}{\varphi (q'_Dr'_D)} \sum_{\substack{n\leqslant x\\ n\md a{q_Dr_D}
 \\ (n, q_D'r_D')=1}} f(n)
 \end{equation}
 is the expected main term.
\subsection{Transformation of $\mathfrak S$}
\label{transformofS}
We  tackle the sum $\mathfrak S$ by  Dirichlet's hyperbola method  as follows.
 We express the congruence 
$n \md a{qr}$
as
\begin{equation}\label{congequa}
n = a+uqr,
\end{equation}
and consider this relation as a congruence condition modulo $ur$, which is convenient since $u\leqslant  2Q_0$ by \eqref{Q0<Q<xR-1}. However  the condition $(a,u)=1$ could now fail. This induces 
 technical complications which have been completely ignored in \cite[pp.~239--240]{BFI1}---but, of course, disappear in the typical cases    $a=\pm 1$. \par 
  We address this difficulty by splitting $\gS $ as 
\begin{equation}\label{firstsplit}
\gS = \sum_{\Delta \mid a} \gS_\Delta,
\end{equation}
where $\gS_\Delta$ is defined as  $\gS$ in  \eqref{deffrakS} but with the extra constraint
$
(n,a) =\Delta.
$
Since $(a,qr) =1$, we deduce from \eqref{congequa} that $\Delta \mid u$. So we may write
\begin{equation}
\label{mbv}
n=\Delta m, \quad a =\Delta b,\quad u =\Delta v,
\end{equation}
and deduce from \eqref{congequa} the equality
\begin{equation}\label{928}
m=b +qv r,
\end{equation}
where the coprimality condition   $(m,b)=  1$ is now satisfied.   In order to simplify some summation conditions in the sequel, we will frequently use the trivial  fact that this condition
implies that any divisor of $m-b$ is coprime to $b$.
\par 
The representation \eqref{928} implies that $(b,q)=1$ are coprime, but not necessarily that $(a,q)=1$. So we introduce the integer
\begin{equation}\label{defalpha}
  \alpha=\alpha (a,b): = \prod_{ p\mid a,\, p\,\nmid\,b} p.
\end{equation}
Let $e$ be any divisor of $\alpha$ and write $q=e w$, so that \eqref{928} may be rewritten as
\begin{equation}\label{935}
m=b +ewv r.
\end{equation}
In order to apply M\"obius' formula to take account of the condition $(q, \alpha)=1$, we perform the further split
\begin{equation} \label{splitSigmaDelta}
\gS_{\Delta } := \sum_{ e \mid \alpha} \mu (e) \gS_{\Delta, e},
\end{equation}
with
\begin{align*}
\gS_{\Delta, e}&:=  \sum_{\substack{r\sim R\\  (r,a)=1}}\xi_r \sum_{Q_0/e< w\leqslant Q/e  }  \sum_{\substack{m\leqslant x/\Delta\\  (m,b) =1}} f(\Delta m)\\
&= \sum_{\substack{r\sim R\\  (r,a)=1}}\xi_r \,\sum_{Q_0/e<w\leqslant Q/e  }  \sum_{m\leqslant x/\Delta} f_{\Delta, b}( m)
\end{align*}
where $f_{\Delta, b}$ is defined in \S \ref{conventions} and the variable  $m$ runs through integers  satisfying \eqref{935}
for some  $v$---in other words $m\md b{ewr}$. 
\par 
This is time to apply Dirichlet's device in the form of summing over the smooth variable $v$ instead of $w$. Thus
$$
\gS_{\Delta, e}:=  \sum_{\substack{r\sim R\\  (r,a)=1}}\xi_r  \sum_{v\leqslant (x-a)/(\Delta Q_0 r) } \ \  \sumast_{m\equiv b \bmod evr} f_{\Delta, b}(  m), 
$$
where the asterisk indicates that $m$ satisfies the extra conditions
$$
Q_0 < (m-b)/v r \leqslant  Q,\quad m\leqslant x/\Delta,
$$
which we rephrase as
\begin{equation}\label{ineqforn1}
b +Q_0 vr <m \leqslant  \min \{ x/\Delta, \, b + Qv r\}.
\end{equation}
\par 
   We would like to apply Proposition \ref{firststep} to $\gS_{\Delta, e}$. However the bounds appearing in \eqref{ineqforn1}  are not fixed since they depend on the product $vr$.  This difficulty may be circumvented by appealing to a classical device in such context:  to split the summation on $r$  and $v$ into subsums over intervals of the form $\scI_k:= ]\varrho^k , \varrho^{k+1}]$    with $\varrho$ as in \eqref{defrho}.  This   leads to an estimate of the form
   \begin{equation}\label{decomp10}
\gS_{\Delta, e}= \sum_{k} \sum_{\ell} \gS_{\Delta, e}^{k,\ell} + E,
\end{equation}
 where
 $$ R \leqslant \varrho^k <\varrho^{k+1} < 2R,\quad 1\leqslant \varrho^\ell < \varrho^{\ell +1}< (x-a)/(\Delta Q_0 \varrho^{k+1}),$$ 
 the number of involved pairs $(k,\ell)$ is $\ll \LL^{2B_0+2}$, and 
$$
 \gS_{\Delta, e}^{k,\ell}:=  \sum_{\substack{r\in \scI_k \\  (r,a)=1}}\xi_r  \sum_{v\in \scI_\ell   } \ \  \sumdag_{m\equiv b \bmod evr} f_{\Delta,b}(  m), 
 $$
where the asterisk indicates the summation condition
\begin{equation}\label{condform}
b+Q_0 \varrho^{k+\ell +2} \leqslant m \leqslant M_{k,\ell}:=\min \{ x/\Delta, b +Q\varrho^{k+\ell} \},
\end{equation}
 and the error term $E$ corresponds to the contribution of the triplets $(r, v,m)$  contributing to $\gS_{\Delta,e}$ but 
to none of the $ \gS_{\Delta, e}^{k,\ell}$. 
\par 
Applying the bounds \eqref{G<} and \eqref{G<<} in a classical way yields the estimate
$$
E \ll x/\LL^{A},
$$
provided $B_0$ is chosen sufficiently large.
\par 
  We now consider two cases, according to the size of $M_{k,\ell}$, as defined in \eqref{condform}.\par 
\noindent {\it Case 1: } $M_{k,\ell} \leqslant x/\LL^{A+3B_0+2}$.\par 
 The trivial bound given by 
Lemma \ref{exceptional} then furnishes the bound
\begin{equation}\label{trivial}
\gS_{\Delta, e}^{k,\ell}\ll x/\LL^{A+3B_0-\idc+2}.
\end{equation}
\vskip .3cm
\noindent {\it Case 2:}  $ x/\LL^{A+3B_0+2} < M_{k,\ell} \leqslant x$.
\par 
 We then apply the estimate \eqref{244bismodif*} of Proposition \ref{secondstep},  with the changes of variables $r\to er$,
 $q\to v$, $R\to e \varrho^k$, $Q \to \varrho^\ell$, $b\to \Delta$ and $c\to b$. Since $(a,D)=1$ we also have 
$$
(e vr)_D = v_D r_D\text{ and } (evr)'_D =ev'_Dr'_D.
$$
 That the required hypotheses are satisfied follows from \eqref{Q0<Q<xR-1}. This gives that, for all $C$,  we have
\begin{multline}\label{deep}
\gS_{\Delta, e}^{k, \ell} =
\sum_{\substack{r\in \scI_k \\  (r,a)=1}}\xi_r  \sum_{\substack{v\in \scI_\ell \\  (v,b)=1}} \frac{1}{\varphi (ev'_Dr'_D)} 
\sumdag_{\substack{(m , ev'_D r'_D)=1\\ m\md b{v_Dr_D}}} f_{\Delta,b}( m) +O \Big(\frac{D^{C_0}x}{\LL^C}\Big).
\end{multline}
\par 
Now observe that \eqref{deep} actually also holds in Case 1 above  because the main term is then smaller than the error term---see \eqref{trivial}. 
\par 
Gluing back all estimates \eqref{deep} for $(k,\ell)$ arising  in \eqref{decomp10} and  suitably selecting~$C$, we obtain 
 \begin{multline}\label{almosttheend}
 \gS_{\Delta, e}
 =  \sum_{\substack{r\sim R\\  (r,a)=1}}\xi_r  \sum_{\substack{v\leqslant (x-a)/(\Delta Q_0 r)\\  (v, b) =1 }} \frac{1}{\varphi (ev'_Dr'_D)}  \sumast_{\substack{(m , e v'_Dr'_D)=1\\ m\md b {v_Dr_D}}} f_{\Delta, b}(  m)+O \Big(\frac{D^{C_0}x}{\LL^A}\Big).
 \end{multline}
 \par 
We now insert   \eqref{almosttheend} into \eqref{splitSigmaDelta}, carry back into \eqref{firstsplit}, revert summations,  split the sum according to the level sets  $r_D :=\idr$ and $v_D:=\idv$, and change $r'_D$ into $s$ and $v'_D$ into $v$, to obtain
  \begin{multline}
  \label{secondstepp}
 \gS  =  \sum_{\idr\idv \vert D^\infty}
 \sum_{\substack{s \sim R/\idr\\  (s,aD) =1}} \xi_{\idr s} 
   \sum_{\Delta \mid a} \ 
     \sum_{\substack{m\leqslant x/\Delta\\(m ,  s)=1\\ m\md b {\idr\idv}}} f_{\Delta, b}( m)    \sum_{\substack{e\vert \alpha (a,b) \\  (e,m)=1}} \mu (e)\\
 \times  
 \sum_{(v, bmD) =1 } \frac{1}{\varphi (ev s)} + O \Big(\frac{D^{C_0}x}{\LL^A}\Big), 
 \end{multline}
 where the last summation is restricted to those integers $v$ satisfying 
 \begin{equation}\label{ineqforq1}
\frac{m-b}{Q\idv\idr s} <v \leqslant  \frac{ m-b}{Q_0\, \idv\idr s},
\end{equation}
and where $b$ and $\alpha$ are as defined in \eqref{mbv} and \eqref{defalpha} respectively. 

 \subsection{Transformation of $\gE$} 
In order to compare $\gS$ with $\gE$ defined in \eqref{deffrakS*}, we  consider the approximation of $\gS$ given  in \eqref{secondstepp} and  transform  $\gE$ following a path parallel  to the treatment of  $\gS$ in \S\thinspace\ref{transformofS}. Thus, we fix $\Delta = (a, n)$, 
split $a= \Delta b$, $n= \Delta m$,  $q=\idv v$,  $r=\idr s$ with $\idv =q_D$ and $\idr = r_D$, which implies $(D, vs)=1$.
After inverting  summations, we obtain 
\begin{multline}\label{SEMT}
\gE = \sum_{\idr\idv \vert D^\infty }
\sum_{\substack{s\sim R/\idr\\  (s,aD) =1}} \xi_{\idr s}
 \sum_{\Delta \mid a} \  
\sum_{\substack{m\leqslant x/\Delta \\ (m, s)=1\\ m\md b{\idr\idv}}} f_{\Delta, b}( m) \ \sum_{\substack{Q_0/\idv < v \leqslant  Q/\idv\\   {(v, amD)=1} }} 
\frac{1}{\varphi (vs)}\cdot
\end{multline}
Substracting \eqref{SEMT} from \eqref{secondstepp},  we get  
\begin{equation}\label{S-SEMT}
\bigl\vert \gS-\gE \vert 
\leqslant   \sum_{\idr\idv \vert D^\infty }
\sum_{\substack{s\sim R/\idr\\  (s,aD) =1}} \vert \,\xi_{\idr s}\vert 
 \sum_{\Delta \mid a} \  
\sum_{\substack{m\leqslant x/\Delta \\ (m, s)=1\\ m\md b{\idr\idv}}} \vert f_{\Delta, b}( m)   \Omega_m \vert+O \Big(\frac{D^{C_0}x}{\LL^A}\Big),
\end{equation}
with 
\begin{equation}
\label{defOmega}
\begin{aligned}
\Omega_m &= \Omega_m (a,b,  D, s) \\ &:= \sum_{\substack{e\vert \alpha (a,b) \\  (e,m)=1}} \mu (e)  
 \sum_{(v, bmD) =1 } \frac{1}{\varphi (ev s)} -\sum_{\substack{Q_0/\idv <v \leqslant  Q/\idv\\   {(v, am D)=1} }} 
\frac{1}{\varphi (vs)},
\end{aligned}
\end{equation}
 {where, in the first $v$-sum, the summation variable satisfies \eqref{ineqforq1}}. 
Note that the two $v$-sums appearing in \eqref{defOmega} run over intervals with the same ratio $Q/Q_0$ between the upper and the lower bound,
which is a transcription of Dirichlet's approach. \par 
By a method already used above, we may shorten the summations over $\idv$ and $\idr$ in \eqref{S-SEMT}: for suitable $G= G(A,\varepsilon)$ we have 
\begin{multline}
\label{S-SEMT1}
\bigl\vert \gS-\gE \vert
\leqslant   \sum_{\substack{\idr\idv \mid D^\infty \\  \max(\idr,\idv) \leqslant \LL^G}}
\sum_{\substack{s\sim R/\idr\\  (s,aD) =1}}
 \sum_{\Delta \mid a}  \!
\sum_{\substack{m\leqslant x/\Delta \\ (m, br)=1\\ m\md b{\idr\idv}}} \tau_K (\Delta m) \vert \Omega_m \vert +O \Big(\frac{D^{C_0}\,x}{\LL^A}\Big).
\end{multline}
 
\subsection{Estimating $\Omega_m$} Lemma \ref{sumofphimodif} is relevant to evaluate $\Omega_m$, defined in \eqref{defOmega}. We will apply this statement four times, the parameter $R$ appearing in \eqref{faTRmn} taking successively the values
\begin{equation}
\label{list}
\frac{m-b}{Q\idv\idr s} ,\quad  \frac{ m-b}{Q_0\idv\idr s}, \quad \frac{Q}{\idv}, \quad \frac{Q_0}{\idv}\cdot
\end{equation}
Since $\Delta \leqslant \vert a \vert \leqslant \LL^A$, $QR\leqslant  x/\LL^{B}$, $\max(\idv,\idr)\leqslant \LL^G$, every number of the list \eqref{list}
is at least $$\gL:=\LL ^{B-G-A},$$ where $B(A,\varepsilon)$ has still to be specified. \par 
We apply Lemma  \ref{sumofphimodif} with $R_0:=\gL$ in the four cases
and we notice that parts of the main terms disappear when substracting. We obtain, with notations \eqref{not-hlg} and \eqref{not-Theta01}, 
\begin{equation}
\label{OmhU}
\Omega_m=h\log \Big(\frac Q{Q_0}\Big)\mathfrak U+\gV
\end{equation}
with
\begin{equation*}
\begin{aligned}
\gU&:=\sum_{\substack{e\mid \alpha(a,b)\\ (e,m)=1}}\frac{\mu(e)g(bDems)}{\varphi(es)}\Theta_0(es,bDm;\gL)-\frac{g(aD ms)}{\varphi(s)}\Theta_0(s,aDm;\gL),\\
\gV&\ll \frac{\LL}{\gL^{1/4}}\Bigg\{\sum_{e\mid \alpha(a,b)}\frac{\mu(e)^2\tau(bDems)^2}{\varphi(es)}+\frac{\tau(aD ms)^2}{\varphi(s)}\Bigg\}.
\end{aligned}
\end{equation*}
\par A standard computation involving sums of classical multiplicative functions shows that, provided $B$ is suitably chosen, the contribution of $\gV$ to the multiple sum in \eqref{S-SEMT1} may be absorbed by the error term. 
\par 
We next apply Lemma \ref{rankin} to evaluate the terms involving $\Theta_0$. We get
\begin{equation}
\label{UU*}
\gU=\gU^*+\gW
\end{equation}
with
\begin{equation*}
\begin{aligned}
\gU^*&:=\sum_{\substack{e\mid \alpha(a,b)\\ (e,m)=1}}\frac{\mu(e)g(bDems)}{\varphi(es)}\prod_{\substack{p\,\mid\, es\\ p\,\nmid\,bDm}}\Big(1-\frac1p\Big)^{-1}-\frac{g(aD ms)}{\varphi(s)}\prod_{\substack{p\,\mid\,s\\p\,\nmid\,aDm}}\Big(1-\frac1p\Big)^{-1}\\
\gW&\ll\frac1{\gL^{1/4}}\Bigg\{\sum_{e\mid \alpha(a,b)}\frac{\mu(e)^2\tau(bDems) b_{3/4}(es)}{\varphi(es)}+\frac{\tau(aD ms) b_{3/4}(s)}{\varphi(s)}\Bigg\}.
\end{aligned}
\end{equation*}
Here again, we check that, for suitable choice of $B$, the contribution of $\gW$ to the multiple sum in \eqref{S-SEMT1} may be absorbed by the error term.

\subsection{Vanishing of $\gU^*$} 
We now prove that, for all relevant values of $a,\,b,\,D,\,m,\, s$, we actually have
\begin{equation*}
\gU^*=0.
\end{equation*}
Inserting this back into \eqref{UU*}, \eqref{OmhU} and \eqref{S-SEMT1},  provides the expected bound
$$
\vert\gS -\gE\vert\ll D^{C_0} x / \LL^{A}.
$$
and hence, via \eqref{Sigma=S-S} and  \eqref{Sigma<<}, completes the proof of Theorem \ref{central} by choosing $B=B(A, \varepsilon)$ sufficiently large.
\par 
Observing that the summation conditions in \eqref{S-SEMT} imply $$(b,D)=(bDem,s)=(e,bDm)=1,$$
we see that the condition $p\nmid bDm$ in the first product arising in the definition of $\gU^*$ is superfluous. Therefore we may rewrite $\gU^*$ as
$$\frac{g(bDms)s}{\varphi(s)^2}\Bigg\{\sum_{\substack{e\mid \alpha(a,b)\\ (e,m)=1}}\frac{\mu(e)g(e)e}{\varphi(e)^2}-\prod_{\substack{p\,\mid\,\alpha(a,b)\\ p\,\nmid\,m}}g(p)\Bigg\}.$$
However, the last sum equals
$$\prod_{\substack{p\,\mid\,\alpha(a,b)\\ p\,\nmid\,m}}\Big\{1-\frac{pg(p)}{(p-1)^2}\Big\}=\prod_{\substack{p\,\mid\,\alpha(a,b)\\ p\,\nmid\,m}}g(p),$$
by the definition of $g(p)$. This is all we need.
\section{Proof of Theorem \ref{centralabsolutevalue}.\label{proofoftheoremabs}}
In this section we sketch the proof of Theorem  \ref{centralabsolutevalue} exploiting the combinatorial preparation of the variables 
given in the beginning of the proof of Proposition~\ref{firststep}---see \S \ref{preparation}, \ref{nontypical} for the notations.  To simplify the exposition we only consider the case
$$ D=1.$$ By dyadic dissection
we may restrict to studying
$$
W (Q) :=\sum_{q\leq Q} \max_{(a,q)=1} \vert \Delta_f (x;q,a) -\Delta_f( x/2; q,a)\vert 
$$
and set out to prove that
$$
W(Q) \ll x/\LL^{A},
$$
provided  $Q\leq \sqrt{x}/ \LL^{B} $ with suitable $B=B (A,K)$. 
Using the factorisation \eqref{f(n)=} and bounding trivially the contribution of non typical terms,  we are led to consider the sum
\begin{equation}\label{Wkl=}
W_{k,\ell} (Q) :=\sum_{q\leq Q}\,  \max_{(a,q)=1} \, \biggl\vert \underset{\nu, p_1 <p_2  <\cdots }{\sum}f (\nu) f(p_1) f(p_2)  \cdots g_q (\nu p_1p_2\cdots; a)\biggr\vert 
\end{equation}
where $k \geq 0$ and $\ell \geq 1$, 
where the variables satisfy the conditions
$$
\begin{cases}
\nu\leq Z_0, \quad P^+(\nu)\leqslant  Y_0,\\
p_1 \in \mathfrak I_{k\ell},\  Y_{k+1}< p_2 <p_3 <\cdots ,\\
\nu p_1p_2 \cdots \sim x,
\end{cases}
$$
and aim at establishing the bound
\begin{equation}\label{Wkl<<}
W_{k,\ell} (Q) \ll x/\LL^{A}.
\end{equation}
We now consider two cases :\\
\noindent $\bullet $ If  the variables $p_1$ and $p_2$ do exist on the right--hand side of \eqref{Wkl=}, we directly apply Lemma \ref{Largesieve}
with $N:= Y_k$. This furnishes \eqref{Wkl<<} \\
\noindent $ \bullet$ If   the variable $p_2$  does not actually appear in the multiple sum on the right--hand-side of \eqref{Wkl=} we cannot directly apply 
Lemma \ref{Largesieve} since the support of $\nu$ could be very small. The function $f(p)$  being constant on $\mathfrak I_{k\ell}$, 
we may appeal for instance to \cite[Theorem 8.4]{PanPan} which generalizes the Bombieri--Vinogradov theorem to the function $\bal  * \Lambda$, when the support of the general 
sequence $\bal$  has suitable size. We obtain \eqref{Wkl<<} here again.
\par 
This completes the proof of Theorem \ref{centralabsolutevalue}.

\section{Proof of Theorem \ref{EW}}
\label{pfTh2.1}
\subsection{Lemmas}
The main difficulty for the proof of Theorem \ref{EW} rests in assuming no more than \eqref{CNS-EW}. We need a number of lemmas.
The first is a easy estimate for the number of friable integers in 
\begin{equation}
\label{defExk}
\EE(x;k):=\{n\leqslant x:\omega(n)=k\}.
\end{equation}
It is useful to bear in mind that the Hardy-Ramanujan upper bound \cite{HR17}---see e.g. \cite[ex. 264]{TenlivreUS} or \cite[p. 257]{TW14} for an alternative proof--- states that, for a suitable absolute constant $a$, we have
\begin{equation}
\label{HR}
\pi_k(x)=|\EE(x;k)|\ll \frac{x(\log_2x+a)^{k-1}}{(k-1)!\log x}\qquad (x\geqslant 3,\,k\geqslant 1).
\end{equation}
\begin{lemma}
Uniformly for $x\geqslant 3$, $1\leqslant k\ll\log_2x$, $2\leqslant y\leqslant x$, $u:=(\log x)/\log y$, we have
\begin{equation}
\label{majEkf1}
\pi_k(x,y):=\sum_{\substack{n\in\EE(x;k)\\ P^+(n)\leqslant y}}1\ll\pi_k(x)\e^{-u/2}\cdot
\end{equation}
\end{lemma}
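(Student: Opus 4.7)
The bound will follow from Rankin's method. For any $\alpha\in(0,1)$, replacing $\mathbf{1}_{n\leqslant x}$ by $(x/n)^\alpha$ and factorising over prime powers yields
\begin{equation*}
\pi_k(x,y)\leqslant x^\alpha\sum_{\substack{\omega(n)=k\\P^+(n)\leqslant y}}n^{-\alpha}\leqslant \frac{x^\alpha}{k!}\Bigg(\sum_{p\leqslant y}\frac1{p^\alpha-1}\Bigg)^k,
\end{equation*}
the last step by expanding $n=p_1^{a_1}\cdots p_k^{a_k}$ with distinct primes $p_i\leqslant y$ and dropping the distinctness constraint before summing the geometric series in each $a_i$.

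The main idea is to choose $\alpha:=1-1/(2\log y)$, so that $x^\alpha=xe^{-u/2}$. Since $p^{1/(2\log y)}\leqslant e^{1/2}$ whenever $p\leqslant y$, we get $p^\alpha\geqslant p/\sqrt e$, hence $p^\alpha-1\geqslant(1-\sqrt e/p)\,p/\sqrt e$; handling the finitely many primes with $p\leqslant 2\sqrt e$ trivially, we then obtain
\begin{equation*}
\sum_{p\leqslant y}\frac1{p^\alpha-1}\leqslant c_1\log_2y+c_2,
\end{equation*}
for absolute constants $c_1,c_2$. Inserting this gives
\begin{equation*}
\pi_k(x,y)\ll xe^{-u/2}\,\frac{(c_1\log_2y+c_2)^k}{k!}.
\end{equation*}

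The last step is to compare this with $\pi_k(x)$. For $1\leqslant k\ll\log_2x$, the Landau-Selberg-Sathe theorem furnishes the matching lower bound $\pi_k(x)\gg x(\log_2x)^{k-1}/((k-1)!\log x)$; combined with Stirling this will reduce the whole inequality to checking that $(c\log_2y)^k\log x/(k(\log_2x)^{k-1})$ stays bounded uniformly over the admissible range of $(k,y,x)$. For $k$ comparable to $\log_2y$ this is immediate from $\log_2y\leqslant\log_2x$; for very small $k$ (typified by $k=1$), Rankin's bound is loose, and I would instead invoke the direct estimate $\pi_1(x,y)\leqslant 2\pi(y)\leqslant 4y/\log y$, together with the fact that $y\,e^{u/2}/x=o(1)$ (which follows upon taking logarithms, since $(1/u-1)\log x+u/2\to-\infty$ whenever $u\geqslant 2$), to absorb this corner.

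The main obstacle will be writing the endgame cleanly: the Rankin bound is sharp precisely in the ``central'' regime $k\asymp \log_2y$, and a little casework (or a Rankin argument with a second parameter $z$ acting on the $\omega=k$ constraint, yielding a two-variable optimisation $z=k/A$, $A=\sum 1/(p^\alpha-1)$) is needed to close the argument uniformly in $k$. Once the comparison is in place, the bound $\pi_k(x,y)\ll\pi_k(x)e^{-u/2}$ follows as claimed.
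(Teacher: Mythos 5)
Your Rankin set-up is the natural first thought, but it loses an irrecoverable factor of $\log x$: the paper's argument is built precisely to avoid this loss, and your endgame paragraph already senses the difficulty without diagnosing it.

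Quantitatively, your bound reads
\begin{equation*}
\pi_k(x,y)\leqslant x\e^{-u/2}\frac{1}{k!}\Big(\sum_{p\leqslant y}\frac1{p^\alpha-1}\Big)^k
\end{equation*}
with $\alpha=1-1/(2\log y)$. Since $p-p^\alpha\leqslant p\log p/(2\log y)$ for $p\leqslant y$, one finds $\sum_{p\leqslant y}1/(p^\alpha-1)=\log_2 y+O(1)$, so the best your computation can produce is $\pi_k(x,y)\ll x\e^{-u/2}(\log_2 y+O(1))^k/k!$. Comparing with $\pi_k(x)\asymp x(\log_2 x)^{k-1}/\{(k-1)!\log x\}$, the ratio you need to be bounded is $(\log_2 y+O(1))^k\log x/\{k(\log_2 x)^{k-1}\}$. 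Take $y=x$ (hence $u=1$, $\e^{-u/2}$ a constant) and $k\asymp\log_2 x$: the ratio is $\asymp\log x$, not $O(1)$. The remark ``for $k$ comparable to $\log_2 y$ this is immediate from $\log_2 y\leqslant\log_2 x$'' is therefore false: in the central regime $k\asymp\log_2 y\asymp\log_2 x$ the deficit is exactly $\log x$. The root cause is structural: after you apply $\mathbf 1_{n\leqslant x}\leqslant (x/n)^\alpha$, the remaining sum $\sum n^{-\alpha}$ extends effectively over all $y$-friable $n$, so it reproduces the full Euler product and never sees the $1/\log x$ density that lives inside $\pi_k(x)$; a second Rankin parameter $z$ acting on $\omega(n)=k$ only recovers a factor $\sqrt{k}$, which is far from enough.

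The paper sidesteps this by using Rankin in the opposite direction and importing the density factor from a sharper mean-value theorem. It first discards $n\leqslant x^{3/4}$ trivially, then bounds $\mathbf 1_{n>x^{3/4}}\leqslant (n/x^{3/4})^\alpha$ with the \emph{small} exponent $\alpha=2/(3\log y)$, so that $x^{-3\alpha/4}=\e^{-u/2}$; the weight $n^\alpha$ satisfies $p^\alpha\leqslant\e^{2/3}<2$ for $p\leqslant y$, which is exactly the admissibility condition for \cite[lemma~1]{GT00}. That lemma bounds $\sum_{n\leqslant x,\,\omega(n)=k}g(n)$ by $\pi_k(x)$ times an exponential in $\tfrac{k-1}{\log_2 x}\sum_{p\leqslant x}(g(p)-1)/p$, \emph{with the correct $\pi_k(x)$ already on the outside}; for $g(n)=\mathbf 1_{P^+(n)\leqslant y}n^\alpha$ the exponential is $O(1)$ under $k\ll\log_2 x$. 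In other words, the $1/\log x$ is inherited from the reference result rather than reconstructed by hand, which is precisely the step your argument is missing. If you want a self-contained elementary route you would need to replace \cite[lemma~1]{GT00} by an equivalent: e.g. a Selberg--Sathe type local estimate for $\sum_{\omega(n)=k,\,n\leqslant x}n^\alpha$ restricted to the dyadic block $n\sim x$, not a Rankin bound on the unrestricted Dirichlet series.

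Your fallback bound $\pi_1(x,y)\leqslant 2\pi(y)$ is fine but only covers $k=1$; the gap above is for all intermediate $k$, and your proposal as written does not close it.
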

\begin{proof}
The stated estimate holds trivially if $y\leqslant 7$ for  then  $\pi_k(x,y)\ll(\log x)^4$. We may therefore assume henceforth that $y\geqslant 11$. In this circumstance, we may write, with $\alpha:=2/(3\log y)$,
\begin{equation*}
\pi_k(x,y)\leqslant x^{3/4}+x^{-3\alpha/4}\sum_{\substack{x^{3/4}<n\leqslant x\\\omega(n)=k\\ P^+(n)\leqslant y}}n^\alpha
\end{equation*}
Since $\e^{2/3}<2$, we the $n$-sum may be estimated by applying \cite[lemma 1]{GT00} to the  multiplicative function $n\mapsto\1_{\{P^+(n)\leqslant y\}}n^\alpha$. Under the assumption $k\ll\log_2x$, we obtain the upper bound
\begin{equation*}
\ll \pi_k(x)\exp\Big\{\frac{k-1}{\log_2x}\sum_{p\leqslant y}\frac{p^\alpha-1}{p}\Big\}\ll\pi_k(x).
\end{equation*}
This implies the required estimate, up to noticing that $x^{3/4}\ll x^{24/25}\e^{-u/2}$ for $y\geqslant 11$.
\end{proof}
Our next lemma refines the latter when $k$ is `small'.
\begin{lemma}
\label{Ekf2}
Under the conditions
\begin{equation}
\label{hyp}
\varepsilon_x=o(1),\quad k\geqslant 1,\quad k\log (1/\varepsilon_x)=o(\log_2x)\qquad (x\to\infty),
\end{equation}
we have
\begin{equation}
\label{estT}
\pi_k\big(x,x^{1-\varepsilon_x}\big)=o\Big(\pi_k(x)\Big).
\end{equation}
\end{lemma}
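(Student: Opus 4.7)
The strategy I adopt is to pass to the complement $\pi_k(x) - \pi_k(x,y)$ with $y := x^{1-\varepsilon_x}$ and estimate it from below. The key identity, valid as soon as $\varepsilon_x < 1/2$ so that $y > \sqrt x$, comes from writing any $n \leqslant x$ with $\omega(n) = k$ and $P^+(n) > y$ uniquely as $n = qm$ where $q := P^+(n)$ is prime in $(y,x]$ and $m \leqslant x/q < x^{\varepsilon_x} < q$. The inequality $m < q$ forces $q \nmid m$, so $\omega(m) = k-1$; conversely every such pair $(q,m)$ gives an admissible $n = qm \leqslant x$. This furnishes
\begin{equation*}
\pi_k(x) - \pi_k(x,y) = \sum_{y < q \leqslant x} \pi_{k-1}(x/q).
\end{equation*}

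The case $k=1$ is handled at once: the sum equals $\pi(x) - \pi(y) \sim x/\log x$, while \eqref{hyp} with $k=1$ gives $\log(1/\varepsilon_x) = o(\log_2 x)$, hence $\varepsilon_x \log x \to \infty$, so $\pi(y) \ll y/\log y = o(x/\log x)$, and \eqref{estT} follows via $\pi_1(x) = \pi(x) + O(\sqrt x)$. For $k \geqslant 2$ I shall invoke the Selberg--Sathe asymptotic $\pi_{k-1}(z) \sim z(\log_2 z)^{k-2}/((k-2)!\log z)$, uniform for $z\to\infty$ with $k-1 = o(\log_2 z)$. The requisite uniformity is available throughout the integration domain since \eqref{hyp} forces $k = o(\log_2 x/\log(1/\varepsilon_x)) = o(\log_2 x)$ and the dominant contribution will come from $q$ with $\log q \sim \log x$, where $\log_2(x/q)$ remains comparable to $\log_2 x$ down to $\log_2 x - \log(1/\varepsilon_x)$. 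Approximating the prime sum by $\int_y^x \pi_{k-1}(x/q)\,dq/\log q$, substituting $u = x/q$ (so that $\log q \geqslant (1-\varepsilon_x)\log x$), and performing the change of variables $v = \log_2 u$ (with $\pi_{k-1}(u)$ vanishing below the product of the smallest $k-1$ primes, making the lower endpoint harmless) will yield
\begin{equation*}
\sum_{y<q\leqslant x}\pi_{k-1}(x/q) \sim \frac{x\bigl(\log_2 x - \log(1/\varepsilon_x)\bigr)^{k-1}}{(k-1)!\log x}.
\end{equation*}

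Combining with $\pi_k(x) \sim x(\log_2 x)^{k-1}/((k-1)!\log x)$ (again Selberg--Sathe, applicable since $k = o(\log_2 x)$) and expanding
\begin{equation*}
\bigg(1 - \frac{\log(1/\varepsilon_x)}{\log_2 x}\bigg)^{k-1} = \exp\bigg(-(k-1)\frac{\log(1/\varepsilon_x)}{\log_2 x}(1+o(1))\bigg) = 1 + o(1),
\end{equation*}
where the final step uses $(k-1)\log(1/\varepsilon_x) = o(\log_2 x)$ from \eqref{hyp}, will deliver $\bigl(\pi_k(x)-\pi_k(x,y)\bigr)/\pi_k(x) \to 1$, which is \eqref{estT}. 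The main technical point to verify is the uniformity of the Selberg--Sathe asymptotic as $u$ sweeps $[1,x^{\varepsilon_x}]$; I expect this to be routine, since the integral concentrates near its upper endpoint and \eqref{hyp} supplies the a priori bound $k = o(\log_2 x)$ needed to remain in the range where the asymptotic holds.
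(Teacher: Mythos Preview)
Your approach is correct and genuinely different from the paper's. Your identity
\[
\pi_k(x)-\pi_k(x,y)=\sum_{y<q\leqslant x}\pi_{k-1}(x/q)
\]
is valid for $y=x^{1-\varepsilon_x}>\sqrt x$, and the endgame comparison via $(1-\log(1/\varepsilon_x)/\log_2x)^{k-1}\to1$ is exactly right. One clean way to justify the asymptotic you assert for the prime sum, avoiding the uniformity issue you flag, is to swap the order of summation:
\[
\sum_{y<q\leqslant x}\pi_{k-1}(x/q)=\sum_{\substack{m\leqslant x^{\varepsilon_x}\\ \omega(m)=k-1}}\bigl\{\pi(x/m)-\pi(y)\bigr\}.
\]
Since $x/m\geqslant y$ always, the prime number theorem gives $\pi(x/m)=(1+o(1))x/(m\log x)$ uniformly, while $\pi(y)\pi_{k-1}(x^{\varepsilon_x})\ll (k-1)\pi_k(x)/(\varepsilon_x\log x\,\log_2x)=o(\pi_k(x))$ (using $\varepsilon_x\log x\to\infty$). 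This reduces matters to the standard estimate $\sum_{m\leqslant Z,\,\omega(m)=k-1}1/m\sim(\log_2 Z)^{k-1}/(k-1)!$, uniform for $k-1=o(\log_2 Z)$, which follows from Sathe by partial summation.

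The paper proceeds oppositely: it bounds $\pi_k(x,x^{1-\varepsilon_x})$ directly from above, writing $n=p^\nu m$ with $p=P^+(n)\leqslant x^{1-\varepsilon_x}$, splitting $p\leqslant\sqrt x$ versus $\sqrt x<p\leqslant x^{1-\varepsilon_x}$, and applying the preceding lemma's friable bound $\pi_{k-1}(x/p,p)\ll\pi_{k-1}(x/p)\,\e^{-(\log x)/(2\log p)}$ in the first range and the Hardy--Ramanujan inequality in the second. After an Abel summation this yields
\[
\pi_k(x,x^{1-\varepsilon_x})\ll\frac{k\,\pi_k(x)}{\log_2x}+\pi_k(x)\Bigl\{1-\Bigl(\tfrac{\log_2 x^{\varepsilon_x}+a}{\log_2 x+a}\Bigr)^{k-1}\Bigr\},
\]
which is $o(\pi_k(x))$ by the same binomial expansion you use. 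The paper's route is more elementary in that it needs only the one-sided Hardy--Ramanujan bound (never the Sathe asymptotic), and it recycles the previous lemma; your route is conceptually cleaner but imports the two-sided Sathe theorem as a black box.
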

\smallskip
\begin{proof}
We may plainly assume $k\geqslant 2$.
Setting $y:=x^{\varepsilon_x}$, we have
\begin{equation}
\begin{aligned}
\pi_k\big(x,x^{1-\varepsilon_x}\big)\leqslant \sum_{\substack{p^\nu\leqslant x\\ p\leqslant x/y}}\pi_{k-1}\Big(\frac x\pnu,p\Big).\cr
\end{aligned}
\end{equation}
A routine Abel summation yields that the contribution of $\nu\geqslant 2$ is
$$\ll\frac{x(\log_2x)^{k-2}}{(k-2)!\log x}\ll\frac{k\pi_k(x)}{\log_2x}=o\Big(\pi_k(x)\Big).$$
The remaining contribution is, for a suitable absolute constant $a$,
\begin{equation}
\begin{aligned}
\label{maj}
&\leqslant \sum_{p\leqslant \sqrt{x}}\pi_{k-1}\Big(\frac xp,p\Big)+\sum_{\sqrt{x}<p\leqslant x/y}\pi_{k-1}\Big(\frac xp\Big)\cr&
\ll\sum_{p\leqslant \sqrt{x}}\frac{x^{1-1/(2\log p)}(\log_2x)^{k-2}}{(k-2)!p\log x}+\sum_{\sqrt{x<}p\leqslant x/y}\frac{x(\log_2x/p+a)^{k-2}}{(k-2)!\,p\log x/p}\cr&\ll\frac{k\pi_k(x)}{\log_2x}+\frac{x}{(k-2)!}\int_{\sqrt{x}}^{x/y}\frac{(\log_2x/t+a)^{k-2}}{t\log (x/t)}\d\pi(t)\cr
&\ll\frac{k\pi_k(x)}{\log_2x}+\frac1{(k-2)!}\int_{y}^{\sqrt{x}}{\pi\Big(\frac xt\Big)}\vbs{f'_k(t)}\d t\cr&\ll\frac{k\pi_k(x)}{\log_2x}+\frac{x}{(k-2)!\log x}\int_{y}^{\sqrt{x}}\frac{\vbs{f'_k(t)}}t\d t,\cr
\end{aligned}
\end{equation}
where we  applied \eqref{HR}, made use of \eqref{majEkf1}, and set
$$f_k(t):=\frac{t(\log_2t+a)^{k-2}}{\log t}\qquad (t\geqslant 3).$$
Since $f_k$ is actually non-decreasing throughout the integration domain, we have
\begin{equation}
\begin{aligned}
\int_{y}^{\sqrt{x}}\frac{\vbs{f'_k(t)}} t\d t\ll\frac{(\log_2x)^{k-2}}{\log x}+\int_y^x\frac{(\log_2t+a)^{k-2}}{t\log t}\d t.\cr
\end{aligned}
\end{equation}
Carrying back into \eqref{maj}, we get
$$\pi_k\big(x,x^{1-\varepsilon_x}\big)\ll\frac{k\pi_k(x)}{\log_2x}+\pi_k(x)\Big\{1-\Big(\frac{\log_2y+a}{\log_2x+a}\Big)^{k-1}\Big\}.$$
Taking account of the inequality 
$1-(1-v)^m\leqslant mv$ $(0\leqslant v\leqslant 1\leqslant m)$, we see that this bound does imply \eqref{estT} under the hypothesis \eqref{hyp}.
\end{proof}
For larger $k$, we shall invoke the following result in which we write 
\begin{equation}
\label{nep}
n_\varepsilon:=\prod_{\substack{p\leqslant x^\varepsilon\\ \pnu\|n}}\pnu\qquad (1\leqslant n\leqslant x,\,0<\varepsilon\leqslant \tfrac12).
\end{equation} 
We also recall notation \eqref{defExk}.
\begin{lemma}
\label{petitsfactsomk}
Uniformly for
$$x\geqslant 3, \quad1/\log_2x<\varepsilon\leqslant \tfrac12, \quad1\leqslant k\ll\log_2x,\quad r:=k/\log_2x,$$
we have
\begin{equation}
\label{propnep}
\omega\big(n_\varepsilon\big)>k-2r\log(1/\varepsilon),\quad n_\varepsilon\leqslant x^{\sqrt{\varepsilon}}
\end{equation}
for all but at most $\ll\varepsilon^{r(\log 4-1)}\pi_k(x)$ integers $n$ in $\EE(x;k)$.
\end{lemma}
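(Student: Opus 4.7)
Setting $y := x^\varepsilon$, I would decompose each $n \in \EE(x;k)$ as $n = \nu m$ where $\nu := n_\varepsilon$ is the $y$-smooth part and $P^-(m) > y$, writing $j := \omega(\nu)$ and $\ell := \omega(m) = k - j$. The exceptional set splits into violators of $\omega(n_\varepsilon) > k - 2r\log(1/\varepsilon)$ (equivalently $\ell \geq L := \lceil 2r\log(1/\varepsilon)\rceil$) and violators of $n_\varepsilon \leq x^{\sqrt{\varepsilon}}$; the two parts are treated separately.

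For the first part, I would bound the contribution of each pair $(j,\ell)$ with $j + \ell = k$ and $\ell \geq L$ via the Hardy--Ramanujan upper bound for rough integers, $\pi_\ell(x/\nu, >y) \ll (x/\nu)(\log(1/\varepsilon) + O(1))^{\ell-1}/((\ell-1)!\log x)$, combined with the Mertens-type estimate $\sum_{\omega(\nu)=j,\, P^+\nu \leq y}\nu^{-1} \ll (\log_2 y)^j/j!$. Dividing by the Sathe-type asymptotic $\pi_k(x) \asymp x(\log_2 x)^{k-1}/((k-1)!\log x)$, and exploiting the identity $\log(1/\varepsilon) + \log_2 y = \log_2 x$, the ratio for each $\ell$ telescopes into a scaled Binomial$(k-1, p)$ mass function at $\ell - 1$, with $p := \log(1/\varepsilon)/\log_2 x$. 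Summing over $\ell \geq L \approx 2(k-1)p$ then reduces to a Chernoff tail of this Binomial above twice its mean, so $\Pr[\mathrm{Bin}(k-1, p) \geq L-1] \leq \exp(-(k-1)p(\log 4 - 1)) \leq \varepsilon^{r(\log 4 - 1)}$, which matches the target bound after multiplication by $\pi_k(x)$.

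For the second part, I would first apply Lemma~\ref{gerald} with $y = x^\varepsilon$ and $z = x^{\sqrt{\varepsilon}}$ to obtain $|\{n\leq x : n_\varepsilon > x^{\sqrt{\varepsilon}}\}| \ll x\exp(-1/(2\sqrt{\varepsilon}))$. When $\varepsilon$ is small (say $\varepsilon \leq 1/(\log_2 x)^2$), the decay $\e^{-1/(2\sqrt{\varepsilon})}$ overwhelms the ratio $x/\pi_k(x)$ and the bound $\leq \varepsilon^{r(\log 4 - 1)}\pi_k(x)$ follows outright. For larger $\varepsilon$, I would split on the rough factor $m = n/n_\varepsilon$: if $m = 1$, then $n$ is $y$-smooth and the bound $\pi_k(x, y) \ll \pi_k(x)\e^{-u/2}$ from \eqref{majEkf1} with $u = 1/\varepsilon$ applies; if $m > 1$, then the inequality $m \leq x^{1-\sqrt{\varepsilon}}$ forces $\ell < (1-\sqrt{\varepsilon})/\varepsilon$ (since $m \geq y^\ell$), and I can rerun the Hardy--Ramanujan decomposition from the first part under this extra restriction to absorb the contribution into $\varepsilon^{r(\log 4 - 1)}\pi_k(x)$.

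The main obstacle will be to keep uniform control on the various constants so that the $O(1)$ shifts in $\log(1/\varepsilon)$ and $\log_2 y$ propagate as multiplicative factors $\e^{O(r)} = O(1)$ rather than polylogarithmic blow-ups; this is where the hypotheses $\varepsilon > 1/\log_2 x$ (ensuring $\log(1/\varepsilon) \leq \log_3 x$ so the Rankin-style corrections stay bounded) and $k \ll \log_2 x$ (keeping $r$ bounded, hence making Chernoff's exponent $(k-1)p(\log 4 - 1)$ of the right order) enter essentially. Matching the constants from the Hardy--Ramanujan and Mertens estimates against the Sathe denominator so that the binomial mass function emerges cleanly, rather than with spurious factors $(\log x)^{o(1)}$, is the delicate step.
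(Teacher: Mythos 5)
Your first part takes a genuinely different but workable route. The paper handles the inequality $\omega(n_\varepsilon)>k-2r\log(1/\varepsilon)$ in one line by citing \cite[lemma 1]{GT00}, a uniform upper bound for $\sum_{n\in\EE(x;k)}g(n)$ with $g$ multiplicative: applying it to $g(n)=2^{\omega(n)-\omega(n_\varepsilon)}$ gives $\ll \pi_k(x)\varepsilon^{-r}$, and Chebyshev with the threshold $2^{2r\log(1/\varepsilon)}=\varepsilon^{-r\log 4}$ finishes. Your hand-rolled decomposition into $j+\ell=k$, Hardy--Ramanujan for the rough part, Mertens for the $y$-smooth part, and a binomial Chernoff tail is essentially a proof of that instance of \cite[lemma 1]{GT00} from scratch; it costs more bookkeeping (exactly the constant-tracking you flag at the end) but the idea is sound and you correctly identified where the hypothesis $k\ll\log_2x$ is used.

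Your second part has a genuine gap. Lemma~\ref{gerald} bounds the count relative to all integers $n\leqslant x$, giving $\ll x\,\e^{-1/(2\sqrt{\varepsilon})}$; you then need this to be $\ll\varepsilon^{r(\log4-1)}\pi_k(x)$, i.e.\ you need $\e^{-1/(2\sqrt{\varepsilon})}$ to absorb $x/\pi_k(x)$. But $x/\pi_k(x)\gg\sqrt{\log_2x}$ (attained near $k\approx\log_2x$), and for small $k$ it is as large as $\log x$. Under the hypothesis $\varepsilon>1/\log_2x$ one has $\e^{-1/(2\sqrt{\varepsilon})}\geqslant\e^{-\sqrt{\log_2x}/2}$, so $\e^{-1/(2\sqrt{\varepsilon})}\,x/\pi_k(x)$ can be as large as $\exp\{\log_2x-\tfrac12\sqrt{\log_2x}\}\to\infty$. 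Moreover the sub-case you introduce, $\varepsilon\leqslant1/(\log_2x)^2$, is vacuous given $\varepsilon>1/\log_2x$. The $m>1$ fallback does not repair this: the constraint $\ell<(1-\sqrt{\varepsilon})/\varepsilon$ is nearly automatic (typically $\ell\ll 1/\varepsilon$) and produces no decay tied to the event $n_\varepsilon>x^{\sqrt{\varepsilon}}$. What is needed is a Rankin weight applied directly to the sum over $\EE(x;k)$, not over all $n\leqslant x$. The paper again invokes \cite[lemma 1]{GT00}, this time with $g(n)=n_\varepsilon^\alpha$ and $\alpha:=1/(\varepsilon\log x)$, to get $\sum_{n\in\EE(x;k)}n_\varepsilon^\alpha\ll\pi_k(x)$; dividing by $x^{\sqrt{\varepsilon}\alpha}=\e^{1/\sqrt{\varepsilon}}$ then bounds the exceptional count by $\ll\e^{-1/\sqrt{\varepsilon}}\pi_k(x)$, which is smaller than $\varepsilon^{r(\log4-1)}\pi_k(x)$ throughout the range. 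You could of course reprove that Rankin inequality by the same $j+\ell$ decomposition you used in part one, but Lemma~\ref{gerald} alone cannot do the job.
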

\begin{proof}
By  \cite[lemma 1]{GT00}, we have, for any fixed $y>0$,
$$\sum_{n\in\EE(x;k)}y^{\omega(n)-\omega(n_\varepsilon)}\ll\pi_k(x)\varepsilon^{r(1-y)}$$
uniformly in the considered ranges for $x$, $k$, $\varepsilon$. Selecting $y=2$, and multiplying through by $\varepsilon^{r\log 4}$, we see that the number of $n$ contravening the first inequality in \eqref{propnep} is $\ll \varepsilon^{r(\log 4-1)}\pi_k(x)$.\par 
Similarly, for $\alpha:=1/(\varepsilon\log x)$, we have, again by \cite[lemma~1]{GT00},
$$\sum_{n\in\EE(x;k)}n_\varepsilon^\alpha\ll\pi_k(x).$$
This shows that at most $\ll\e^{-1/\sqrt{\varepsilon}}\pi_k(x)$ integers contravene the second condition in \eqref{propnep}.
\end{proof}

Next, we need an estimate for mean-values of some arithmetic functions over level sets related to a shifted argument. 
\begin{definition}
\label{MABeps}
Given constants $A>0$, $B>0$, $\varepsilon>0$, we designate by $\MM(A,B,\varepsilon)$ the class of those functions $G\geqslant 0$ such that
$$G(mn)\leqslant A^{\Omega(m)}G(n)\qquad \big((m,n)=1\big), \quad G(n)\leqslant  Bn^\varepsilon,\quad\sum_{p}\sum_{\nu\geqslant 2}\frac{G(p^\nu)}{p^\nu}\leqslant B.$$
\end{definition}
The following result could be generalized much further along the lines of \cite{NT98,GT18}. 
\begin{lemma}\label{lemmeG}
Let $A>0$, $B>0$, $R>0$, $0<\varepsilon<\tfrac12$.  Uniformly under the conditions $x\geqslant 2$, $G\in\MM(A,B,\varepsilon/3)$,  $1\leqslant k\leqslant R\log_2x$, we have
\begin{equation}
\label{estG}
S_k(x):=\sum_{\substack{1<n\leqslant x\\ \omega(n-1)=k}}G(n)\ll\frac{\pi_k(x)}{\log x}\sum_{n\leqslant x}\frac{G(n)}{n}\cdot
\end{equation}
\end{lemma}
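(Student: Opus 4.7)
My plan is to write $G=\mathbf{1}\ast h$ with $h:=\mu\ast G$, reduce via $m=n-1$ to a problem of counting integers $m$ with $\omega(m)=k$ in arithmetic progressions $-1\pmod d$, and then appeal to Corollary~\ref{easy2}. Explicitly,
\begin{equation*}
S_k(x) \;=\; \sum_{d\leqslant x}h(d)\sum_{\substack{m\leqslant x-1\\ m\equiv -1\,({\rm mod}\,d)\\ \omega(m)=k}}1,
\end{equation*}
which reveals that the decisive input is the level of distribution of $w_k$ in progressions weighted by~$h(d)$.

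\textbf{Execution.} A preliminary reduction using $\sum_p\sum_{\nu\geqslant 2}G(p^\nu)/p^\nu\leqslant B$ allows me to restrict the sum to $n$ with small squarefull part, the remaining contribution being absorbed by the target. I then split at $D:=x^{1/105-\varepsilon_1}$, the threshold permitted by Corollary~\ref{easy2}. For $d\leqslant D$, I write the inner count as $M_k(x;d)/\varphi(d)+\Delta_{w_k}(x;d,-1)$, where $M_k(x;d):=\#\{m\leqslant x-1:(m,d)=1,\,\omega(m)=k\}$. After a dyadic decomposition $d\sim D'$ and normalisation $\xi_d:=h(d)/\max_{d'\sim D'}|h(d')|$ (with $|\xi_d|\leqslant 1$), Corollary~\ref{easy2} furnishes a saving of any fixed power of~$\LL$ on each error block. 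The main terms $\sum_{d\leqslant D}h(d)M_k(x;d)/\varphi(d)$ are reassembled using inclusion--exclusion on the primes dividing~$d$, recovering $\pi_k(x)(\log x)^{-1}\sum_{n\leqslant x}G(n)/n$ to within admissible error.

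\textbf{Tail and main obstacle.} For $d>D$, Corollary~\ref{easy2} no longer applies, and I would invoke instead a combinatorial-sieve Shiu--Henriot bound
\begin{equation*}
\#\{m\leqslant x:m\equiv -1\,({\rm mod}\,d),\,\omega(m)=k\}\;\ll\;\frac{\pi_k(x)}{\varphi(d)}\prod_{p\mid d}\Bigl(1+\frac{c}{p}\Bigr),
\end{equation*}
valid for $d\leqslant x^{1-\delta}$ and $k\ll\log_2x$; after summation against $|h(d)|$ the tail is $\ll\pi_k(x)D^{-\eta}\sum_n G(n)/n$, which is absorbed. The principal difficulty is that $G$ is only \emph{submultiplicative}, so $h=\mu\ast G$ need not be multiplicative and its pointwise behaviour is not Eulerian. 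I would circumvent this by dominating $G$ pointwise by a genuine multiplicative function $\widetilde G\in\MM(A',B',\varepsilon/3)$ (at the cost of enlarged constants), so that $|h(d)|\leqslant\widetilde h(d):=(\mu\ast\widetilde G)(d)$ on squarefree $d$ and $\widetilde h$ satisfies a convenient Euler product bound $|\widetilde h(d)|\ll\tau_K(d)$ for some $K$; this verifies the size requirement $|\xi_d|\leqslant\tau_K(d)$ needed to activate Corollary~\ref{easy2}.
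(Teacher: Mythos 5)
Your route differs fundamentally from the paper's. The paper proves this lemma by a direct sieve decomposition \`a la Nair--Tenenbaum: after discarding $n\leqslant x/(\log x)^c$ via \cite[cor.\,3]{NT98}, it writes $n(n-1)$ as a product $a_nb_n$ determined by a cut-off $\xi_n$ on the small prime factors, and bounds the three resulting ranges $N_1,N_2,N_3$ by Brun--Titchmarsh, a trivial estimate, and Rankin's method respectively; no level-of-distribution input occurs anywhere in that argument. Your proposal instead expands $G=\mathbf 1 * h$ and appeals to Corollary~\ref{easy2}.

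Two steps in your argument do not go through as written. First, the class $\MM(A,B,\varepsilon)$ contains functions that are merely submultiplicative, in the sense $G(mn)\leqslant A^{\Omega(m)}G(n)$ for $(m,n)=1$, so $h:=\mu*G$ need not be multiplicative, and your repair fails: dominating $G$ pointwise by a multiplicative $\widetilde G$ does \emph{not} yield $|h(d)|\leqslant \widetilde h(d)$ on squarefree $d$, because the convolution with $\mu$ introduces signs, and a pointwise majorant of $G$ gives no pointwise control of $\mu*G$ by $\mu*\widetilde G$. Hence the hypothesis $|\xi_d|\leqslant\tau_K(d)$ of Corollary~\ref{easy2} is not secured, and the reassembly of the main term by inclusion--exclusion also presupposes a multiplicative structure that $G$ does not possess. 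Second, the Shiu--Henriot-type bound you invoke for the tail $d>D$, namely
\[
\#\{m\leqslant x:m\md{-1}d,\ \omega(m)=k\}\ll \frac{\pi_k(x)}{\varphi(d)}\prod_{p\mid d}\Bigl(1+\frac cp\Bigr)
\]
uniformly for $d\leqslant x^{1-\delta}$ and $k\ll\log_2x$, is not a citable fact: $w_k$ is not multiplicative, so neither Shiu's nor Henriot's theorem applies to it directly, and the required uniformity in $k$ and $d$ would itself demand a separate lemma (for instance via Cauchy's formula on $z^{\omega(m)}$ together with a complex-valued extension of Shiu's estimate). The paper's argument avoids both pitfalls precisely because it never writes $G$ as a convolution and never needs to count integers with prescribed $\omega$ inside a single residue class.
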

\begin{proof}
In view of, for instance, \cite[cor. 3]{NT98}, the subsum over $n\leqslant x/(\log x)^c$,   with $c=c(R)$ sufficiently large, is negligible in front of the right-hand side of \eqref{estG}. By a standard splitting argument, we may hence restrict to finding an upper bound for the subsum, say $S_k^*(x)$, over the range $x/2<n\leqslant x$.\par 
For each $n$, let $\xi_n$ be the largest of those integers  $\xi$ such that 
$$a_n(\xi):=\prod_{\substack{p^\nu\|n(n-1)\\ p\leqslant \xi}} p^\nu \leqslant x^{2\varepsilon}.$$
Write $a_n:=a_n(\xi_n)$, $b_n:=n(n-1)/a_n$, $p_n:=P^-(b_n)$, $v_n:=v_{p_n}(b_n)$, so that
$$x^{2\varepsilon}/p_n^{v_n}<a_n\leqslant x^{2\varepsilon}.$$
Put  
$a_{jn}:=\prod_{\substack{p^\nu\|n-j,\ p\leqslant \xi_n}}p^\nu$ $(j=0,1)$, so that
$$a_{0n}|n,\ (a_{0n},a_{1n})=1,\ a_{1n}|(n-1).$$
\par For $j\in[1,3]$, we denote by $N_j(x)$ the contribution to $S_k^*(x)$ of those integers $n$ respectively satisfying  the conditions
\begin{equation*}
\begin{aligned}
(N_1)\qquad \qquad& a_n\leqslant x^\varepsilon\hbox{ and } p_n>x^{\varepsilon/2}\hskip30mm & \cr
(N_2)\qquad \qquad& a_n\leqslant x^\varepsilon\hbox{ and } p_n\leqslant x^{\varepsilon/2}& \cr
(N_3)\qquad \qquad&  a_n>x^\varepsilon.&\cr
\end{aligned}
\end{equation*}
\par 
If $n$ appears in $N_1(x)$, then conditions  $\Omega(b_n)\leqslant E:=5/\varepsilon$ and $\omega(n-1)=k$ imply $k-E\leqslant \omega(a_{1n})<k$, where the upper inequality arises from the assumption $a_{1n}\leqslant x^{\varepsilon}$.
Summing according to the fibers $a_{1n}=s,\,a_{0n}=t$, we may hence write
\begin{align*}N_1(x)&\leqslant \sum_{(k-E)^+\leqslant \kappa<k}\sum_{\substack{\omega(s)=\kappa\\ st\leqslant x^\varepsilon}}
\sum_{\substack {x/2<n\leqslant x\\ s|(n-1),\,t|n\\ P^-\big(n(n-1)/st\big)>x^{\varepsilon/2}}}G(n)\cr
&\ll\sum_{(k-E)^+\leqslant \kappa<k}\sum_{\substack{\omega(s)=\kappa\\ st\leqslant x^\varepsilon}} \frac {G(t)x}{\varphi(st)(\log x)^2} \cr&\ll \frac{x}{(\log x)^2}\sum_{(k-E)^+\leqslant \kappa<k}\frac{1}{\kappa!}\bigg(\sum_{p\leqslant x}\frac p{ (p-1)^2}\bigg)^\kappa\sum_{t\leqslant x}\frac{G(t)}{\varphi(t)}\cr
&\ll\frac{x\big(\log_2x\big)^{k-1}}{(\log x)^2(k-1)!}\sum_{n\leqslant x}\frac{G(n)}{n},\cr
\end{align*}
where the last upper bound stems from the following computation, in which we write $\varepsilon_p:=\sum_{\nu\geqslant 2}G(p^\nu)/p^\nu$ and define $\lambda$ as the multiplicative function such that $\lambda(p):=1/(p-1)$, $\lambda(p^\nu)=0$ $(\nu>1)$, for all prime numbers $p$:
\begin{equation}
\label{majsG}
\begin{aligned}
\sum_{t\leqslant x}\frac{G(t)}{\varphi (t)} &=\sum_{t\leqslant x}\frac{G(t)}t\sum_{d|t}\lambda(d)=\sum_{d\leqslant x}\lambda(d)\sum_{m|d^\infty}\frac{G(md)}{md}\sum_{n\leqslant x/md}\frac{G(n)}n\cr
&\leqslant K\sum_{n\leqslant x}\frac{G(n)}{n},\cr
\end{aligned}
\end{equation}
with
\begin{equation}
\label{majK}
\begin{aligned}
K&:=\sum_{d\geqslant 1}\frac{\lambda(d)}{d}\prod_{p\,\mid\,d}\sum_{j\geqslant 0}\frac{G(p^{j+1})}{p^{j}}\leqslant \prod_{p}\bigg\{1+\sum_{j\geqslant 0}\frac{\lambda(p)G(p^{j+1})}{p^{j+1}}\bigg\}\cr&\leqslant \prod_{p}\bigg\{1+\frac{G(p)\lambda(p)}{p}+\lambda(p)\varepsilon_p\bigg\}\leqslant \e^{A+B}.\cr
\end{aligned}
\end{equation}
\par 
\par 
Let us now consider an integer $n$ contributing to $N_2(x)$. We have $p_n^{v_n}>x^\varepsilon$ and $p_n\leqslant x^{\varepsilon/2}$. For each prime $p$ not exceeding $x^{\varepsilon/2}$, let $\nu(p)$ denote the smallest exposant $\nu$ such that $p^\nu>x^\varepsilon$. Then $\nu (p) \geq 2$ and  $p^{\nu(p)-1}\leqslant x^{\varepsilon}$, so $p^{\nu(p)}\leqslant p^{3\varepsilon/2}$. Therefore
\begin{align}
N_2(x)&\leqslant \sum_{p\leqslant x^{\varepsilon/2}}\sum_{\substack{x/2<n\leqslant x\\ n(n-1)\md0{p^{\nu(p)}}}}G(n)\ll Bx^{\varepsilon/3}\sum_{p\leqslant x^{\varepsilon/2}}\frac{x\nu(p)}{p^{\nu(p)}}\ll x^{1-\varepsilon/6}.
\end{align}
\par 
In order to bound $N_3(x)$, we consider $q_n:=P^+(a_n)$ and note that $$\Omega(b_n)\leqslant \eta(q_n):=3(\log x)/\log q_n.$$ It follows that
\begin{align*}
N_3(x)&\leqslant \sum_{q\leqslant x^{2\varepsilon}}\sum_{(k-\eta(q))^+\leqslant \kappa<k}\sum_{\substack{\omega(s)=\kappa\\ x^{\varepsilon}<st\leqslant x^{2\varepsilon}\\ P^+(st)=q}}G(t)\sum_{\substack{x/2<n\leqslant x\\ s|(n-1),\,t|n\\ P^-\big(n(n-1)/st\big)>q}}A^{\eta(q)}\cr
&\ll\sum_{q\leqslant x^{2\varepsilon}}\sum_{(k-\eta(q))^+\leqslant \kappa<k}\sum_{\substack{\omega(s)=\kappa\\ x^{\varepsilon}<st\leqslant x^{2\varepsilon}\\ P^+(st)=q}}\frac{G(t)A^{\eta(q)}x}{\varphi(st)(\log q)^2}\cdot\
\end{align*} 
We estimate the inner sum by Rankin's method, employing the weight $(st/x^\varepsilon)^v$ with $v:=C/\log q$, where $C$ is a sufficiently large constant. We obtain, for a suitable constant $c_0$,
\begin{align*}
N_3(x)&\ll\sum_{q\leqslant x^{2\varepsilon}}\sum_{(k-\eta(q))^+\leqslant \kappa<k}\frac{x^{1-C\varepsilon/\log q}(\log_22q+c_0)^{\kappa}(AR+1)^{\eta(q)}}{q(\log q)^2\kappa!}\sum_{\substack{t\leqslant x\\ P^+(t)\leqslant q}}\frac{G(t)t^v}{\varphi(t)}\cdot
\end{align*}
A computation similar to \eqref{majsG}-\eqref{majK} enables us to show that the last sum is 
$\ll\sum_{n\leqslant x}G(n)/n.$ We then observe that, provided $C\varepsilon>1+2\log(AR+1) $, we have
 \begin{align*}
 &\sum_{q\leqslant x^{2\varepsilon}}\sum_{(k-\eta(q))^+\leqslant \kappa<k}\frac{x^{1-C\varepsilon/\log q}(\log_22q+c_0)^{\kappa}(AR+1)^{\eta(q)}}{q(\log q)^2\kappa!}\cr&\ll\sum_{q\leqslant x^{2\varepsilon}}\frac{\eta(q)x^{1-1/\log q}(\log_2x+c_0)^{k-1}}{q(\log q)^2(k-1)!}\ll\pi_k(x).
 \end{align*}
 \end{proof}   
 Our next, and last, preliminary result is a sieve estimate for integers $n$ such that $\omega(n-1)$ is fixed.
 \begin{lemma}
 \label{crible-n/n-1} Let $R>0$.
 Uniformly for $x\geqslant 3$, $1\leqslant k\leqslant R\log_2x$, and all sets of prime numbers $\P\subset [2,x]$ such that $\sum_{p\in\P}1/p=o(1)$ as $x\to\infty$, we have
 \begin{equation}
\label{estcrib}
\sum_{\substack{1<n\leqslant x\\ \omega(n-1)=k\\ \exists p\in\P\,:\,p|n}}1=o\big(\pi_k(x)\big).
\end{equation}
\end{lemma}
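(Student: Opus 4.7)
The plan is to bound $S\leq\sum_{p\in\P}N_p$ with $N_p:=|\{n\leq x:p\mid n,\,\omega(n-1)=k\}|=\pi_k(x-1;p,-1)$ via the union bound, and to split the $p$-sum by size. Setting $\varepsilon_x:=\sum_{p\in\P}1/p=o(1)$ and $\delta:=\sqrt{\varepsilon_x}$, I would partition $\P$ at $y_1:=\sqrt{x}/\LL^B$ and $y_2:=x^{1-\delta}$, using a Bombieri--Vinogradov bound for $w_k$ on the small range, a uniform Hardy--Ramanujan estimate in arithmetic progressions on the intermediate range, and an Erd\H{o}s--Pomerance-type bound on the large range.

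For $p\leq y_1$, I would first derive a Bombieri--Vinogradov bound for $w_k$ by applying Theorem~\ref{centralabsolutevalue} to the multiplicative function $n\mapsto z^{\omega(n)}\in\FF(1,|z|)$ and extracting $w_k=[z^k]z^{\omega(\cdot)}$ via Cauchy's integral formula, following the proof of Corollary~\ref{easy2}; this yields $\sum_{q\leq y_1}\max_{(a,q)=1}|\Delta_{w_k}(x;q,a)|\ll x/\LL^A$ for any $A>0$. Combined with the decomposition $N_p=\pi_k(x)/(p-1)+\Delta_{w_k}(x-1;p,-1)+O\bigl(\pi_{k-1}(x/p)/(p-1)\bigr)$ and summing, the small-prime contribution is $O(\pi_k(x)\varepsilon_x+x/\LL^A)=o(\pi_k(x))$. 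For $y_1<p\leq y_2$, the classical uniform Hardy--Ramanujan bound in arithmetic progressions (see e.g.\ \cite[ch.~II.6]{TenlivreUS}), $\pi_k(x;p,a)\ll x(\log_2 x+B)^{k-1}/\bigl((p-1)(k-1)!\log(2x/p)\bigr)$, gives $N_p\ll\pi_k(x)/((p-1)\delta)$ since $\log(2x/p)\geq\delta\log x$; summing yields $\sum_{p\in\P,\,y_1<p\leq y_2}N_p\ll\pi_k(x)\varepsilon_x/\delta=\sqrt{\varepsilon_x}\,\pi_k(x)=o(\pi_k(x))$.

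For $p>y_2$, since $p>\sqrt{x}$ we have $p=P^+(n)$ for any $n\leq x$ divisible by $p$, so $\sum_{p\in\P,\,p>y_2}N_p\leq\#\{n\leq x:\omega(n-1)=k,\,P^+(n)>x^{1-\delta}\}$. Writing $n=pm$ with $p=P^+(n)>x^{1-\delta}$ and $m<x^\delta$, the inner count $|\{p\leq x/m:\omega(pm-1)=k\}|$ is estimated by a uniform Erd\H{o}s--Pomerance type inequality of order $(x/m)(\log_2 x)^{k-1}/\bigl(\varphi(m)(k-1)!(\log x)^2\bigr)$, valid for $k\leq R\log_2 x$. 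Summing over $m<x^\delta$ using the convergence of $\sum_{m\geq 1}1/(m\varphi(m))$ yields a total of order $\pi_k(x)/\log x=o(\pi_k(x))$, independently of $\delta$. The main obstacle lies in precisely this last step: the uniform Erd\H{o}s--Pomerance upper bound for $\omega(pm-1)=k$, which is not explicitly isolated among the lemmas provided but is classical and follows from a Brun or Selberg sieve for the primality of $p$ combined with the Hardy--Ramanujan inequality in arithmetic progressions.
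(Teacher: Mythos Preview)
Your approach is genuinely different from the paper's. The paper never bounds $N_p$ individually; instead it splits according to the \emph{size of $k$}. For small $k$ (i.e.\ $k\leqslant\eta_x\log_2 x$ with $\eta_x\to0$), Lemma~\ref{Ekf2} ensures that almost every $n-1$ with $\omega(n-1)=k$ has a prime factor $q_n>x^{1-\varepsilon_x}$, so one writes $n-1=q_n m_n$ with $m_n\leqslant x^{\varepsilon_x}$ and handles the two ranges $p_n\lessgtr x^{1-2\varepsilon_x}$ via Brun--Titchmarsh and the large sieve respectively. For large $k$, Lemma~\ref{petitsfactsomk} gives the opposite structural information (the $x^{\varepsilon_x^2}$-smooth part of $n-1$ is already $\leqslant x^{\varepsilon_x}$ and carries almost all prime factors), and the two $p_n$-ranges are then treated by a sieve bound and, for the very large primes, by the combinatorial sieve combined with the Bombieri--Vinogradov theorem for $w_k$ of Wolke--Zhan. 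Your route via the union bound $\sum_{p\in\P}N_p$ is more direct in spirit, and both proofs ultimately invoke a Bombieri--Vinogradov estimate for $w_k$.

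There is however a real gap in your argument. The ``uniform Hardy--Ramanujan bound in arithmetic progressions''
\[
\pi_k(x;p,a)\ll\frac{x(\log_2 x+B)^{k-1}}{(p-1)(k-1)!\log(2x/p)}
\]
uniformly for $p\leqslant x^{1-\delta}$ and $1\leqslant k\leqslant R\log_2 x$, which you cite from \cite[ch.~II.6]{TenlivreUS}, is not stated there; that chapter treats $\pi_k(x)$ via the Selberg--Delange method, with no uniformity in large moduli. A Shiu-type bound plus Cauchy's formula yields $\pi_k(x;p,a)\ll\pi_k(x)\cdot(\log_2 x)/\big(\sqrt{k}\,(p-1)\big)$, which loses a factor $(\log_2 x)/\sqrt{k}$; for bounded $k$ this turns your medium-range estimate into $O\big(\varepsilon_x\log_2 x\cdot\pi_k(x)\big)$, and $\varepsilon_x\log_2 x\to0$ is \emph{not} guaranteed by $\varepsilon_x=o(1)$. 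The same sharp HR-in-AP bound is also the real input behind the ``Erd\H os--Pomerance type'' estimate you invoke in the large range (your sieve-plus-HR sketch needs it for moduli $md$), so this is a single missing lemma that your whole medium-and-large range argument rests on. It is provable, but it requires an inductive Hardy--Ramanujan argument carried out in progressions with care about the range of moduli---it is not a reference you can simply cite. The paper's approach sidesteps this entirely by exploiting the structure of $n-1$ rather than bounding $\pi_k$ in arithmetic progressions to large moduli.
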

\begin{proof}
We may plainly reduce the proof to showing the stated estimate for the contribution of those $n$ in $]x/2,x]$ to the left-hand side of \eqref{estcrib}. We consider two cases, according to the size of $k$. \par 
Let us first assume $k\leqslant \eta_x\log_2x$ for some function $\eta_x$ tending to 0 sufficiently slowly, to be specified later. We write $n-1=q_nm_n$ with $q_n:=P^+(n-1)$. By Lemma~\ref{Ekf2}, we may assume that $q_n>x^{1-\varepsilon_x}$ for some quantity $\varepsilon_x$ tending to $0$ sufficiently slowly as $x\to\infty$. Thus, we can also discard those integers $n$ such that $q_n^2|n-1$. Now, define $p_n$ as the smallest element of $\P$ such that $p_n|n$. We split the set of remaining integers $n$ into two subsets, according to whether $p_n\leqslant x^{1-2\varepsilon_x}$ or $x^{1-2\varepsilon_x}<p_n\leqslant x$.
\par 
By the Brun-Titchmarsh theorem, the contribution of the first subset does not exceed
\begin{equation}
\label{petitp}
\begin{aligned}
\sum_{\substack{m\leqslant x^{\varepsilon_x}\\ \omega(m)=k-1}}\sum_{\substack{p\in\P\\ p\leqslant x^{1-2\varepsilon_x}\\p\,\nmid\,m}}\sum_{\substack{q\in\PP\\ q\md{-\overline m}p\\ x/3m<q\leqslant x/m}}
1&\ll\sum_{\substack{m\leqslant x^{\varepsilon_x}\\ \omega(m)=k-1}}\sum_{\substack{p\in\P\\ p\leqslant x^{1-2\varepsilon_x}}}\frac x{mp\log (x/mp)}\\
&\ll \frac{x(\log_2x)^{k-1}}{(k-1)!\,\varepsilon_x\log x}\sum_{p\in\P}\frac1p=o\big(\pi_k(x)\big)
\end{aligned}
\end{equation}
provided we select $\varepsilon_x$ tending to 0 sufficiently slowly.
\par 
To bound the contribution of the second subset, we write $$n=q_nm_n+1=\nu_np_n,\mbox{ with } m_n\leqslant x^{\varepsilon_x},\ \nu_n\leqslant x^{2\varepsilon_x}.$$
By the large sieve (see, e.g. \cite[ch. 9]{FI10}), the searched for contribution is hence
\begin{align*}
&\leqslant \sum_{\nu\leqslant x^{2\varepsilon_x}}\!\!\sum_{\substack{m\leqslant x^{\varepsilon_x}\\ (m,\nu)=1\\ \omega(m)=k-1}}\sum_{\substack{q\md{-\overline m}\nu\\ (qm+1)/\nu\in\PP\\ x/3m<q\leqslant x/m}}1\ll \sum_{\nu\leqslant x^{2\varepsilon_x}}\!\!\sum_{\substack{m\leqslant x^{\varepsilon_x}\\ (m,\nu)=1\\ \omega(m)=k-1}}\frac{x}{m\varphi(\nu)(\log x)^2}\ll\varepsilon_x\pi_k(x).
\end{align*}
\par 
We next turn our attention to the case of large $k$, i.e. $\eta_x\log_2x<k\ll\log_2x$. Put $r:=k/\log_2x$. We  select a function $\varepsilon_x$ tending to 0 and write $m:=n-1=ab$, with $a=m_{\varepsilon_x^2}$, as defined in \eqref{nep}.  With a suitable choice of $\varepsilon_x$, Lemma \ref{petitsfactsomk} implies\footnote{This is where we take into account the sufficiently slow decay of $\eta_x$ to 0, so as to ensure that $\varepsilon_x^r=o(1)$.} that, at the cost of neglecting $o\big(\pi_k(x)\big)$ elements $m$ from $\EE(x;k)$, we may  assume that $\omega(b)\leqslant h_x:=4r\log (1/\varepsilon_x)$, $a\leqslant x^{\varepsilon_x}$. Retaining the definition of $p_n$, we consider the same two cases as previously, by comparing $p_n$ to $x^{1-2\varepsilon_x}$.
\par 
If $p_n\leqslant x^{1-2\varepsilon_x}$, the corresponding contribution is hence, parallel to \eqref{petitp},
\begin{equation}
\label{petitp2}
\begin{aligned}
&\leqslant \sum_{\substack{a\leqslant x^{\varepsilon_x}\\ k-h_x\leqslant \omega(a)\leqslant k-1}}\sum_{\substack{p\in\P\\p\,\nmid\,a\\ p\leqslant x^{1-2\varepsilon_x}}}\sum_{\substack{b\md{-\overline a}p\\ P^-(b)>x^{\varepsilon_x^2}\\ x/3a<b\leqslant x/a}}
1\\&\ll\sum_{\substack{a\leqslant x^{\varepsilon_x}\\ k-h_x\leqslant \omega(a)\leqslant k-1}}\sum_{\substack{p\in\P\\ p\leqslant x^{1-2\varepsilon_x}}}\frac {x}{ap\varepsilon_x^2\log x}\\
&\ll\frac{x}{\varepsilon_x^2\log x}\sum_{1\leqslant t\leqslant h_x} \frac{(\log_2x)^{k-t}}{(k-t)!}\sum_{p\in\P}\frac1p,
\end{aligned}
\end{equation}
where the first bound follows from the sieve.  The sum over $t$ does not exceed
$$h_x(1+R)^{h_x}\frac{(\log_2x)^{k-1}}{(k-1)!}\cdot$$
Therefore the last bound in \eqref{petitp2} is $o\big(\pi_k(x)\big)$ provided $\varepsilon_x$ tends to 0 sufficiently slowly.
\par 
If $x^{1-2\varepsilon_x}<p_n\leqslant x$, we have $\nu_n=n/p_n\leqslant x^{2\varepsilon_x}$ so, still writing $m=n-1$, we see that the number $S$ of these integers satisfies
\begin{equation*}
S\leqslant \sum_{\nu\leqslant x^{2\varepsilon_x}}\sum_{\substack{x/3<m\leqslant x\\\omega(m)=k\\ m\md{-1}\nu\\ (m+1)/\nu\in\PP}}1.
\end{equation*}
This quantity may be bounded above using the weights of the combinatorial sieve, as defined, for instance, in \cite[\S 6.5]{FI10}. 
If $\{\lambda_d^+(\nu)\}_{d=1}^{D}$ denotes the sequence of the upper bound sieve for primes $p\leqslant x^c$, $p\nmid \nu$, with a small positive constant $c$ and $D:=x^{cs}$ with some large, fixed $s$, we obtain, with a suitable constant $C$,
\begin{equation*}
\begin{aligned}
S&\leqslant \sum_{\nu\leqslant x^{2\varepsilon_x}}\sum_{d\leqslant D}\lambda_d^+(\nu)\pi_{k}\big(x;\nu d,- 1\big).
\\
&=\sum_{\nu\leqslant x^{2\varepsilon_x}}\sum_{d\leqslant D}\lambda_d^+(\nu)\Big\{\frac{\pi_k(x)}{\varphi(\nu d)}+\Delta_{w_k}\Big(x;\nu d,-1\Big)\Big\}\\
&\leqslant \sum_{\nu\leqslant x^{2\varepsilon_x}}\Bigg\{\frac{C\nu}{\varphi(\nu)^2}\prod_{p\leqslant x^c}\Big(1-\frac1p\Big)\pi_k(x)+\sum_{d\leqslant D}\Big|\Delta_{w_k}\Big(x;\nu d,-1\Big)\Big|\Bigg\},
\end{aligned}
\end{equation*}
where, as defined earlier, $w_k$ is the indicator function of the level set $\EE(x;k)$.
The contribution of the first term inside curly brackets is plainly
$$\ll\varepsilon_x\pi_k(x)=o\big(\pi_k(x)\big).$$
That of the second may be bounded using the Bombieri-Vinogradov theorem for~$w_k$, as established by Wolke and Zhan in \cite{WZ93}. Since we can select $c$ so small that, for instance, $cs\leqslant 1/4$, we obtain the bound
$$\ll\sum_{q\leqslant x^{1/3}}2^{\omega(q)}\max_{(v,q)=1}\Big|\Delta_{w_k}\Big(x;q,v\Big)\Big|.$$
 By the Cauchy-Schwarz inequality, this is 
\begin{align*}
&\ll\Bigg(\sum_{q\leqslant x^{1/3}}\frac{4^{\omega(q)}x}{\varphi(q)}\Bigg)^{1/2}\Bigg(\sum_{q\leqslant x^{1/3}}\max_{(v,q)=1}\Big|\Delta_{w_k}\Big(x;q,v\Big)\Big|\Bigg)^{1/2}\ll\frac x{(\log x)^{A}}
\end{align*}
for any constant $A>0$.\par 
This finishes the proof.
\end{proof}
\subsection{Completion of the proof}
\label{sectpfEW}
We aim at applying Lévy's continuity theorem (see e.g. \cite[th. III.2.4]{TenlivreUS}), according to which the required conclusion  holds if, and only if, under our assumption upon $k$, the  Fourier transforms
\begin{equation}
\label{dft}
\Phi_k(\vartheta;x):=\frac{1}{\pi_k(x)}\sum_{\substack{1<n\leqslant x\\ \omega(n-1)=k}}\e^{i\vartheta f(n)} \qquad (\vartheta\in\R)
\end{equation}
approach $\varphi_{F_r}(\vartheta)$ as defined in \eqref{CarFr} for each $\vartheta$ as $x\to\infty$.   The standard strategy is to employ Cauchy's formula
\begin{equation*}
\Phi_k(\vartheta;x):=\frac{1}{2\pi i\pi_k(x)}\oint_{|z|=r}\WW(x;\vartheta,z;f)\frac{\dd z}{z^{k+1}}\qquad (r>0)
\end{equation*}
with
\begin{equation}
\label{W}
\WW(x;\vartheta,z;f):=\sum_{1<n\leqslant x}\e^{i\vartheta f(n)}z^{\omega(n-1)}.
\end{equation}
To this end, we would like to expand $\e^{i\vartheta f(n)}$ as a sum over the divisors of $n$ and revert summations. However, in order to apply a result like Theorem \ref{central}, we need to restrict the sizes of the  divisors involved. This can be done by selecting a suitable parameter $y\in[1,x]$ and approximating $f(n)$ by the additive truncation
$f_y$, defined~by
\begin{equation}
\label{fy}
f_y(\pnu):=\begin{cases}f(\pnu) & \text{if } p\leqslant y,\\ 0 & \text{if } p>y.\end{cases}
\end{equation}
\par 
We select throughout
\begin{equation}
\label{defy}
y:=\exp\{(\log_2x)^{1/3}\}.
\end{equation}
The first step consists in establishing that, if they exist, the limiting distributions associated to $f$ and $f_y$ are identical, in other words that, given any $\varepsilon>0$, we have
\begin{equation}
\label{appffy}
\sum_{\substack{1<n\leqslant x\\ \omega(n-1)=k\\ |f(n)-f_y(n)|>\varepsilon}}1\ll \eta\pi_k(x)
\end{equation}
for some $\eta=\eta(\varepsilon)$ tending to 0 with $\varepsilon$.\par 
Showing this turns out to be the most difficult part of the proof and motivates all of the preparation displayed in the previous subsection. Since $y\to\infty$, the Hardy-Ramanujan upper bound \eqref{HR} enables us to discard those $n$ such that $p^2|n$ for  some $p>y$. Then, we consider the set $\P_\varepsilon:=\{p\in\PP:p>y,\,|f(p)|>\varepsilon^3\}$. By the convergence of the first two series in \eqref{CNS-EW}, we have
\begin{equation*}
\sum_{p\in\P_\varepsilon}\frac{1}{p}\leqslant \sum_{\substack{p>y\\|f(p)|>1}}\frac1p+ \sum_{\substack{p>y\\|f(p)|\leqslant 1}}\frac{f(p)^2}{\varepsilon^6p}=o(1).
\end{equation*}
Therefore, Lemma \ref{crible-n/n-1} yields that the contribution to \eqref{appffy} of those $n$ divisible by a prime from $\P_\varepsilon$ is negligible. To deal with the remaining integers, we define a multiplicative function $G_\varepsilon$ by
$$G_\varepsilon(\pnu):=\begin{cases}1& \text{if } p\leqslant y,\\ 
0 & \text{if } p>y \text{ and } |f(p)|>\varepsilon^3,\\
0 & \text{if } p>y \text{ and } \nu\geqslant 2,\\
\e^{|f(p)|/\varepsilon^2} & \text{if } p>y,\, \nu=1, \text{ and }|f(p)|\leqslant \varepsilon^3.\end{cases}$$
 Thus, if $n$ is not divisible by the square of a prime $>y$ and is free of prime factors from $\P_\varepsilon$, we have $$|f(n)-f_y(n)|>\varepsilon\Rightarrow G_\varepsilon(n)>\e^{1/\varepsilon}.$$
 Moreover, since $G_\varepsilon$ satisfies the hypotheses of Lemma \ref{lemmeG}, we have
 $$\sum_{\substack{1<n\leqslant x\\ \omega(n-1)=k}}G_\varepsilon(n)\ll\pi_k(x),$$
 which completes the proof of \eqref{appffy}.
 \par 
 We have thus reduced the proof of Theorem \ref{EW} to showing that, for each fixed $\vartheta\in\R$, we have
 \begin{equation}
 \label{conv*}
 \Phi_k^*(\vartheta;x):=\frac{1}{\pi_k(x)}\sum_{\substack{1<n\leqslant x\\ \omega(n-1)=k}}\e^{i\vartheta f_y(n)}=\varphi_{F_r}(\vartheta)+o(1)\qquad (x\to\infty).
 \end{equation}
 With the notation \eqref{W}, we hence write
 \begin{equation}
 \label{repPhi*}
 \Phi_k^*(\vartheta;x)=\frac{1}{2\pi i\pi_k(x)}\oint_{|z|=r}\WW(x;\vartheta,z;f_y)\frac{\dd z}{z^{k+1}}\qquad (r>0).
 \end{equation}
 Given $\vartheta\in\R$, let $h_\vartheta$ denote the multiplicative function defined by $h_\vartheta=\e^{i\vartheta f_y}*\mu$, so that
 $$ h_\vartheta(\pnu)=\begin{cases} \e^{i\vartheta f(\pnu)}-\e^{i\vartheta f(p^{\nu-1})}& \text{if } p\leqslant y,\,\nu\geqslant 1,\\
 0& \text{if } p>y.
 \end{cases}$$
 We have $$\e^{i\vartheta f_y(n)}=\sum_{\substack{d|n\\ P^+(d)\leqslant y}}h_\vartheta(d)$$
 so, for $\alpha:=K/\log y$, $|z|=r$, the contribution to $\WW(x;\vartheta,z;f_y)$ of those $d$ exceeding~$x^c$  is at most
 \begin{equation*}
 \begin{aligned}
 \sum_{1<n\leqslant x}&r^{\omega(n-1)}\sum_{\substack{d|n\\ P^+(d)\leqslant y}}\frac{|h_\vartheta(d)|d^\alpha}{x^{\alpha c}}\\
 &=x^{-c\alpha}\sum_{1<n\leqslant x}r^{\omega(n-1)}\prod_{\substack{\pnu\|n\\ p\leqslant y}}\Big(1+2\sum_{1\leqslant j\leqslant \nu}p^{j\alpha}\Big)\ll_r  x^{1-K/\log_2x},
 \end{aligned}
 \end{equation*}
 by the Cauchy-Schwarz inequality and  standard estimates for sums of non-negative multiplicative functions.
 Since we shall ultimately select $r\leqslant R$, we see that, given any fixed constant $c>0$, we may replace $\WW(x;\vartheta,z;f_y)$ in \eqref{repPhi*} by
 \begin{equation*}
 \WW^*(x;\vartheta,z;f_y):=\sum_{\substack{d\leqslant x^c\\ P^+(d)\leqslant y}}h_\vartheta(d)\sum_{\substack{n\leqslant x-1\\ n\md{-1}d}}z^{\omega(n)},
 \end{equation*}
to within an acceptable error. We are hence in a position to apply Corollary \ref{easy1} with 
\begin{align*}
&Q=D=1, \quad c<\tfrac{1}{105}, \quad R=x^c, \quad \xi_r=\begin{cases}h_\vartheta(r) & \text{ if } P^+(r)\leqslant y,\\ 0& \text{ if } P^+(r)>y,\end{cases}\\
& f(n)=z^{\omega(n)}, \quad a=-1.
\end{align*}
 We obtain
\begin{equation}
\label{estW*}
\WW^*(x;\vartheta,z;f_y)=\sum_{\substack{d\leqslant x^c\\ P^+(d)\leqslant y}}\frac{h_\vartheta(d)}{\varphi(d)}\sum_{\substack{n\leqslant x\\ (n,d)=1}}z^{\omega(n)}+O_A\Big(\frac x{(\log x)^A}\Big).
\end{equation}
Now a standard application of the Selberg-Delange method as displayed in \cite[ch. II.6]{TenlivreUS} yields, uniformly for $x\geqslant 2$, $d\geqslant 1$, $1\leqslant k\leqslant R\log_2x$, $r:=(k-1)/\log_2x\leqslant R$,
\begin{equation}
\label{pikd}
\sum_{\substack{n\leqslant x\\ (n,d)=1\\ \omega(n)=k}}1=\frac{\pi_k(x)}{\idb_r(d)}+O\Big(\frac{B_R(d)\pi_k(x)}{\log_2x}\Big)
\end{equation}
\vskip-1mm
\noindent
 with 
\begin{equation}
\label{brBR}
\idb_r(d):=\prod_{p\,\mid\,d}\Big(1+\frac r{p-1}\Big),\qquad B_R(d):=\prod_{p|d}\Big(1-\frac{1}{p^{3/4}}\Big)^{-R}.\end{equation}
Inserting  \eqref{pikd} back into \eqref{estW*} and carrying into \eqref{repPhi*} yields, by Cauchy's integral formula,
\begin{equation*}
\Phi_k^*(x;\vartheta)=\sum_{\substack{d\leqslant x^c\\ P^+(d)\leqslant y}}\frac{h_\vartheta(d)}{\idb_r(d)\varphi(d)}+o(1)
\end{equation*}
where the error term has been estimated, in view of \eqref{defy}, using the fact that $|h_\vartheta(d)|\leqslant 2^{\omega(d)}$ for all $d\geqslant 1$. By Rankin's method, we may extend the summation over $d$ to all $y$-friable values without altering the remainder term. To reach the required conclusion \eqref{conv*}, it only remains to observe that
$$\sum_{ P^+(d)\leqslant y}\frac{h_\vartheta(d)}{\idb_r(d)\varphi(d)}=\prod_{p\leqslant y}\Bigg(1+\Big(1-\frac1p\Big)\sum_{\nu\geqslant 1}\frac{e^{i\vartheta f(\pnu)}-1}{p^{\nu-1}(p-1+r)}\Bigg)=\varphi_{F_r}(\vartheta)+o(1),$$
due to the convergence of the infinite product.
\goodbreak
\section{Proof of Theorem \ref{thEK}}
Since the proof is very similar to that of Theorem \ref{EW}, we shall only sketch the main lines.
\par 
Let $y$ be defined as in the statement of the theorem and let $f_y$ denote the additive truncation defined by \eqref{fy}. The first step consists in showing that the distribution functions of $\{f(n)-A_x\}/B_x$ and $\{f_y(n)-A_x\}/B_x$ on $$\EE^*_k(x):=\{n\in]1,x]:\omega(n-1)=k\}$$ differ by at most $o(1)$. \par 
To this end, we apply the Hardy-Ramanujan  bound \eqref{HR} to show that at most $o\big(\pi_k(x)\big)$ elements of $\EE^*_k(x)$ are divisible by the square of a prime $>y$, and we invoke 
Lemma \ref{estcrib} to infer that, given $\varepsilon>0$, the same bound holds for the number of those $n\in\EE^*_k(x)$ having a prime factor $p>y$ such that $|f(p)|>\varepsilon^3 B_x$. Then, we may use, as previously, Lemma \ref{lemmeG} with now $G(p):=\e^{|f(p)|/\varepsilon^2B_x}$ to get that $$|f(n)-f_y(n)|=o\big( B_x\big)$$ holds for $\big\{1+o(1)\big\}\pi_k(x)$ elements of $\EE^*_k(x)$.
\par \goodbreak
Let $\vartheta$ be a real number and put $h_\vartheta:=\e^{i \vartheta f_y/B_x}*\mu$. For the second step we aim at showing that, for bounded $z\in\CC$ and any fixed $c>0$, the contribution of $d>x^c$ to the sum
\begin{equation*}
\VV(x;\vartheta,z,y):=\sum_{1<n\leqslant x}z^{\omega(n-1)}\e^{i\vartheta f_y(n)/B_x}=\sum_{1<n\leqslant x}z^{\omega(n-1)}\sum_{d|n}h_\vartheta(d)
\end{equation*} 
is negligible. As in Section \ref{sectpfEW}, this is achieved by a standard application of Rankin's method. This is where we need  $y$ to be taken smaller than any power of $x$. 
\par 
We then apply Corollary \ref{easy1} to obtain, for all fixed $A$,
\begin{equation}
\label{estVzy1}
\VV(x;\vartheta,z,y)=\sum_{\substack{d\leqslant x^c\\ P^+(d)\leqslant y}}\frac{h_\vartheta(d)}{\varphi(d)}\sum_{\substack{n\leqslant x\\(n,d)=1}}z^{\omega(n)}+O\bigg(\frac x{(\log x)^A}\bigg).
\end{equation}
The Selberg-Delange method now furnishes, uniformly for $x\geqslant 2$, $d\geqslant 1$, $|z|\leqslant R$,
\begin{equation*}
\sum_{\substack{n\leqslant x\\(n,d)=1}}z^{\omega(n)}=\frac{J_d(z)x(\log x)^{z-1}}{\Gamma(z)}+O\Big(B_R(d)x(\log x)^{z-2}\Big),
\end{equation*}
where $B_R$ is defined in \eqref{brBR} and
$$J_d(z):=\prod_{p\,\nmid\,d}\Big(1+\frac z{p-1}\Big)\Big(1-\frac1p\Big)^z\prod_{p\,\mid\, d}\Big(1-\frac1p\Big)^z.$$\par 
We then carry back into \eqref{estVzy1} and extend the summation over all $y$-friable $d$. This involves a global error $\ll L_\vartheta(y)x(\log x)^{\Re z-2}$ with
\begin{align*}
L_\vartheta(y)&:=\sum_{P^+(d)\leqslant y}\frac{|h_\vartheta(d)|B_R(d)}{\varphi(d)}\ll\exp\Big\{\sum_{p\leqslant y}\frac{|h_\vartheta(p)|}p\Big\}\\
&\ll\exp\bigg\{\frac{|\vartheta|}{B_x}\sum_{p\leqslant x}\frac{|f(p)|}p\bigg\}\ll(\log x)^{o(1)},
\end{align*} 
where the last bound follows from the Cauchy-Schwarz inequality. By Cauchy's formula with a standard treatment of the error term, we get, for each fixed $\vartheta$ and uniformly for $r:=(k-1)/\log_2x\leqslant R$,
\begin{equation}
\label{EK1}
\sum_{\substack{1<n\leqslant x\\ \omega(n-1)=k}}\e^{i\vartheta f_y(n)/B_x}=\frac x{2\pi i\log x}\oint_{|z|=r}\HH_\vartheta(z;x)(\log x)^z\frac{\dd z}{z^k}+
O\Big(\frac{\pi_k(x)}{\sqrt{\log x}}\Big),
\end{equation}
with
$$\HH_\vartheta(z;x):=\frac1{\Gamma(z+1)}\sum_{P^+(d)\leqslant y}\frac{h_\vartheta(d)J_d(z)}{\varphi(d)}.$$
The last sum is an entire function of $z$. We may hence compute it assuming first that $z\not\in(1-\PP)$ and then deleting this restriction by analytic continuation. We find that it is equal to
$$\prod_{p>y}\Big(1+\frac z{p-1}\Big)\Big(1-\frac1p\Big)^z\prod_{p\leqslant y}\Big(1-\frac1p\Big)^z\bigg(1+\frac {z+h_\vartheta(p)}{p-1}\bigg).$$
It readily follows from our hypotheses that $\HH_\vartheta(z;x)$ is bounded in the disk $|z|\leqslant 2R$. Thus, inserting Taylor's formula
$$\HH_\vartheta(z;x)=\HH_\vartheta(r;x)+(z-r)\HH_\vartheta'(r;x)+O\Big(|z-r|^2\sup_{0\leqslant s\leqslant 1}|\HH''_\vartheta(r+s(z-r);x)|\Big)$$
and noting that, by our choice of $r$, the contribution of the linear term to the Cauchy integral in \eqref{EK1} vanishes, we obtain
\begin{equation*}
\sum_{\substack{1<n\leqslant x\\ \omega(n-1)=k}}\e^{i\vartheta f_y(n)/B_x}=\{1+o(1)\}\frac{x\HH_\vartheta(r;x)(\log_2x)^{k-1}}{(k-1)!\log x}\cdot
\end{equation*}
Applying this also for $\vartheta=0$ and dividing yields
\begin{equation*}
\begin{aligned}
\frac1{\pi_k(x)}&\sum_{\substack{1<n\leqslant x\\ \omega(n-1)=k}}\e^{i\vartheta \{f_y(n)-A_x\}/B_x}=\e^{-i\vartheta A_x/B_x}\prod_{p\leqslant y}\Big(1+\frac{h_\vartheta(p)}{p-1+r}\Big)+o(1)\\
&=\exp\bigg\{\sum_{p\leqslant x}\frac{\e^{i\vartheta f(p)/B_x}-1-i\vartheta f(p)/B_x}p+o(1)\bigg\}\\
&=\exp\bigg\{\int_\R\frac{\e^{i\vartheta u}-1-i\vartheta u}{u^2}\d K_x(u)+o(1)\bigg\},
\end{aligned}
\end{equation*}
with $$K_x(u):=\frac1{B_x^2}\sum_{\substack{p\leqslant x\\ f(p)\leqslant uB_x}}\frac{f(p)^2}p\cdot$$ 
By \eqref{Lind}, $K_x(u)$ tends to $\1_{]0,\infty[}(u)$ as $x\to\infty$. Thus, as may be seen by partial integration and appeal to Lebesgue's dominated convergence theorem, the last integral approaches $-\vartheta^2/2$ as $x\rightarrow \infty$. This is all we need.
\section{Proof of Theorem \ref{thLIL}}
\label{pfLIL}
This is a reappraisal of the proof of \cite[th. 11]{HT88}, in which we make crucial use of Corollary \ref{easy1}. We provide all the details for convenience of the reader. The following result, which is a special case of \cite[cor. 3]{NT98}, will also be very useful. We recall Definition \ref{MABeps} for the class $\MM(A,B,\varepsilon)$.
\begin{lemma}
Let $A>0$, $B>0$, $\varepsilon\in]0,\tfrac1{100}]$. Then, uniformly for $F\in\MM(A,B,\varepsilon)$, $x\geqslant 2$, we have
\begin{equation}
\label{majNT}
\sum_{1<n\leqslant x}F(n)\tau(n-1)\ll x\sum_{n\leqslant x}\frac{F(n)}n\cdot
\end{equation}
\end{lemma}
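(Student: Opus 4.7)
The claim is a special case of \cite[cor.~3]{NT98}, which establishes, for products of non-negative sub-multiplicative weights evaluated at linear polynomials, a bound of the shape
\[
\sum_{n\leqslant x}\prod_iF_i(|P_i(n)|)\ll x\prod_i\bigg(\prod_{p\leqslant x}\Big(1-\frac1p\Big)\sum_{\nu\geqslant 0}\frac{F_i(p^\nu)\rho_{P_i}(p^\nu)}{p^\nu}\bigg),
\]
where $\rho_{P_i}(d)$ denotes the number of residues of $P_i$ modulo $d$. Taking $P_1(n)=n$, $F_1=F$ and $P_2(n)=n-1$, $F_2=\tau$, the factors $\rho_{P_i}(d)$ equal $1$ and $\sum_{\nu\geqslant 0}\tau(p^\nu)/p^\nu=(1-1/p)^{-2}$, so the twin Euler products collapse to $\prod_{p\leqslant x}\sum_{\nu\geqslant 0}F(p^\nu)/p^\nu$, which is comparable to $\sum_{n\leqslant x}F(n)/n$ under the hypotheses on $F$.

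My plan to establish \eqref{majNT} essentially reproduces the Nair--Tenenbaum argument in this simpler setting. First I would apply Dirichlet's hyperbola identity $\tau(n-1)\leqslant 2\,|\{d\mid n-1:d\leqslant \sqrt{x}\}|$; reversing summations gives
\begin{equation*}
\sum_{1<n\leqslant x}F(n)\tau(n-1)\leqslant 2\sum_{d\leqslant \sqrt{x}}\ \sum_{\substack{n\leqslant x\\ n\md 1 d}}F(n).
\end{equation*}
The congruence $n\md 1 d$ forces $(n,d)=1$, which is what allows effective use of the sub-multiplicativity $F(mn)\leqslant A^{\Omega(m)}F(n)$.

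The essential step is then to bound the inner sum, uniformly in $d\leqslant \sqrt{x}$, by a Shiu--Nair inequality of the shape
\begin{equation*}
\sum_{\substack{n\leqslant x\\ n\md 1 d}}F(n)\ll \frac{x}{\varphi(d)\log x}\exp\bigg(\sum_{\substack{p\leqslant x\\ p\,\nmid\,d}}\frac{F(p)}{p}\bigg),
\end{equation*}
valid for any $F\in\MM(A,B,\varepsilon)$ and $d\leqslant x^{1-\delta}$ with a fixed $\delta>0$. Extending Shiu's 1980 Brun--Titchmarsh theorem for multiplicative functions to sub-multiplicative weights as in Definition \ref{MABeps} is the real technical content, due to Nair (1992) and re-packaged in \cite{NT98}: the precise conditions of Definition \ref{MABeps} (polynomial growth and summable prime-power tail) are tailored to making this extension possible uniformly in $d$. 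Once this inequality is secured, summation over $d$ via $\sum_{d\leqslant y}1/\varphi(d)\ll \log y$, combined with the observation that the exponential weight $\exp(-\sum_{p\mid d}F(p)/p)$ does not exceed $1$, produces an upper bound of the shape $x\prod_{p\leqslant x}\sum_{\nu\geqslant 0}F(p^\nu)/p^\nu$. Comparing this Euler product with $\sum_{n\leqslant x}F(n)/n$ via a standard Rankin-type estimate, justified by $F(n)\leqslant Bn^\varepsilon$ and the bounded prime-power tail, finishes the proof. The main obstacle is thus the Shiu--Nair step.
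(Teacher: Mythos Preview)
Your proposal is correct and matches the paper's approach exactly: the paper simply states that the lemma is a special case of \cite[cor.~3]{NT98} and gives no further proof. Your additional sketch of the underlying Nair--Tenenbaum argument goes beyond what the paper provides, but is sound and helpful context.
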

 Letting $E_x$ denote the expectation relative to the probability $P_x$ brought up in Section \ref{sectapps}, we see that \eqref{majNT} may be restated~as
\begin{equation}
\label{majNT-exp}
E_x(F)\ll\frac1{\log x}\sum_{n\leqslant x}\frac{F(n)}n\cdot
\end{equation}
\par 
Let us start by proving \eqref{LIL-maj} or, equivalently, with notation \eqref{defsLIL},
\begin{equation}
\label{LIL-maj-Px}
P_x\big(|M(n,\xi)|>1+\varepsilon\big)=o(1)\qquad (x\to\infty).
\end{equation}
\par 
Provided $\xi(x)$ tends to infinity sufficiently slowly, which can be assumed without loss of generality, we may restrict our attention to the range $\xi(x)<t\leqslant x_1$ with $\log_2x_1=\log_2x-\xi(x)$. Indeed, if $t>x_1$, we have, with notation \eqref{omnt}, 
\begin{align*}
\omega(n,t)-\log_2t&\leqslant \omega(n)-\omega(n,x_1)+|\omega(n)-\log_2x|+\xi(x)\\
\omega(n,t)-\log_2t&\geqslant \omega(n)-\log_2x-\big\{\omega(n)-\omega(n,x_1)\big\}\end{align*}
and so
$$|\omega(n,t)-\log_2t|\leqslant \omega(n)-\omega(n,x_1)+|\omega(n)-\log_2x|+\xi(x).$$
Now, by  \eqref{majNT-exp}, we have for any $v\in[1,2]$
\begin{align*}
&E_x\big(v^{\omega(n)-\omega(n,x_1)}\big)\ll \e^{(v-1)\xi(x)},\\
&E_x\big(v^{\omega(n)-\log_2x}+v^{\log_2x-\omega(n)}\big)\ll (\log x)^{v-1-\log v}+(\log x)^{\log v-1+1/v}
\end{align*} 
Selecting $v=2$ in the upper estimate and $v=1+\xi(x)/\sqrt{\log_2x}$ in the lower one, we see that, assuming $\xi(x)=o\big(\sqrt{\log_4x}\big)$,  we have, for all $\varepsilon>0$,
$$P_x\Big(\sup_{x_1<t\leqslant x}|\Lambda(n,t)|>\varepsilon\Big)=o(1)$$
and so \eqref{LIL-maj-Px} will follow from
\begin{equation*}
P_x\big(|M_1(n,\xi)|>1+\varepsilon\big)=o(1), \text{ with } M_1(n,\xi):=\sup_{\xi(x)<t\leqslant x_1}|\Lambda(n,t)|.
\end{equation*}
\par 
Next, we consider the subset $U_x$ of $\{n:1<n\leqslant x\}$ comprising those integers $n$ such that $\prod_{\pnu\|n,\,p\leqslant x_1}\pnu\leqslant x^{1/4}$. By \eqref{majNT-exp}, we have
$$P_x(]1,x]\smallsetminus U_x)\leqslant x^{-1/(4\log x_1)}E_x\Big(\prod_{\pnu\|n,\,p\leqslant x_1}p^{\nu/\log x_1}\Big)\ll\exp\big\{-\tfrac14\e^{\xi(x)}\big\}.$$
\par 
We thus embark to prove that
\begin{equation}
\label{LIL-maj2-Px}
P_x\Big(n\in U_x,\,|M_1(n,\xi)|>1+\varepsilon\Big)=o(1).
\end{equation}
At the cost of replacing $\varepsilon$ by $2\varepsilon$ in \eqref{LIL-maj2-Px}, we may plainly restrict the variable $t$, in the definition of $M_1(n,\xi)$, to run through the sequence $$\{t_k:\xi(x)\leqslant k\leqslant \log_2x-\xi(x)\},$$ with $t_k:=\exp\exp k$ $(k\geqslant 1)$. Now, put $I:=\fl{(\log_3\xi(x))/\varepsilon}$, 
 $J:=\fl{(1+\log_3x_1)/\varepsilon }$,    and for each $j$, $I\leqslant j\leqslant J$, write $K_j:=\e^{\varepsilon j}$, so that $K_I\leqslant \log_2\xi(x)$, $K_J>\log_2x_1$. We then define $T_j:=\exp\exp K_j=\exp\exp\exp(\varepsilon j)$ and consider the set $S_j$ of those $n\in U_x$ such that 
$$\sup_{K_j\leqslant k\leqslant K_{j+1}}|\Lambda(n,t)|>1+\varepsilon.\leqno{(S_j)}$$
We also define $\psi(T):=(1+\varepsilon)\sqrt{\log_2T}-c$, where $c$ is a sufficienlty large constant, and denote by $A_j$ the set of those $n\in U_x$ for which
$$|\omega(n,T_{j+1})-\log T_{j+1}|>\psi(T_j)\sqrt{\log_2T_j}.\leqno{(A_j)}$$
In order to show that 
\begin{equation*}
P_x\Big(\bigcup_{I\leqslant j\leqslant J}S_j\Big)=o(1),
\end{equation*}
we shall actually prove, for $x>x_0(\varepsilon)$,
\begin{align}
&P_x\big(A_j\big)\ll\frac1{j^{1+\varepsilon/4}}\quad(I\leqslant j\leqslant J),&\label{PxAj}\\
&P_x\big(S_j\big)\leqslant 2P_x\big(A_j\big)\quad(I\leqslant j\leqslant J).&\label{PxSj}
\end{align}
\par 
The proof of \eqref{PxAj} is straightforward, since, for $x>x_0(\varepsilon)$, condition $(A_j)$ implies 
\begin{align*}
|\omega(n,T_{j+1})-\log_2T_{j+1}|&>(1+\tfrac23\varepsilon)\sqrt{2\log_2T_j\log_4T_j}\\
&>(1+\eta_j)\log_2T_{j+1},
\end{align*}
with $\eta_j:=(1+\tfrac17\varepsilon)\sqrt{2\log_4T_{j+1}/\log_4T_{j+1}}$ and so we only need to apply \eqref{majNT-exp} with $F(n):=v^{\omega(n,T_{j+1})}$ for $v=1\pm\eta_j$.
\par 
To prove \eqref{PxSj}, we split $S_j$ into $K_{j+1}-K_j$ disjoint subsets $S_{kj}$, $K_j<k\leqslant K_{j+1}$, defined by the extra condition
$$\max_{k_j<m<k}|\Lambda(n,t_m)|\leqslant 1+\varepsilon<|\Lambda(n,t_k)|.\leqno{(S_{kj})}$$
We clearly have
\begin{equation*}
P_x(S_j)\leqslant \sum_{K_j<k\leqslant K_{j+1}}P_x(S_{kj}).
\end{equation*}
For each $k\in]K_j,K_{j+1}]$, let $B_{kj}$ comprise those integers $n\in U_x$ such that
$$\bigg|\omega\big(n,T_{j+1}\big)-\omega(n,t_k)-\log \Big(\frac{\log T_{j+1}}{\log t_k}\Big)\bigg|\leqslant c\sqrt{\log_2T_{j+1}}.\leqno{(B_{kj})}$$
Since condition $(S_{kj})$ implies 
$$\left|\omega(n,t_k)-\log_2t_k\right|>(1+\varepsilon)\sqrt{2\log_2t_k\log_4t_k}$$
we see that $B_{kj}\cap S_{kj}\subset A_j$.  The $S_{kj}$ being disjoint for fixed $j$, we infer that
\begin{equation*}
\sum_{K_j<k\leqslant K_{j+1}}P_x\big(B_{kj}\cap S_{kj}\big)\leqslant P_x\big(A_j\big).
\end{equation*}
Therefore \eqref{PxSj} will follow from 
\begin{equation}
\label{SovB}
P_x\big(S_{kj}\cap\ov B_{kj}\big)\leqslant \tfrac12 P_x\big(S_{kj}\big)
\end{equation}
with $\ov B_{kj}:=U_x\smallsetminus B_{kj}$. We shall prove that \eqref{SovB} holds for sufficiently large $c$.
\par 
Let $a$, $b$ denote respectively generic integers such that $P^+(a)\leqslant t_k$, $P^-(b)>t_k$. We have
\begin{equation*}
\sum_{n\in S_{kj}}\tau(n-1)=\sum_{\substack{a\leqslant x^{1/4}\\ a\in S_{kj}}}\sum_{\substack{b\leqslant x/a\\ ab>1}}\tau(ab-1)\geqslant \sum_{\substack{a\leqslant x^{1/4}\\ a\in S_{kj}}}\sum_{d\leqslant \sqrt{x}}\sum_{\substack{b\leqslant x/a\\ ab>1\\ b\md{\ov a}d}}1.
\end{equation*}
Corollary \ref{easy1} enables us to bound the double inner sum from below. We obtain, for any $A>0$,
\begin{equation*}
\sum_{n\in S_{kj}}\tau(n-1)\geqslant  \sum_{\substack{a\leqslant x^{1/4}\\ a\in S_{kj}}}\frac xa\Big\{\sum_{d\leqslant \sqrt{x}}\frac{1}{\varphi(d)\log t_k}+O\Big(\frac{1}{(\log x)^A}\Big)\Big\},
\end{equation*}
whence
\begin{equation}
\label{PxSkj}
P_x\big(S_{kj}\big)\gg \e^{-k}\sum_{\substack{a\leqslant x^{1/4}\\ a\in S_{kj}}}\frac1a\cdot
\end{equation}
Now observe that, for $n=ab\in S_{kj}\cap\ov B_{kj}$, at least one of the inequalities $\alpha_1(b)<0$ or $\alpha_2(b)>0$ holds, with
\begin{align*}
\alpha_h(b)=\omega(b,T_{j+1})-\big(K_{j+1}-k)+(-1)^hc\sqrt{K_j}\qquad (h=1,2).
\end{align*}
Therefore, for any $y_1$, $y_2$, with $\tfrac12\leqslant y_1<1<y_2\leqslant \tfrac32$, we have
\begin{equation}
\label{PxSkjovBkj}
P_x\big(S_{kj}\cap\ov B_{kj}\big)\ll\frac{1}{x\log x}\sum_{\substack{a\leqslant x^{1/4}\\ a\in S_{kj}}}\sum_{b\leqslant x/a}\tau(ab-1)\big\{y_1^{\alpha_1(b)}+y_2^{\alpha_2(b)}\big\}.
\end{equation}
Inserting the upper bound $$\tau(ab-1)\leqslant 2\sum_{\substack{d\leqslant \sqrt{x}\\ ab\md1d}}1$$ and appealing to Corollary \ref{easy1} again furnishes
\begin{align*}
\sum_{b\leqslant x/a}\tau(ab-1)y_h^{\alpha_h(b)}&\leqslant 2\sum_{d\leqslant \sqrt{x}}\sum_{\substack{b\leqslant x/a\\ ab>1\\ b\md{\ov a}d}}y_h^{\alpha_h(b)}\\
&\ll \sum_{d\leqslant \sqrt{x}}\frac{1}{\varphi(d)}\sum_{\substack{b\leqslant x/a\\ (b,d)=1}}y_h^{\alpha_h(b)}+\frac x{a(\log x)^A}\\
&\ll\frac{x\log x}{a\,\e^k}\exp\Big\{(K_{j+1}-k)(y_h-1-\log y_h)-c|\log y_h|\sqrt{K_j}\Big\}.
\end{align*} 
We select $y_h:=1+(-1)^h/2$ or $y_h=1+(-1)^hc\sqrt{K_j}/(K_{j+1}-k)$ according as $k>K_{j+1}-2c\sqrt{K_j}$ or not. Then the expression in curly brackets is $\leqslant -\kappa c$, where $\kappa$ is an absolute constant. Inserting back into \eqref{PxSkjovBkj}, we obtain
$$P_x\big(S_{kj}\cap\ov B_{kj}\big)\ll\e^{-k-\kappa c}\sum_{\substack{a\leqslant x^{1/4}\\ a\in S_{kj}}}\frac1a,$$
from which \eqref{SovB} follows for sufficiently large $c$, in view of \eqref{PxSkj}.
\par 
This completes the proof of \eqref{LIL-maj}.
\par \smallskip
We now turn our attention to proving \eqref{LIL+} and \eqref{LIL-}. By symmetry, we restrict to the  first property. We write $X:=x^{1/\log_2x}$ and embark on showing 
\begin{equation}
\label{minLIL}
P_x\big(M^+(n,\xi)\leqslant 1-\varepsilon\big)=o(1)\qquad (x\to\infty).
\end{equation}
Given a large constant $D=D(\varepsilon)$ to be specified later, we put $K:=\fl{\log_2\xi(x)/\log D}$, $L:=\fl{\log_3X/\log D}$. For $K<k\leqslant L$,  set $s_k:=\exp\exp D^k$, $I_k:=]s_{k-1},s_k]$, and define
$$\omega_k(n):=\sum_{p|n,\, p\in I_k}1.$$
As in the corresponding part of the proof of \cite[th. 11]{HT88}, we aim at establishing that the level of independence of the $\omega_k(n)$ $(K<k\leqslant L)$ is sufficient to implement the classical probabilistic approach. Of course, independence is here understood  with respect to~$P_x$.
\par 
 Let $z_k$ $(K<k\leqslant L)$ be complex numbers such that $|z_k|\leqslant 2$, and write $$\bz:=(z_{K+1},\ldots, z_L).$$ The first step consists in evaluating the characteristic function
 $$\Psi(\bz):=E_x(f), \text{ where } f(n):=\prod_{K<k\leqslant L}z_k^{\omega_k(n)}.$$ 
 We shall show that
 \begin{equation}
 \label{evalPsibz}
 \Psi(\bz)=\Theta(\bz)\bigg\{1+O\Big(\frac1{\log x}\Big)\bigg\},
  \end{equation}
 with $$\Theta(\bz):=\prod_{p}\bigg\{1+\frac{(f(p)-1)(p-1)}{p^2}\bigg\}=\prod_{K<k\leqslant L}\prod_{p\in I_k}\bigg\{1+\frac{(z_k-1)(p-1)}{p^2}\bigg\}.$$\par 
 For any $\beta>\tfrac12$, we have
 \begin{equation}
 \label{NumPxEx}
\begin{aligned}
 S(x;\bz):=\sum_{1<n\leqslant x}f(n) \tau(n-1)&=2\sum_{d\leqslant \sqrt{x}}\sum_{\substack{d^2<n\leqslant x\\ n\md1d}}f(n)+O\big(x^\beta\big)\\
 &=2\sum_{d\leqslant \sqrt{x}}\sum_{\substack{n\leqslant x\\ n\md1d}}f(n)+O\big(\gR+x^\beta\big),\end{aligned}
  \end{equation}
with 
$$\gR:=\sum_{x^{1/4}<d\leqslant \sqrt{x}}\sum_{\substack{n\leqslant d^2\\ n\md1d}}f(n).$$
 Let $\Delta:=1+1/\LL^B$, where $B$ is a large constant to be determined later. We split the outer summation range into intervals $V_j:=\big\{d:x^{1/4}\Delta^j<d\leqslant x^{1/4}\Delta^{j+1}\big\}$. In each corresponding subsum we may replace the condition $n\leqslant d^2$ by $n\leqslant \sqrt{x}\Delta^{2j}$ at the cost of an error 
 $$\ll \frac{x^{1/4}\Delta^{j}(\Delta-1)}{\log x}\sum_{n\leqslant x}\frac{|f(n)|}n\ll\frac{x^{1/4}\Delta^j}{\LL^{B-1}}.$$ 
 Indeed, this readily follows from \cite[cor. 3]{NT98}. Summing over $j$, we obtain that the global error involved is  $\ll \sqrt{x}\LL$. Next, we apply Corollary \ref{easy1} to each subsum, viz.
 \begin{equation*}
 \sum_{d\in V_j}\sum_{\substack{n\leqslant \sqrt{x}\Delta^{2j}\\ n\md1d}}f(n)=\sum_{d\in V_j}\frac{1}{\varphi(d)}\sum_{\substack{n\leqslant \sqrt{x}\Delta^{2j}\\ (n,d)=1}}f(n)+O\Big(\frac{\sqrt{x}\Delta^{2j}}{\LL^{2B}}\Big).
 \end{equation*}
 The inner sum is relevant to \cite[th. 02]{HT88}. It is
 $$\sqrt{x}\Delta^{2j}\prod_{p\,\nmid\, d}\Big(1+\frac{f(p)-1}p\Big)+O\Big(\frac{\sqrt{x}\Delta^{2j}}{\log x}\Big).$$
Carrying back into \eqref{NumPxEx}, we get, after a short computation,
$$S(x;\bz)=2\sum_{d\leqslant \sqrt{x}}\sum_{\substack{n\leqslant x\\ n\md1d}}f(n)+O\Big(x \Theta(\bz)\Big),$$
which is compatible with \eqref{evalPsibz}. Applying Corollary \ref{easy1} again, we get
\begin{equation}
\label{Eftau1}
S(x;\bz)=2\sum_{d\leqslant \sqrt{x}}\frac1{\varphi(d)}\sum_{\substack{n\leqslant x\\ (n,d)=1}}f(n)+O\Big(x \Theta(\bz)\Big),
\end{equation}
 since the error involved from the Bombieri-Vinogradov estimate may be absorbed by the previous one. Now, letting $\chi_d$ denote the indicator of the set of integers coprime to $d$, we write $f\chi_d=g*\chi_d$ where $g$ is the multiplicative function defined as
 $$ g(\pnu)=\begin{cases}0& \text{if } p\mid d \text{ or } \nu>1\\
 f(p)-1& \text{if } p\nmid d,\,\nu=1.
 \end{cases}
 $$
 We have 
 \begin{align*}
 \sum_{\substack{n\leqslant x\\ n\md1d}}f(n)&=\sum_{m\leqslant x}g(m)\sum_{\substack{n\leqslant x/m\\ (n,d)=1}}1=\frac{x\varphi(d)}d\sum_{m\leqslant x}\frac{g(m)}m+O\Big(2^{\omega(d)}\sum_{m\leqslant x}|g(m)|\Big)\\
 &=\frac{x\varphi(d)}d\prod_{p\,\nmid\, d}\Big(1+\frac{f(p)-1}p\Big)+O\Big(\frac{x2^{\omega(d)}}{(\log x)^B}\Big),
 \end{align*}
 where the last estimate may be obtained by a standard application of Rankin's method, using the fact that $g(m)$ vanishes if $P^+(m)>X$. Carrying back into \eqref{Eftau1}, we get, keeping in mind that $f(p)=1$ for all small $p$,
 \begin{equation*}
 \begin{aligned}
 S(x;\bz)&=2\sum_{d\leqslant \sqrt{x}}\frac1d\prod_{p\,\nmid\, d}\Big(1+\frac{f(p)-1}p\Big)+O\big(x \Theta(\bz)\big)\\
 &=2\prod_{p}\Big(1+\frac{f(p)-1}p\Big)\sum_{d\leqslant \sqrt{x}}\frac{1}{dr(d)}+O\big(x \Theta(\bz)\big),
 \end{aligned}
 \end{equation*}
with $r(d):=\prod_{p|d}\{1+(f(p)-1)/p\}$. The Selberg-Delange method yields \eqref{evalPsibz}. We omit the details.
\par 
From this point on, the argument is essentially identical with the corresponding part of \cite[th. 11]{HT88}, and  we only sketch the main steps. We use $$\exp\Big\{\frac{z-1}{p-1}+\frac{2}{p(p-1)}\Big\}$$ as a majorant series for $1+z(p-1)/p^2$ and note that
$$\exp\Big\{2+\sum_{K<k\leqslant L}(z-1)H_k\Big\}$$ 
is a majorant series for $\Theta(\bz)$ with
$$H_k:=\sum_{p\in I_k}\frac1{p-1}=D^{k-1}(D-1)+O(1)\qquad (K<k\leqslant L).$$
\par 
For $j_k\leqslant 2H_k$, we evaluate 
$P_x\big(\omega_k(n)=j_k\,(K<k\leqslant L)\big)$
by Cauchy's integral formula and then show that, with $$ h_k:=H_k+\sqrt{2H_k\log_2D^k}\qquad (K<k\leqslant L),$$ we have
 $$P_x\Big(\sup_{K<k\leqslant L}\{\omega_k(n)-h_k\}>0\Big)=1+o(1).$$ 
 We then conclude by noting that, if $\omega_\ell(n)-h_\ell>0$ and if $M_1(n,\xi)\leqslant 1+\varepsilon$, then
 $$\omega(n,s_{\ell-1})\geqslant D^{\ell-1}-(1+\varepsilon)\sqrt{2D^{\ell-1}\log_2 D^\ell}$$
 and so, if $D=D(\varepsilon)$ is sufficiently large,
  \begin{align*}
\omega(n,s_\ell)&=\omega(n,s_{\ell-1})+\omega_\ell(n)\\
&\geqslant D^{\ell-1}+H_\ell+\Big(\sqrt{H_\ell}-(1+\varepsilon)\sqrt{D^{\ell-1}}\Big)\sqrt{2\log_2D^{\ell}}\\
&\geqslant D^{\ell}+(1-\varepsilon)\sqrt{2D^{\ell}\log_2D^\ell}.
 \end{align*}
 This completes the proof of \eqref{LIL+} alias \eqref{minLIL}.
\goodbreak
\vskip10mm
\bibliographystyle{abbrv}
\bibliography{Fouvry-biblio} 

\begin{thebibliography}{10}

\bibitem{BVL65}
M.~B. Barban, A.~I. Vinogradov, and B.~V. Levin.
\newblock Limit laws for functions of the class {$H$} of {I}. {P}. {K}ubilius
  which are defined on a set of ``shifted'' primes.
\newblock {\em Litovsk. Mat. Sb.}, 5:5--8, 1965.

\bibitem{BFI1}
E.~Bombieri, J.~Friedlander, and H.~Iwaniec.
\newblock Primes in arithmetic progressions to large moduli.
\newblock {\em Acta Math.}, 156(3--4):203--251, 1986.

\bibitem{BFI2}
E.~Bombieri, J.~Friedlander, and H.~Iwaniec.
\newblock Primes in arithmetic progressions to large moduli, {II}.
\newblock {\em Math. Ann.}, 277(3):361--393, 1987.

\bibitem{BFI4}
E.~Bombieri, J.~Friedlander, and H.~Iwaniec.
\newblock Some corrections to an old paper.
\newblock {\em arXiv:1903.01371}, pages 1--3, 2019.

\bibitem{DrTo}
S.~Drappeau and B.~Topacogullari.
\newblock Combinatorial identities and {T}itchmarsh's divisor problem for
  multiplicative functions.
\newblock {\em Algebra \& Number Theory}, 13(10):2383--2425, 2019.

\bibitem{Elliott79}
P.~D. T.~A. Elliott.
\newblock {\em Probabilistic number theory. {I}}, volume 239 of {\em
  Grundlehren der Mathematischen Wissenschaften [Fundamental Principles of
  Mathematical Science]}.
\newblock Springer-Verlag, New York-Berlin, 1979.
\newblock Mean-value theorems.

\bibitem{FouCrelle}
{\'E}.~Fouvry.
\newblock Sur le probl\`eme des diviseurs de {T}itchmarsh.
\newblock {\em J. Reine Angew. Math.}, 357:51--76, 1985.

\bibitem{FoKoMid3}
{\'E}.~Fouvry, E.~Kowalski, and {\relax Ph}.~Michel.
\newblock On the exponent of distribution of the ternary divisor function.
\newblock {\em Mathematika}, 61(1):121--144, 2015.

\bibitem{FouRadzi1}
{\'E}.~Fouvry and M.~Radziwi\l\l.
\newblock Level of distribution of unbalanced convolutions.
\newblock {\em Ann. Sci. \'{E}c. Norm. Sup\'{e}r. (4)}, to appear.

\bibitem{FrIwd3}
J.~Friedlander and H.~Iwaniec.
\newblock Incomplete {K}loosterman sums and a divisor problem. {W}ith an
  appendix by {B}ryan {J.} {B}irch and {E}nrico {B}ombieri.
\newblock {\em Ann. of Math. (2)}, 121(2):319--350, 1985.

\bibitem{FI10}
J.~Friedlander and H.~Iwaniec.
\newblock {\em Opera de cribro}, volume~57 of {\em American Mathematical
  Society Colloquium Publications}.
\newblock American Mathematical Society, Providence, RI, 2010.

\bibitem{Go20}
{\'E}.~Goudout.
\newblock Th{\'e}or\`eme d'{E}rd{\H o}s-{K}ac pour les translat{\'e}s d'entiers
  ayant {$k$} facteurs premiers.
\newblock {\em J. Number Theory}, 210:280--291, 2020.

\bibitem{GS19}
A.~Granville and X.~Shao.
\newblock Bombieri-{V}inogradov for multiplicative functions, and beyond the
  {$x^{1/2}$}-barrier.
\newblock {\em Adv. Math.}, 350:304--358, 2019.

\bibitem{HT88}
R.~R. Hall and G.~Tenenbaum.
\newblock {\em Divisors}, volume~90 of {\em Cambridge Tracts in Mathematics}.
\newblock Cambridge University Press, Cambridge, 1988.

\bibitem{HR17}
G.~H. Hardy and S.~Ramanujan.
\newblock The normal number of prime factors of a number {$n$} [{Q}uart. {J}.
  {M}ath. {\bf 48} (1917), 76--92].
\newblock In {\em Collected papers of {S}rinivasa {R}amanujan}, pages 262--275.
  AMS Chelsea Publ., Providence, RI, 2000.

\bibitem{H-Bd3}
D.~Heath-Brown.
\newblock The divisor function $d_3 (n)$ in arithmetic progressions.
\newblock {\em Acta Arith.}, 47(1):29--56, 1986.

\bibitem{IwKo}
H.~Iwaniec and E.~Kowalski.
\newblock {\em Analytic number theory}, volume~53 of {\em American Mathematical
  Society Colloquium Publications}.
\newblock American Mathematical Society, Providence, RI, 2004.

\bibitem{Ka68}
I.~K\'{a}tai.
\newblock On distribution of arithmetical functions on the set prime plus one.
\newblock {\em Compositio Math.}, 19:278--289, 1968.

\bibitem{Kub64}
J.~Kubilius.
\newblock {\em Probabilistic methods in the theory of numbers}.
\newblock Translations of Mathematical Monographs, Vol. 11. American
  Mathematical Society, Providence, R.I., 1964.

\bibitem{NT98}
M.~Nair and G.~Tenenbaum.
\newblock Short sums of certain arithmetic functions.
\newblock {\em Acta Math.}, 180(1):119--144, 1998.

\bibitem{PanPan}
C.~Pan and C.~Pan.
\newblock {\em Goldbach conjecture}.
\newblock Science {P}ress {B}eijing, Beijing, China, 1992.

\bibitem{Sh}
P.~Shiu.
\newblock A {B}run-{T}itchmarsh theorem for multiplicative functions.
\newblock {\em J. Reine Angew. Math.}, 313:161--170, 1980.

\bibitem{GT00}
G.~Tenenbaum.
\newblock A rate estimate in {B}illingsley's theorem for the size distribution
  of large prime factors.
\newblock {\em Q. J. Math.}, 51(3):385--403, 2000.

\bibitem{TenlivreUS}
G.~Tenenbaum.
\newblock {\em Introduction to analytic and probabilistic number theory},
  volume 163 of {\em Graduate Studies in Mathematics}.
\newblock American Mathematical Society, Providence, RI, third edition, 2015.
\newblock Translated from the 2008 French edition by Patrick D. F. Ion.

\bibitem{GT18}
G.~Tenenbaum.
\newblock Note sur les lois locales conjointes de la fonction nombre de
  facteurs premiers.
\newblock {\em J. Number Theory}, 188:88--95, 2018.

\bibitem{TW14}
G.~Tenenbaum and {J. Wu (coll.)}.
\newblock {\em {T}h\'eorie {A}nalytique et {P}robabiliste des {N}ombres, 307
  exercices corrigés}.
\newblock \'Echelles. Belin, 2014.

\bibitem{TV20}
G.~Tenenbaum and J.~Verwee.
\newblock Effective {E}rd{\H o}s-{W}intner theorems.
\newblock {\em Proc. Steklov Inst. Math.}, 314, 2021.
\newblock To appear.

\bibitem{Wolke73}
D.~Wolke.
\newblock \" {U}ber die mittlere {V}erteilung der {W}erte zahlentheoretischer
  {F}unktionen auf {R}estklassen. {I}.
\newblock {\em Math. Ann.}, 202:1--25, 1973.

\bibitem{WZ93}
D.~Wolke and T.~Zhan.
\newblock On the distribution of integers with a fixed number of prime factors.
\newblock {\em Math. Z.}, 213(1):133--144, 1993.

\end{thebibliography}
\end{document}